\def\namedlabel#1#2{\begingroup
    #2%
    \def\@currentlabel{#2}%
    \phantomsection\label{#1}\endgroup
}
\newcommand{\pushright}[1]{\ifmeasuring@#1\else\omit\hfill$\displaystyle#1$\fi\ignorespaces}
\newcommand{\pushleft}[1]{\ifmeasuring@#1\else\omit$\displaystyle#1$\hfill\fi\ignorespaces}
\newcommand\C{\mathbb{C}}
\newcommand\Z{\mathbb{Z}}
\newcommand\Q{\mathbb{Q}}
\newcommand\R{\mathbb{R}}
\newcommand\N{\mathbb{N}}
\newcommand\F{\mathbb{F}}
\newcommand\bS{\mathbb{S}}
\newcommand\kk{\Bbbk}
\newcommand\blambda{{\boldsymbol{\lambda}}}
\newcommand\fh{\mathfrak{h}}
\newcommand\fP{\mathfrak{P}}
\newcommand\fS{\mathfrak{S}}
\newcommand\partition{\mathcal{P}}
\newcommand\cH{\mathcal{H}}
\newcommand\cB{\mathcal{B}}
\newcommand\ba{\mathbf{a}}
\newcommand\bF{\mathbf{F}}
\newcommand\bb{\mathbf{b}}
\newcommand\bc{\mathbf{c}}
\newcommand\bv{\mathbf{v}}
\newcommand\one{\mathbf{1}}
\newcommand\op{\mathrm{op}}
\newcommand\tr{\mathrm{tr}}
\newcommand\rR{\mathrm{R}}
\newcommand\rL{\mathrm{L}}
\newcommand\sP{\mathsf{P}}
\newcommand\sQ{\mathsf{Q}}
\newcommand{\md}{\textup{-mod}}
\newcommand{\pmd}{\textup{-pmod}}
\newcommand{\bimod}{\textup{-bimod}}
\newcommand{\proj}{\textup{proj}}
\newcommand\Cl{\textup{Cl}}
\newcommand\pr[1]{\prescript{r}{}{#1}}
\newcommand\pl[1]{\prescript{\ell}{}{#1}}
\newcommand\psiB{\psi}
\newcommand\ts{\textstyle}
\newcommand\redcircle[1]{\filldraw[fill=white, draw=red] #1 circle (3pt)}
\newcommand\bluedot[1]{\filldraw[blue] #1 circle (2pt)}
\DeclareMathOperator{\Hom}{Hom}
\DeclareMathOperator{\HOM}{HOM}
\DeclareMathOperator{\dist}{dist}
\DeclareMathOperator{\End}{End}
\DeclareMathOperator{\END}{END}
\DeclareMathOperator{\Fun}{Fun}    % Category of functors
\DeclareMathOperator{\Span}{Span}
\DeclareMathOperator{\Res}{Res}
\DeclareMathOperator{\Ind}{Ind}
\DeclareMathOperator{\grdim}{grdim}
\DeclareMathOperator{\id}{id}
\DeclareMathOperator{\shift}{shift}
\DeclareMathOperator{\Ann}{Ann}
\newtheorem{theo}{Theorem}[section]
\newtheorem{prop}[theo]{Proposition}
\newtheorem{lem}[theo]{Lemma}
\newtheorem{cor}[theo]{Corollary}
\newtheorem{conj}[theo]{Conjecture}
\theoremstyle{definition}
\newtheorem{defin}[theo]{Definition}
\newtheorem{rem}[theo]{Remark}
\newtheorem{eg}[theo]{Example}
\numberwithin{equation}{section}
  \newcommand{\comments}[1]{
    \ \\
    {\color{red}
      \textbf{Comments:} #1
    }
    \\
    }
  \newcommand{\comments}[1]{}
  \newcommand{\details}[1]{
      \ \\
      {\color{OliveGreen}
%        {\footnotesize \textbf{Details:} #1}
        \textbf{Details:} #1
      }
      \\
  }
  \newcommand{\details}[1]{}
  \newcommand{\prelim}{\textsc{Preliminary version} \bigskip}
  \newcommand{\prelim}{}
\begin{document}
%
%%%%%%%%%%%%%%%%%%%%%%%%%%%%%%%%%%%

\title{A general approach to Heisenberg categorification via wreath product algebras}

\author{Daniele Rosso}
\address{D.~Rosso: Department of Mathematics, University of California Riverside}
\urladdr{\url{http://math.ucr.edu/~rosso}}
\email{daniele.rosso@ucr.edu}

\author{Alistair Savage}
\address{A.~Savage: Department of Mathematics and Statistics, University of Ottawa}
\urladdr{\url{http://alistairsavage.ca}}
\email{alistair.savage@uottawa.ca}
\thanks{The second author was supported by a Discovery Grant from the Natural Sciences and Engineering Research Council of Canada.}

\begin{abstract}
  We associate a monoidal category $\cH_B$, defined in terms of planar diagrams, to any graded Frobenius superalgebra $B$.  This category acts naturally on modules over the wreath product algebras associated to $B$.  To $B$ we also associate a (quantum) lattice Heisenberg algebra $\fh_B$.  We show that, provided $B$ is not concentrated in degree zero, the Grothendieck group of $\cH_B$ is isomorphic, as an algebra, to $\fh_B$.  For specific choices of Frobenius algebra $B$, we recover existing results, including those of Khovanov and Cautis--Licata.  We also prove that certain morphism spaces in the category $\cH_B$ contain generalizations of the degenerate affine Hecke algebra.  Specializing $B$, this proves an open conjecture of Cautis--Licata.
\end{abstract}

\subjclass[2010]{Primary 18D10; Secondary 17B10, 17B65, 19A22}
\keywords{Categorification, lattice Heisenberg algebra, tower of algebras, graded Frobenius superalgebra, Fock space}
%\date{\today}

\prelim

\maketitle
\thispagestyle{empty}

\tableofcontents

%%%%%%%%%%%%%%%%%%%%%%%%%%%%%%%%%%%%%%%%%%%%%%%%%%%%%%%%%%%%%%%%%%%%
%
\section{Introduction}
%
%%%%%%%%%%%%%%%%%%%%%%%%%%%%%%%%%%%%%%%%%%%%%%%%%%%%%%%%%%%%%%%%%%%%

The relationship between the representation theory of the symmetric group, the ring of symmetric functions, and the Heisenberg algebra goes back to work of Geissinger in the 1970s (see \cite{Gei77}).  Using the more modern concept of \emph{categorification}, Geissinger's work was substantially strengthened by Khovanov in \cite{Kho14}.  More precisely, Khovanov introduced a category, defined in terms of planar diagrams, whose Grothendieck group is conjecturally isomorphic to the Heisenberg algebra.  This work has inspired a great deal of research into Heisenberg categorification, including $q$-deformations (\cite{LS13}), relations to the geometry of Hilbert schemes (\cite{CL12}), categorification of vertex operators (\cite{CL11}), and categorified braid group actions (\cite{CLS14}).

Many of the constructions in the field of Heisenberg categorification use variations of Khovanov's original Heisenberg category.  For example, the work of \cite{CL12} involves two Heisenberg categories associated to a finite subgroup of $\mathrm{SL}(2,\C)$.  In this case, a natural grading on the morphism spaces allows the authors to prove that their category does in fact categorify the Heisenberg algebra (as opposed to the conjectural status of Khovanov's categorification).

The goal of the current paper is to unify, simplify, and generalize the existing graphical Heisenberg categorifications.  We associate a monoidal category $\cH_B'$ to any $\N$-graded Frobenius superalgebra $B$ over an algebraically closed field $\F$ of characteristic zero.  The objects of $\cH_B'$ are generated by two objects $\sP$ and $\sQ$, which we depict as upward and downward oriented arrows.  Morphisms between sequences (tensor products) of $\sP$'s and $\sQ$'s are given by planar diagrams involving oriented strands and dots labeled by elements of $B$.  These diagrams are considered up to super isotopy and modulo local relations (see Section~\ref{sec:cat-def}).  We then take the idempotent completion $\cH_B$ of $\cH_B'$.  The category $\cH_B$ acts naturally on categories of modules over the wreath product algebras $B^{\otimes n} \rtimes \F[S_n]$.  Note that we do \emph{not} require $B$ to be a symmetric algebra.  In fact, the Nakayama automorphism of $B$ plays an important role in the graphical calculus (see \eqref{eq:dotsup-a}--\eqref{eq:dotsdown-b}).   We also do not require the trace map of $B$ to be of even parity.  Trace maps of odd parity lead to interesting ``super isotopy invariance'' in the graphical category (see Section~\ref{sec:cat-def}).

Let $K_0(B)$ denote the Grothendieck group of the category of finitely-generated projective $B$-modules.  There is a natural bilinear form on $K_0(B)$ induced by the graded dimension of the space of homomorphisms between modules (see \eqref{eq:bilinear-form}).  Thus one can form the \emph{(quantum) lattice Heisenberg algebra} $\fh_B$ associated to $B$ (see Definition~\ref{def:hB} and Remark~\ref{rem:lattice-Heisenberg}).  The main result of the current paper (Theorem~\ref{theo:main-iso}) is that, if $B$ is not concentrated in $\N$-degree zero, then the Grothendieck group of $\cH_B$ is isomorphic to the Heisenberg algebra $\fh_B$.  This can be viewed as a strengthening of the results of \cite[\S7]{RS15a} (see \cite[Rem.~7.7]{RS15a}).

The morphism spaces appearing in the category $\cH_B'$ are interesting mathematical objects deserving of further study.  For instance, we show (Proposition~\ref{prop:chim-properties}) that the endomorphism algebra of the object $\sP^n$ contains a generalization of the degenerate affine Hecke algebra depending on the graded Frobenius superalgebra $B$.  For a certain choice of $B$, this proves a conjecture of Cautis--Licata stated in \cite{CL12} (see Corollary~\ref{cor:CL-conjecture}).  We also obtain a natural generalization to wreath product algebras of the Jucys--Murphy elements in the group algebra of the symmetric group (see Remark~\ref{rem:Jucys-Murphy}).

The results of the current paper recover as special cases the Heisenberg categorification results mentioned earlier in the introduction.  Firstly, the category $\cH_B$ reduces to Khovanov's original category when $B=\F$.  Secondly, certain choices of $B$ recover both the main category of study in \cite{CL12} as well as the ``alternate category'' used there to prove some of their results.  Other choices of $B$ recover the categories considered in \cite[\S 6]{CS14} and \cite{HS15}, related to the categorification of twisted presentations of Heisenberg algebras.  However, even in these cases, the approach of the current paper offers several benefits:
\begin{asparaitem}
  \item The presentations of the Heisenberg algebra best suited to categorification are often deduced on a case-by-case basis from the standard presentation via generating function techniques.  See, for example, \cite[\S1]{Kho14} and \cite[Lem.~1]{CL12}.  However, in Proposition~\ref{prop:hB-presentation}, we are able to determine such presentations in the general setting by adopting the Heisenberg double point of view.  The general presentations specialize to those of \cite{Kho14,CL12} without the need to perform a generating function computation in each case.  Other specializations yield corrected versions of the relations in \cite[Prop.~5.1]{CS14} and \cite[Prop.~1]{HS15} (see Remark~\ref{rem:missing-relations}).

  \item Even for the choices of $B$ made in \cite{CL12,HS15}, we are able to prove directly that $\cH_B$ categorifies $\fh_B$, without resorting to the ``alternate category'' used in the proofs in those papers. We also fill in a gap in the arguments in \cite{CS14} and \cite{HS15} (see Remark~\ref{rem:missing-isoms}).

  \item The choice of the finite subgroup $\Z_2$ of $\mathrm{SL}(2,\C)$ requires special treatment in \cite{CL12}.  This is not necessary in the approach of the current paper.
\end{asparaitem}

We expect that many of the existing constructions in the literature related to Heisenberg categorification can be extended to the setting of the current paper.  Some such possible directions include the following:
\begin{asparaenum}
  \item While the current paper recovers the algebraic results of \cite{CL12}, that paper also discusses actions on derived categories of coherent sheaves on certain Hilbert schemes.  We expect that, at least for some choices of $B$, there should exist analogous geometric actions of the categories $\cH_B$.  This would result in a strengthening of the results of \cite{Kru15} (see \cite[\S3.4]{Kru15}).

  \item We expect that the categorical braid group actions of \cite{CLS14} can be generalized to the setting of the current paper.

  \item In \cite{CLLS15}, the authors identify the trace (or zeroth Hochschild homology) of Khovanov's Heisenberg category with a quotient of the $W$-algebra $W_{1+\infty}$.  In \cite[\S.1.1]{CLLS15}, the authors predict that their results should have analogues for more general Heisenberg categories.

  \item In \cite{CL11}, the authors construct 2-representations of quantum affine algebras from 2\nobreakdash-\hspace{0pt}representations of Heisenberg algebras using categorical vertex operators.  It is reasonable to expect that some of these results can be generalized.

  \item It is natural to ask if the construction of the current paper can be $q$-deformed in a manner analogous to the $q$-deformation given in \cite{LS13} of Khovanov's original construction \cite{Kho14}.

  \item In \cite[\S4]{Web13}, Webster develops a diagrammatic categorification of tensor products of representations of quantum groups.  Starting from the categorification in the current paper, methods similar to those of Webster should enable one to categorify tensor products of representations of Heisenberg algebras, resulting in a categorification of higher level representations (with the Fock space involved in the current paper being of level one).
\end{asparaenum}

Note also that wreath products of finite subgroups of $\mathrm{SL}_2(\C)$ (or a twisted versions thereof) have appeared in algebraic constructions of the Fock space of Heisenberg algebras.  See, for example, \cite{FJW00,FJW02,JW02} and references therein.

The organization of the current paper is as follows.  In Section~\ref{sec:superalebras}, we review basic facts about graded superalgebras and their modules and establish notation.  Then, in Section~\ref{sec:wreath}, we introduce Frobenius algebras and wreath product algebras.  In Section~\ref{sec:wreath-modules}, we classify the simple and indecomposable projective modules for wreath product algebras.  We introduce the Heisenberg algebra $\fh_B$ associated to a Frobenius algebra $B$ in Section~\ref{sec:Heis-alg}.  In particular, we deduce there, in Proposition~\ref{prop:hB-presentation}, the presentations best suited to categorification.  In Section~\ref{sec:cat-def}, we define the graphical category $\cH_B$ and, in Section~\ref{sec:actions}, we define a natural action of $\cH_B$ on categories of modules over wreath product algebras.  We examine the morphism spaces of $\cH_B$ in Section~\ref{sec:morphism-spaces}.  In particular, we prove there a generalization of a conjecture of Cautis--Licata (see Proposition~\ref{prop:chim-properties} and Corollary~\ref{cor:CL-conjecture}).   In Section~\ref{sec:key-isoms} we prove the key isomorphisms in $\cH_B$ that descend to the relations of the Heisenberg algebra $\fh_B$ (Theorem~\ref{theo:functor-isos}).  Finally, we prove our main result, that $\cH_B$ categorifies $\fh_B$ (Theorem~\ref{theo:main-iso}) in Section~\ref{sec:main-result}.

\medskip

\paragraph{\textbf{Notation}} In computations, we will often write equation numbers above equals signs to indicate the relations that are being used at each step.

\iftoggle{detailsnote}{
\medskip

\paragraph{\textbf{Note on the arXiv version}} For the interested reader, the tex file of the arXiv version of this paper includes hidden details of some straightforward computations and arguments that are omitted in the pdf file.  These details can be displayed by switching the \texttt{details} toggle to true in the tex file and recompiling.
}{}

%%%%%%%%%%%%%%%%%%%%%%%%%%%%%%
\subsection*{Acknowledgements}
%%%%%%%%%%%%%%%%%%%%%%%%%%%%%%

The authors would like to thank A.~Licata and S.~Cautis for sharing their preliminary notes, based on conversations with M.~Khovanov, regarding Heisenberg categorification depending on symmetric Frobenius algebras.  They would also like to thank J.~Brundan, A.~Licata, A.~Ram, J.~Sussan, and O.~Yacobi for useful conversations.

%%%%%%%%%%%%%%%%%%%%%%%%%%%%%%%%%%%%%%%%
%
\section{Modules for superalgebras} \label{sec:superalebras}
%
%%%%%%%%%%%%%%%%%%%%%%%%%%%%%%%%%%%%%%%%

We let $\N$ and $\N_+$ denote the set of nonnegative and positive integers respectively.  We let $\Z_2=\Z/2\Z$ be the ring of integers mod $2$, and let $\F$ be a field (often assumed to be algebraically closed of characteristic zero).  By a slight abuse of terminology, we will use the terms \emph{module} and \emph{representation} interchangeably.  We call an $\F$-algebra $R$ a graded (more precisely, $\N$-graded) superalgebra over $\F$ if
\[ \ts
  R \cong \bigoplus_{i\in\N,\, \epsilon\in\Z_2} R_{i,\epsilon} \quad \text{ with } \quad R_{i,\epsilon} R_{j,\tau} \subseteq R_{i+j,\epsilon+\tau}, \quad i,j\in\N,~\epsilon,\tau\in\Z_2.
\]
If $R$ is a graded superalgebra, a graded $R$-supermodule $M$ is analogously a $\Z\times\Z_2$-graded vector space over $\F$ such that
\[
  R_{i,\epsilon} M_{j,\tau} \subseteq M_{i+j,\epsilon+\tau} \quad \text{for all } i \in \N,\ j \in \Z,\ \epsilon,\tau \in \Z_2.
\]
For the remainder of this paper, we will use the term \emph{algebra} to mean graded superalgebra and \emph{module} to mean graded supermodule.  Furthermore, all modules are left modules unless otherwise specified.  If $v$ is a homogeneous element in a $\Z$-graded (resp.\ $\Z_2$-graded) vector space, we will denote by $|v|$ (resp.\ $\bar v$) its degree.  Whenever we write an expression involving degrees of elements, we will implicitly assume that such elements are homogeneous.

For $M$, $N$ two $\Z\times\Z_2$-graded vector spaces over $\F$, we define a $\Z\times\Z_2$-grading on the space $\HOM_\F(M,N)$ of all $\F$-linear maps by setting $\HOM_\F(M,N)_{i,\epsilon}$ to be the subspace of all homogeneous maps of degree $(i,\epsilon)$. That is,
\[
  \HOM_\F(M,N)_{i,\epsilon} := \{\alpha \in \HOM_\F(M,N)\ |\  \alpha(M_{j,\tau}) \subseteq N_{i+j,\epsilon+\tau} \ \forall\ j \in \Z,\ \tau \in \Z_2\}.
\]
If $M$, $N$ are two $R$-modules, then we define the $\Z \times \Z_2$-graded $\F$-vector space
\begin{gather*}
  \HOM_R(M,N) := \bigoplus_{i \in \Z,\, \epsilon \in \Z_2} \HOM_R(M,N)_{i,\epsilon},\\
  \HOM_R(M,N)_{i,\epsilon} := \{f \in \HOM_\F(M,N)_{i,\epsilon}\ |\ f(am) = (-1)^{\bar a \epsilon} a f(m) \ \forall\ a \in R,\ m\in M\}.
\end{gather*}
and the $\F$-vector space
\[
  \Hom_R(M,N) := \HOM_R(M,N)_{0,0}.
\]
We define the dual of an $R$-module $M$ to be $M^* := \HOM_R(M,\F)$.  We also set $\End_R(M) = \Hom_R(M,M)$ and $\END_R(M) = \HOM_R(M,M)$.

Let $R\md$ denote the category of finitely-generated left $R$-modules and let $R\pmd$ denote the category of finitely-generated projective left $R$-modules.  In both categories, we take the morphisms from $M$ to $N$ to be $\Hom_R(M,N)$. For each $n\in\Z$, we have a degree shift functor
\[
  \{n\} \colon R\md \to R\md,\quad M \mapsto \{n\}M.
\]
Here $\{n\}M$ is the same underlying vector space as $M$, with the same $R$-action, but a new grading given by $(\{n\}M)_{i,\epsilon}=M_{i-n,\epsilon}$.  We have an analogous functor on the category of right $R$-modules, and hence on the category of $R$-bimodules.  We also have a parity shift functor
\[
  \Pi \colon R\md \to R\md,\quad M \mapsto \Pi M,
\]
that switches the $\Z_2$-grading of the spaces, i.e.\ $(\Pi M)_{i,\epsilon}=M_{i,\epsilon+1}$. The action of $R$ on $\Pi M$ is given by $r \cdot m=(-1)^{\bar r}rm$, where $rm$ is the action on $M$.  The parity shift functor $\Pi$ for a right module switches the $\Z_2$-grading as above but, unlike the case of left modules, it does not change the signs in the action.  For $n \in \Z$ and $\epsilon \in \Z_2$, we define the functor
\[
  \{n,\epsilon\} \colon R\md \to R\md,\quad M \mapsto \{n,\epsilon\}M := \Pi^\epsilon \{n\} M.
\]
Both the degree shift and parity shift functors leave morphisms unchanged.  Note that we choose to write the shift $\{n,\epsilon\}$ on the left since it commutes with the right action but not necessarily the left action.  We will sometimes write an element of $\HOM_R(M,N)_{n,\epsilon}$ as an element of $\Hom_R(\{n,\epsilon\}M,N)$ or an element of $\Hom_R(M,\{-n,\epsilon\}N)$ when we wish to emphasize its degree.

If $M$ is a left $R$-module and $a \in R$, we define the map
\[
  \pl{a} \colon M \to M,\quad m \mapsto am.
\]
Similarly, if $M$ is a right $R$-module and $a \in R$, we define the map
\[
  \pr{a} \colon M \to M,\quad m \mapsto (-1)^{\bar a \bar m} ma.
\]

%%%%%%%%%%%%%%%%%%%%%%%%%%%%%%%%%
\section{Wreath product algebras} \label{sec:wreath}
%%%%%%%%%%%%%%%%%%%%%%%%%%%%%%%%%

%%%%%%%%%%%%%%%%%%%%%%%%%%%%%%%
\subsection{Frobenius algebras}
%%%%%%%%%%%%%%%%%%%%%%%%%%%%%%%

For the remainder of the paper, we fix an algebraically closed field $\F$ of characteristic zero and a Frobenius algebra $B$ over $\F$ with trace map $\tr_B$ of degree $(-\delta,\sigma)$, $\delta \in \N$, $\sigma \in \Z_2$.  In other words, $\tr_B \colon B \to \F$ is a linear map of degree $(-\delta,\sigma)$ whose kernel contains no nonzero left ideals of $B$.  Here we view $\F$ as concentrated in degree zero.  We denote by $\psiB \colon B \to B$ its Nakayama automorphism, which means, in particular, that, for all homogeneous $a, b\in B$, we have $\tr_B(ab)=(-1)^{\bar{a}\bar{b}}\tr_B(b\psiB(a))$.  We refer the reader to \cite[\S6]{RS15a} for a more detailed discussion of Frobenius graded superalgebras.

Whenever we refer to a basis $\cB $ of $B$, we will assume that it consists of homogeneous elements with respect to the grading.  Given a basis $\cB$, since $B$ is a Frobenius algebra, we can find a right dual basis $\cB^\vee = \{b_k^\vee \mid b_k\in\cB\}$ defined by the property that
\[
  \tr_B(b_k b_\ell^\vee)=\delta_{k,\ell}\quad \text{for all } k,\ell.
\]
Then we have
\begin{equation} \label{eq:dual-bases-decomp}
  \sum_{c \in \cB} \tr_B(bc^\vee)c = b = \sum_{c \in \cB} \tr_B(cb)c^\vee \quad \text{for all } b \in B
\end{equation}
and
\begin{equation}
  |b^\vee| = |b| + \delta,\quad \overline{b^\vee} = \bar b + \sigma,\quad \text{for all } b \in \cB.
\end{equation}
In particular, since $\tr$ has parity $\sigma$, we have
\begin{equation} \label{eq:trace-parity-condition}
  \tr (b) \ne 0 \implies \bar b = \sigma \quad \text{and} \quad \tr(b^\vee) \ne 0 \implies \bar b = 0.
\end{equation}
Note also that if $F$ is any Frobenius algebra with trace map $\tr_F$ of degree $(-\delta_F,\sigma_F)$, Nakayama automorphism $\psi_F$, and basis $\mathcal{F}$, then
\[
  \delta_{f_1, f_2} = \tr_F(f_1 f_2^\vee) = (-1)^{\bar f_1 \overline{f_2^\vee}} \tr_F (f_2^\vee \psi_F(f_1)) = (-1)^{\bar f_1 (\sigma_F + \bar f_2)} \tr_F (f_2^\vee \psi_F(f_1)) \quad \text{for all } f_1,f_2 \in \mathcal{F},
\]
and so
\begin{equation} \label{eq:double-dual-basis}
  \left( f^\vee \right)^\vee = (-1)^{\sigma_F \bar f + \bar f} \psi_F(f),\quad f \in \mathcal{F}.
\end{equation}

The following lemma will be useful in later computations.

\begin{lem} \label{lem:casimir-properties}
  The elements $\sum_{b \in \cB} b \otimes b^\vee$ and $\sum_{b \in \cB} b^\vee \otimes b$ of $B \otimes B$ are independent of the basis $\cB$ and
  \begin{equation} \label{eq:b-bvee-swap}
    \sum_{b \in \cB} b \otimes b^\vee = \sum_{b \in \cB} (-1)^{\sigma \bar b + \bar b} b^\vee \otimes \psiB(b),\quad \sum_{b \in \cB} b^\vee \otimes b = \sum_{b \in \cB} (-1)^{\sigma \bar b + \bar b} \psiB(b) \otimes b^\vee.
  \end{equation}
\end{lem}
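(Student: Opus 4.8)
The plan is to first establish the basis independence of the two elements $\sum_{b\in\cB} b\otimes b^\vee$ and $\sum_{b\in\cB} b^\vee\otimes b$, and then to deduce the identities in \eqref{eq:b-bvee-swap} by specializing that independence to the particular basis $\cB^\vee$.

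For the independence, I would fix two homogeneous bases $\cB$ and $\cB'$ of $B$ and expand $c=\sum_{b\in\cB}\lambda_{c,b}\,b$ and $c^\vee=\sum_{b\in\cB}\mu_{c,b}\,b^\vee$ for $c\in\cB'$, with scalars $\lambda_{c,b},\mu_{c,b}\in\F$; note $\lambda_{c,b}=0$ (resp.\ $\mu_{c,b}=0$) unless $b$ and $c$ have equal degree, and since $\F$ sits in degree $(0,0)$ no Koszul signs appear when rearranging such sums. Substituting these expansions into the defining relation $\tr_B(c(c')^\vee)=\delta_{c,c'}$ and using $\tr_B(b(b')^\vee)=\delta_{b,b'}$ shows that the matrices $\Lambda=(\lambda_{c,b})$ and $M=(\mu_{c,b})$ satisfy $\Lambda M^{\mathrm T}=I$, hence also $M^{\mathrm T}\Lambda=I$ and $\Lambda^{\mathrm T}M=I$. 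Plugging the expansions into $\sum_{c\in\cB'}c\otimes c^\vee$ and $\sum_{c\in\cB'}c^\vee\otimes c$ and collecting coefficients then produces precisely the matrix products $\Lambda^{\mathrm T}M$ and $M^{\mathrm T}\Lambda$, both equal to $I$, which yields $\sum_{c\in\cB'}c\otimes c^\vee=\sum_{b\in\cB}b\otimes b^\vee$ and $\sum_{c\in\cB'}c^\vee\otimes c=\sum_{b\in\cB}b^\vee\otimes b$. (One could alternatively phrase this conceptually, identifying $\sum_b b\otimes b^\vee$ with the image of $\mathrm{id}_B$ under a Frobenius-form isomorphism $\End(B)\cong B\otimes B$, but the matrix computation is the most transparent to check in the super setting.)

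Finally, I would apply these two identities with $\cB'$ taken to be the basis $\cB^\vee=\{b^\vee:b\in\cB\}$. For $c=b^\vee\in\cB^\vee$ the right dual is $c^\vee=(b^\vee)^\vee$, which by \eqref{eq:double-dual-basis} (with $F=B$) equals $(-1)^{\sigma\bar b+\bar b}\psiB(b)$; reindexing the two basis-independence identities by $b$ and inserting this value gives at once both equalities of \eqref{eq:b-bvee-swap}. The only point demanding care is the sign bookkeeping: one must confirm that the change-of-basis step really introduces no signs — which holds because all structure constants are even scalars — so that the signs appearing in \eqref{eq:b-bvee-swap} are exactly those contributed by the double-dual-basis formula \eqref{eq:double-dual-basis}. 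This is the main (and fairly mild) obstacle; the remainder is routine.
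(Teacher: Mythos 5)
Your proof is correct and follows essentially the same strategy as the paper's: you establish basis-independence of $\sum_{b\in\cB} b\otimes b^\vee$ and $\sum_{b\in\cB} b^\vee\otimes b$ via the change-of-basis matrix computation (which, modulo notation, is exactly the argument spelled out in the paper's hidden details), and then specialize the resulting invariance to the basis $\cB^\vee$ and invoke \eqref{eq:double-dual-basis} to produce the signs and Nakayama twist in \eqref{eq:b-bvee-swap}. Your explicit remark that the structure constants live in $\F$ (degree $(0,0)$) and hence contribute no Koszul signs is exactly the point that makes the naive matrix argument go through in the super-graded setting, and it is the right thing to flag.
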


\begin{proof}
  The fact that $\sum_{b \in \cB} b \otimes b^\vee$ and $\sum_{b \in \cB} b^\vee \otimes b$ are independent of $\cB$ is standard (and the proof straightforward).
  \details{
    Enumerate the elements of $\cB$ so that $\cB = \{b_1,\dotsc,b_\ell\}$.  Let $\cB' = \{b_1',\dotsc,b_\ell'\}$ be another basis of $B$.  Then there exist invertible $\ell \times \ell$ matrices $M = (m_{ij})$ and $M' = (m_{ij}')$ with entries in $\F$ such that, for $i = 1, \dotsc, \ell$,
    \[
      b_i = \sum_{j=1}^\ell m_{ij} b_j',\quad b_i^\vee = \sum_{j=1}^\ell m_{ij}' (b_j')^\vee.
    \]
    Then, for $i,j \in \{1,\dotsc,\ell\}$, we have
    \[
       \delta_{i,j} = \tr_B(b_i b_j^\vee) = \sum_{r,s=1}^\ell m_{ir} m_{js}' \tr_B(b_r' (b_s')^\vee) = \sum_{r,s=1}^\ell m_{ir} m_{js}' \delta_{r,s} = \sum_{r=1}^\ell m_{ir} m_{jr}'.
    \]
    It follows that $(M')^T = M^{-1}$.  Thus
    \begin{gather*}
      \sum_{i=1}^\ell b_i \otimes b_i^\vee = \sum_{i,j,k=1}^\ell m_{ij} m_{ik}' b_j' \otimes (b_k')^\vee = \sum_{j=1}^\ell b_j' \otimes (b_j')^\vee \quad \text{and} \\
      \sum_{i=1}^\ell b_i^\vee \otimes b_i = \sum_{i,j,k=1}^\ell m_{ij}' m_{ik} (b_j')^\vee \otimes b_k' = \sum_{j=1}^\ell (b_j')^\vee \otimes b_j'.
    \end{gather*}
  }
    Using the independence of the basis, we have
    \[
      \sum_{b \in \cB} b \otimes b^\vee = \sum_{b \in \cB^\vee} b \otimes b^\vee = \sum_{b \in \cB} b^\vee \otimes (b^\vee)^\vee \stackrel{\eqref{eq:double-dual-basis}}{=} \sum_{b \in \cB} (-1)^{\sigma \bar b + \bar b} b^\vee \otimes \psiB(b).
    \]
    The proof of the second relation in \eqref{eq:b-bvee-swap} is analogous.
\end{proof}

%%%%%%%%%%%%%%%%%%%%%%%%%%%%%%%%%%%%
\subsection{Wreath product algebras}
%%%%%%%%%%%%%%%%%%%%%%%%%%%%%%%%%%%%

For $n \in \N_+$, the symmetric group $S_n$ acts on $V^{\otimes n}$, for any $\Z_2$-graded vector space $V$ (in particular, for $V = B$), by superpermutations.  More precisely, if $s_k \in S_n$ is the simple transposition $(k,k+1)$ for $1 \le k \le n-1$, then
\begin{equation} \label{eq:superpermutation}
  s_k \cdot (v_1 \otimes \dotsb \otimes v_n)
  = (-1)^{\bar v_k \bar v_{k+1}} v_1 \otimes \dotsb \otimes v_{k-1} \otimes v_{k+1} \otimes v_k \otimes v_{k+2} \otimes \dotsb \otimes v_n.
\end{equation}

For $n\in\N$, let $A_n := B^{\otimes n} \rtimes S_n$ be the wreath product algebra, with grading inherited from $B$ (in other words, we take $S_n$ to lie in degree zero).  By convention, we set $A_0:=\F$.  As shown in \cite[Lem.~7.2]{RS15a} (although the trace map on $S_n$ is different in that reference, the proof for the choice made here is analogous), $A_n$ is a Frobenius algebra with degree $(-n \delta, n \sigma)$ trace map $\tr_n = \tr_B^{\otimes n} \otimes \tr_{S_n}$, where $\tr_{S_n}$ is the trace map on $\F S_n$ given by $\tr_{S_n}(\tau) = \delta_{\tau,1}$, $\tau \in S_n$.  The corresponding Nakayama automorphism $\psi_n \colon A_n \to A_n$ is given by
\begin{gather*}
  \psi_n(b_1 \otimes \dotsb \otimes b_n) = \psiB(b_1) \otimes \dotsb \otimes \psiB(b_n),\quad b_1,\dotsc, b_n \in B,\\
  \psi_n(s_i) = (-1)^{\sigma} s_i,\quad i=1,\dotsc,n-1.
\end{gather*}
\details{
  Suppose $\bb \in B^{\otimes n}$ and $i \in \{1,\dotsc,n-1\}$.  Write $\bb = \bb' + \bb''$ where
  \[
    \bb' \in (B_\sigma)^{\otimes n},\quad \bb'' \in \bigoplus_{\substack{\epsilon_1,\dotsc,\epsilon_n \in \Z_2 \\ \text{not all equal to } \sigma}} \left( B_{\epsilon_1} \otimes \dotsb \otimes B_{\epsilon_n} \right).
  \]
  Here, for $\epsilon \in \Z_2$, we set $B_\epsilon = \bigoplus_{m \in \N} B_{m,\epsilon}$.  Then
  \[
    \tr_n((\bb \otimes s_i)s_i) = \tr_n(\bb) = \tr_B^{\otimes n}(\bb') = (-1)^\sigma \tr_B^{\otimes n}(s_i \cdot \bb') = (-1)^\sigma \tr_B^{\otimes n} (s_i \cdot \bb) = (-1)^{\sigma} \tr_n(s_i(\bb \otimes s_i)).
  \]
  On the other hand, for $\tau \in S_n$ with $\tau \ne s_i$, we have
  \[
    \tr_n((\bb \otimes \tau)s_i) = 0 = \tr_n(s_i(\bb \otimes \tau)).
  \]
  Thus $\psi_n(s_i) = (-1)^\sigma s_i$.

  For $b_1,\dotsc,b_n \in B$ and $\bb \in B^{\otimes n}$, we have
  \[
    \tr_n((b_1 \otimes \dotsb \otimes b_n)\bb) = \tr_B^{\otimes n}((b_1 \otimes \dotsb \otimes b_n)\bb) = \tr_B^{\otimes n}(\bb(\psiB(b_1) \otimes \dotsb \otimes \psiB(b_n))).
  \]
  On the other hand, for $\tau \in S_n$ with $\tau \ne 1$, we have
  \[
    \tr_n((b_1 \otimes \dotsb \otimes b_n)(\bb \otimes \tau)) = 0 = \tr_n((\bb \otimes \tau)(b_1 \otimes \dotsb \otimes b_n)).
  \]
  Thus $\psi_n(b_1 \otimes \dotsb \otimes b_n) = \psiB(b_1) \otimes \dotsb \otimes \psiB(b_n)$.
}

We naturally view $A_n$ as a subalgebra of $A_{n+1}$ via the maps
\[
  \bb \mapsto \bb \otimes 1_B,\quad s_i \mapsto s_i,\quad \bb \in B^{\otimes n},\ i \in \{1,\dotsc,n-1\}.
\]
Via the second map, we also view $S_n$ as the subgroup of $S_{n+1}$ fixing $n+1$.  In this way, $A_{n+1}$ is naturally an $(A_{n+1}, A_n)$-bimodule as well as an $(A_n,A_{n+1})$-bimodule.  We denote these two bimodules by $(n+1)_n$ and $\prescript{}{n}(n+1)$ respectively.  The notation $(n)$ will denote $A_n$ viewed in the natural way as an $(A_n,A_n)$-bimodule.  We will use juxtaposition to denote the tensor product.  For example, $(n+1)_n (n+1)_n$ is $A_{n+1} \otimes_{A_n} A_{n+1}$, viewed as an $(A_{n+1},A_n)$-bimodule.  We will use the notation $1_n$ to denote the unit element of $A_n$.

\begin{eg}[Sergeev algebra] \label{eg:Sergeev-algebra}
  When $B$ is the rank one Clifford algebra $\Cl$ with one odd generator $c$ and relation $c^2=1$, then the wreath product algebra $\bS_n := \Cl^{\otimes n} \rtimes S_n$ is a \emph{Sergeev algebra} (sometimes also called a \emph{Hecke--Clifford algebra}).
\end{eg}

\begin{lem} \label{lem:decom}
  The algebra $A_{n+1}$ is a finitely-generated free left and right $A_n$-module.  More precisely, we have an isomorphism
  \[
    A_{n+1} \cong \bigoplus_{b \in \cB} \bigoplus_{i = 1}^{n+1} A_n (1^{\otimes n} \otimes b) s_n \dotsm s_i
  \]
  of left $A_n$-modules and an isomorphism
  \[
    A_{n+1} \cong \bigoplus_{b \in \cB} \bigoplus_{i = 1}^{n+1} (1^{\otimes n} \otimes b) s_i \dotsm s_n A_n
  \]
  of right $A_n$-modules. Here $\cB$ is a basis of $B$ and $s_i$ denotes the simple transposition $(i,i+1) \in S_n$.  By convention, $s_n \dotsm s_i = s_i \dotsm s_n = 1$ when $i=n+1$.
\end{lem}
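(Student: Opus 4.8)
The plan is to reduce the statement to the classical decomposition of $S_{n+1}$ into cosets of $S_n$ (the stabilizer of $n+1$), tensored with the factorization $B^{\otimes(n+1)} = B^{\otimes n} \otimes B$. Throughout I use the $\F$-linear identification $A_{n+1} \cong B^{\otimes(n+1)} \otimes_\F \F S_{n+1}$ sending $\bb \otimes g$ to $\bb g$, together with $B^{\otimes(n+1)} = \bigoplus_{b \in \cB} (B^{\otimes n} \otimes \F b)$.

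For the left-module statement, recall that $\{s_n s_{n-1} \dotsm s_i \mid 1 \le i \le n+1\}$ is a complete, irredundant set of representatives for the right cosets $S_n \backslash S_{n+1}$: the coset $S_n g$ is determined by $g^{-1}(n+1)$, and $(s_n \dotsm s_i)^{-1}(n+1) = (s_i \dotsm s_n)(n+1) = i$. Hence $\F S_{n+1} = \bigoplus_{i=1}^{n+1} \F S_n \,(s_n \dotsm s_i)$ as a left $\F S_n$-module, and combining the two decompositions exhibits $A_{n+1}$ as the $\F$-direct sum, over $b \in \cB$ and $1 \le i \le n+1$, of the subspaces $W_{b,i} := \Span_\F\{(\bc \otimes b)\,\sigma s_n \dotsm s_i \mid \bc \in (\text{a fixed basis of } B^{\otimes n}),\ \sigma \in S_n\}$. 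I then claim $W_{b,i} = A_n (1^{\otimes n} \otimes b) s_n \dotsm s_i$: for $\sigma \in S_n$ the superpermutation in \eqref{eq:superpermutation} permutes only the first $n$ tensor slots of $1^{\otimes n} \otimes b$, all occupied by the even element $1 \in B$, so $\sigma \cdot (1^{\otimes n} \otimes b) = 1^{\otimes n} \otimes b$ and hence $\sigma(1^{\otimes n} \otimes b) = (1^{\otimes n} \otimes b)\sigma$ in $A_{n+1}$; moreover $(\bc \otimes 1)(1^{\otimes n} \otimes b) = \bc \otimes b$ with no sign, since $b$ sits in the last slot. Thus $(\bc\sigma)(1^{\otimes n} \otimes b) s_n \dotsm s_i = (\bc \otimes b)\,\sigma s_n \dotsm s_i$, which proves the claim and shows that the left $A_n$-module map $A_n \to A_{n+1}$, $a \mapsto a(1^{\otimes n} \otimes b)s_n \dotsm s_i$, carries the standard $\F$-basis $\{(\bc \otimes 1)\sigma\}$ of $A_n$ bijectively onto an $\F$-basis of $W_{b,i}$. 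Hence each of the $|\cB|(n+1)$ summands is free of rank one over $A_n$, which gives the displayed left decomposition and, in particular, that $A_{n+1}$ is a finitely generated free left $A_n$-module.

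The right-module statement is obtained by the mirror argument: replace the right cosets above by the left cosets $S_{n+1} = \bigsqcup_{i=1}^{n+1} (s_i \dotsm s_n) S_n$ (determined by $g(n+1)$, with $(s_i \dotsm s_n)(n+1) = i$), so that $\F S_{n+1} = \bigoplus_i (s_i \dotsm s_n)\,\F S_n$ as a right $\F S_n$-module, and read off the corresponding rank-one free right $A_n$-submodules of $A_{n+1}$ from the tensor-times-coset decomposition. The one step that is not mere bookkeeping — and the place I expect to spend the most care — is precisely this last identification: one must check that, after commuting an element of $B^{\otimes n}$ past $1^{\otimes n}\otimes b$ and applying the relevant coset representative, no product in $B$ is ever formed against the slot occupied by $b$, so that the proposed generator really does give a free rank-one submodule; and one must track the Koszul signs of \eqref{eq:superpermutation} (which are absent in the left case but occur here), noting that they only rescale basis vectors and hence do not affect freeness. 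Granting this identification, finite generation and freeness follow at once, since the standard $\F$-basis of $A_{n+1}$ is then partitioned among the $|\cB|(n+1)$ cyclic submodules, each of which is thereby free of rank one over $A_n$.
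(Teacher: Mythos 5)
Your argument for the left-module decomposition is correct and follows essentially the same route as the paper's one-line proof: reduce to the right-coset decomposition $\F S_{n+1} \cong \bigoplus_{i=1}^{n+1} \F S_n (s_n\dotsm s_i)$, tensor with $B^{\otimes(n+1)} = B^{\otimes n}\otimes B$, and observe that $\sigma\in S_n$ commutes with $1^{\otimes n}\otimes b$ and that $(\bc\otimes 1)(1^{\otimes n}\otimes b) = \bc\otimes b$ with no Koszul sign.

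The right-module half, however, has a real gap, and it is precisely the step you flag and then grant yourself. You write that ``one must check that \dots no product in $B$ is ever formed against the slot occupied by $b$.'' With the generator $(1^{\otimes n}\otimes b)(s_i\dotsm s_n)$ as written in the statement, that check \emph{fails}: pushing $(\bd\otimes 1)\in A_n$ to the left past $s_i\dotsm s_n$ conjugates the tensor, moving $d_n$ into slot $n+1$, so multiplying by $1^{\otimes n}\otimes b$ produces $b\,d_n$ in the last slot. Concretely, for $B=\F[x]/(x^2)$, $n=1$, $b=x$, $i=1$ one finds $(1\otimes x)s_1(x\otimes 1)=(1\otimes x)(1\otimes x)s_1 = (1\otimes x^2)s_1 = 0$, so the cyclic right $A_1$-module $(1\otimes x)s_1 A_1$ has dimension $1$, not $\dim A_1 = 2$; the summand is neither free nor of full rank, and the displayed sum is neither direct nor exhaustive. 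The right-module basis that actually works puts the coset representative on the \emph{left}: $s_i\dotsm s_n(1^{\otimes n}\otimes b)$. (This is exactly the $y_{b,i}$ appearing in Proposition~\ref{prop:Frob-ext}, so the paper's own usage bears this out; the formula in the lemma statement appears to be a typo.) With that generator the verification is as clean as in the left case, since $(1^{\otimes n}\otimes b)(\bd\otimes 1)=\pm\,\bd\otimes b$ and then the coset representative merely permutes slots. So the ``mirror argument'' is not literally a mirror of your left-case computation: you cannot take the left-case generator and simply swap which side $A_n$ acts on. The deferred check is the whole content of the right case, and as deferred the proof is incomplete.
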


\begin{proof}
  This is a direct consequence of the fact that $\F [S_{n+1}] \cong \bigoplus_{i=1}^{n+1} \F [S_n] s_n \dotsm s_i$ as a left $S_n$-module.
\end{proof}

In the following proposition, we refer to a trace map in the sense of Frobenius extensions.  See \cite[\S4]{PS15} for a discussion of this concept in the graded super setting.

\begin{prop} \label{prop:Frob-ext}
  The algebra $A_{n+1}$ is a Frobenius extension of $A_n$ with degree $(-\delta,\sigma)$ trace map
  \begin{gather*}
    \tr \colon \prescript{}{n}(n+1)_n \to \prescript{}{n}(n)_{n},\\
    \tr((b_1 \otimes \dotsb \otimes b_{n+1})\tau) =
    \begin{cases}
      (-1)^{\sigma(\bar b_1 + \dotsb + \bar b_n)} \tr_B(b_{n+1}) (b_1 \otimes \dotsb \otimes b_n) \tau & \text{if } \tau \in S_n, \\
      0 & \text{otherwise}.
    \end{cases}
  \end{gather*}
  Furthermore,
  \begin{gather*}
    \left\{ \left. x_{b,i} = \left( 1^{\otimes n} \otimes b \right) s_n \dotsm s_i\ \right| \ b \in \cB,\ i=1,\dotsc,n+1 \right\} \quad \text{and} \\
    \left\{ \left. y_{b,i} = s_i \dotsm s_n \left( 1^{\otimes n} \otimes b^\vee \right)\ \right| \ b \in \cB,\ i=1,\dotsc,n+1 \right\}
  \end{gather*}
  are dual sets of generators in the sense of \cite[Prop.~4.9]{PS15}.
\end{prop}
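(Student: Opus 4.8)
The plan is to verify the three defining properties of a Frobenius extension with the proposed trace map $\tr$, and then check that the two given sets are dual sets of generators in the sense of \cite[Prop.~4.9]{PS15}. First I would confirm that $\tr \colon \prescript{}{n}(n+1)_n \to \prescript{}{n}(n)_n$ is a well-defined homogeneous $(A_n,A_n)$-bimodule map of degree $(-\delta,\sigma)$: the degree claim is immediate from the degree of $\tr_B$ and the tracking of the sign $(-1)^{\sigma(\bar b_1 + \dotsb + \bar b_n)}$, and bimodule linearity follows from the superpermutation rule \eqref{eq:superpermutation} together with the fact that $\tr_B$ has parity $\sigma$ (so moving $b_{n+1}$ past elements of $B$ that act in slots $1,\dotsc,n$ produces exactly the stated sign), noting that left/right multiplication by $S_n$ permutes only the first $n$ tensor factors and fixes the condition $\tau \in S_n$.

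Next I would establish the duality. By Lemma~\ref{lem:decom}, the $x_{b,i}$ form a free basis of $A_{n+1}$ as a left $A_n$-module and the $y_{b,i}$ form a free basis as a right $A_n$-module (after replacing $b$ by $b^\vee$, which still ranges over a basis of $B$). The key computation is to show
\[
  \tr(x_{b,i} y_{c,j}) = \delta_{i,j} \delta_{b,c} 1_n \qquad \text{for all } b, c \in \cB,\ i,j \in \{1,\dotsc,n+1\},
\]
or rather the appropriately sign-normalized version dictated by the conventions of \cite[Prop.~4.9]{PS15}. Here $x_{b,i} y_{c,j} = (1^{\otimes n} \otimes b) s_n \dotsm s_i \, s_j \dotsm s_n (1^{\otimes n} \otimes c^\vee)$. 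The product $s_n \dotsm s_i s_j \dotsm s_n$ of simple transpositions in $S_{n+1}$ lies in $S_n$ (equivalently, fixes $n+1$) if and only if $i = j$, in which case it equals $1$; when $i \ne j$ it moves $n+1$, so $\tr$ of the resulting element is $0$ by the second case in the definition of $\tr$. When $i = j$ the element is $(1^{\otimes n} \otimes b)(1^{\otimes n} \otimes c^\vee) = 1^{\otimes n} \otimes bc^\vee$ (up to a sign from supercommuting $b$ past the identities, which is trivial), and applying $\tr$ gives $\tr_B(bc^\vee)(1^{\otimes n}) = \delta_{b,c} 1_n$, as desired. I would also need to verify the companion relation expressing $1_{n+1}$ (or a general element) in terms of $\sum_{b,i} x_{b,i} \otimes y_{b,i}$ under the coproduct-type map, which is the standard "partition of unity" identity \eqref{eq:dual-bases-decomp} for $B$ promoted to $A_{n+1}$ via the left $A_n$-basis decomposition.

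I expect the main obstacle to be bookkeeping of signs: both the parity $\sigma$ of the trace map and the super sign conventions in the superpermutation action \eqref{eq:superpermutation} contribute, and \cite[Prop.~4.9]{PS15} itself has a sign-sensitive formulation in the graded super setting, so one must be careful that the signs in the definition of $\tr$ (namely $(-1)^{\sigma(\bar b_1 + \dotsb + \bar b_n)}$) are exactly the ones forced by requiring $\tr$ to be a degree $(-\delta,\sigma)$ bimodule map and the $x_{b,i}, y_{b,i}$ to be honestly dual. A useful sanity check is the ungraded, purely even case ($\sigma = 0$, $B$ ordinary Frobenius), where the formulas reduce to the classical fact that $B^{\otimes(n+1)} \rtimes S_{n+1}$ is a Frobenius extension of $B^{\otimes n} \rtimes S_n$; the general argument is the same modulo inserting the Koszul signs at each transposition. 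Everything else — well-definedness, the degree count, and the two duality identities — is then a direct calculation using \eqref{eq:dual-bases-decomp}, \eqref{eq:superpermutation}, and Lemma~\ref{lem:decom}.
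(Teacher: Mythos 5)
Your proposal is correct and follows essentially the same route as the paper's proof: check that $\tr$ is an $(A_n,A_n)$-bimodule homomorphism of the right degree, invoke Lemma~\ref{lem:decom} so that it suffices to verify the dual-basis identities of \cite[Prop.~4.9]{PS15} on the generators $x_{b,i}$ and $y_{b,i}$, and reduce everything to the computation $\tr(x_{b,i}\,y_{c,j}) = \delta_{i,j}\delta_{b,c}$ via the observation that $s_n\dotsm s_i\,s_j\dotsm s_n \in S_n$ if and only if $i=j$, together with \eqref{eq:dual-bases-decomp}.
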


\begin{proof}
  Since $\tr$ is clearly an $(A_n,A_n)$-bimodule homomorphism, it suffices, by \cite[Prop.~4.9]{PS15}, to show that the given sets satisfy \cite[(4.2)]{PS15}.
  \details{
    To see that $\tr$ is an $(A_n,A_n)$-bimodule homomorphism, note that it commutes with the left and right action by elements of $S_n$ since such elements only permute the first $n$ factors of $B^{\otimes (n+1)}$ and, for $\tau_1 \in S_n$ and $\tau_2 \in S_{n+1}$, we have $\tau_1 \tau_2 \in S_n$ if and only if $\tau_2 \in S_n$.  Now, for $b_1,\dotsc,b_{n+1}, b_1',\dotsc, b_n' \in B$ and $\tau \in S_n$ (the case $\tau \not \in S_n$ is straightforward), we have
    \begin{align*}
      \tr((b_1' \otimes \dotsb \otimes b_n' \otimes 1_B)&(b_1 \otimes \dotsb \otimes b_{n+1}) \tau)
      = (-1)^{\sum_{i < j \le n} \bar b_i \bar b_j'} \tr((b_1'b_1 \otimes \dotsb \otimes b_n' b_n \otimes b_{n+1}) \tau) \\
      &= (-1)^{\sum_{i < j \le n} \bar b_i \bar b_j'} (-1)^{\sigma (\bar b_1 + \dotsb + \bar b_n + \bar b_1' + \dotsb + \bar b_n')} \tr_B(b_{n+1}) (b_1' b_1 \otimes \dotsb \otimes b_n' b_n) \tau \\
      &= (-1)^{\sigma (\bar b_1 + \dotsb + \bar b_n + \bar b_1' + \dotsb + \bar b_n')} (b_1' \otimes \dotsb \otimes b_n') \tr_B(b_{n+1}) (b_1 \otimes \dotsb \otimes b_n) \tau \\
      &= (-1)^{\sigma(\bar b_1' + \dotsb + \bar b_n')} (b_1' \otimes \dotsb \otimes b_n') \tr((b_1 \otimes \dotsb \otimes b_{n+1})\tau)
    \end{align*}
    and
    \begin{align*}
      \tr((b_1 \otimes \dotsb \otimes b_{n+1}) &\tau (b_1' \otimes \dotsb \otimes b_n' \otimes 1_B))
      = (-1)^{\sum_{i < j \le n+1} \bar b_{\tau^{-1}(i)}' \bar b_j} \tr((b_1 b_{\tau^{-1}(1)}' \otimes \dotsb \otimes b_n b_{\tau^{-1}(n)}' \otimes b_{n+1}) \tau) \\
      &= (-1)^{\sum_{i < j \le n+1} \bar b_{\tau^{-1}(i)}' \bar b_j} (-1)^{\sigma (\bar b_1 + \dotsb + \bar b_n + \bar b_1' + \dotsb + \bar b_n')} \tr_B(b_{n+1}) (b_1 b_{\tau^{-1}(1)}' \otimes \dotsb \otimes b_n b_{\tau^{-1}(n)}') \tau \\
      &\! \! \stackrel{\eqref{eq:trace-parity-condition}}{=} (-1)^{\sigma (\bar b_1 + \dotsb + \bar b_n)} \tr_B(b_{n+1}) (b_1 \otimes \dotsb \otimes b_n) \tau (b_1' \otimes \dotsb \otimes b_n')\\
      &= \tr((b_1 \otimes \dotsb \otimes b_{n+1})\tau) (b_1' \otimes \dotsb \otimes b_n').
    \end{align*}
    Thus $\tau$ is an $(A_n,A_n)$-bimodule homomorphism of degree $(-\delta,\sigma)$.xxx
  }
  Since the $x_{b,i}$ form a basis of $A_{n+1}$ as a left $A_n$-module by Lemma~\ref{lem:decom}, to prove that the first equation in \cite[(4.2)]{PS15} is satisfied, it suffices to show it holds for $a=x_{b,i}$, $b \in \cB$, $i \in \{1,\dotsc,n+1\}$.  Similarly, it suffices to show that the first and last terms in \cite[(4.2)]{PS15} are equal for $a=y_{b,i}$, $b \in \cB$, $i \in \{1,\dotsc,n+1\}$.  But this follows immediately from the fact that, for $b,c \in \cB$, $i,j \in \{1,\dotsc,n+1\}$, we have
  \[
    \tr(x_{b,i} y_{c,j}) = \tr((1^{\otimes n} \otimes b)s_n \dotsb s_i s_j \dotsb s_n (1^{\otimes n} \otimes c^\vee)) = \delta_{i,j} \tr_B(bc^\vee) = \delta_{i,j} \delta_{b,c},
  \]
  where the second equality follows from the fact that $s_n \dotsb s_i s_j \dotsb s_n \in S_n$ if and only if $i=j$.
  \details{
    We then have
    \[
      (-1)^{\sigma \bar b} \sum_{\substack{c \in \cB \\ j \in \{1,\dotsc,n+1\}}} (-1)^{\sigma \bar c} \tr(x_{b,i} y_{c,j}) x_{c,j} = x_{b,i}
      \qquad \text{and} \qquad
      \sum_{\substack{c \in \cB \\ j \in \{1,\dotsc,n+1\}}}y_{c,j} \tr(x_{c,j} y_{b,i}) = y_{b,i}
    \]
    as desired.  (Note that the twistings $\alpha$ and $\beta$ in \cite[Prop.~4.9]{PS15} are trivial in the setting of the current paper.)
  }
\end{proof}

If $R$ and $S$ are rings, we will naturally identify an $(R,S)$-bimodule $M$ with the bimodules $R \otimes_R M$ and $M \otimes_S S$.

\begin{prop} \label{prop:adjunction-maps}
  The maps
  \begin{gather*}
    \varepsilon_\rR \colon (n+1)_n(n+1) \to (n+1),\quad \varepsilon_\rR(a \otimes a') = aa',\quad a \in (n+1)_n,\ a' \in \prescript{}{n}(n+1), \\
    \eta_\rR \colon (n) \to {_n}(n+1)_n,\quad \eta_{\rR}(\bb \tau) = (\bb \otimes 1_B)\tau,\quad \bb \in B^{\otimes n},\ \tau \in S_n, \\
    \varepsilon_\rL \colon \{-\delta,\sigma\}\prescript{}{n}(n+1)_n \to (n),\\
    \varepsilon_\rL((b_1 \otimes \dotsb \otimes b_{n+1}) \tau) =
    \begin{cases}
      0 & \text{if } \tau \not \in S_n,\\
      (-1)^{\sigma (\bar b_1 + \dotsb + \bar b_n)} \tr_B(b_{n+1}) (b_1 \otimes \dotsb \otimes b_n) \tau & \text{if } \tau \in S_n,
    \end{cases}\\
    \eta_\rL \colon (n+1) \to \{-\delta,\sigma\} (n+1)_n \otimes_{A_n} \prescript{}{n}(n+1),\\
    \eta_\rL(a) = (-1)^{\sigma \bar a} a \sum_{\substack{b \in \cB \\ i \in \{1,\dotsc,n+1\}}} (-1)^{\sigma \bar b^\vee} s_i \dotsm s_n (1_B^{\otimes n} \otimes b^\vee) \otimes (1_B^{\otimes n} \otimes b) s_n \dotsm s_i,\quad a \in (n+1),
  \end{gather*}
  are bimodule homomorphisms and satisfy the relations
  \begin{gather}
    (\varepsilon_\rR \otimes \id) \circ (\id \otimes \eta_\rR) = \id, \quad
    (\id \otimes \varepsilon_\rR) \circ (\eta_\rR \otimes \id) = \id, \label{eq:right-unit-counit} \\
    (\varepsilon_\rL \otimes \id) \circ (\id \otimes \eta_\rL) = (-1)^\sigma \id, \quad
    (\id \otimes \varepsilon_\rL) \circ (\eta_\rL \otimes \id) = \id. \label{eq:left-unit-counit}
  \end{gather}
  In particular, $(n+1)_n$ is left adjoint to $\prescript{}{n}(n+1)$ and right adjoint to $\{-\delta,\sigma\} \prescript{}{n}(n+1)$ in the 2-category of bimodules over rings.
\end{prop}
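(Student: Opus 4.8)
The plan is to verify each of the four claims in turn: that the four maps are well-defined bimodule homomorphisms, and that the two pairs of zig-zag (triangle) identities \eqref{eq:right-unit-counit} and \eqref{eq:left-unit-counit} hold. The statement about adjunctions in the 2-category of bimodules is then a formal consequence of the zig-zag identities, so no extra work is needed there once the relations are checked.

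First I would dispose of $\varepsilon_\rR$ and $\eta_\rR$: these are the standard unit and counit exhibiting $(n+1)_n$ as left adjoint to $\prescript{}{n}(n+1)$ for the inclusion $A_n \into A_{n+1}$, so $\varepsilon_\rR$ is just multiplication in $A_{n+1}$ (clearly an $(A_{n+1},A_{n+1})$-bimodule map) and $\eta_\rR$ is the inclusion (clearly an $(A_n,A_n)$-bimodule map). The identities in \eqref{eq:right-unit-counit} are the usual ones: $(\varepsilon_\rR \otimes \id)(a \otimes 1 \otimes a') = a a'$ and similarly on the other side, and everything collapses because $\eta_\rR$ inserts $1_B$ in slot $n+1$ and $\varepsilon_\rR$ multiplies it back out. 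No signs intervene because $1_B$ is even.

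Next, for $\varepsilon_\rL$ and $\eta_\rL$: well-definedness of $\varepsilon_\rL$ is exactly the content that $\tr$ from Proposition~\ref{prop:Frob-ext} is an $(A_n,A_n)$-bimodule homomorphism of degree $(-\delta,\sigma)$ — indeed $\varepsilon_\rL$ is that trace map after the shift $\{-\delta,\sigma\}$ is absorbed, so I would simply cite Proposition~\ref{prop:Frob-ext}. For $\eta_\rL$, I would check it is a right $A_n$-module map directly (it ends in the factor $s_n \dotsm s_i$ hitting elements of $\cB$, while the outer $a$ absorbs the left action), and that it lands in the tensor product over $A_n$; the sign $(-1)^{\sigma \bar a}$ is forced by the degree $\sigma$ of the trace-like data, and the formula is, up to these signs, the image under $\eta_\rL$ of $1_{n+1}$ acted on by $a$. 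The key computational input for both triangle identities in \eqref{eq:left-unit-counit} is the ``dual sets of generators'' property established in Proposition~\ref{prop:Frob-ext}, namely $\tr(x_{b,i} y_{c,j}) = \delta_{i,j}\delta_{b,c}$ together with the completeness relations $\sum (-1)^{\sigma \bar b^\vee} y_{b,i}\, \tr(x_{b,i} a) = a$ and its mirror image recorded in the \texttt{details} block of that proof. Feeding these into $(\varepsilon_\rL \otimes \id)\circ(\id \otimes \eta_\rL)$ and $(\id \otimes \varepsilon_\rL)\circ(\eta_\rL \otimes \id)$, the sums over $b \in \cB$ and $i$ collapse; the overall constant $(-1)^\sigma$ in the first identity is the residue of the sign bookkeeping between the two copies of the degree-$\sigma$ trace, while the second comes out as $\id$ on the nose. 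I would present this as a short computation using \eqref{eq:dual-bases-decomp} and the Frobenius-extension relations \cite[(4.2)]{PS15}.

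The main obstacle I expect is the sign bookkeeping: keeping track of the Koszul signs coming from moving homogeneous elements past one another in $B^{\otimes(n+1)}$, the parity $\sigma$ of the trace, and the interaction with the parity-shift functor $\Pi$ hidden inside $\{-\delta,\sigma\}$. In particular, verifying that $\eta_\rL$ is genuinely a bimodule homomorphism of the stated degree — rather than off by a sign on some homogeneous components — and that the asymmetry between the two identities in \eqref{eq:left-unit-counit} (one has a $(-1)^\sigma$, the other does not) is exactly as claimed, requires care; everything else is the formal adjunction yoga plus the already-established Frobenius-extension structure from Proposition~\ref{prop:Frob-ext}.
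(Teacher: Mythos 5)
Your strategy is correct and matches the paper's up to a choice of presentation. The paper derives the left adjunction by invoking the general Frobenius-extension theorem \cite[Th.~6.2]{PS15}, which produces the unit $\eta_\rL$ and counit $\varepsilon_\rL$ directly from the data established in Proposition~\ref{prop:Frob-ext}, and then remarks that ``alternatively, the relations~\eqref{eq:left-unit-counit} can easily be checked directly'' --- that alternative is exactly your plan. Both routes rest entirely on Proposition~\ref{prop:Frob-ext}, so there is no difference in substance, only in how much of the yoga is unpacked by hand.

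One imprecision in your sketch is worth fixing. You describe checking that $\eta_\rL$ is a ``right $A_n$-module map,'' but $\eta_\rL$ is a homomorphism of $(A_{n+1},A_{n+1})$-bimodules, and it is the \emph{right} $A_{n+1}$-equivariance that is the nontrivial part. Since $\eta_\rL(a)$ is (up to the sign $(-1)^{\sigma\bar a}$) the element $a$ multiplied on the left into the Casimir-type element $\sum_{b,i}(-1)^{\sigma\bar b^\vee}\, y_{b,i}\otimes x_{b,i}$, right $A_{n+1}$-equivariance amounts to showing that for any $a'\in A_{n+1}$ one can slide $a'$ across the $\otimes_{A_n}$, i.e.\ that $\sum_{b,i}(-1)^{\sigma\bar b^\vee}\, y_{b,i}\otimes x_{b,i}a' = \sum_{b,i}(-1)^{\sigma\bar b^\vee+\sigma\bar a'}\, a'y_{b,i}\otimes x_{b,i}$. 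That identity is indeed a consequence of the dual-generators relations from Proposition~\ref{prop:Frob-ext} (equivalently, of \cite[(4.2)]{PS15}), but it is a genuinely substantive step that your sketch glides over; if you opt for the direct verification, it needs to be stated and checked explicitly.
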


\begin{proof}
  The map $\varepsilon_\rR$ is multiplication and $\eta_\rR$ is the inclusion $A_n \hookrightarrow A_{n+1}$.  Thus, both are clearly bimodule homomorphisms.  The relations \eqref{eq:right-unit-counit} are standard.  They are the usual unit-counit equations making induction left adjoint to restriction.
  \details{
    For $a \in (n+1)_n$, $a' \in \prescript{}{n}(n+1)$, we have
    \begin{gather*}
      (\varepsilon_\rR \otimes \id) \circ (\id \otimes \eta_\rR) (a) = (\varepsilon_\rR \otimes \id) \circ (\id \otimes \eta_\rR) (a \otimes 1_n) =  (\varepsilon_\rR \otimes \id)(a \otimes 1_{n+1}) = a, \\
      (\id \otimes \varepsilon_\rR) \circ (\eta_\rR \otimes \id) (a') = (\id \otimes \varepsilon_\rR) \circ (\eta_\rR \otimes \id) (1_n \otimes a') = (\id \otimes \varepsilon_\rR)(1_{n+1} \otimes a') = a'.
    \end{gather*}
  }

  By Proposition~\ref{prop:Frob-ext} and \cite[Th.~6.2]{PS15}, we know that $(n+1)_n$ is left adjoint to $\{-\delta,\sigma\} \prescript{}{n}(n+1)$, with the unit $\eta$ and counit $\varepsilon$ defined as in \cite[(6.1), (6.2)]{PS15}.  These maps are precisely the $\eta_\rL$ and $\varepsilon_\rL$ in the statement of the proposition.  (Note that the twistings $\alpha$ and $\beta$ of \cite{PS15} are trivial in the setting of the current paper.)  Alternatively, the relations \eqref{eq:left-unit-counit} can easily be checked directly.
  \details{
    For the left equation in \eqref{eq:left-unit-counit} we have
    \[
      1_{n+1}
      \mapsto \sum_{\substack{b \in \cB \\ i \in \{1,\dotsc,n+1\}}} (-1)^{\sigma \bar b^\vee} s_i \dotsm s_n (1_B^{\otimes n} \otimes b^\vee) \otimes (1_B^{\otimes n} \otimes b) s_n \dotsm s_i
      \mapsto (-1)^\sigma \sum_{b \in \cB} \tr(b^\vee) b
      = (-1)^\sigma.
    \]
    For the right equation in \eqref{eq:left-unit-counit} we have
    \[
      1_{n+1}
      \mapsto \sum_{\substack{b \in \cB \\ i \in \{1,\dotsc,n+1\}}} (-1)^{\sigma \bar b^\vee} s_i \dotsm s_n (1_B^{\otimes n} \otimes b^\vee) \otimes (1_B^{\otimes n} \otimes b) s_n \dotsm s_i
      \mapsto \sum_{b \in \cB} b^\vee \tr(b)
      = 1_{n+1}.
    \]
  }
\end{proof}

%%%%%%%%%%%%%%%%%%%%%%%%%%%%%%%%%%%%%%%%%%%%%
\section{Modules for wreath product algebras} \label{sec:wreath-modules}
%%%%%%%%%%%%%%%%%%%%%%%%%%%%%%%%%%%%%%%%%%%%%

In this section we give a complete description of the simple and indecomposable projective modules for wreath product algebras.  In the ungraded and non-super setting, such a description (for the simple modules) is a special case of \cite[Th.~A.5]{Mac80}.  (We refer the reader also to \cite[Th.~A.6]{RR03} for a correction to an error in the proof of \cite[Th.~A.5]{Mac80}, although this correction is not needed for the case of wreath product algebras, since one can take the map $\alpha$ of \cite{RR03} to be trivial.)  However, the generalization to the super setting is nontrivial due to the fact that the outer tensor product of simple modules may not be simple in general.  We refer the reader not familiar with the behaviour of modules in the super setting to the overview in \cite[Ch.~12]{Kle05}.

Let $V_1,\dotsc,V_\ell$ be a complete list of simple finite-dimensional modules for $B$, up to to shift and isomorphism.  Shifting with respect to the $\Z$-grading if necessary, we may assume that the $V_i$ are non-negatively graded, with nonzero degree zero piece.   Recall that a simple module is said to be of type $Q$ if it is isomorphic to its parity shift, and type $M$ otherwise.  After possibly reordering, we assume that
\begin{center}
  $V_1,\dotsc,V_r$ are of type $M$ and $V_{r+1},\dotsc,V_\ell$ are of type $Q$.
\end{center}
For $i_1,\dotsc,i_n \in \{1,\dotsc,\ell\}$, the module $B^{\otimes n}$-module $V_{i_1} \boxtimes \dotsb \boxtimes V_{i_n}$ contains a simple submodule $\widehat{V}_{i_1,\dotsc,i_n}$ such that $V_{i_1} \boxtimes \dotsb \boxtimes V_{i_n}$ is isomorphic, as a $B^{\otimes n}$-module, to a direct sum of copies of $\widehat{V}_{i_1,\dotsc,i_n}$ and possibly its parity shift.  The modules
\[
  \widehat{V}_{i_1,\dotsc,i_n},\quad (i_1,\dotsc,i_n) \in \{1,\dotsc,\ell\}^n,
\]
form a complete set of representatives of the isomorphism classes of simple $B^{\otimes n}$-modules, up to shift.  (See, for example, \cite[Lem.~12.2.13]{Kle05}.)

For a $B^{\otimes n}$-module $V$ and $\tau \in S_n$, we define the twist $\prescript{\tau}{}{V}$ to be the $B^{\otimes n}$-module with underlying additive group $V$ and with action
\[
  (\bb,v) \mapsto (\tau^{-1} \cdot \bb)v,\quad \bb \in B^{\otimes n},\ v \in V.
\]
Then, for $\tau \in S_n$ and $(i_1,\dotsc,i_n) \in \{1,\dotsc,\ell\}^n$, we have
\begin{equation} \label{eq:simple-tau-action}
  \prescript{\tau}{}{\widehat{V}_{i_1,\dotsc,i_n}} \cong \widehat{V}_{i_{\tau^{-1}(1)},\dotsc, i_{\tau^{-1}(n)}} \qquad \text{(up to parity shift)}.
\end{equation}
\details{
  We have the vector space isomorphism
  \[
    \tau \colon \prescript{\tau}{}{(V_{i_1} \boxtimes \dotsb \boxtimes V_{i_n})} \to V_{i_{\tau^{-1}(1)}} \otimes \dotsb \otimes V_{i_{\tau^{-1}(n)}}.
  \]
  Since
  \[
    \tau( (\tau^{-1} \cdot \bb)v) = \bb(\tau \cdot v),
  \]
  this is also an isomorphism of $B^{\otimes n}$-modules.  Restricting to simple summands implies the stated result.
}

For $m \in \N_+$, let $\partition(m)$ denote the set of partitions of $m$ and let $\partition_\textup{strict}(m)$ denote the set of strict partitions of $m$ (i.e.\ partitions of $m$ with pairwise distinct parts).  Set $\partition = \bigcup_{m \in \N_+} \partition(m)$ and $\partition_\textup{strict} = \bigcup_{m \in \N_+} \partition_\textup{strict}(m)$, and let
\[
  \fP = \partition^r \times \partition_\textup{strict}^{\ell-r},\quad \fP(n) = \{(\lambda^1,\dotsc,\lambda^\ell) \in \fP \mid \textstyle \sum_{i=1}^\ell |\lambda^i| = n \},
\]
where $|\lambda|$ denotes the size of a partition (i.e.\ the sum of its parts).  We let $\ell(\lambda)$ denote the length of the partition $\lambda$ (i.e.\ the number of nonzero parts).  It is well known that irreducible representations of the symmetric group $S_m$ are enumerated by $\partition(m)$ and the simple modules of the Sergeev algebra $\bS_m$ are enumerated, up to parity shift, by $\partition_\textup{strict}(m)$.  (For the Sergeev algebra case see, for example, \cite[\S 2]{WW12}.)  For $\lambda \in \partition(m)$, let $L_\lambda$ denote the corresponding irreducible representation of $S_m$. Similarly, for each $\lambda \in \partition_\textup{strict}(m)$, let $L'_\lambda$ denote the corresponding simple module of the Sergeev algebra $\bS_m$.  Note that, if $\ell(\lambda)$ is even, then $\bS_m$ has two simple modules corresponding to $\lambda$, related by a parity shift.  We have made a choice of one of these.

Recall the definition of $\Cl$ and $\bS_m$ given in Example~\ref{eg:Sergeev-algebra}.  For $\blambda = (\lambda^1,\dotsc,\lambda^\ell) \in \fP$, define
\begin{gather*}
  V_\blambda := V_1^{\otimes |\lambda^1|} \otimes \dotsb \otimes V_\ell^{\otimes |\lambda^\ell|},\quad
  R_\blambda := \HOM_{B^{\otimes n}}(V_\blambda,V_\blambda) \cong \F^{\otimes (|\lambda^1| + \dotsb + |\lambda^r|)} \otimes \Cl^{\otimes (|\lambda^{r+1}| + \dotsb + |\lambda^n|)},\\
  S_\blambda := S_{|\lambda^1|} \times \dotsb \times S_{|\lambda^\ell|},\quad
  \fS_\blambda := \F S_{|\lambda^1|} \otimes \dotsb \otimes \F S_{|\lambda^r|} \otimes \bS_{|\lambda^{r+1}|} \otimes \dotsb \otimes \bS_{|\lambda^\ell|}.
\end{gather*}
We fix the isomorphism above and view it as equality.  Then we can naturally view $R_\blambda$ as a subalgebra of $\fS_\blambda$, and $S_\blambda$ as a subgroup of the group of units of $\fS_\blambda$.  Then we have
\begin{equation} \label{eq:RS=fS}
  R_\blambda S_\blambda = \fS_\blambda,\quad \blambda \in \fP.
\end{equation}

For $\lambda^{r+1},\dotsc,\lambda^\ell \in \partition_\textup{strict}$, we let $L'_{\lambda^{r+1},\dotsc,\lambda^\ell}$ denote a simple $\bS_{|\lambda^{r+1}|} \otimes \dotsb \otimes \bS_{|\lambda^\ell|}$-submodule of $L_{\lambda^{r+1}}' \boxtimes \dotsb \boxtimes L_{\lambda^\ell}'$.  This choice is unique up to isomorphism and parity shift.  Then, for $\blambda = (\lambda^1,\dotsc,\lambda^\ell) \in \fP$,
\[
  L_\blambda := L_{\lambda^1} \boxtimes \dotsb \boxtimes L_{\lambda^r} \boxtimes L'_{\lambda^{r+1},\dotsc,\lambda^\ell}
\]
is a simple $\fS_\blambda$-module.  In fact $L_\blambda$, $\blambda \in \fP$, is a complete list of simple $\fS_\blambda$-modules, up to isomorphism and shift.

Note that $V_\blambda$ is naturally a $(B^{\otimes n}, R_\blambda)$-bimodule.  We also have a natural action of $S_\blambda$ on $V_\blambda$ permuting the factors.  The embedding $R_\blambda \hookrightarrow \fS_\blambda$ gives rise to an action of $R_\blambda$ on $L_\blambda$.  We can thus define a $B^{\otimes n} \rtimes S_\blambda$-action on $V_\blambda \otimes_{R_\blambda} L_\blambda$ by
\begin{equation} \label{eq:Bn-blambda-action}
  (\bb \tau, \bv \otimes w) \mapsto \bb (\tau \cdot \bv) \otimes \tau w,\quad \bb \in B^{\otimes n},\ \tau \in S_\blambda,\ \bv \in V_\blambda,\ w \in L_\blambda.
\end{equation}
\details{
  Consider the map
  \begin{gather*}
    \xi \colon (B^{\otimes n} \rtimes S_\blambda) \times (V_\blambda \times L_\blambda) \to V_\blambda \otimes_{R_\blambda} L_\blambda, \\
    (\bb \tau, (\bv, w)) \mapsto \bb (\tau \cdot \bv) \otimes_{R_\blambda} \tau w,\quad \bb \in B^{\otimes n},\ \tau \in S_\blambda,\ \bv \in V_\blambda,\ w \in L_\blambda.
  \end{gather*}
  This map is linear in the $V_\blambda$ and $L_\blambda$ arguments.  Now, for $a \in R_\blambda$, $\bb \in B^{\otimes n}$, $\tau \in S_\blambda$, $\bv \in V_\blambda$, $w \in L_\blambda$, we have
  \begin{align*}
    \xi(\bb \tau, (\bv a, w))
    &= \bb(\tau \cdot (\bv a)) \otimes_{R_\blambda} \tau w \\
    &= (\bb (\tau \cdot \bv)) \tau(a) \otimes_{R_\blambda} \tau w \\
    &= (\bb (\tau \cdot \bv)) \otimes_{R_\blambda} \tau(a) \tau w \\
    &= (\bb (\tau \cdot \bv)) \otimes_{R_\blambda} \tau a w \\
    &= \xi(\bb \tau,(\bv,aw)).
  \end{align*}
  Thus, the map $\xi$ induces the map \eqref{eq:Bn-blambda-action}.  Denoting this map now by $\cdot$, we have
  \[
    (\bb_1 \tau_1) \cdot ((\bb_2 \tau_2) \cdot (\bv \otimes w)) = \bb_1 \tau_1 \cdot (\bb_2 (\tau_2 \cdot \bv)) \otimes \tau_1 \tau_2 = \bb_1 \tau_1(\bb_2) ((\tau_1 \tau_2) \cdot \bv) \otimes \tau_1 \tau_2 = (\bb_1 \tau_1(\bb_2) \tau_1 \tau_2)(\bv \otimes w).
  \]
}

\begin{lem} \label{lem:sergeev-action-big-hom-space}
  If $M$ is a $B^{\otimes n} \rtimes S_n$-module and $\blambda \in \fP(n)$, then the space $\HOM_{B^{\otimes n}} (V_\blambda, M)$ is a $\fS_\blambda$-module with action given by
  \[
    (a \tau) \cdot f = (-1)^{\bar a \bar f} \tau f \tau^{-1} \pr{a},\quad a \in R_\blambda,\ \tau \in S_\blambda,\ f \in \HOM_{B^{\otimes n}} (V_\blambda, M).
  \]
\end{lem}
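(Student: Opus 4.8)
The plan is to verify directly the axioms of a $\fS_\blambda$-supermodule for the given formula: that each operator $f \mapsto (a\tau)\cdot f$ maps $\HOM_{B^{\otimes n}}(V_\blambda,M)$ to itself, raising $\Z_2$-degree by $\bar a$; that the unit of $\fS_\blambda$ acts as the identity; and that the action is compatible with multiplication in $\fS_\blambda$. The unit axiom is immediate, since the identity permutation together with $1 \in R_\blambda$ gives $\tau f \tau^{-1} \pr{1} = f$. The content lies in the other two checks.

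For the preservation statement, I would first observe that for $a \in R_\blambda = \HOM_{B^{\otimes n}}(V_\blambda,V_\blambda)$ the map $\pr{a} \colon V_\blambda \to V_\blambda$ is a homogeneous $B^{\otimes n}$-module homomorphism of $\Z_2$-degree $\bar a$ --- a consequence of the $(B^{\otimes n},R_\blambda)$-bimodule axiom for $V_\blambda$ and the definition of $\pr{a}$. Next, for $\tau \in S_\blambda$ the superpermutation operator $\tau$ on $B^{\otimes n}$ restricts to an invertible, $(\Z\times\Z_2)$-degree-preserving linear map of $V_\blambda$ satisfying $\tau(\bb v) = (\tau\cdot\bb)(\tau v)$ for $\bb \in B^{\otimes n}$, $v \in V_\blambda$; likewise $\tau$ acts on $M$ through $S_\blambda \subseteq S_n$ and, since $M$ is an $A_n$-module, satisfies $\tau(\bb m) = (\tau\cdot\bb)(\tau m)$. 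Combining these with the fact that superpermutations define an honest $S_n$-action (so $\tau\cdot(\tau^{-1}\cdot\bb) = \bb$), a short sign computation shows that $\tau f \tau^{-1}$ is again $B^{\otimes n}$-linear, of $\Z_2$-degree $\bar f$. Hence $(a\tau)\cdot f = (-1)^{\bar a\bar f}(\tau f \tau^{-1})\circ\pr{a}$, being a composite of $B^{\otimes n}$-module homomorphisms, lies in $\HOM_{B^{\otimes n}}(V_\blambda,M)$ and is homogeneous of $\Z_2$-degree $\bar a + \bar f$, as needed.

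For compatibility with multiplication I would invoke \eqref{eq:RS=fS}: every element of $\fS_\blambda$ is a sum of products $a\tau$ with $a \in R_\blambda$, $\tau \in S_\blambda$, and the product of two such elements rewrites, using the smash-product relation $\tau_1 a_2 = (\prescript{\tau_1}{}{a_2})\tau_1$ in $\fS_\blambda$ (no extra sign, as $\tau_1$ is even), as $(a_1\tau_1)(a_2\tau_2) = a_1(\prescript{\tau_1}{}{a_2})(\tau_1\tau_2)$. Thus it suffices to prove $\big((a_1\tau_1)(a_2\tau_2)\big)\cdot f = (a_1\tau_1)\cdot\big((a_2\tau_2)\cdot f\big)$, and expanding both sides via the definition reduces this to two elementary identities among the operators $\pr{a}$ and conjugation by $\tau$: first, $\pr{b}\circ\pr{a} = (-1)^{\bar a\bar b}\pr{ab}$, which is immediate from $V_\blambda$ being a right $R_\blambda$-module; and second, the equivariance $\tau\circ\pr{a}\circ\tau^{-1} = \pr{\prescript{\tau}{}{a}}$, expressing that the superpermutation action of $S_\blambda$ on $V_\blambda$ is compatible with the factorwise action of $R_\blambda$. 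Granting these, what remains is to match powers of $-1$ on the two sides; here the sign $(-1)^{\bar a\bar f}$ built into the formula is exactly what is needed, since without it the two sides would differ by the factor $(-1)^{\bar a_1\bar a_2}$ coming from the identity $\pr{b}\circ\pr{a} = (-1)^{\bar a\bar b}\pr{ab}$.

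The main obstacle is the equivariance relation $\tau\circ\pr{a}\circ\tau^{-1} = \pr{\prescript{\tau}{}{a}}$: establishing it requires reducing to a simple transposition $\tau$ and a pure tensor $a$, and then carefully tracking the superpermutation signs on $V_\blambda$ and on $R_\blambda$ against the Koszul signs in the right $R_\blambda$-action, verifying that --- because $\tau$ is even --- they all cancel, so that the relation holds on the nose. Everything else is routine, if sign-heavy, bookkeeping.
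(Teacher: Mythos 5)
Your plan matches the paper's (omitted) proof: a direct check of the supermodule axioms, reducing to the identities $\pr{b}\circ\pr{a}=(-1)^{\bar a\bar b}\pr{ab}$ and $\tau\circ\pr{a}\circ\tau^{-1}=\pr{\prescript{\tau}{}{a}}$, with the sign $(-1)^{\bar a\bar f}$ making the associativity bookkeeping close. This is the same route the authors take; the verification is correct.
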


\begin{proof}
  The proof consists of straightforward verification and will be omitted.
  \details{
    Let $a \in R_\blambda$, $\tau \in S_\blambda$, $f \in \HOM_{B^{\otimes n}} (V_\blambda, M)$, $\bb \in B^{\otimes n}$, and $v \in V_\blambda$.  Then we have
    \begin{multline*}
      \tau f \tau^{-1} \pr{a} (\bb v)
      = (-1)^{\bar a (\bar \bb + \bar v)} \tau f \tau^{-1} ((\bb v) \cdot a) \\
      = (-1)^{\bar a (\bar \bb + \bar v)} \tau f (\tau^{-1}(\bb) \tau^{-1}(v \cdot a))
      = (-1)^{\bar a (\bar \bb + \bar v) + \bar \bb \bar f} \tau \left( \tau^{-1}(\bb) f (\tau^{-1}(v \cdot a)) \right) \\
      = (-1)^{\bar a (\bar \bb + \bar v) + \bar \bb \bar f} \bb \tau f (\tau^{-1}(v \cdot a))
      = (-1)^{\bar \bb (\bar f + \bar a)} \bb \left( \tau f  \tau^{-1} \pr{a}(v) \right).
    \end{multline*}
    Thus $(-1)^{\bar a f} \tau f \tau^{-1} \pr{a} \in \HOM_{B^{\otimes n}}(V_\blambda,M)$.

    The action is clearly linear.  Now, if $\tau' \in S_\blambda$ and $a' = R_\blambda$, we have
    \begin{align*}
      (a \tau) \cdot ((a' \tau') \cdot f)
      &= (-1)^{\bar a' \bar f} (a \tau) \cdot \left( \tau' f (\tau')^{-1} \pr{a'} \right) \\
      &= (-1)^{\bar a' \bar f + \bar a (\bar f + \bar a')} \tau \tau' f (\tau')^{-1} \pr{a'} \tau^{-1} \pr{a} \\
      &= (-1)^{\bar a' \bar f + \bar a (\bar f + \bar a')} \tau \tau' f (\tau \tau')^{-1} \pr{\tau(a')} \pr{a} \\
      &= (-1)^{(\bar a + \bar a') \bar f} (\tau \tau') f (\tau \tau')^{-1} \pr{(a \tau(a'))} \\
      &= (a \tau(a') \tau \tau') \cdot f.
    \end{align*}
    Thus the given expression does indeed define a $\fS_\blambda$-action.
  }
\end{proof}

For $\blambda \in \fP$, define
\[
  E_\blambda =
  \begin{cases}
    \Cl & \text{if $|\lambda^{r+1}| + \dotsb + |\lambda^\ell|$ is odd}, \\
    \F & \text{otherwise}.
  \end{cases}
\]
Then $E_\blambda \cong \HOM_{B^{\otimes n}}(\widehat{V}_\blambda, \widehat{V}_\blambda)$, where $\widehat{V}_\blambda$ is the simple submodule of $V_\blambda$.  Also, $E_\blambda \cong \HOM_{R_\blambda}(U_\blambda,U_\blambda) \cong \HOM_{R_\blambda}(U_\blambda^*,U_\blambda^*)$, where $U_\blambda$ is the simple $R_\blambda$-module (unique up to isomorphism and shift).  We fix such isomorphisms and view them as equalities from now on.

\begin{lem}
  For $\blambda \in \fP(n)$, we have $V_\blambda \cong \widehat{V}_\blambda \otimes_{E_\blambda} U_\blambda$ as $(B^{\otimes n}, R_\blambda)$-bimodules.
\end{lem}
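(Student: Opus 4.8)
The plan is to establish the isomorphism $V_\blambda \cong \widehat{V}_\blambda \otimes_{E_\blambda} U_\blambda$ by recognizing it as an instance of the double centralizer / tensor decomposition philosophy for semisimple algebras, applied over the (possibly super) base $E_\blambda$. First I would recall the setup: $V_\blambda$ is a $(B^{\otimes n}, R_\blambda)$-bimodule, where $R_\blambda = \HOM_{B^{\otimes n}}(V_\blambda, V_\blambda)$ by definition, and $\widehat{V}_\blambda$ is the unique (up to shift) simple $B^{\otimes n}$-submodule so that $V_\blambda$ is isotypic of type $\widehat{V}_\blambda$ as a left module. Likewise $U_\blambda$ is the unique simple $R_\blambda$-module, and $E_\blambda = \HOM_{B^{\otimes n}}(\widehat V_\blambda, \widehat V_\blambda) = \HOM_{R_\blambda}(U_\blambda, U_\blambda)$ is the common endomorphism (super)algebra, which is either $\F$ or $\Cl$ according to the parity of $|\lambda^{r+1}| + \dotsb + |\lambda^\ell|$. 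The natural map to write down is
\[
  \Phi \colon \widehat{V}_\blambda \otimes_{E_\blambda} U_\blambda \to V_\blambda,\quad v \otimes f \mapsto f(v),
\]
where we view $U_\blambda$ concretely as $\Hom_{B^{\otimes n}}(\widehat V_\blambda, V_\blambda)$ (possibly up to a shift), so that an element $f \in U_\blambda$ is literally a $B^{\otimes n}$-module map $\widehat V_\blambda \to V_\blambda$ and $f(v) \in V_\blambda$ makes sense. I would check that $\Phi$ is well-defined over the balanced tensor product (an $E_\blambda$-endomorphism can be moved across the tensor sign by the associativity of composition), that it is a map of $(B^{\otimes n}, R_\blambda)$-bimodules (left $B^{\otimes n}$-linearity because each $f$ is $B^{\otimes n}$-linear; right $R_\blambda$-linearity because $R_\blambda$ acts on $U_\blambda = \Hom_{B^{\otimes n}}(\widehat V_\blambda, V_\blambda)$ by postcomposition), and that it respects the $\Z\times\Z_2$-grading with the stated shifts.

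The bulk of the argument is then to show $\Phi$ is bijective. Since $B^{\otimes n}$ is a (graded super) semisimple algebra — being a tensor product of matrix algebras and copies of $\Cl$ over the algebraically closed field $\F$ of characteristic zero — the module $V_\blambda$ decomposes as a direct sum of copies of $\widehat V_\blambda$ and its parity shift, say $V_\blambda \cong \widehat V_\blambda^{\oplus a} \oplus (\Pi \widehat V_\blambda)^{\oplus b}$ for suitable $a, b$. I would compute both sides of the claimed isomorphism explicitly against this decomposition. On the target side this is immediate. On the source side, $U_\blambda = \Hom_{B^{\otimes n}}(\widehat V_\blambda, V_\blambda)$ becomes $E_\blambda^{\oplus a} \oplus (\text{shift of } E_\blambda)^{\oplus b}$ as an $E_\blambda$-module (using $\Hom_{B^{\otimes n}}(\widehat V_\blambda, \widehat V_\blambda) = E_\blambda$ and $\Hom_{B^{\otimes n}}(\widehat V_\blambda, \Pi \widehat V_\blambda) = \Pi E_\blambda$), and then $\widehat V_\blambda \otimes_{E_\blambda} E_\blambda^{\oplus a} \oplus \dotsb \cong \widehat V_\blambda^{\oplus a} \oplus (\Pi \widehat V_\blambda)^{\oplus b}$, matching. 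To make this rigorous without case-chasing the type $M$ / type $Q$ distinction too painfully, I would instead argue abstractly: $\Phi$ is surjective because every $v' \in V_\blambda$ lies in the image of some $B^{\otimes n}$-module map $\widehat V_\blambda \to V_\blambda$ (as $V_\blambda$ is $\widehat V_\blambda$-isotypic, pick a summand containing a vector mapping to $v'$); and $\Phi$ is injective by a dimension count, comparing graded dimensions of $\widehat V_\blambda \otimes_{E_\blambda} U_\blambda$ and $V_\blambda$ using $\dim_\F E_\blambda \cdot \dim(\widehat V_\blambda \otimes_{E_\blambda} U_\blambda) = \dim \widehat V_\blambda \cdot \dim U_\blambda$ (valid since $E_\blambda$ is $\F$ or $\Cl$, both of which are ``Frobenius'' enough that balanced tensor products have the expected size), together with $\dim U_\blambda = \dim \Hom_{B^{\otimes n}}(\widehat V_\blambda, V_\blambda)$ and the isotypic decomposition of $V_\blambda$.

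The main obstacle I anticipate is bookkeeping of the parity shifts and the $\Z$-grading shift: $\widehat V_\blambda$ and $U_\blambda$ are only defined up to shift, so the isomorphism in the statement should be read as ``for an appropriate choice of representatives'' or with an implicit shift, and one must check the $E_\blambda$-action is compatible on both factors (in the type $Q$ case $E_\blambda = \Cl$ is noncommutative as a superalgebra, so ``$\otimes_{E_\blambda}$'' genuinely uses the bimodule structure and the Koszul signs from \eqref{eq:superpermutation} must be tracked). A secondary subtlety is justifying that $B^{\otimes n}$ is semisimple in the graded super sense and that Schur's lemma gives $\HOM_{B^{\otimes n}}(\widehat V_\blambda, \widehat V_\blambda) = E_\blambda$ with $E_\blambda \in \{\F, \Cl\}$ — but this is exactly the content of the discussion preceding the lemma (and \cite[Ch.~12]{Kle05}), so I would simply cite it. Everything else is the standard ``a module over a semisimple algebra is recovered from its simple constituent tensored with the multiplicity space'' argument, carried out with super care.
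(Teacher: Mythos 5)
Your proposal is correct and takes essentially the same route as the paper's proof: the paper records that $V_\blambda$ is $\widehat V_\blambda$-isotypic with multiplicity $2^{\lfloor m/2\rfloor}$, that $\dim U_\blambda = 2^{\lfloor (m+1)/2\rfloor}$, and that $\dim E_\blambda \in \{1,2\}$, and then concludes ``by dimension considerations'' --- which is precisely the injectivity step of your argument applied to the natural evaluation map $\widehat V_\blambda \otimes_{E_\blambda} U_\blambda \to V_\blambda$ that you make explicit. You supply a bit more detail (writing out $\Phi$, surjectivity from isotypicity, and the identification $U_\blambda \cong \HOM_{B^{\otimes n}}(\widehat V_\blambda, V_\blambda)$ up to shift) than the paper does, but the underlying idea is the same.
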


\begin{proof}
  Let $m = |\lambda^{r+1}| + \dotsc + |\lambda^\ell|$.  By repeated application of \cite[Lem.~12.2.13]{Kle05}, we see that $\widehat{V}_\blambda$ and $U_\blambda$ are of type $Q$ if $m$ is odd and of type $M$ if $m$ is even.  Furthermore, $V_\blambda$ is a direct sum of $2^{\lfloor m/2 \rfloor}$ copies of $\widehat{V}_\blambda$ (up to shift) and the dimension of $U_\blambda$ is $2^{\lfloor (m+1)/2 \rfloor}$  (see, for example, \cite[p.~158]{Kle05}).  Finally, $E_\blambda$ is two dimensional when $m$ is odd and one dimensional when $m$ is even.  Thus the result follows by dimension considerations.
\end{proof}

\begin{prop} \label{prop:wreath-simples}
  The induced modules
  \[
    \Ind_{B^{\otimes n} \rtimes S_\blambda}^{B^{\otimes n} \rtimes S_n} (V_\blambda \otimes_{R_\blambda} L_\blambda),\quad \blambda \in \fP(n),
  \]
  are a complete list of simple $B^{\otimes n} \rtimes S_n$-modules up to isomorphism and shift.
\end{prop}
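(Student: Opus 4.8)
The plan is to carry out the standard Clifford theory for the crossed product algebra $A_n = B^{\otimes n} \rtimes S_n$ over its coefficient algebra $B^{\otimes n}$, taking care of the super subtleties for which the excerpt has introduced $E_\blambda$, $U_\blambda$, and $\widehat{V}_\blambda$. Let $M$ be a simple $A_n$-module. Since $A_n$ is free of finite rank over $B^{\otimes n}$, $\Res_{B^{\otimes n}} M$ is finite-dimensional and semisimple, $S_n$ permutes its $B^{\otimes n}$-isotypic components, and this action is transitive (the $A_n$-submodule generated by any one isotypic component is all of $M$). By the classification of the simple $B^{\otimes n}$-modules and the twisting formula \eqref{eq:simple-tau-action}, the orbit of isotypic components occurring in $M$ is determined, up to shift, by the multiset of indices of the constituents $\widehat{V}_{i_1,\dotsc,i_n}$; sorting the indices, this orbit has a representative isomorphic (up to shift) to $\widehat{V}_\blambda$ for a unique tuple of sizes $(|\lambda^1|,\dotsc,|\lambda^\ell|)$, and the stabilizer of $\widehat{V}_\blambda$ in $S_n$ is precisely the Young subgroup $S_\blambda$. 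Letting $M_0$ be the $\widehat{V}_\blambda$-isotypic component of $M$, the usual argument (as in \cite[Th.~A.5]{Mac80}, using that the map $\alpha$ of \cite[Th.~A.6]{RR03} is trivial here because $S_\blambda$ normalizes $R_\blambda$) shows that $M_0$ is a simple $B^{\otimes n} \rtimes S_\blambda$-module and $M \cong \Ind_{B^{\otimes n} \rtimes S_\blambda}^{A_n} M_0$; conversely $\Ind_{B^{\otimes n} \rtimes S_\blambda}^{A_n}$ sends simple $B^{\otimes n} \rtimes S_\blambda$-modules with $\widehat{V}_\blambda$-isotypic restriction to simple $A_n$-modules, injectively on isomorphism classes, and modules attached to different orbits induce to non-isomorphic $A_n$-modules.

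So the remaining task is to classify the simple $B^{\otimes n} \rtimes S_\blambda$-modules whose restriction to $B^{\otimes n}$ is $\widehat{V}_\blambda$-isotypic and to identify them with the modules $V_\blambda \otimes_{R_\blambda} L_\blambda$. Here I would use that $S_\blambda$ permutes the tensor factors of $V_\blambda$ and hence normalizes $R_\blambda = \HOM_{B^{\otimes n}}(V_\blambda, V_\blambda)$ inside $\fS_\blambda = R_\blambda S_\blambda$, so that the action \eqref{eq:Bn-blambda-action} promotes the Morita-type functor $V_\blambda \otimes_{R_\blambda} (-)$ to a functor $\fS_\blambda\md \to (B^{\otimes n} \rtimes S_\blambda)\md$. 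Using $V_\blambda \cong \widehat{V}_\blambda \otimes_{E_\blambda} U_\blambda$ from the lemma above, together with the semisimplicity of $R_\blambda$, one checks that this functor is an equivalence onto the full subcategory of $(B^{\otimes n} \rtimes S_\blambda)\md$ consisting of modules that restrict to a direct sum of shifts of $\widehat{V}_\blambda$, with quasi-inverse $\HOM_{B^{\otimes n}}(\widehat{V}_\blambda, -)$ made into an $\fS_\blambda$-module as in Lemma~\ref{lem:sergeev-action-big-hom-space} (the relation $R_\blambda S_\blambda = \fS_\blambda$ of \eqref{eq:RS=fS} is what guarantees this lands in $\fS_\blambda$-modules). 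Under this equivalence the simple objects on the source side --- namely the simple $\fS_\blambda$-modules $L_\blambda = L_{\lambda^1} \boxtimes \dotsb \boxtimes L_{\lambda^r} \boxtimes L'_{\lambda^{r+1},\dotsc,\lambda^\ell}$, as the $\lambda^i$ range over $\partition(|\lambda^i|)$ for $i \le r$ and over $\partition_\textup{strict}(|\lambda^i|)$ for $i > r$ --- correspond precisely to the sought-after simple $B^{\otimes n} \rtimes S_\blambda$-modules $V_\blambda \otimes_{R_\blambda} L_\blambda$.

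Assembling these two steps yields the asserted list: every simple $A_n$-module is, up to shift, of the form $\Ind_{B^{\otimes n} \rtimes S_\blambda}^{A_n}(V_\blambda \otimes_{R_\blambda} L_\blambda)$ for some $\blambda \in \fP(n)$, where the sizes $(|\lambda^i|)$ record the orbit and the partitions $\lambda^i$ record the simple module over the inertia subalgebra. Distinctness follows because different orbits are distinguished by the $B^{\otimes n}$-isotypic support of the restriction and, within a fixed orbit, Mackey's formula recovers $M_0 = V_\blambda \otimes_{R_\blambda} L_\blambda$ from $M$, so distinct $L_\blambda$ yield non-isomorphic inductions.

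I expect the main obstacle to be the super bookkeeping rather than the Clifford-theoretic skeleton: because the $L'_\lambda$ (and the $V_i$ of type $Q$) have non-simple outer tensor products, one must consistently descend to the simple submodules $\widehat{V}_\blambda$, $U_\blambda$, $L'_{\lambda^{r+1},\dotsc,\lambda^\ell}$ and verify that all the relevant dimensions match up --- exactly what $E_\blambda$ and the lemma above are for --- check that $V_\blambda \otimes_{R_\blambda} L_\blambda$ genuinely has $\widehat{V}_\blambda$-isotypic restriction with no hidden contributions from other orbits, and confirm that the twisted Morita functor is an equivalence (in particular essentially surjective onto the isotypic subcategory). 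Everything else --- transitivity of the $S_n$-action on isotypic components, the induction/restriction adjunction, and the Mackey computation --- is routine.
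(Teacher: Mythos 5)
Your proposal is correct and follows the same Clifford-theoretic skeleton as the paper's proof (reduce to the $\widehat{V}_\blambda$-isotypic component over the inertia subalgebra $B^{\otimes n}\rtimes S_\blambda$, then match simple $\fS_\blambda$-modules with simple isotypic $B^{\otimes n}\rtimes S_\blambda$-modules via $V_\blambda\otimes_{R_\blambda}(-)$ and $\HOM_{B^{\otimes n}}(V_\blambda,-)$); the paper verifies the ``Morita equivalence'' step you flag as the main obstacle by an explicit chain of isomorphisms $V_\blambda\otimes_{R_\blambda}\HOM_{B^{\otimes n}}(V_\blambda,N)\cong N$ using the factorization $V_\blambda\cong\widehat{V}_\blambda\otimes_{E_\blambda}U_\blambda$, rather than by citing abstract Morita theory. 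One small slip: your quasi-inverse should be $\HOM_{B^{\otimes n}}(V_\blambda,-)$, not $\HOM_{B^{\otimes n}}(\widehat{V}_\blambda,-)$ --- Lemma~\ref{lem:sergeev-action-big-hom-space} provides the $\fS_\blambda$-action on the former, whereas the latter functor is only $E_\blambda$-valued.
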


\begin{proof}
  Let $M$ be a simple $B^{\otimes n} \rtimes S_n$-module and let $U$ be a simple $B^{\otimes n}$-submodule of $M$.  It follows that $\sum_{\tau \in S_n} \tau U$ is a nonzero $B^{\otimes n} \rtimes S_n$-submodule of $M$ and hence is equal to $M$.  Note that $\tau U \cong \prescript{\tau}{}{U}$ as $B^{\otimes n}$-modules.
  \details{
    For $\bb \in B^{\otimes n}$ and $u \in U$, we have $\bb(\tau u) = \tau (\tau^{-1} \cdot \bb)u$, and hence $\tau U \cong \prescript{\tau}{}{E}$.
  }
  Thus, by \eqref{eq:simple-tau-action}, we may assume, without loss of generality, that $U = \widehat{V}_\blambda$ for some $\blambda \in \fP(n)$.

  Set $N = \sum_{\tau \in S_\blambda} \tau U$, which is a $B^{\otimes n} \rtimes S_\blambda$-module. Let $\tau_1,\dotsc,\tau_s$ be left coset representatives of $S_\blambda$ in $S_n$.  Then the $\tau_i N$ are the isotypic components of $M$ as a $B^{\otimes n}$-module, and we have
  \[
    M = \bigoplus_{i=1}^s \tau_i N = \Ind_{B^{\otimes n} \rtimes S_\blambda}^{B^{\otimes n} \rtimes S_n}(N) = (B^{\otimes n} \rtimes S_n) \otimes_{B^{\otimes n} \rtimes S_\blambda} N.
  \]
  Here we consider a simple module and its parity shift to lie in the same isotypic component.

  Let
  \[
    W = \HOM_{B^{\otimes n}}(V_\blambda,M) = \HOM_{B^{\otimes n}}(V_\blambda,N).
  \]
  Then $W$ is a $\fS_\blambda$-module by Lemma~\ref{lem:sergeev-action-big-hom-space}.  In fact, $W$ must simple, since if $W'$ is a proper $\fS_\blambda$-submodule of $W$, then $\Ind_{B^{\otimes n} \rtimes S_\blambda}^{B^{\otimes n} \rtimes S_n} (V_\blambda \otimes_{R_\blambda} W')$ is a proper $B^{\otimes n} \rtimes S_n$-submodule of $M$.  Since $V_\blambda$ depends only on the size of the partitions in $\blambda$, we may, without loss of generality, assume that $W \cong L_\blambda$.  Then $V_\blambda \otimes_{R_\blambda} W$ is a $B^{\otimes n} \rtimes S_\blambda$-module with action given by \eqref{eq:Bn-blambda-action}, and, as $B^{\otimes n} \rtimes S_n$-modules,
  \begin{align*}
    V_\blambda \otimes_{R_\blambda} W
    &= V_\blambda \otimes_{R_\blambda} \HOM_{B^{\otimes n}}(V_\blambda, N) \\
    &\cong V_\blambda \otimes_{R_\blambda} V_\blambda^* \otimes_{B^{\otimes n}} N \\
    &\cong \widehat{V}_\blambda \otimes_{E_\blambda} U_\blambda \otimes_{R_\blambda} U_\blambda^* \otimes_{E_\blambda} \left( \widehat{V}_\blambda \right)^* \otimes_{B^{\otimes n}} N \\
    &\cong \widehat{V}_\blambda \otimes_{E_\blambda} \HOM_{R_\blambda}(U_\blambda^*,U_\blambda^*) \otimes_{E_\blambda} \left( \widehat{V}_\blambda \right)^* \otimes_{B^{\otimes n}} N \\
    &\cong \widehat{V}_\blambda \otimes_{E_\blambda} E_\blambda \otimes_{E_\blambda} \left( \widehat{V}_\blambda \right)^* \otimes_{B^{\otimes n}} N \\
    &\cong \widehat{V}_\blambda \otimes_{E_\blambda} \left( \widehat{V}_\blambda \right)^* \otimes_{B^{\otimes n}} M \\
    &\cong \HOM_{E_\blambda}(\widehat{V}_\blambda , \widehat{V}_\blambda) \otimes_{B^{\otimes n}} N \\
    &\cong \left( B^{\otimes n}/\Ann_{B^{\otimes n}}(\widehat{V}_\blambda) \right) \otimes_{B^{\otimes n}} N \\
    &\cong N,
  \end{align*}
  where $\Ann_{B^{\otimes n}}(\widehat{V}_\blambda)$ is the annihilator of $\widehat{V}_\blambda$ in $B^{\otimes n}$ and, in the last isomorphism, we use the fact that $N$ is the $\widehat{V}_\blambda$-isotypic component of $M$.

  It follows that every simple $B^{\otimes n} \rtimes S_n$-module is of the form given in the lemma.  Uniqueness follows immediately from the above, since $\blambda$ was uniquely determined by $M$.  No two choices of $\blambda$ yield isomorphic $B^{\otimes n} \rtimes S_n$-modules, as can be seen from the decomposition as $B^{\otimes n}$-modules.
\end{proof}

For $i=1,\dotsc,\ell$, let $P_i$ be the projective cover of the $B^{\otimes n}$-module $V_i$ and, for $\blambda = (\lambda^1,\dotsc,\lambda^\ell) \in \fP$, define
\[
  P_\blambda := P_1^{\otimes |\lambda^1|} \otimes \dotsb \otimes P_\ell^{\otimes |\lambda^\ell|}.
\]
Since any $B^{\otimes n}$-endomorphism of $V_\blambda$ induces a $B^{\otimes n}$-endomorphism of $P_\blambda$, we have that $P_\blambda$ is naturally a $(B^{\otimes n},R_\blambda)$-bimodule.  Then $P_\blambda \otimes_{R_\blambda} L_\blambda$ carries a $B^{\otimes n} \rtimes S_\blambda$-action as in \eqref{eq:Bn-blambda-action}.

\begin{prop} \label{prop:induced-projectives}
  The
  \[
    \Ind_{B^{\otimes n} \rtimes S_\blambda}^{B^{\otimes n} \rtimes S_n} (P_\blambda \otimes_{R_\blambda} L_\blambda),\quad \blambda \in \fP(n),
  \]
  are a complete list of indecomposable projective $B^{\otimes n} \rtimes S_n$-modules up to isomorphism and shift.
\end{prop}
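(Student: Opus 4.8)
The plan is to deduce this classification of indecomposable projectives from the classification of simple modules in Proposition~\ref{prop:wreath-simples} by a standard projective-cover argument, using the fact that $\F$ has characteristic zero so that all the algebras in sight are finite-dimensional (in each graded degree) and the usual theory of projective covers applies. Concretely, I would first show that $M_\blambda := \Ind_{B^{\otimes n} \rtimes S_\blambda}^{B^{\otimes n} \rtimes S_n} (P_\blambda \otimes_{R_\blambda} L_\blambda)$ is projective: since $P_\blambda$ is a projective $B^{\otimes n}$-module and $B^{\otimes n} \rtimes S_\blambda$ is free (hence faithfully flat) over $B^{\otimes n}$, the module $P_\blambda \otimes_{R_\blambda} L_\blambda$ is a projective $B^{\otimes n} \rtimes S_\blambda$-module (it is a summand of $(B^{\otimes n} \rtimes S_\blambda) \otimes_{B^{\otimes n}} P_\blambda$ because $R_\blambda$ is a semisimple subalgebra of $\fS_\blambda = R_\blambda S_\blambda$ with $L_\blambda$ a summand of $\fS_\blambda \otimes_{R_\blambda} U_\blambda$-type modules, using \eqref{eq:RS=fS}); and induction $\Ind_{B^{\otimes n} \rtimes S_\blambda}^{B^{\otimes n} \rtimes S_n}$ takes projectives to projectives since $B^{\otimes n} \rtimes S_n$ is free over $B^{\otimes n} \rtimes S_\blambda$ (using that $\F[S_n]$ is free over $\F[S_\blambda]$).

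Next I would show $M_\blambda$ is indecomposable with simple head isomorphic (up to shift) to $\Ind_{B^{\otimes n} \rtimes S_\blambda}^{B^{\otimes n} \rtimes S_n}(V_\blambda \otimes_{R_\blambda} L_\blambda)$. The cleanest route is to identify $M_\blambda$ as the projective cover of the latter simple module. Since $P_\blambda \onto V_\blambda$ is a projective cover of $B^{\otimes n}$-modules and this surjection is $R_\blambda$-equivariant, applying $- \otimes_{R_\blambda} L_\blambda$ and then $\Ind$ gives a surjection $M_\blambda \onto \Ind(V_\blambda \otimes_{R_\blambda} L_\blambda)$; I would argue its kernel lies in the radical by using that $\rad(P_\blambda) = (\rad B^{\otimes n}) P_\blambda$ is $R_\blambda$-stable and that radicals are preserved under induction along the free extension $B^{\otimes n} \rtimes S_\blambda \subseteq B^{\otimes n} \rtimes S_n$ (equivalently, $\Ind$ of a projective cover is a projective cover, since $\Ind$ is exact and sends projectives to projectives and, being left adjoint to the exact restriction functor which preserves semisimplicity appropriately, sends superfluous submodules to superfluous ones). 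Then by the uniqueness of projective covers and Proposition~\ref{prop:wreath-simples}, the $M_\blambda$ for $\blambda \in \fP(n)$ are pairwise non-isomorphic indecomposable projectives, one for each simple module, hence a complete irredundant list.

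The main obstacle I anticipate is the super-module bookkeeping: one must be careful that ``projective cover'' and ``head'' behave well in the graded-super setting, where a simple module of type $Q$ and its parity shift must be regarded as belonging to the same block of the classification (exactly as in the statement and proof of Proposition~\ref{prop:wreath-simples}), and that the twisting data $E_\blambda$ (the Clifford-algebra factor when $|\lambda^{r+1}| + \dotsb + |\lambda^\ell|$ is odd) is carried correctly through the isomorphisms $V_\blambda \cong \widehat{V}_\blambda \otimes_{E_\blambda} U_\blambda$ and its projective analogue $P_\blambda \cong \widehat{P}_\blambda \otimes_{E_\blambda} U_\blambda$, where $\widehat{P}_\blambda$ is the projective cover of $\widehat{V}_\blambda$. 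Granting the preparatory lemmas already established (Lemma~\ref{lem:decom}, \eqref{eq:RS=fS}, and the simple-module classification), the argument is essentially formal; the only real content is verifying that the chain of isomorphisms in the proof of Proposition~\ref{prop:wreath-simples} lifts verbatim with $\widehat{V}_\blambda$ replaced by $\widehat{P}_\blambda$ and $V_\blambda$ by $P_\blambda$, which goes through because all the Hom-space and bimodule identifications used there only relied on $\widehat{V}_\blambda$ being simple with endomorphism algebra $E_\blambda$ and $V_\blambda$ being a sum of copies of $\widehat{V}_\blambda \otimes_{E_\blambda} U_\blambda$ — properties whose projective counterparts hold for $\widehat{P}_\blambda$ and $P_\blambda$.
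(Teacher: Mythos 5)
Your proof follows essentially the same strategy as the paper: show that $\Ind_{B^{\otimes n} \rtimes S_\blambda}^{B^{\otimes n} \rtimes S_n}(P_\blambda \otimes_{R_\blambda} L_\blambda)$ is the projective cover of the simple module $\Ind_{B^{\otimes n} \rtimes S_\blambda}^{B^{\otimes n} \rtimes S_n}(V_\blambda \otimes_{R_\blambda} L_\blambda)$ of Proposition~\ref{prop:wreath-simples}, by tracking projectivity and superfluous kernels first through $- \otimes_{R_\blambda} L_\blambda$ and then through induction (which preserves both because $\F[S_n]$ is free over $\F[S_\blambda]$ and $\operatorname{char}\F = 0$ forces $\rad(B^{\otimes n}\rtimes S_\blambda) = \rad(B^{\otimes n})\rtimes S_\blambda$), and conclude from the classification of simples. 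The one point where you argue differently is the projectivity of $P_\blambda \otimes_{R_\blambda} L_\blambda$ over $B^{\otimes n}\rtimes S_\blambda$: the paper uses a Maschke-style averaging argument following \cite[(A.3)]{Mac80} (a smash-product module that is projective over $B^{\otimes n}$ is projective over the smash product), whereas you realize it as a direct summand of the induced projective $(B^{\otimes n}\rtimes S_\blambda)\otimes_{B^{\otimes n}} P_\blambda$ via \eqref{eq:RS=fS} and semisimplicity of $R_\blambda$; both are correct and comparably short, so this is a cosmetic rather than structural difference. Your closing remark about re-lifting the chain of isomorphisms from the proof of Proposition~\ref{prop:wreath-simples} with $\widehat P_\blambda$ in place of $\widehat V_\blambda$ is unnecessary for this argument and the paper does not do it; once the projective-cover reduction is in place, the classification follows directly.
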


\begin{proof}
  First note that if $P'_j$ is the projective cover of a simple module $V'_j$ for an algebra $B_j$ for $j=1,2$, then $P'_1 \boxtimes P'_2$ is the projective cover of $V'_1 \boxtimes V'_2$ in the category of $B_1\otimes B_2$-modules.
  \details{
    We have short exact sequences
    \begin{gather*}
      0\to K_1\to P'_1\xrightarrow{p_1} V'_1\to 0, \\
      0\to K_2\to P'_2\xrightarrow{p_2} V'_2\to 0,
    \end{gather*}
    where $K_1$ and $K_2$ are superfluous submodules of $P'_1$ and $P'_2$ respectively.  In particular, for any submodule $M_j\subseteq P'_j$, either $M_j\subseteq K_j$ or $M_j=P'_j$, for $j=1,2$.  We then obtain another short exact sequence
    \[
      0\to K_1\boxtimes P'_2+P'_1\boxtimes K_2 \to P'_1\boxtimes P'_2 \xrightarrow{p_1\boxtimes p_2} V'_1\boxtimes V'_2\to 0.
    \]
    It is clear that $K_1\boxtimes P'_2 + P'_1 \boxtimes K_2 \subseteq \ker(p_1\boxtimes p_2)$, but since these are all vector spaces over $\mathbb{F}$, we have equality by a dimension count.  Now suppose that $M \subseteq P'_1\boxtimes P'_2$ is a submodule, then $M=\sum_r M_1^r\boxtimes M_2^r$ for some finite sets of submodules $M_j^r\subseteq P'_j$ for $j=1,2$.  If $M + K_1\boxtimes P'_2+P'_1\boxtimes K_2=P'_1\boxtimes P'_2$, then there is an $r_0$ such that $M_j^{r_0} \not \subseteq K_j$ for $j=1,2$. But this then implies that $M_j^{r_0}=P'_j$ for $j=1,2$, therefore $M=P'_1\boxtimes P'_2$. This proves that $K_1\boxtimes P'_2+P'_1\boxtimes K_2$ is a superfluous submodule.
  }
  This implies that, for each $\blambda \in \fP(n)$, $P_\blambda$ is the projective cover of $V_\blambda$ in the category of $B^{\otimes n}$-modules.  Let $p_\blambda \colon P_\blambda \twoheadrightarrow V_\blambda$ be the corresponding projection.

  It follows from the argument of \cite[(A.3)]{Mac80} that $P_\blambda \otimes_{R_\blambda} L_\blambda$ is projective as a $B^{\otimes n} \rtimes S_\blambda$-module.
  \details{
    More generally, if $P$ is a $B^{\otimes n} \rtimes S_n$-module that is projective as a $B^{\otimes n}$-module, then it is projective as a $B^{\otimes n} \rtimes S_n$-module.  Indeed, choosing a set of generators of $P$, we obtain an exact sequence
    \[
      0 \to Q \xrightarrow{\alpha} F \xrightarrow{\beta} P \to 0
    \]
    of $B^{\otimes n} \rtimes S_n$-modules, where $F$ is a free $B^{\otimes n} \rtimes S_n$-module.  Since $P$ is projective as an $B^{\otimes n}$-module, there exists a $B^{\otimes n}$-module homomorphism $\gamma \colon P \to F$ such that $\beta \gamma = \id_P$.  Define
    \[
      \gamma^* = \frac{1}{n!} \sum_{\tau \in S_n} \tau \gamma \tau^{-1}.
    \]
    Then, for $\sum_{\nu \in S_n} a_\nu \nu \in B^{\otimes n} \rtimes S_n$ and $x \in P$, we have
    \begin{gather*}
      \gamma^* \left( \sum_{\nu \in S_n} a_\nu \nu \cdot x \right)
      = \frac{1}{n!} \sum_{\tau,\nu \in S_n} \tau \gamma \tau^{-1} (a_\nu \nu \cdot x)
      = \frac{1}{n!} \sum_{\tau,\nu \in S_n} \tau \gamma ((\tau^{-1} \cdot a_\nu) (\tau^{-1} \nu) \cdot x) \\
      = \frac{1}{n!} \sum_{\tau,\nu \in S_n} \tau (\tau^{-1} \cdot a_\nu) \gamma ((\tau^{-1} \nu) \cdot x)
      = \frac{1}{n!} \sum_{\tau,\nu \in S_n} a_\nu \tau \gamma ((\tau^{-1} \nu) \cdot x) \\
      = \frac{1}{n!} \sum_{\tau,\nu \in S_n} a_\nu \nu \tau \gamma (\tau^{-1} \cdot x)
      = \left( \sum_{\nu \in S_n} a_\nu \nu \right) \cdot \gamma^*(x),
    \end{gather*}
    where in the second-to-last equality we replaced $\tau$ with $\nu \tau$ (which leaves the sum unchanged).
  }
  Since the kernel of $p_\blambda$ is a superflouous submodule of $P_\blambda$, we know that the kernel of $p_\blambda \otimes \id_{L_\blambda} \colon P_\blambda \otimes_{R_\blambda} L_\blambda \twoheadrightarrow V_\blambda \otimes_{R_\blambda} L_\blambda$ is a superfluous submodule of $P_\blambda \otimes_{R_\blambda} L_\blambda$. Hence $P_\blambda \otimes_{R_\blambda} L_\blambda$ is the projective cover of $V_\blambda \otimes_{R_\blambda} L_\blambda$ as $B^{\otimes n} \rtimes S_\blambda$-modules.

  Since $\F S_n$ is free as a right $\F S_\blambda$-module, we have that $\Ind_{B^{\otimes n} \rtimes S_\blambda}^{B^{\otimes n} \rtimes S_n} (P_\blambda \otimes_{R_\blambda} L_\blambda)$ is projective as a $B^{\otimes n} \rtimes S_n$-module.  In addition, the kernel of the map
  \[
    \id_{B^{\otimes n} \rtimes S_n} \otimes (p_\blambda \otimes \id_{L_\blambda}) \colon (B^{\otimes n} \rtimes S_n) \otimes_{B^{\otimes n} \rtimes S_\blambda} (P_\blambda \otimes_{R_\blambda} L_\blambda) \twoheadrightarrow (B^{\otimes n} \rtimes S_n) \otimes_{B^{\otimes n} \rtimes S_\blambda} (V_\blambda \otimes_{R_\blambda} L_\blambda)
  \]
  is superfluous, and so $\Ind_{B^{\otimes n} \rtimes S_\blambda}^{B^{\otimes n} \rtimes S_n}(P_\blambda \otimes_{R_\blambda} L_\blambda)$ is the projective cover of $\Ind_{B^{\otimes n} \rtimes S_\blambda}^{B^{\otimes n} \rtimes S_n}(V_\blambda \otimes_{R_\blambda} L_\blambda)$.  Thus, the result follows from Proposition~\ref{prop:wreath-simples}.
\end{proof}

We can also describe the indecomposable projectives of Proposition \ref{prop:induced-projectives} in terms of minimal idempotents.  For $i \in \{1,\dotsc,\ell\}$ and $\blambda \in \fP(n)$, choose minimal idempotents $e_i \in B$ and $e_\blambda' \in \fS_\blambda$ such that $P_i \cong Be_i$ and $L_\blambda \cong \fS_\blambda e_\blambda'$.  Then set
\begin{equation} \label{eq:e-blambda-def}
  e_\blambda = \left( e_1^{\otimes |\lambda^1|} \otimes \dotsb \otimes e_\ell^{\otimes |\lambda^\ell|} \right) e_\blambda'.
\end{equation}
Now, as $B^{\otimes n} \rtimes S_\blambda$-modules, we have
\begin{multline*}
  (B^{\otimes n} \otimes_\F \F S_\blambda)e_\blambda
  = \left( (B e_1)^{\otimes |\lambda^1|} \otimes \dotsb \otimes (B e_\ell)^{\otimes |\lambda^\ell|} \otimes_\F \F S_\blambda \right) e_\blambda'
  \cong \left( P_\blambda \otimes_{R_\blambda} R_\blambda \otimes_\F \F S_\blambda \right) e_\blambda' \\
  \cong \left( P_\blambda \otimes_{R_\blambda} \fS_\blambda \right) e_\blambda'
  \cong P_\blambda \otimes_{R_\blambda} L_\blambda.
\end{multline*}
Therefore
\begin{equation} \label{eq:wreath-module-idemponent}
  (B^{\otimes n}\rtimes S_n) e_\blambda
  \cong (B^{\otimes n}\rtimes S_n)\otimes_{B^{\otimes n}\rtimes S_{\blambda}}(B^{\otimes n} \otimes_\F \F S_{\blambda}) e_\blambda
  \cong  \Ind_{B^{\otimes n} \rtimes S_\blambda}^{B^{\otimes n} \rtimes S_n} (P_\blambda \otimes_{R_\blambda} L_\blambda).
\end{equation}

%%%%%%%%%%%%%%%%%%%%%%%%%%%%%%%%%%%%%%%%
\section{The Heisenberg algebra $\fh_B$} \label{sec:Heis-alg}
%%%%%%%%%%%%%%%%%%%%%%%%%%%%%%%%%%%%%%%%

In this section, we introduce the Heisenberg algebra $\fh_B$ that we will categorify in the remainder of the paper.  Our definition is in the spirit of the concept of a Heisenberg double.   We deduce in Proposition~\ref{prop:hB-presentation} an explicit presentation of $\fh_B$.  The reader not familiar with the general Heisenberg double construction can simply take this concrete presentation (which implicitly involves \eqref{eq:Q-even-reduction} and \eqref{eq:P-even-reduction}) as a definition.  Our reason for the more abstract definition of $\fh_B$ is that the Heisenberg double point of view allows one to deduce a uniform presentation that holds for all choices of $B$.  It also makes it clear how one could choose other generating sets, yielding other presentations of $\fh_B$.  As mentioned in the introduction, this in contrast to the approach in other places in the literature, where presentations are deduced on a case-by-case basis using generating function techniques.

%%%%%%%%%%%%%%%%%%%%%%%%%%%%%%%%%%%
\subsection{The Heisenberg algebra}
%%%%%%%%%%%%%%%%%%%%%%%%%%%%%%%%%%%

For $n \in \N$, let $K_0'(A_n)$ be the split Grothendieck group of the category $A_n\pmd$ of finitely generated projective $A_n$-modules.  More precisely, $K_0'(A_n)$ is defined to be the quotient of the free $\Z$-module with generators corresponding to finitely-generated projective $A_n$-modules, by the $\Z$-submodule generated by $M_1-M_2+M_3$ for all $M_1,M_2,M_3 \in A_n\pmd$ such that $M_2 \cong M_1 \oplus M_3$.  We denote the class of an object $M \in A_n\pmd$ in $K_0'(A_n)$ by $[M]$.

The split Grothendieck group $K_0'(A_n)$ is naturally a module over
\[
  \Z_{q,\pi} = \Z[q,q^{-1},\pi]/(\pi^2-1),
\]
where
\[
  \pi [M] := [\Pi M],\quad q^n[M] := [\{n\}M],\quad \text{for all } M \in B\pmd,\ n \in \Z.
\]
Let
\begin{equation} \label{eq:k-def}
  \kk =
  \begin{cases}
    \Z_{q,\pi} & \text{if all simple left $B$-modules are of type $M$}, \\
    \Z[\frac{1}{2},q,q^{-1}] & \text{if $B$ has a simple left module of type $Q$}.
  \end{cases}
\end{equation}
We will identify $\Z[\frac{1}{2},q,q^{-1}]$ with $\Z_{q,\pi}[\frac{1}{2}]/(\pi-1)$.  Thus, any $\kk$-module is naturally a $\Z_{q,\pi}$-module and $\pi$ acts as the identity in the case that $\kk = \Z[\frac{1}{2},q,q^{-1}]$.  Then, for $n \in \N$, define
\[
  K_0(A_n) = K_0'(A_n) \otimes_{\Z_{q,\pi}} \kk.
\]
We set
\[
  H^\pm = \bigoplus_{n \in \N} K_0(A_n),
\]
except that we twist the action of $\kk$ on $H^-$ by the involution of $\kk$ mapping $q$ to $q^{-1}$ and fixing $\pi$. In other words, we have $q^n[M] = [\{-n\}M]$ for $[M] \in H^-$.

It is shown in \cite[\S7]{RS15a} that $H^+$ and $H^-$ are Hopf algebras with product and coproduct
\begin{gather*}
  [M] \cdot [N] = \left[ \Ind_{A_m \otimes A_n}^{A_{m+n}}(M \boxtimes N) \right],\quad M \in A_m\pmd,\ N \in A_n\pmd, \\
  \nabla([M]) = \sum_{n=0}^m \left[ \Res^{A_m}_{A_n \otimes A_{m-n}}(M) \right],\quad M \in A_m\pmd.
\end{gather*}

For a $\Z \times \Z_2$-graded vector space $V$, define its graded dimension by
\begin{equation}
  \grdim V := \sum_{\substack{n \in \Z \\ \epsilon \in \Z_2}} q^n \pi^\epsilon \dim V_{n,\epsilon} \in \Z_{q,\pi}.
\end{equation}
We have a $\Z_{q,\pi}$-bilinear pairing $\langle -, - \rangle \colon H^- \otimes H^+ \to \Z_{q,\pi}$ given by
\begin{equation} \label{eq:bilinear-form}
  \langle [M], [N] \rangle =
  \begin{cases}
    \grdim \HOM_{A_n}(M,N) & \text{if } M,N \in A_n\pmd \text{ for some } n \in \N, \\
    0 & \text{otherwise}.
  \end{cases}
\end{equation}

\begin{defin}[Heisenberg algebra $\fh_B$] \label{def:hB}
  We define $\fh_B = H^+ \# H^-$ to be the $\kk$-algebra isomorphic to $H^+ \otimes_\kk H^-$ as a $\kk$-module, such that $H^\pm$ are subalgebras and where the product of elements of $H^-$ with elements of $H^+$ is given as follows: For $[M] \in H^-$ and $[N] \in H^+$, where $M \in A_m\pmd$, $N \in A_n\pmd$, $m,n \in \N$, we have
  \begin{gather*}
    [M] [N] = \sum_{k,\ell} \left\langle [M^1_k], [N^2_\ell] \right\rangle [N^1_\ell] [M^2_k],\quad \text{where} \\
    \Delta([M]) = \sum_k [M^1_k \boxtimes M^2_k] \quad \text{and} \quad \Delta([N]) = \sum_\ell [N^1_\ell \boxtimes N^2_\ell].
  \end{gather*}
  For $a \in H^+$ and $x \in H^-$, we denote the element $a \otimes x$ of $\fh_B$ by $a \# x$, or sometimes simply by $ax$.
\end{defin}

\begin{rem}[Relation to the Heisenberg double] \label{rem:Heis-double}
  The algebra $\fh_B$ is the projective Heisenberg double $\fh_\proj(A)$ associated to the tower of algebras $A = \bigoplus_{n \in \N} A_n$ in \cite[Def.~5.4]{RS15a}.  The fact that $A$ satisfies the hypotheses of \cite[Def.~5.4]{RS15a}, namely that the tower $A$ fulfills the axioms of \cite[Def.~4.1]{RS15a}, is proved in \cite[\S 7]{RS15a}.  We also use the fact that $H^+$, as defined here, is isomorphic, as a Hopf algebra, to the $H^+_\proj$ of \cite{RS15a}.  The adjective \emph{twisted} of \cite[Def.~5.4]{RS15a} is not needed in the current setting since the twisting is trivial for the tower of wreath product algebras (see \cite[Prop.~7.3]{RS15a}).  If we extend the ground ring to a field, then $\fh_B$ is the Heisenberg double of $H^+$.  This will allow us to use certain properties of the Heisenberg double, such as the faithfulness of the Fock space representation, in our proofs.
\end{rem}

%%%%%%%%%%%%%%%%%%%%%%%%%%
\subsection{Presentations}
%%%%%%%%%%%%%%%%%%%%%%%%%%

For $(i,\lambda) \in \{1,\dotsc,r\} \times \partition$ or $(i,\lambda) \in \{r+1,\dotsc,\ell\} \times \partition_\textup{strict}$, define
\[
  \lambda \langle i \rangle = (0,\dotsc,0,\lambda,0,\dotsc,0) \in \fP,
\]
where the $\lambda$ appears in the $i$-th position.  Then let
\[
  P_i^\lambda := [P_i^{\otimes |\lambda|} \otimes_{R_{\lambda \langle i \rangle}} L_{\lambda \langle i \rangle}] \in K_0(A_{|\lambda|}) \subseteq H^+,\quad
  Q_i^\lambda := [P_i^{\otimes |\lambda|} \otimes_{R_{\lambda \langle i \rangle}} L_{\lambda \langle i \rangle}] \in K_0(A_{|\lambda|}) \subseteq H^-.
\]
By \eqref{eq:wreath-module-idemponent}, we have an isomorphism of $A_n$-modules
\[
  P_i ^{\otimes |\lambda|} \otimes_{R_{\lambda \langle i \rangle}} L_{\lambda \langle i \rangle} \cong A_n e_{\lambda \langle i \rangle}.
\]

For the Sergeev algebra $\mathbb{S}_n$, the minimal idempotents (up to a constant) are described in \cite[Cor.~3.3.4]{Ser99}. In particular, for the strict partition $(n)$, the idempotent is just the complete symmetrizer $e_{(n)} = \frac{1}{n!}\sum_{\tau \in S_n} \tau$, as explained in \cite[\S3.3.5]{Ser99}.  Now, if $e_{(1^n)} = \frac{1}{n!}\sum_{\tau \in S_n} (-1)^{\ell(\tau)} \tau$ is the complete antisymmetrizer, then the map $\pr{(c^{\otimes n})}$ gives an isomorphism $\bS_n e_{(n)} \cong \{0,n\} \bS_n e_{(1^n)}$.  Thus, even though the partition $(1^n)$ is not strict for $n > 1$, we define
\begin{equation} \label{eq:1n-type-Q-def}
  P_i^{(1^n)} := P_i^{(n)},\quad Q_i^{(1^n)} := Q_i^{(n)},\quad i \in \{r+1,\dotsc,\ell\},\ n \in \N_+.
\end{equation}

For $i \in \{1,\dotsc,\ell\}$, define
\begin{equation}
  N_i :=
  \begin{cases}
    \N_+, & 1 \le i \le r, \\
    2\N + 1, & r < i \le n.
  \end{cases}
\end{equation}

\begin{lem} \label{lem:Heis-double-generators}
  The subalgebra $H^+$ of $\fh_B$ is isomorphic, as an algebra, to the polynomial algebras
  \[
    \kk [P_i^{(n)} \mid i \in \{1,\dotsc,\ell\},\ n \in N_i] \quad \text{and} \quad
    \kk [P_i^{(1^n)} \mid i \in \{1,\dotsc,\ell\},\ n \in N_i].
  \]
  Similarly, the subalgebra $H^-$ of $\fh_B$ is isomorphic, as an algebra, to the polynomial algebras
  \[
    \kk [Q_i^{(n)} \mid i \in \{1,\dotsc,\ell\},\ n \in N_i] \quad \text{and} \quad
    \kk [Q_i^{(1^n)} \mid i \in \{1,\dotsc,\ell\},\ n \in N_i].
  \]
  In particular, $\fh_B$ is generated by the following sets:
  \begin{itemize}
    \item $P_i^{(n)}$, $Q_i^{(n)}$, $i \in \{1,\dotsc,\ell\}$, $n \in N_i$,
    \item $P_i^{(1^n)}$, $Q_i^{(1^n)}$, $i \in \{1,\dotsc,\ell\}$, $n \in N_i$,
    \item $P_i^{(n)}$, $Q_i^{(1^n)}$, $i \in \{1,\dotsc,\ell\}$, $n \in N_i$,
    \item $P_i^{(1^n)}$, $Q_i^{(n)}$, $i \in \{1,\dotsc,\ell\}$, $n \in N_i$.
  \end{itemize}
\end{lem}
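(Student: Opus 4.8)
\medskip

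The plan is to reduce the statement to the classical descriptions of the ring of symmetric functions and the ring of Schur $Q$-functions, using the classification of indecomposable projectives from Proposition~\ref{prop:induced-projectives}. First I would record that, by Proposition~\ref{prop:induced-projectives} and the idempotent description \eqref{eq:wreath-module-idemponent}, $H^+$ is free over $\kk$ with basis $\{[A_n e_\blambda] : \blambda \in \fP\}$, that the product on $H^+$ is induction, and (as part of the Hopf-algebra structure established in \cite{RS15a}; see Remark~\ref{rem:Heis-double}) that $H^+$ is commutative.

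Next I would establish a tensor-product decomposition indexed by the simple $B$-modules. For $i \in \{1,\dotsc,\ell\}$, let $H^+_i \subseteq H^+$ be the $\kk$-span of the $P_i^\lambda$, with $\lambda$ ranging over $\partition$ if $i \le r$ and over $\partition_\textup{strict}$ if $i > r$. The key point is that, for $\blambda = (\lambda^1,\dotsc,\lambda^\ell) \in \fP(n)$, transitivity of induction together with the definitions of $P_\blambda$, $R_\blambda$, $S_\blambda$ and $L_\blambda$ gives
\[
  P_1^{\lambda^1} P_2^{\lambda^2} \dotsm P_\ell^{\lambda^\ell}
  = u_\blambda \left[ \Ind_{B^{\otimes n} \rtimes S_\blambda}^{A_n} \left( P_\blambda \otimes_{R_\blambda} L_\blambda \right) \right]
  = u_\blambda \left[ A_n e_\blambda \right]
\]
for a scalar $u_\blambda \in \kk^\times$. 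Indeed, the only discrepancy between the left-hand side and the basis element $[A_n e_\blambda]$ is that the former uses the full outer tensor product $L'_{\lambda^{r+1}} \boxtimes \dotsb \boxtimes L'_{\lambda^\ell}$ in place of its simple submodule $L'_{\lambda^{r+1},\dotsc,\lambda^\ell}$ (together with the attendant $E_\blambda$-bookkeeping); this contributes a power of $2$, which is invertible precisely because $\tfrac12 \in \kk$ whenever $B$ has a simple module of type $Q$. Hence $\{P_1^{\lambda^1} \dotsm P_\ell^{\lambda^\ell}\}_{\blambda \in \fP}$ is again a $\kk$-basis of $H^+$; since $H^+$ is commutative this shows that multiplication is an isomorphism of $\kk$-algebras $H^+_1 \otimes_\kk \dotsb \otimes_\kk H^+_\ell \xrightarrow{\ \sim\ } H^+$, and that each $H^+_i$ is free over $\kk$ with basis $\{P_i^\lambda\}_\lambda$.

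It then remains to identify each factor. For $i \le r$ (type $M$), the $B$-module tensor factors $P_i^{\otimes(-)}$ and the endomorphism rings $R_{\lambda\langle i\rangle} = \F$ play no role, so the structure constants of the product on $H^+_i$ are the Littlewood--Richardson coefficients; thus the Frobenius characteristic gives an algebra isomorphism $H^+_i \cong \Sym \otimes_\Z \kk$ under which $P_i^{(n)} \leftrightarrow h_n$ (coming from the trivial module $L_{(n)}$) and $P_i^{(1^n)} \leftrightarrow e_n$ (coming from the sign module $L_{(1^n)}$). Since $\Sym = \Z[h_1, h_2, \dotsc] = \Z[e_1, e_2, \dotsc]$, both $\{P_i^{(n)} : n \in N_i\}$ and $\{P_i^{(1^n)} : n \in N_i\}$ are polynomial generating sets for $H^+_i$. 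For $i > r$ (type $Q$), an analogous analysis --- now with Clifford-algebra endomorphism rings, and with $P_i^{(1^n)} := P_i^{(n)}$ by \eqref{eq:1n-type-Q-def} --- identifies $H^+_i$ with $\Gamma \otimes_\Z \kk$, where $\Gamma$ is the ring of Schur $Q$-functions (equivalently, the Grothendieck ring of the tower of Sergeev algebras), with $P_i^{(n)} = P_i^{(1^n)} \leftrightarrow q_n = Q_{(n)}$; the classical fact that $\Gamma \otimes_\Z \Z[\tfrac12] = \Z[\tfrac12][q_1, q_3, q_5, \dotsc]$ shows that $\{P_i^{(n)} : n \in N_i\}$ is a polynomial generating set. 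Tensoring these identifications over $i$ yields both polynomial presentations of $H^+$, and the case of $H^-$ is entirely analogous (the twist $q \mapsto q^{-1}$ in the $\kk$-action being immaterial). Finally, the generation statements for $\fh_B$ are immediate from Definition~\ref{def:hB}: since $\fh_B = H^+ \# H^-$ with $H^\pm$ subalgebras, combining either of the two presentations of $H^+$ with either of the two presentations of $H^-$ produces the four listed generating sets.

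The step I expect to be the main obstacle is the tensor decomposition --- concretely, verifying the displayed identity and the invertibility of $u_\blambda$, i.e.\ that inducing an outer tensor product of modules supported on distinct simples $V_i$ really lands, up to a unit of $\kk$, on a single basis element $[A_n e_\blambda]$. This is precisely where the type-$Q$ subtleties (the simple submodules $L'_{\ldots}$, the rings $E_\blambda$, and the resulting powers of $2$) and the hypothesis $\tfrac12 \in \kk$ are used; everything downstream of it is classical symmetric-function theory.
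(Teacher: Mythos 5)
Your proposal is correct and follows essentially the same route as the paper: commutativity of $H^+$ plus the basis from Proposition~\ref{prop:induced-projectives}, the tensor decomposition $H^+ \cong H^+_1 \otimes \dotsb \otimes H^+_\ell$ (with the power-of-$2$ bookkeeping handled by $\tfrac12 \in \kk$ when a type-$Q$ simple occurs), and identification of each factor with $\Sym$ (via Littlewood--Richardson coefficients, $P_i^{(n)} \leftrightarrow h_n$, $P_i^{(1^n)} \leftrightarrow e_n$) for $i \le r$ or with the ring of Schur $Q$-functions (polynomial in $q_n$, $n$ odd, over $\Z[\tfrac12]$) for $i > r$. Your write-up is slightly more explicit about the unit $u_\blambda$ witnessing the tensor decomposition, but otherwise matches the paper's argument step for step.
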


\begin{proof}
  We prove the first assertion, since the proof of the second is identical.  By \cite[Lem.~7.5]{RS15a}, the subalgebra $H^+$ is commutative.  It follows from Proposition~\ref{prop:induced-projectives} that $H^+$ has a $\kk$-basis given by
  \[
    [P_1^{\otimes |\lambda_1|} \otimes_{R_{\lambda^1 \langle 1 \rangle}} L_{\lambda^1 \langle 1 \rangle}] \dotsm [P_\ell^{\otimes |\lambda_\ell|} \otimes_{R_{\lambda^\ell \langle \ell \rangle}} L_{\lambda^\ell \langle \ell \rangle}],\quad (\lambda^1,\dotsc,\lambda^\ell) \in \fP.
  \]
  Note that it is crucial here that we have inverted $2$ in $\kk$ when $B$ has a module of type $Q$, since the $\fS_{(\lambda^1,\dotsc,\lambda^\ell)}$-module $L_{\blambda^1 \langle 1 \rangle} \boxtimes \dotsb \boxtimes L_{\blambda^\ell \langle \ell \rangle}$ is, in general, a copy of $2^m$ copies of $L_{(\blambda^1,\dotsc,\blambda^\ell)}$ and its shift, for some $m \in \N$.

  Now suppose $i \in \{1,\dotsc,r\}$.  Then the reasoning used in the proof of \cite[Lem.~5]{CL12} shows that, in $H^+$, we have, for $\lambda,\mu \in \partition$,
  \[
    \left[ P_i^{|\lambda|} \otimes_{R_{\lambda \langle i \rangle}} L_{\lambda \langle i \rangle} \right] \cdot \left[ P_i^{|\mu|} \otimes_{R_{\mu \langle i \rangle}} L_{\mu \langle i \rangle} \right]
    = \left[ A_{|\lambda|} e_{\lambda \langle i \rangle} \right] \cdot \left[ A_{|\mu|} e_{\mu \langle i \rangle} \right]
    = \sum_{\nu \in \partition} \left[A_{|\nu|} e_{\nu \langle i \rangle} \right]^{\oplus c^\nu_{\lambda,\mu}}
    = \sum_{\nu \in \partition} \left[ P_i^{|\nu|} \otimes_{R_{\nu \langle i \rangle}} L_{\nu \langle i \rangle} \right]^{\oplus c^\nu_{\lambda,\mu}},
  \]
  where $c^\nu_{\lambda,\mu}$ are the Littlewood-Richardson coefficients.  It follows that the subalgebra of $H^+$ spanned by the $[P_i^{|\lambda|} \otimes_{R_{\lambda \langle i \rangle}} L_{\lambda \langle i \rangle}]$, $\lambda \in \partition$, is isomorphic to the ring of symmetric functions, with $[P_i^{|\lambda|} \otimes_{R_{\lambda \langle i \rangle}} L_{\lambda \langle i \rangle}]$ corresponding to the Schur function $s_\lambda$.  Since the ring of symmetric functions is a polynomial algebra in the complete homogeneous symmetric functions $s_{(n)}$, $n \in \N_+$, and also a polynomial algebra in the elementary symmetric functions $s_{(1^n)}$, $n \in \N_+$, this subalgebra of $H^+$ is a polynomial algebra in the $P_i^{(n)}$ or the $P_i^{(1^n)}$.

  Now suppose $i \in \{r+1,\dotsc,\ell\}$.  Here the argument is similar, but with the representation theory of the symmetric group replaced by the representation theory of the Sergeev algebra.   In this case, $[P_i^{|\lambda|} \otimes_{R_{\lambda \langle i \rangle}} L_{\lambda \langle i \rangle}]$, $\lambda \in \partition_\textup{strict}$, corresponds to $2^{- \lfloor \ell(\lambda)/2 \rfloor} Q_\lambda$, where $Q_\lambda$ is the Schur $Q$-function.  (See \cite[\S 4C]{Joz89} or the more recent \cite[(3.14)]{WW12}.)  Now, the space spanned over $\kk$ by the Schur $Q$-functions is a polynomial algebra in the $Q_{(n)}$, $n \in 2 \N + 1$.   See, for example, \cite[III, (8.5)]{Mac95}, and note that while that reference states the result over $\Q$, the key relation \cite[III, (8.2')]{Mac95} only requires that 2 be invertible.  It follows that the subalgebra of $H^+$ spanned by the $[P_i^{|\lambda|} \otimes_{R_{\lambda \langle i \rangle}} L_{\lambda \langle i \rangle}]$, $\lambda \in \partition_\textup{strict}$, is a polynomial algebra in the $P_i^{(n)} = P_i^{(1^n)}$, $n \in 2 \N + 1$.
\end{proof}

\begin{rem} \label{rem:even-type-Q}
  It follows from the proof of Lemma~\ref{lem:Heis-double-generators} that, for $i \in \{r+1,\dotsc, \ell\}$ and $m \in \N_+$, we have
  \begin{gather}
    Q_i^{(2m)} = \frac{1}{2} (-1)^{m-1} \left( Q_i^{(m)} \right)^2 + \sum_{r=1}^{m-1} (-1)^{r-1} Q_i^{(r)} Q_i^{(2m-r)}, \label{eq:Q-even-reduction} \\
    P_i^{(2m)} = \frac{1}{2} (-1)^{m-1} \left( P_i^{(m)} \right)^2 + \sum_{r=1}^{m-1} (-1)^{r-1} P_i^{(r)} P_i^{(2m-r)}, \label{eq:P-even-reduction}
  \end{gather}
  since the corresponding relations hold for the Schur Q-functions.  See, for example, \cite[III, (8.2')]{Mac95}.  This allows one to recursively express $P_i^{(2m)}$, $m \in \N_+$, in terms of $P_i^{(n)}$ for odd $n$ and similarly for $Q_i^{(2m)}$.  From now on, we will in this way view $P_i^{(2m)}$ and $Q_i^{(2m)}$, $m \in \N_+$, as elements of the polynomial algebras $\kk[P_i^{(n)} \mid n \in N_i]$ and $\kk[Q_i^{(n)} \mid n \in N_i]$, respectively.
\end{rem}

For a $\Z \times \Z_2$-graded vector space $V$ and $k \in \N_+$, define the symmetric and exterior algebras
\begin{gather}
  S^k(V) = V^{\otimes k}/\Span_\F \{\mathbf{v} - \tau \cdot \mathbf{v} \mid \tau \in S_n,\ \mathbf{v} \in V^{\otimes k}\}, \label{eq:SkV-def} \\
  \Lambda^k(V) = V^{\otimes k}/\Span_\F \{\mathbf{v} -(-1)^{\ell(\tau)} \tau \cdot \mathbf{v} \mid \tau \in S_n,\ \mathbf{v} \in V^{\otimes k}\}. \label{eq:LambdakV-def}
\end{gather}
Note that
\begin{equation} \label{eq:symmetric-exterior-relation}
  S^k(V) = \Lambda^k(\Pi V).
\end{equation}
The graded dimensions of $S^k(V)$ and $\Lambda^k(V)$ can be explicitly computed.  In particular, we have
\begin{equation}
  \sum_{k \in \N} t^k \grdim S^k(V)
  = \prod_{n \in \Z} \left( \frac{1}{1-q^nt} \right)^{\dim V_{n,0}} \prod_{n \in \Z} (1+\pi q^n t)^{\dim V_{n,1}} \in \Z_{q,\pi} \llbracket t \rrbracket.
\end{equation}
Then \eqref{eq:symmetric-exterior-relation} allows us to obtain a similar expression with $S^k(V)$ replaced by $\Lambda^k(V)$.

\begin{prop}[Presentations of $\fh_B$] \label{prop:hB-presentation}
  Recall that, for $i \in \{r+1,\dotsc,\ell\}$ and $n \in 2 \N_+$, $P_i^{(n)}$ and $Q_i^{(n)}$ correspond to polynomials in $P_i^{(m)}$ and $Q_i^{(m)}$, $m \in 2 \N + 1$ (see Remark~\ref{rem:even-type-Q}).  We have the following presentations of $\fh_B$.
  \begin{asparaenum}
    \item \label{lem-item:hB-presentation-nn} The algebra $\fh_B$ is isomorphic, as a $\kk$-module, to the product of polynomial algebras
      \[
        \kk [P_i^{(n)} \mid i=1,\dotsc,\ell,\ n \in N_i] \otimes_\kk \kk [Q_i^{(n)} \mid i=1,\dotsc,\ell,\ n \in N_i].
      \]
      The two polynomial algebras are subalgebras of $\fh_B$, and the relations between the two factors are
      \[
        Q_i^{(n)} P_j^{(m)} = \sum_{k \in \N} \left( \grdim S^k\left(\HOM_B(P_i,P_j)\right) \right) P_j^{(m-k)} Q_i^{(n-k)}.
      \]

    \item \label{lem-item:hB-presentation-1n1n} The algebra $\fh_B$ is isomorphic, as a $\kk$-module, to the product of polynomial algebras
      \[
        \kk [P_i^{(1^n)} \mid i=1,\dotsc,\ell,\ n \in N_i] \otimes_\kk \kk [Q_i^{(1^n)} \mid i=1,\dotsc,\ell,\ n \in N_i].
      \]
      The two polynomial algebras are subalgebras of $\fh_B$, and the relations between the two factors are
      \[
        Q_i^{(1^n)} P_j^{(1^m)} = \sum_{k \in \N} \left( \grdim S^k\left(\HOM_B(P_i,P_j)\right) \right) P_j^{(1^{m-k})} Q_i^{(1^{n-k})}.
      \]

    \item \label{lem-item:hB-presentation-n1n} The algebra $\fh_B$ is isomorphic, as a $\kk$-module, to the product of polynomial algebras
      \[
        \kk [P_i^{(n)} \mid i=1,\dotsc,\ell,\ n \in N_i] \otimes_\kk \kk [Q_i^{(1^n)} \mid i=1,\dotsc,\ell,\ n \in N_i].
      \]
      The two polynomial algebras are subalgebras of $\fh_B$, and the relations between the two factors are
      \[
        Q_i^{(1^n)} P_j^{(m)} = \sum_{k \in \N} \left( \grdim \Lambda^k\left(\HOM_B(P_i,P_j)\right) \right) P_j^{(m-k)} Q_i^{(1^{n-k})}.
      \]

      \item \label{lem-item:hB-presentation-1nn} The algebra $\fh_B$ is isomorphic, as a $\kk$-module, to the product of polynomial algebras
      \[
        \kk [P_i^{(1^n)} \mid i=1,\dotsc,\ell,\ n \in N_i] \otimes_\kk \kk [Q_i^{(n)} \mid i=1,\dotsc,\ell,\ n \in N_i].
      \]
      The two polynomial algebras are subalgebras of $\fh_B$, and the relations between the two factors are
      \[
        Q_i^{(n)} P_j^{(1^m)} = \sum_{k \in \N} \left( \grdim \Lambda^k\left(\HOM_B(P_i,P_j)\right) \right) P_j^{(1^{m-k})} Q_i^{(n-k)}.
      \]
  \end{asparaenum}
\end{prop}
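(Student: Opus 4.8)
The structural parts of the statement need essentially no work. That $\fh_B$ is, as a $\kk$-module, the tensor product of the two polynomial algebras, with $H^+$ and $H^-$ appearing as subalgebras, is built into Definition~\ref{def:hB}, and the two polynomial descriptions of each of $H^+$ and $H^-$ (in terms of the $P_i^{(n)}$, resp.\ the $P_i^{(1^n)}$, and similarly for the $Q$'s) are exactly Lemma~\ref{lem:Heis-double-generators}, with the reductions \eqref{eq:Q-even-reduction}--\eqref{eq:P-even-reduction} of Remark~\ref{rem:even-type-Q} accounting for the even-superscript type $Q$ generators. So the plan is to extract the four commutation relations from the Heisenberg double product of Definition~\ref{def:hB}, which requires two ingredients: the coproducts of the generators, and the values of the bilinear form \eqref{eq:bilinear-form}.

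For the coproducts, the point is to identify $P_i^{(n)}$ and $P_i^{(1^n)}$ with classes of ``symmetric'' and ``exterior power'' modules. Using \eqref{eq:wreath-module-idemponent} together with the fact that for a one-row strict partition the Sergeev idempotent is just the ordinary symmetrizer $\frac{1}{n!}\sum_{\tau\in S_n}\tau$ (see \cite[\S3.3.5]{Ser99}), one sees that $P_i^{(n)}$ is the class of $P_i^{\otimes n}\cdot\tfrac{1}{n!}\sum_{\tau\in S_n}\tau$ and $P_i^{(1^n)}$ the class of the corresponding antisymmetrized module (for $i>r$ these coincide, by \eqref{eq:1n-type-Q-def}), and likewise on the $H^-$ side. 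Since restriction of such modules along $A_a\otimes A_{n-a}\hookrightarrow A_n$ decomposes in the familiar way $S^n(V\oplus W)=\bigoplus_{a+b=n}S^a(V)\otimes S^b(W)$ (and its exterior analogue), and since this restriction cannot mix different ``colours'' $i$, one obtains
\[
  \nabla\bigl(P_i^{(n)}\bigr)=\sum_{a=0}^n P_i^{(a)}\otimes P_i^{(n-a)},\qquad \nabla\bigl(P_i^{(1^n)}\bigr)=\sum_{a=0}^n P_i^{(1^a)}\otimes P_i^{(1^{n-a})},
\]
and the same formulas for the $Q_i^{(\cdot)}$ in $H^-$.

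For the bilinear form, note that $\langle-,-\rangle$ vanishes between classes living in distinct $K_0(A_k)$, so only ``diagonal'' pairings enter. By \eqref{eq:bilinear-form}, \eqref{eq:wreath-module-idemponent}, and the standard graded isomorphism $\HOM_{A_k}(A_k e,A_k f)\cong eA_k f$ for idempotents $e,f$, these pairings reduce to graded dimensions of $(e_i^{\otimes k})(B^{\otimes k}\rtimes S_k)(e_j^{\otimes k})=(e_iBe_j)^{\otimes k}\rtimes S_k$ cut down on each side by the symmetrizer, or by the symmetrizer on one side and the antisymmetrizer on the other; since $e_iBe_j\cong\HOM_B(P_i,P_j)$ this yields
\[
  \bigl\langle Q_i^{(k)},P_j^{(k)}\bigr\rangle=\grdim S^k\bigl(\HOM_B(P_i,P_j)\bigr),\qquad \bigl\langle Q_i^{(1^k)},P_j^{(k)}\bigr\rangle=\grdim \Lambda^k\bigl(\HOM_B(P_i,P_j)\bigr).
\]
Here one must check consistency with the conventions \eqref{eq:1n-type-Q-def}: when $B$ has a type $Q$ module we have $\pi=1$ in $\kk$ and each projective $P_i$ with $i>r$ satisfies $\Pi P_i\cong P_i$, hence $\HOM_B(P_i,P_j)$ is parity-balanced and $\grdim S^k=\grdim\Lambda^k$ there, which is precisely what is needed for the identification $Q_i^{(1^k)}=Q_i^{(k)}$.

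Finally, substituting the coproducts and these pairings into the product rule of Definition~\ref{def:hB}, the off-diagonal vanishing of $\langle-,-\rangle$ collapses the double sum over the two Sweedler indices to a single sum over $k$, producing $Q_i^{(n)}P_j^{(m)}=\sum_{k\in\N}\langle Q_i^{(k)},P_j^{(k)}\rangle\,P_j^{(m-k)}Q_i^{(n-k)}$ and the three analogous relations; together with the module and subalgebra statements these constitute the four presentations. I expect the main obstacle to be the bilinear-form computation --- carrying out the idempotent-truncation argument carefully in the graded super setting, i.e.\ handling the Sergeev idempotents attached to the type $Q$ colours, tracking the parities of the $e_i$ and the superpermutation signs in $(e_iBe_j)^{\otimes k}\rtimes S_k$, and verifying the $S^k$-versus-$\Lambda^k$ consistency described above. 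By comparison, the coproduct step is routine once the symmetric/exterior-power identification of the $P_i^{(n)}$ and $P_i^{(1^n)}$ is in place.
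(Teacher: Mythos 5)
Your plan is correct and follows essentially the same route as the paper: read off the module and subalgebra structure from Lemma~\ref{lem:Heis-double-generators}, compute the coproducts $\nabla(P_i^{(n)})=\sum_{a}P_i^{(a)}\boxtimes P_i^{(n-a)}$ (and likewise for the other generators) from the explicit idempotent description \eqref{eq:wreath-module-idemponent}, compute the pairings $\langle Q_i^{(k)},P_j^{(k)}\rangle$ via the chain $\HOM_{A_k}(A_k e,A_k f)\cong eA_kf$ followed by projection onto the trivial (resp.\ sign) $S_k$-isotypic component, and substitute into the Heisenberg double product rule of Definition~\ref{def:hB} — this is exactly the computation the paper carries out in \eqref{eq:hom-symmetrizer}. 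The one thing you add beyond the paper's argument is the explicit observation that $\Pi P_i\cong P_i$ for $i>r$ forces $\grdim S^k(\HOM_B(P_i,P_j))=\grdim\Lambda^k(\HOM_B(P_i,P_j))$, which is indeed what makes the four presentations mutually consistent with the convention \eqref{eq:1n-type-Q-def}; this is a useful sanity check though not strictly needed, since \eqref{eq:1n-type-Q-def} is a definition and the four relations are each derived independently.
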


\begin{proof}
  We prove part~\eqref{lem-item:hB-presentation-nn}, since the proofs of the other parts are analogous.  By Lemma~\ref{lem:Heis-double-generators}, it suffices to compute the commutation relations between the $Q_i^{(n)}$ and $P_j^{(m)}$.  Note that, for $i \in \{1,\dotsc,\ell\}$, $n \in \N_+$, and $\lambda = (n)$, we have
  \[
    e_{\lambda \langle i \rangle} = \left( e_i \otimes \dotsb \otimes e_i \right) \frac{1}{n!} \sum_{\tau \in S_n} \tau = \frac{1}{n!} \sum_{\tau \in S_n} \tau (e_i \otimes \dotsb \otimes e_i) \in A_n.
  \]
  Then, as left $A_{n-k} \otimes A_k$-modules, we have
  \[
    P_i^n \otimes_{R_{(n) \langle i \rangle}} L_{(n) \langle i \rangle} = A_n e_{(n) \langle i \rangle} = (Be_i)^{\otimes n} = (Be_i)^{\otimes (n-k)} \boxtimes (Be_i)^{\otimes k}.
  \]
  Therefore,
  \[
    \Res^{A_n}_{A_{n-k} \otimes A_k} \left( P_i^{\otimes n} \otimes_{(n) \langle i \rangle} L_{(n) \langle i \rangle} \right) = \left( P_i^{\otimes (n-k)} \otimes_{(n-k) \langle i \rangle} L_{(n-k) \langle i \rangle} \right) \boxtimes \left( P_i^{\otimes k} \otimes_{(k) \langle i \rangle} L_{(k) \langle i \rangle} \right).
  \]
  Thus, in the Hopf algebras $H^+$ and $H^-$, we have
  \[
    \Delta \left( P_i^{(n)} \right) = \sum_{k=0}^n P_i^{(n-k)} \boxtimes P_i^{(k)} \quad \text{and} \quad
    \Delta \left( Q_i^{(n)} \right) = \sum_{k=0}^n Q_i^{(n-k)} \boxtimes Q_i^{(k)}.
  \]
  Therefore, by Definition~\ref{def:hB}, we have
  \[
    Q_i^{(n)} P_j^{(m)}
    = \sum_{k=0}^n \sum_{\ell=0}^m \left\langle Q_i^{(k)}, P_j^{(\ell)} \right\rangle P_j^{(m-\ell)} Q_i^{(n-k)} \\
    = \sum_{k=0}^n \grdim \HOM(Q_i^{(k)}, P_j^{(k)}) P_j^{(m-k)} Q_i^{(n-k)}.
  \]

  We have isomorphisms of graded vector spaces
  \begin{multline} \label{eq:hom-symmetrizer}
    \HOM \left( Q_i^{(k)}, P_j^{(k)} \right)
    \cong e_{(k) \langle i \rangle} A_k e_{(k) \langle j \rangle}
    = \left\{ \sum_{\tau_1,\tau_2 \in S_n} \tau_1 (e_i b_1 e_j \otimes e_i b_2 e_j \otimes \dotsb \otimes e_i b_k e_j ) \tau_2 \mid b_1,\dotsc, b_k \in B \right\} \\
    = \left( e_i B e_j \right)^{S_k} \sum_{\tau \in S_n} \tau
    \cong \left( e_i B e_j \right)^{S_k}
    \cong S^k\left( e_i B e_j \right)
    \cong S^k \left( \HOM_B(Be_i, Be_j) \right)
    \cong S^k \left( \HOM_B(P_i,P_j) \right),
  \end{multline}
  where $(e_i B e_j)^{S_k}$ denotes the space of $S_k$-invariants of $e_i B e_j$ and, in the third-to-last isomorphism, we use the fact that $\F[S_k]$ is semisimple and so projection onto the trivial isotypic component is isomorphic to the quotient by the other isotypic components.
  \details{
    Proof of part~\eqref{lem-item:hB-presentation-1n1n}: Again, by Lemma~\ref{lem:Heis-double-generators}, it suffices to compute the commutation relations between the $Q_i^{(1^n)}$ and $P_j^{(1^m)}$.  And by \eqref{eq:1n-type-Q-def}, it suffices to assume $1 \le i,j \le r$.  Note that, for $i \in \{1,\dotsc,r\}$ and $n \in \N_+$, we have
    \[
      e_{(1^n) \langle i \rangle} = \left( e_i \otimes \dotsb \otimes e_i \right) \frac{1}{n!} \sum_{\tau \in S_n} (-1)^{\ell(\tau)} \tau = \frac{1}{n!} \sum_{\tau \in S_n} (-1)^{\ell(\tau)} \tau (e_i \otimes \dotsb \otimes e_i) \in A_n.
    \]
    Since $L_{(1^m)}$ is the sign representation of $S_m$ for $m \in \N_+$, it is clear that, for $0 \le k \le n$,
    \[
      \Res^{A_n}_{A_{n-k} \otimes A_k} \left( P_i^{\otimes n} \otimes_{R_{(1^n) \langle i \rangle}} L_{(1^n) \langle i \rangle} \right) = \left( P_i^{\otimes (n-k)} \otimes_{R_{(1^{n-k}) \langle i \rangle}} L_{(1^{n-k}) \langle i \rangle} \right) \boxtimes \left( P_i^{\otimes k} \otimes_{R_{(1^k) \langle i \rangle}} L_{(1^k) \langle i \rangle} \right).
    \]
    Thus, in the Hopf algebras $H^+$ and $H^-$, we have
    \[
      \Delta \left( P_i^{(1^n)} \right) = \sum_{k=0}^n P_i^{(1^{n-k})} \boxtimes P_i^{(1^k)} \quad \text{and} \quad \Delta \left( Q_i^{(1^n)} \right) = \sum_{k=0}^n Q_i^{(1^{n-k})} \boxtimes Q_i^{(1^k)}.
    \]
    Therefore, by Definition~\ref{def:hB}, we have
    \[
      Q_i^{(1^n)} P_j^{(1^m)}
      = \sum_{k=0}^n \sum_{\ell=0}^m \left\langle Q_i^{(1^k)}, P_j^{(1^\ell)} \right\rangle P_j^{(1^{m-\ell})} Q_i^{(1^{n-k})} \\
      = \sum_{k=0}^n \grdim \HOM \left( Q_i^{(1^k)}, P_j^{(1^k)} \right) P_j^{(1^{m-k})} Q_i^{(1^{n-k})}.
    \]
    We have isomorphisms of graded vector spaces
    \begin{multline*}
      \HOM \left( Q_i^{(1^k)}, P_j^{(1^k)} \right)
      \cong e_{(1^k) \langle i \rangle} A_k e_{(1^k) \langle j \rangle} \\
      = \left\{ \sum_{\tau_1,\tau_2 \in S_n} (-1)^{\ell(\tau_1) + \ell(\tau_2)} \tau_1 (e_i b_1 e_j \otimes e_i b_2 e_j \otimes \dotsb \otimes e_i b_k e_j ) \tau_2 \mid b_1,\dotsc, b_k \in B \right\} \\
      = \left\{ \sum_{\tau_1,\tau_2 \in S_n} (-1)^{\ell(\tau_1) + \ell(\tau_2)} \Big( \tau_1 \cdot (e_i b_1 e_j \otimes e_i b_2 e_j \otimes \dotsb \otimes e_i b_k e_j ) \Big) \tau_1 \tau_2 \mid b_1,\dotsc, b_k \in B \right\} \\
      = \left\{ \sum_{\tau_1,\tau \in S_n} (-1)^{\ell(\tau)} \Big( \tau_1 \cdot (e_i b_1 e_j \otimes e_i b_2 e_j \otimes \dotsb \otimes e_i b_k e_j ) \Big) \tau \mid b_1,\dotsc, b_k \in B \right\} \\
      = \left( e_i B e_j \right)^{S_k} \sum_{\tau \in S_n} (-1)^{\ell(\tau)} \tau
      \cong \left( e_i B e_j \right)^{S_k}
      \cong S^k\left( e_i B e_j \right)
      \cong S^k \left( \HOM_B(Be_i, Be_j) \right)
      \cong S^k \left( \HOM_B(P_i,P_j) \right).
    \end{multline*}

    Proof of part~\eqref{lem-item:hB-presentation-n1n}: By Definition~\ref{def:hB}, we have
    \[
      Q_i^{(1^n)} P_j^{(1^m)}
      = \sum_{k=0}^n \sum_{\ell=0}^m \left\langle Q_i^{(1^k)}, P_j^{(1^\ell)} \right\rangle P_j^{(1^{m-\ell})} Q_i^{(1^{n-k})} \\
      = \sum_{k=0}^n \grdim \HOM \left( Q_i^{(1^k)}, P_j^{(1^k)} \right) P_j^{(1^{m-k})} Q_i^{(1^{n-k})}.
    \]
    We have isomorphisms of graded vector spaces
    \begin{multline*}
      \HOM \left( Q_i^{(1^k)}, P_j^{(k)} \right)
      \cong e_{(1^k) \langle i \rangle} A_k e_{(k) \langle j \rangle} \\
      = \left\{ \sum_{\tau_1,\tau_2 \in S_n} (-1)^{\ell(\tau_1)} \tau_1 (e_i b_1 e_j \otimes e_i b_2 e_j \otimes \dotsb \otimes e_i b_k e_j ) \tau_2 \mid b_1,\dotsc, b_k \in B \right\} \\
      = \left( e_i B e_j \right)^\text{sign} \sum_{\tau \in S_n} \tau
      \cong \left( e_i B e_j \right)^\text{sign}
      \cong \Lambda^k \left( e_i B e_j \right)
      \cong \Lambda^k \left( \HOM_B(Be_i, Be_j) \right)
      \cong \Lambda^k \left( \HOM_B(P_i,P_j) \right),
    \end{multline*}
    where $\left( e_i B e_j \right)^\text{sign}$ denotes the isotypic component of $e_i B e_j$ corresponding to the sign representation of $S_n$ and we again use the fact that $\F[S_k]$ is semisimple and so projection onto the sign isotypic component is isomorphic to the quotient by the other isotypic components.

    The proof of part~\eqref{lem-item:hB-presentation-1nn} is almost identical to the proof of part~\eqref{lem-item:hB-presentation-n1n}.
  }
\end{proof}

\begin{rem} \label{rem:lattice-Heisenberg}
  The algebra $\fh_B$ is the \emph{(quantum) lattice Heisenberg algebra} associated to the lattice $K_0(B)$ with bilinear form given by $\langle [P_i], [P_j] \rangle = \grdim \HOM_B(P_i,P_j)$, $i \in \{1,\dotsc,\ell\}$.  See, for example, \cite[\S1.2]{LS12} and \cite[\S7, \S8]{RS15b} for a discussion of (quantum) lattice Heisenberg algebras.  Over a field and when the form is nondegenerate, the (quantum) lattice Heisenberg algebras are all isomorphic to the usual infinite rank Heisenberg algebra over the same field.  In other words, the lattice Heisenberg algebras are integral (more precisely, $\kk$) forms of the usual infinite rank Heisenberg algebra (see \cite[Prop.~8.3]{RS15b}).  Proposition~\ref{prop:hB-presentation} gives a general form of the presentations of these integral forms that appear to be natural from the point of view of categorification.  In special cases, it recovers presentations appearing in the literature and deduced using different methods.  For example, the presentations of \cite[\S1]{Kho14} and \cite[(1)--(5), \S2.2.2]{CL12} are special cases of those of Proposition~\ref{prop:hB-presentation}.  In the case that all gradings are trivial, the relations of Proposition~\ref{prop:hB-presentation} are also closely related to those of \cite[Lem.~1.2 and \S3.2]{Kru15}.
\end{rem}

\begin{rem} \label{rem:missing-relations}
  Proposition~\ref{prop:hB-presentation} does \emph{not} specialize to the presentations of \cite[Prop.~5.1]{CS14} or \cite[Prop.~1]{HS15}.  This is because the presentations in those papers appear to be missing relations.  For example, using \cite[(20)]{CS14}, one can compute that $p^{(2)} = 2 h_{-1/2}^2 = \frac{1}{2} \left( p^{(1)} \right)^2$ in the notation of that paper, but this relation does not follow from the ones given in \cite[Prop.~5.1]{CS14}.  In fact, the relations that are missing there are precisely \eqref{eq:Q-even-reduction} and \eqref{eq:P-even-reduction}.  Those relations are also missing in \cite[Prop.~1]{HS15}.\footnote{After this was brought to their attention, the authors of \cite{HS15} corrected this omission in the published version of their paper.}
\end{rem}

\begin{cor} \label{cor:heis-alg-involution}
  There is an algebra involution $\omega$ of $\fh_B$ uniquely determined by
  \[
    Q_i^{(n)} \mapsto Q_i^{(1^n)},\ P_i^{(n)} \mapsto P_i^{(1^n)},\quad i \in \{1,\dotsc,\ell\},\ n \in \N_i.
  \]
\end{cor}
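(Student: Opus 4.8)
The plan is to use the presentations of $\fh_B$ from Proposition~\ref{prop:hB-presentation}. By part~\eqref{lem-item:hB-presentation-nn}, the algebra $\fh_B$ is the polynomial algebra $\kk[P_i^{(n)}]\otimes_\kk\kk[Q_i^{(n)}]$ subject only to the cross relations
\[
  Q_i^{(n)} P_j^{(m)} = \sum_{k \in \N} \left( \grdim S^k\left(\HOM_B(P_i,P_j)\right) \right) P_j^{(m-k)} Q_i^{(n-k)},
\]
while by part~\eqref{lem-item:hB-presentation-1n1n} the same algebra is the polynomial algebra $\kk[P_i^{(1^n)}]\otimes_\kk\kk[Q_i^{(1^n)}]$ subject only to the cross relations
\[
  Q_i^{(1^n)} P_j^{(1^m)} = \sum_{k \in \N} \left( \grdim S^k\left(\HOM_B(P_i,P_j)\right) \right) P_j^{(1^{m-k})} Q_i^{(1^{n-k})}.
\]
These two presentations have the same shape: in each case the generators are (for $i\in\{1,\dots,r\}$) indexed by $n\in\N_+$, and for $i\in\{r+1,\dots,\ell\}$ the two families coincide on the nose by \eqref{eq:1n-type-Q-def}. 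So first I would define a $\kk$-algebra homomorphism $\omega\colon\fh_B\to\fh_B$ on generators by $P_i^{(n)}\mapsto P_i^{(1^n)}$ and $Q_i^{(n)}\mapsto Q_i^{(1^n)}$, using the first presentation to check it is well defined: the target elements $P_i^{(1^n)},Q_i^{(1^n)}$ generate two commuting polynomial subalgebras (Lemma~\ref{lem:Heis-double-generators}) whose cross relations, read off from part~\eqref{lem-item:hB-presentation-1n1n}, are literally the images under $n\mapsto(1^n)$ of the defining relations of the first presentation (the structure constants $\grdim S^k(\HOM_B(P_i,P_j))$ being identical). Hence $\omega$ is a well-defined algebra endomorphism, and it is unique because the $P_i^{(n)},Q_i^{(n)}$ generate $\fh_B$.

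Next I would show $\omega^2=\id$. It suffices to check this on the generating set $\{P_i^{(n)},Q_i^{(n)}\}$. For $i\in\{r+1,\dots,\ell\}$ this is immediate since $P_i^{(n)}=P_i^{(1^n)}$ and likewise for $Q$, so $\omega$ fixes these generators. For $i\in\{1,\dots,r\}$, recall from the proof of Lemma~\ref{lem:Heis-double-generators} that the subalgebra of $H^+$ generated by the $P_i^{(\lambda)}$ for fixed $i$ is isomorphic to the ring of symmetric functions $\Sy$, with $P_i^{(n)}\leftrightarrow s_{(n)}=h_n$ (the complete homogeneous symmetric function) and $P_i^{(1^n)}\leftrightarrow s_{(1^n)}=e_n$ (the elementary symmetric function). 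Under this identification $\omega$ restricts to the classical involution of $\Sy$ that interchanges $h_n$ and $e_n$, which is well known to be an involution (see e.g.\ \cite[I, (2.7), (2.13)]{Mac95}); equivalently one may argue directly via the generating-function identity $\left(\sum_n h_n t^n\right)\left(\sum_n (-1)^n e_n t^n\right)=1$, which is symmetric in $h$ and $e$. The same holds for the $Q$-side. Therefore $\omega$ maps the generating set to itself and $\omega^2$ fixes each generator, so $\omega^2=\id$ and $\omega$ is an algebra involution of $\fh_B$.

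The only genuine subtlety — and the step I would be most careful with — is the well-definedness argument: one must confirm that no relations among the $P_i^{(1^n)},Q_i^{(1^n)}$ beyond those listed in part~\eqref{lem-item:hB-presentation-1n1n} are imposed, i.e.\ that part~\eqref{lem-item:hB-presentation-1n1n} really is a presentation and not merely a set of relations. But this is exactly the content of Proposition~\ref{prop:hB-presentation}\eqref{lem-item:hB-presentation-1n1n} together with Lemma~\ref{lem:Heis-double-generators} (which guarantees the $\kk$-module freeness, hence that the two polynomial subalgebras intersect trivially and the cross relations suffice). So the corollary follows formally from the two presentations and the classical $h\leftrightarrow e$ symmetry of symmetric functions, with no further computation needed. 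I would also remark in passing that parts~\eqref{lem-item:hB-presentation-n1n} and \eqref{lem-item:hB-presentation-1nn} are consistent with this: applying $\omega$ to the mixed $P^{(n)},Q^{(1^n)}$ presentation turns the $\Lambda^k$-coefficients into themselves after swapping, reflecting the fact that $\omega$ is an anti-involution-free algebra automorphism squaring to the identity.
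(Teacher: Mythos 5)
Your proposal is correct and follows essentially the same route the paper implicitly intends: the corollary is an immediate consequence of Proposition~\ref{prop:hB-presentation}, using that presentations~\eqref{lem-item:hB-presentation-nn} and~\eqref{lem-item:hB-presentation-1n1n} have identical structure constants $\grdim S^k(\HOM_B(P_i,P_j))$ to get well-definedness, and that for $i \le r$ the map restricts to the classical $h \leftrightarrow e$ involution of symmetric functions while for $i > r$ it fixes the generators outright by \eqref{eq:1n-type-Q-def}. (Your closing phrase ``anti-involution-free algebra automorphism'' is garbled wording, presumably meaning an algebra automorphism---as opposed to an anti-automorphism---squaring to the identity, but this is cosmetic and does not affect the argument.)
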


%%%%%%%%%%%%%%%%%%%%%%%%%%%%%%
\section{The category $\cH_B$} \label{sec:cat-def}
%%%%%%%%%%%%%%%%%%%%%%%%%%%%%%

In this section we define an additive $\F$-linear monoidal category $\cH_B'$ whose idempotent completion $\cH_B$ categorifies the algebra $\fh_B$.

The objects of $\cH'_B$ are generated by symbols $\{n,\epsilon\} \sP$ and $\{n,\epsilon\} \sQ$, for $n\in \Z$, $\epsilon \in \Z_2$. We think of $\{n,\epsilon\}\sP$ as being a shifted version of $\sP$ and we declare the monoidal structure to be compatible with shifts $\{ \cdot, \cdot \}$, so that, for example, $\{s,\epsilon\} \sQ\otimes \{s', \epsilon'\} \sQ = \{s+s',\epsilon+\epsilon'\} (\sQ\otimes \sQ)$.  We will usually omit the $\otimes$ symbol, and write tensor products as words in $\sP$ and $\sQ$.  Thus an arbitrary object of $\cH'_B$ is a finite direct sum of words in the letters $\sP$ and $\sQ$ where each word has a shift.

The space of morphisms between two objects is the $\F$-algebra generated by suitable planar diagrams modulo local relations.  The diagrams consist of oriented compact one-manifolds immersed into the plane strip $\R \times [0,1]$ modulo certain relations.  The grading on morphisms, which will be specified later in this section, determines the difference in shifts between the domain and codomain.

A single upward oriented strand denotes the identity morphism from $\sP$ to $\sP$ while a downward oriented strand denotes the identity morphism from $\sQ$ to $\sQ$.
\[
  \begin{tikzpicture}[>=stealth,baseline={([yshift=-.5ex]current bounding box.center)}]
    \draw[->] (0,0) -- (0,1);
    \draw[<-] (5,0) -- (5,1);
  \end{tikzpicture}
\]

Strands are allowed to carry dots labeled by elements of $B$. For example, if $b, b', b'' \in B$, then the diagram
\[
  \begin{tikzpicture}[>=stealth,baseline={([yshift=-.5ex]current bounding box.center)}]
    \draw [->](0,0) -- (0,1.6);
    \filldraw [blue](0,.4) circle (2pt);
    \draw (0,.4) node [anchor=west] [black] {$b''$};
    \filldraw [blue](0,.8) circle (2pt);
    \draw (0,.8) node [anchor=west] [black] {$b'$};
    \filldraw [blue](0,1.2) circle (2pt);
    \draw (0,1.2) node [anchor=west] [black] {$b$};
  \end{tikzpicture}
\]
is an element of $\Hom_{\cH'_B}(\sP,\{-|b|-|b'|-|b''|,\bar b + \bar b' + \bar b''\}\sP)$.  (See below for an explanation of the degree shift.)  Diagrams are linear in the dots in the sense that
\[
  \begin{tikzpicture}[>=stealth,baseline={([yshift=-.5ex]current bounding box.center)}]
    \draw [->](0,0) -- (0,1);
    \bluedot{(0,.5)} node [anchor=west,color=black] {$(z_1b_1 + z_2 b_2)$};
  \end{tikzpicture}
  \ =\ z_1 \left(\
  \begin{tikzpicture}[>=stealth,baseline={([yshift=-.5ex]current bounding box.center)}]
    \draw [->](0,0) -- (0,1);
    \bluedot{(0,.5)} node [anchor=west, color=black] {$b_1$};
  \end{tikzpicture} \right)
  \ +\ z_2 \left(\
  \begin{tikzpicture}[>=stealth,baseline={([yshift=-.5ex]current bounding box.center)}]
    \draw [->](0,0) -- (0,1);
    \bluedot{(0,.5)} node [anchor=west, color=black] {$b_2$};
  \end{tikzpicture} \right)
  \quad \text{for } z_1,z_2 \in \F,\ b_1,b_2 \in B.
\]

Collision of dots is controlled by multiplication in the algebra $B$:

\noindent\begin{minipage}{0.5\linewidth}
  \begin{equation} \label{eq:collision-up}
    \begin{tikzpicture}[>=stealth,baseline={([yshift=-.5ex]current bounding box.center)}]
      \draw[->] (0,0) -- (0,1.5);
      \bluedot{(0,.5)};
      \draw (0,.5) node [anchor=west] [black] {$b'$};
      \bluedot{(0,1)};
      \draw (0,1) node [anchor=west] [black] {$b$};
    \end{tikzpicture}
    = (-1)^{\bar b \bar b'}\
    \begin{tikzpicture}[>=stealth,baseline={([yshift=-.5ex]current bounding box.center)}]
      \draw[->] (0,0) -- (0,1.5);
      \bluedot{(0,.75)};
      \draw (0,.75) node [anchor=west,color=black] {$b'b$};
    \end{tikzpicture}
  \end{equation}
\end{minipage}%
\begin{minipage}{0.5\linewidth}
  \begin{equation} \label{eq:collision-down}
    \begin{tikzpicture}[>=stealth,baseline={([yshift=-.5ex]current bounding box.center)}]
      \draw[->] (0,1.5) -- (0,0);
      \bluedot{(0,.5)};
      \draw (0,.5) node [anchor=west] [black] {$b'$};
      \bluedot{(0,1)};
      \draw (0,1) node [anchor=west] [black] {$b$};
    \end{tikzpicture}
    \ =\
    \begin{tikzpicture}[>=stealth,baseline={([yshift=-.5ex]current bounding box.center)}]
      \draw[->] (0,1.5) -- (0,0);
      \bluedot{(0,.75)};
      \draw (0,.75) node [anchor=west,color=black] {$bb'$};
    \end{tikzpicture}
  \end{equation}
\end{minipage}\par\vspace{\belowdisplayskip}
\noindent The sign in the first relation comes from the fact that composition of dots on upward strands actually corresponds to multiplication in the opposite algebra $B^\op$.

When dots move along cups or caps, in some cases we apply the Nakayama automorphism:

\noindent\begin{minipage}{0.5\linewidth}
  \begin{equation} \label{eq:dotsup-a}
    \begin{tikzpicture}[>=stealth,baseline={([yshift=-.5ex]current bounding box.center)}]
      \draw[->] (1,0.5) -- (1,0) arc (360:180:0.5) -- (0,0.5);
      \bluedot{(0,0)} node [anchor=east,color=black] {$\psiB(b)$};
    \end{tikzpicture}
    \ =\
    \begin{tikzpicture}[>=stealth,baseline={([yshift=-.5ex]current bounding box.center)}]
      \draw[->] (1,0.5) -- (1,0) arc (360:180:0.5) -- (0,0.5);
      \bluedot{(1,0)} node [anchor=west,color=black] {$b$};
    \end{tikzpicture}
  \end{equation}
\end{minipage}%
\begin{minipage}{0.5\linewidth}
  \begin{equation} \label{eq:dotsup-b}
    \begin{tikzpicture}[>=stealth,baseline={([yshift=-.5ex]current bounding box.center)}]
      \draw[->] (0,-0.5) -- (0,0) arc (180:0:0.5) -- (1,-0.5);
      \bluedot{(0,0)} node [anchor=east,color=black] {$b$};
    \end{tikzpicture}
    \ =\
    \begin{tikzpicture}[>=stealth,baseline={([yshift=-.5ex]current bounding box.center)}]
      \draw[->] (0,-0.5) -- (0,0) arc (180:0:0.5) -- (1,-0.5);
      \bluedot{(1,0)} node [anchor=west,color=black] {$b$};
    \end{tikzpicture}
  \end{equation}
\end{minipage}\par\vspace{\belowdisplayskip}
\noindent and

\noindent\begin{minipage}{0.5\linewidth}
  \begin{equation} \label{eq:dotsdown-a}
    \begin{tikzpicture}[>=stealth,baseline={([yshift=-.5ex]current bounding box.center)}]
      \draw[<-] (1,0.5) -- (1,0) arc (360:180:0.5) -- (0,0.5);
      \bluedot{(0,0)} node [anchor=east,color=black] {$b$};
    \end{tikzpicture}
    \ =\
    \begin{tikzpicture}[>=stealth,baseline={([yshift=-.5ex]current bounding box.center)}]
      \draw[<-] (1,0.5) -- (1,0) arc (360:180:0.5) -- (0,0.5);
      \bluedot{(1,0)} node [anchor=west,color=black] {$b$};
    \end{tikzpicture}
  \end{equation}
\end{minipage}%
\begin{minipage}{0.5\linewidth}
  \begin{equation} \label{eq:dotsdown-b}
    \begin{tikzpicture}[>=stealth,baseline={([yshift=-.5ex]current bounding box.center)}]
      \draw[<-] (0,-0.5) -- (0,0) arc (180:0:0.5) -- (1,-0.5);
      \bluedot{(0,0)} node [anchor=east,color=black] {$b$};
    \end{tikzpicture}
    \ =\
    \begin{tikzpicture}[>=stealth,baseline={([yshift=-.5ex]current bounding box.center)}]
      \draw[<-] (0,-0.5) -- (0,0) arc (180:0:0.5) -- (1,-0.5);
      \bluedot{(1,0)} node [anchor=west,color=black] {$\psiB(b)$};
    \end{tikzpicture}
    .
  \end{equation}
\end{minipage}\par\vspace{\belowdisplayskip}

We also allow strands to cross. For example
\[
  \begin{tikzpicture}[>=stealth]
    \draw[->] (0,0) -- (1,1);
    \draw[<-] (1,0) -- (0,1);
    \filldraw [blue](.25,.25) circle (2pt);
    \draw (.25,.25) node [anchor=south east] [black] {$b$};
  \end{tikzpicture}
\]
is an element of $\Hom_{\cH'_B}(\sP\sQ,\{-|b|,\bar b\}\sQ\sP)$.  (See below for an explanation of the degree shift.)  Notice that the domain of a 2-morphism is specified at the bottom of the diagram and the codomain is specified at the top, so we compose diagrams by stacking them and reading upwards.

We assign a $\Z\times \Z_2$-grading to the space of planar diagrams by defining
\begin{gather*}
  \deg\
  \begin{tikzpicture}[>=stealth,baseline={([yshift=-.5ex]current bounding box.center)}]
    \draw [->](0,0) -- (1,1);
    \draw [->](1,0) -- (0,1);
  \end{tikzpicture}
  \ = (0, 0),
  \qquad \qquad
  \deg\
  \begin{tikzpicture}[>=stealth,baseline={([yshift=-.5ex]current bounding box.center)}]
    \draw [->](0,0) -- (0,1);
    \bluedot{(0,0.5)} node[anchor=west, color=black] {$b$};
  \end{tikzpicture}
  \ = \deg b,
  \\
  \deg\
  \begin{tikzpicture}[>=stealth,baseline={([yshift=-.5ex]current bounding box.center)}]
    \draw[->] (0,0) arc (180:360:.5);
  \end{tikzpicture}
  \ = (0,0),
  \qquad \qquad
  \deg\
  \begin{tikzpicture}[>=stealth,baseline={([yshift=-.5ex]current bounding box.center)}]
    \draw[<-] (6.5,-.5) arc (0:180:.5);
  \end{tikzpicture}
  \ = (0,0),
  \\
  \deg\
  \begin{tikzpicture}[>=stealth,baseline={([yshift=-.5ex]current bounding box.center)}]
    \draw[<-] (0,0) arc (180:360:.5);
  \end{tikzpicture}
  \ =(\delta,\sigma),
  \qquad \qquad
  \deg\
  \begin{tikzpicture}[>=stealth,baseline={([yshift=-.5ex]current bounding box.center)}]
    \draw[->] (6.5,-.5) arc (0:180:.5);
  \end{tikzpicture}
  \ = (-\delta,\sigma).
\end{gather*}

We consider diagrams up to super isotopy fixing the vertical coordinates of the endpoints of strands.  By \emph{super isotopy}, we mean the following:
\begin{enumerate}
  \item Any isotopy changing the diagram only in a horizontal strip of the form $\R \times [a,b]$, with $0 \le a < b \le 1$, containing no dots or odd cups/caps (i.e.\ left cups/caps if $\sigma = 1$).  Any time two odd cups or caps (i.e.\ left cups/caps if $\sigma = 1$) move past each other vertically, we multiply the diagram by $-1$.  We may also use the relations
      \begin{equation} \label{rel:left-down-zigzag}
        \begin{tikzpicture}[>=stealth,baseline={([yshift=-.5ex]current bounding box.center)}]
          \draw[->] (1,1) .. controls (1,-1) and (0,-1) .. (0,0) .. controls (0,1) and (-1,1) .. (-1,-1);
        \end{tikzpicture}
        \ = (-1)^\sigma \ \
        \begin{tikzpicture}[>=stealth,baseline={([yshift=-.5ex]current bounding box.center)}]
          \draw[->] (0,1) -- (0,-1);
        \end{tikzpicture}
        \qquad \text{and} \qquad
        \begin{tikzpicture}[>=stealth,baseline={([yshift=-.5ex]current bounding box.center)}]
          \draw[->] (1,-1) .. controls (1,1) and (0,1) .. (0,0) .. controls (0,-1) and (-1,-1) .. (-1,1);
        \end{tikzpicture}
        \ = \
        \begin{tikzpicture}[>=stealth,baseline={([yshift=-.5ex]current bounding box.center)}]
          \draw[<-] (0,1) -- (0,-1);
        \end{tikzpicture}
      \end{equation}
      to straighten out left zigzags.

  \item Dots may slide along strands, except that they may pick up a Nakayama automorphism when passing along cups or caps according to \eqref{eq:dotsup-a}--\eqref{eq:dotsdown-b}, and when a dot of odd parity moves vertically past a dot, cup, or cap of odd parity, we multiply the diagram by $-1$.  In particular, dots on distinct strands supercommute when they move past each other:
      \begin{equation} \label{eq:supercomm}
        \begin{tikzpicture}[>=stealth,baseline={([yshift=-.5ex]current bounding box.center)}]
          \draw[->] (0,0) -- (0,1.5);
          \bluedot{(0,.5)} node [anchor=east,color=black] {$b$};
          \draw (.5,.75) node {$\ldots$};
          \draw[->] (1,0)--(1,1.5);
          \bluedot{(1,1)} node [anchor=west,color=black] {$b'$};
        \end{tikzpicture}
        = \ (-1)^{\bar{b}\bar{b'}}\
        \begin{tikzpicture}[>=stealth,baseline={([yshift=-.5ex]current bounding box.center)}]
          \draw[->] (6,0) --(6,1.5);
          \bluedot{(6,1)} node [anchor=east, color=black] {$b$};
          \draw (6.5,.75) node {$\ldots$};
          \bluedot{(7,.5)} node [anchor=west,color=black] {$b'$};
          \draw[->] (7,0) -- (7,1.5);
        \end{tikzpicture}.
      \end{equation}
\end{enumerate}
Because of the sign changes involved in super isotopy invariance, we do not allow diagrams in which odd cups, caps, or dots appear at the same height (i.e.\ have the same vertical coordinate).

The local relations we impose are the following:

\noindent\begin{minipage}{0.5\linewidth}
  \begin{equation}\label{eq:rel0a}
    \begin{tikzpicture}[>=stealth,baseline={([yshift=-.5ex]current bounding box.center)}]
      \draw[->] (0,0) -- (1,1);
      \draw[->] (1,0) -- (0,1);
      \bluedot{(.25,.25)} node [anchor=south east, color=black] {$b$};
    \end{tikzpicture}
    \ =\
    \begin{tikzpicture}[>=stealth,baseline={([yshift=-.5ex]current bounding box.center)}]
      \draw[->](2,0) -- (3,1);
      \draw[->](3,0) -- (2,1);
      \bluedot{(2.75,.75)} node [anchor=north west, color=black] {$b$};
    \end{tikzpicture}
  \end{equation}
\end{minipage}%
\begin{minipage}{0.5\linewidth}
  \begin{equation}\label{eq:rel0b}
    \begin{tikzpicture}[>=stealth,baseline={([yshift=-.5ex]current bounding box.center)}]
      \draw[->] (0,0) -- (1,1);
      \draw[->] (1,0) -- (0,1);
      \bluedot{(.75,.25)} node [anchor=south west, color=black] {$b$};
    \end{tikzpicture}
    \ =\
    \begin{tikzpicture}[>=stealth,baseline={([yshift=-.5ex]current bounding box.center)}]
      \draw[->] (2,0) -- (3,1);
      \draw[->] (3,0) -- (2,1);
      \bluedot{(2.25,.75)} node [anchor=north east, color=black] {$b$};
    \end{tikzpicture}
  \end{equation}
\end{minipage}\par\vspace{\belowdisplayskip}

\noindent\begin{minipage}{0.5\linewidth}
  \begin{equation}\label{eq:rel1a}
    \begin{tikzpicture}[>=stealth,baseline={([yshift=-.5ex]current bounding box.center)}]
      \draw (0,0) -- (2,2)[->];
      \draw (2,0) -- (0,2)[->];
      \draw[->] (1,0) .. controls (0,1) .. (1,2);
    \end{tikzpicture}
    \ =\
    \begin{tikzpicture}[>=stealth,baseline={([yshift=-.5ex]current bounding box.center)}]
      \draw (3,0) -- (5,2)[->];
      \draw (5,0) -- (3,2)[->];
      \draw[->] (4,0) .. controls (5,1) .. (4,2);
    \end{tikzpicture}
  \end{equation}
\end{minipage}%
\begin{minipage}{0.5\linewidth}
  \begin{equation}\label{eq:rel1b}
    \begin{tikzpicture}[>=stealth,baseline={([yshift=-.5ex]current bounding box.center)}]
      \draw[->] (0,0) .. controls (1,1) .. (0,2);
      \draw[->] (1,0) .. controls (0,1) .. (1,2);
    \end{tikzpicture}
    \ =\
    \begin{tikzpicture}[>=stealth,baseline={([yshift=-.5ex]current bounding box.center)}]
      \draw[->] (2,0) --(2,2);
      \draw[->] (3,0) -- (3,2);
    \end{tikzpicture}
  \end{equation}
\end{minipage}\par\vspace{\belowdisplayskip}

\noindent\begin{minipage}{0.5\linewidth}
  \begin{equation}\label{eq:rel2a}
    \begin{tikzpicture}[>=stealth,baseline={([yshift=-.5ex]current bounding box.center)}]
      \draw[<-] (0,0) .. controls (1,1) .. (0,2);
      \draw[->] (1,0) .. controls (0,1) .. (1,2);
    \end{tikzpicture}
    \ =\
    \begin{tikzpicture}[>=stealth,baseline={([yshift=-.5ex]current bounding box.center)}]
      \draw[<-] (2,0) --(2,2);
      \draw[->] (3,0) -- (3,2);
    \end{tikzpicture}
    \ - \sum_{b \in \cB} \
    \begin{tikzpicture}[>=stealth,baseline={([yshift=-.5ex]current bounding box.center)}]
      \draw (4,1.75) arc (180:360:.5);
      \draw (4,2) -- (4,1.75);
      \draw[<-] (5,2) -- (5,1.75);
      \draw (5,.25) arc (0:180:.5);
      \bluedot{(5,1.66)} node [anchor=west,color=black] {$b^\vee$};
      \bluedot{(4,.33)} node [anchor=west,color=black] {$b$};
      \draw (5,0) -- (5,.25);
      \draw[<-] (4,0) -- (4,.25);
    \end{tikzpicture}
  \end{equation}
\end{minipage}%
\begin{minipage}{0.5\linewidth}
  \begin{equation}\label{eq:rel2b}
    \begin{tikzpicture}[>=stealth,baseline={([yshift=-.5ex]current bounding box.center)}]
      \draw[->] (0,0) .. controls (1,1) .. (0,2);
      \draw[<-] (1,0) .. controls (0,1) .. (1,2);
    \end{tikzpicture}
    \ =\
    \begin{tikzpicture}[>=stealth,baseline={([yshift=-.5ex]current bounding box.center)}]
      \draw[->] (2,0) --(2,2);
      \draw[<-] (3,0) -- (3,2);
    \end{tikzpicture}
  \end{equation}
\end{minipage}\par\vspace{\belowdisplayskip}

\noindent\begin{minipage}{0.5\linewidth}
  \begin{equation}\label{eq:rel3a}
    \begin{tikzpicture}[>=stealth,baseline={([yshift=-.5ex]current bounding box.center)}]
      \draw [->](0,0) arc (180:360:0.5cm);
      \draw (1,0) arc (0:180:0.5cm);
      \bluedot{(0,0)} node [anchor=east,color=black] {$b$};
    \end{tikzpicture}
    \ = \tr_B(b)
  \end{equation}
\end{minipage}%
\begin{minipage}{0.5\linewidth}
  \begin{equation}\label{eq:rel3b}
    \begin{tikzpicture}[>=stealth,baseline={([yshift=-.5ex]current bounding box.center)}]
      \draw (0,0) .. controls (0,.5) and (.7,.5) .. (.9,0);
      \draw (0,0) .. controls (0,-.5) and (.7,-.5) .. (.9,0);
      \draw (1,-1) .. controls (1,-.5) .. (.9,0);
      \draw[->] (.9,0) .. controls (1,.5) .. (1,1);
      \draw (1.5,0) node {$=$};
      \draw (2,0) node {$0$};
    \end{tikzpicture}
  \end{equation}
\end{minipage}\par\vspace{\belowdisplayskip}
\noindent Relation \eqref{eq:rel2a} is independent of the choice of basis ${\cB}$ by Lemma~\ref{lem:casimir-properties}.  By \emph{local relation}, we mean that relations \eqref{eq:rel0a}--\eqref{eq:rel3b} can be applied to any subdiagram contained in horizonal strip of the form $\R \times [a,b]$, with $0 < a < b < 1$, where the remainder of the strip (to the right and left of the subdiagram) consists only of strands without dots, local minima, or local maxima.  If we wish to perform a local relation on a subdiagram in a vertical strip containing dots, local minima, and/or local minima, we should first super isotope the diagram so that the vertical strip containing the subdiagram no longer contains such elements.

The monoidal structure on morphisms is given by horizontal juxtaposition of diagrams.  However, it is important that we specify that, when juxtaposing diagrams, any dots, local minima, or local maxima in the left-hand diagram should be placed higher (vertically) than dots, local minima, or local maxima in the right-hand diagram.  We always super isotope the diagrams before juxtaposing them to ensure that this is the case.

Note that all of the graphical relations are graded.   Thus, the morphism spaces of $\cH_B'$ are $\Z\times\Z_2$-graded vector spaces and composition of morphisms is compatible with the grading. We denote by $\{ \cdot,\cdot\}$ the grading shift in $\cH'_B$.

To illustrate the concept of super isotopy, consider the diagrams
\[
  \begin{tikzpicture}[>=stealth,baseline={([yshift=-.5ex]current bounding box.center)}]
    \draw[<-] (-1,1) .. controls (-1,-1) and (0,-1) .. (0,0) .. controls (0,1) and (1,1) .. (1,-1);
    \bluedot{(0,0)} node [anchor=west, color=black] {$b$};
  \end{tikzpicture}
  \qquad \text{and} \qquad
  \begin{tikzpicture}[>=stealth,baseline={([yshift=-.5ex]current bounding box.center)}]
    \draw[<-] (1,1) .. controls (1,-1) and (0,-1) .. (0,0) .. controls (0,1) and (-1,1) .. (-1,-1);
    \bluedot{(0,0)} node [anchor=west, color=black] {$b$};
  \end{tikzpicture}\ .
\]
These diagrams are not equivalent in general.  Indeed, using relations \eqref{eq:dotsup-a}--\eqref{eq:dotsdown-b}, we see that they are equal, respectively, to
\[
  \begin{tikzpicture}[>=stealth,baseline={([yshift=-.5ex]current bounding box.center)}]
    \draw[<-] (0,1) to (0,0);
    \bluedot{(0,0.5)} node [anchor=west, color=black] {$(-1)^{\sigma \bar b} \psi(b)$};
  \end{tikzpicture}
  \qquad \text{and} \qquad
  \begin{tikzpicture}[>=stealth,baseline={([yshift=-.5ex]current bounding box.center)}]
    \draw[<-] (0,1) to (0,0);
    \bluedot{(0,0.5)} node [anchor=west, color=black] {$b$};
  \end{tikzpicture}.
\]
Thus, they are equal precisely when $\psiB(b) = (-1)^{\sigma \bar b} b$.

We define the category $\cH_B$ to be the idempotent completion (also known as the Karoubi envelope) of $\cH'_B$.  By definition, the objects of $\cH_B$ consist of pairs $(R,e)$, where $R$ is an object of $\cH'_B$ and $e \colon R \to R$ is an idempotent morphism.  The space of morphisms in $\cH_B$ from $(R,e)$ to $(R',e')$ is the subspace of morphisms in $\cH'_B$ consisting of $g \colon R \to R'$ such that $ge = g = e'g$.  The idempotent $e$ defines the identity morphism of $(R,e)$ in $\cH_B$.

Note that $\cH_B$ inherits the $\Z\times \Z_2$-grading from $\cH'_B$.  This means that the split Grothendieck group $K_0'(\cH_B)$ of $\cH_B$ is a $\Z_{q,\pi}$-algebra, where the grading shift $\{1,0\}$ corresponds to multiplication by $q$ and $\{0,1\}$ corresponds to multiplication by $\pi$.  We define $K_0(\cH_B) = K_0'(\cH_B) \otimes_{\Z_{q,\pi}} \kk$ (see \eqref{eq:k-def}).  The monoidal structure in $\cH_B$ descends to a multiplication in $K_0(\cH_B)$.

\begin{rem} \label{rem:2-category-viewpoint}
   We could have chosen to define $\cH_B'$ and $\cH_B$ as 2-categories.  More precisely, we have a two category $\mathscr{H}_B'$ whose set of objects is $\Z$.  For $n,m \in \Z$, the morphisms from $m$ to $n$ are given by the subcategory of $\cH_B'$ whose objects are sequences of $\sP$'s and $\sQ$'s such that the total number of $\sP$'s minus the total number of $\sQ$'s is equal to $n-m$.  Vertical composition is composition in the category $\cH_B'$ and horizontal composition is given by the monoidal structure on $\cH_B'$.  Then $\mathscr{H}_B$ is again the idempotent completion of $\mathscr{H}_B'$.
\end{rem}

\begin{rem} \label{rem:special-cases}
  Several categories appearing in the literature are special cases of the category $\cH_B$ (or 2-category $\mathscr{H}_B$) for appropriate choices of Frobenius algebra $B$.
  \begin{asparaenum}
    \item If $B = \F$, then the category $\cH_\F$ is precisely the category $\cH$ of \cite{Kho14}.

    \item The 2-category $\mathscr{H}_\Gamma'$ of \cite{CL12} is a special case of the 2-category $\mathscr{H}_B'$ of Remark~\ref{rem:2-category-viewpoint} for the choice $B = \Lambda^*(V) \rtimes \Gamma$, where $\Gamma$ is a finite subgroup of $\mathrm{SL}_2(\F)$ and $V$ is the natural two-dimensional representation of $\Gamma$.  In this case, $\delta=2$, $\sigma = 0$, and $\psi_B = \id_B$ (i.e.\ $B$ is a symmetric algebra).  In addition, the object $P$ of $\mathscr{H}_\Gamma$' is the object $\{-1\}\sP$ of $\mathscr{H}_B'$.   As a result, the degrees of the cups and caps in \cite{CL12} are shifted relative to their degrees in the current paper.

    \item \label{rem-item:special-cases-CLaux} The 2-category $\mathscr{H}^\Gamma$ of \cite[\S 6.1]{CL12} is also a special case of the 2-category $\mathscr{H}_B$, where $B$ is a skew-zigzag algebra of \cite{HK01}.  The particular skew-zigzag algebra needed to recover $\mathscr{H}^\Gamma$ is described in \cite[\S2.3]{CLS14}.  An upward strand labeled $i$ in $\mathscr{H}^\Gamma$ is the (identity morphism of the) element $(\sP,e_i)$ of the Karoubi envelope $\cH_B$, where $e_i$ is the idempotent in the skew-zigzag algebra corresponding to a path of length zero at vertex $i$.

    \item The category $\cH^t$ of \cite[\S 6]{CS14} is the special case of the category $\cH_B$ for $B = \Cl$.

    \item The category $\cH_\Gamma^-$ of \cite{HS15} is the special case of the category $\cH_B$ for $B = (\Lambda^*(V) \rtimes \Gamma) \otimes \Cl$.
  \end{asparaenum}
\end{rem}

%%%%%%%%%%%%%%%%%%%%%%%%%%%%%%%%%%%%%%%%%%%%%%%
\section{Action of $\cH_B$ on module categories} \label{sec:actions}
%%%%%%%%%%%%%%%%%%%%%%%%%%%%%%%%%%%%%%%%%%%%%%%

In this section we define an action of $\cH_B$ on the category of modules $\bigoplus_{n \ge 0} A_n \md$.  More precisely, for each $n \in \N$, we will define a functor
\[
  \bF_n \colon \cH_B \to \bigoplus_{m \in \Z} (A_m,A_n)\bimod.
\]
Then the composition
\begin{multline} \label{eq:functor-A-def}
  \bF \colon \cH_B
  \xrightarrow{\bigoplus_{n \in \N} \bF_n} \bigoplus_{m,n \in \N} (A_m,A_n)\bimod
  \xrightarrow{M \mapsto M \otimes -} \bigoplus_{m,n \in \N} \Fun (A_n\md,A_m\md) \\
  \to \Fun \left( \bigoplus_{n \in \N} A_n\md, \bigoplus_{n \in \N} A_n\md \right).
\end{multline}
defines a representation of the category $\cH_B$ on the category $\bigoplus_{n \in \N} A_n\md$.  This will categorify the Fock space representation of the Heisenberg algebra $\fh_B$.

%%%%%%%%%%%%%%%%%%%%
\subsection{Objects}
%%%%%%%%%%%%%%%%%%%%

Fix $n \in \N$.  We define the functor $\bF_n$ on objects as follows.  The object $\sP$ is sent to the bimodule $(k+1)_k$, and the object $\sQ$ is sent to the bimodule $\prescript{}{k}(k+1)$, where $k$ is uniquely determined by the fact that our functor should respect the tensor product and be a functor to the category $\bigoplus_{m \in \Z} (A_m,A_n)\bimod$.  For example,
\[
  \bF_n(\sP\sP\sQ\sP\sQ\sQ\sP) = (n+1)_n(n)_{n-1}(n)_n(n)_{n-1}(n)_n(n+1)_{n+1}(n+1)_n
\]
We define the $\bF_n$ to be zero on any object for which the indices $k$ become negative.  For example, $\bF_1(\sQ\sQ)=0$.  Finally, we declare $\bF_n$ to commute with degree shifts.

%%%%%%%%%%%%%%%%%%%%%%
\subsection{Morphisms}
%%%%%%%%%%%%%%%%%%%%%%

We now define the functor on morphisms.  Let $D$ be a morphism in $\cH_B$ consisting of a single planar diagram.  We label the rightmost region of the diagram $D$ with $n$.  We then label all other regions of the diagram with integers such that, as we move from right to left across the diagram, labels increase by one when we cross upward pointing strands and decrease by one when we cross downward pointing strands.  It is clear that there is a unique way to label the regions of the diagram in this way.  For instance, the following diagram is labelled as indicated (where we have omitted the labels of the dots).
\[
  \begin{tikzpicture}[>=stealth]
    \draw[->] (0,0) to (1,2);
    \draw[->] (1,0) .. controls (1,1) and (3,1) .. (3,0);
    \draw[->] (2,0) .. controls (2,1.5) and (-1,1.5) .. (-1,0);
    \draw[->] (2,2) arc(180:360:1);
    \bluedot{(0.75,1.5)};
    \bluedot{(-.3,0.97)};
    \draw (7,1) .. controls (7,1.5) and (6.3,1.5) .. (6.1,1);
    \draw (7,1) .. controls (7,.5) and (6.3,.5) .. (6.1,1);
    \draw (6,0) .. controls (6,.5) .. (6.1,1);
    \draw (6.1,1) .. controls (6,1.5) .. (6,2) [->];
    \bluedot{(6,0.3)};
    \draw[->] (4.3,1) arc(180:-180:.6);
    \filldraw [blue] (3.88,1.5) circle (2pt);
    \draw (7.5,1) node {$n$};
    \draw (6.6,1) node {\small{$n\!-\!1$}};
    \draw (3.7,.6) node {$n+1$};
    \draw (4.9,1) node {$n$};
    \draw (3,1.5) node {$n+2$};
    \draw (-.5,1.5) node {$n+2$};
    \draw (-.3,.6) node {\small$n\!+\!3$};
    \draw (.8,.7) node {\small$n\!+\!2$};
    \draw (1.5,.3) node {\small$n\!+\!1$};
    \draw (2.5,.3) node {$n$};
  \end{tikzpicture}
\]
Each diagram is a composition of dots, crossings, cups and caps.  Thus, we define the functors $\bF_n$, $n \in \N$, on such atoms.  Since, on such atoms, the functor $\bF_n$ is independent of $n$, we will drop the index and describe the functor $\bF$.

%%%%%%%%%%%%%%%%%%%%%%%%%%
\subsection{Cups and caps}
%%%%%%%%%%%%%%%%%%%%%%%%%%

We define $\bF$ on cups and caps by

\noindent\begin{minipage}{0.5\linewidth}
  \begin{equation} \label{eq:right-cap-cup-maps}
    \begin{tikzpicture}[>=stealth,baseline={([yshift=-.5ex]current bounding box.center)}]
      \draw[<-] (0,-.5) arc (0:180:.5);
      \draw (0.7,-0.3) node {$n+1$};
    \end{tikzpicture}
    \quad \mapsto \varepsilon_\rR,
    \end{equation}
\end{minipage}%
\begin{minipage}{0.5\linewidth}
  \begin{equation}
    \begin{tikzpicture}[>=stealth,baseline={([yshift=-.5ex]current bounding box.center)}]
      \draw[->] (0,0) arc (180:360:.5);
      \draw (1.3,-0.3) node {$n$};
    \end{tikzpicture}
    \quad \mapsto \eta_\rR,
  \end{equation}
\end{minipage}\par\vspace{\belowdisplayskip}

\noindent\begin{minipage}{0.5\linewidth}
  \begin{equation}
    \begin{tikzpicture}[>=stealth,baseline={([yshift=-.5ex]current bounding box.center)}]
      \draw[->] (0,-.5) arc (0:180:.5);
      \draw (0.3,-0.3) node {$n$};
    \end{tikzpicture}
    \quad \mapsto \varepsilon_\rL,
    \end{equation}
\end{minipage}%
\begin{minipage}{0.5\linewidth}
  \begin{equation} \label{eq:left-cap-cup-maps}
    \begin{tikzpicture}[>=stealth,baseline={([yshift=-.5ex]current bounding box.center)}]
      \draw[<-] (0,0) arc (180:360:.5);
      \draw (1.5,-0.3) node {$n+1$};
    \end{tikzpicture}
    \quad \mapsto \eta_\rL,
  \end{equation}
\end{minipage}\par\vspace{\belowdisplayskip}

\noindent where $\varepsilon_\rR$, $\eta_\rR$, $\varepsilon_\rL$, and $\eta_\rL$ are the bimodule homomorphisms of Proposition~\ref{prop:adjunction-maps}.  Note that the maps \eqref{eq:right-cap-cup-maps}--\eqref{eq:left-cap-cup-maps} are degree preserving.

%%%%%%%%%%%%%%%%%
\subsection{Dots} \label{subsec:dots}
%%%%%%%%%%%%%%%%%

It is straightforward to verify that the maps
\begin{gather}
  \phi_n \colon B^\op \rightarrow \END \left( (n+1)_n \right), \quad \phi(b) = \pr{\left(1_B^{\otimes n} \otimes b\right)}, \quad \text{and} \label{eq:phi-def} \\
  \phi'_n \colon B \rightarrow \END \left( \prescript{}{n}(n+1) \right),\quad \phi'(b) = \pl{\left(1_B^{\otimes n} \otimes b \right)}, \label{eq:phi'-def}
\end{gather}
are algebra homomorphisms.  Since the value of $n$ will usually be clear from the context, we will sometimes omit the subscript on $\phi_n$ and $\phi_n'$.
\details{
  For $b \in B$, $\phi(b)$ is a homomorphism of left $A_{n+1}$-modules because, for $\bb_1,\bb_2 \in B^{\otimes (n+1)}$ and $\tau_1, \tau _2 \in S_{n+1}$, we have
  \[
    (\bb_2 \tau_2) \left( \phi(b) \cdot (\bb_1 \tau_1) \right)
    = (-1)^{\bar b \bar \bb_1} (\bb_2 \tau_2) ({\bb_1}\tau_1) (1_B^{\otimes n} \otimes b) \\
    = (-1)^{\bar b \bar \bb_2} \phi(b) \cdot \left((\bb_2 \tau_2) (\bb_1 \tau_1)\right).
  \]
  It is a homomorphism of right $A_n$-modules since, for $\bb_1 \in B^{\otimes (n+1)}$, $\bb_2 \in B^{\otimes n}$, $\tau_1 \in S_{n+1}$, $\tau_2 \in S_n$, we have
  \begin{multline*}
    \left(\phi(b) \cdot (\bb_1 \tau_1) \right) (\bb_2 \tau_2)
    = (-1)^{\bar b \bar \bb_1} (\bb_1 \tau_1) (1_B^{\otimes n} \otimes b) (\bb_2 \tau_2) \\
    = (-1)^{\bar b \bar \bb_1 + \bar b \bar \bb_2} (\bb_1 \tau_1) (\bb_2 \tau_2) (1_B^{\otimes n} \otimes b)
    = \phi(b) \cdot \left((\bb_1 \tau_1) (\bb_2 \tau_2) \right),
  \end{multline*}
  where, in the second equality, we use that $\tau_2 \in S_n$, so $\tau_2 \cdot (1_B^{\otimes n} \otimes b) = (1_B^{\otimes n} \otimes b)$.  It is clear that the left and right actions commute.  Thus $\phi(b)$ is a homomorphism of $(A_{n+1},A_n)$-bimodules.

  Finally, $\phi$ is an algebra homomorphism since it is clearly linear, maps the identity to the identity, and, for $b_1, b_2 \in B$,
  \[
    \phi \left( (-1)^{\bar b_1 \bar b_2} b_2 b_1 \right)
    = \pr{\left( 1^{\otimes n} \otimes (-1)^{\bar b_1 \bar b_2} b_2 b_1 \right)}
    = \pr{\left(1^{\otimes n} \otimes b_1 \right)}\, \pr{\left(1^{\otimes n} b_2\right)}
    = \phi(b_1) \phi(b_2).
  \]

  The proof that $\phi'$ is an algebra homomorphism is similar.
}
We thus define $\bF$ on dots by

\noindent\begin{minipage}{0.5\linewidth}
  \begin{equation} \label{eq:dot-maps-up}
    \begin{tikzpicture}[>=stealth,baseline={([yshift=-.5ex]current bounding box.center)}]
      \draw[->] (0,0) -- (0,1.6);
      \bluedot{(0,.8)};
      \draw (0,.8) node [anchor=west] {$b$};
      \draw (0.5,0.3) node {$n$};
    \end{tikzpicture}
    \mapsto \phi_n(b) = \pr{\left(1_B^{\otimes n} \otimes b\right)},
  \end{equation}
\end{minipage}%
\begin{minipage}{0.5\linewidth}
  \begin{equation} \label{eq:dot-maps-down}
    \begin{tikzpicture}[>=stealth,baseline={([yshift=-.5ex]current bounding box.center)}]
      \draw[<-] (0,0) -- (0,1.6);
      \bluedot{(0,.8)};
      \draw (0,.8) node [anchor=west] {$b$};
      \draw (-0.4,0.3) node {$n$};
    \end{tikzpicture}
    \mapsto \phi'_n(b)  = \pl{\left(1_B^{\otimes n} \otimes b \right)}.
  \end{equation}
\end{minipage}\par\vspace{\belowdisplayskip}

\noindent Note that the maps \eqref{eq:dot-maps-up} and \eqref{eq:dot-maps-down} preserve degree.

%%%%%%%%%%%%%%%%%%%%%%
\subsection{Crossings} \label{sec:cross}
%%%%%%%%%%%%%%%%%%%%%%

We define $\bF$ on upwards crossings by the degree preserving map
\begin{equation} \label{eq:up-crossing-map}
  \begin{tikzpicture}[>=stealth,baseline={([yshift=-.5ex]current bounding box.center)}]
    \draw[->] (0,0) -- (1,1);
    \draw[->] (1,0) -- (0,1);
    \draw (1.5,0.5) node {$n$};
  \end{tikzpicture}
  \ \mapsto \Big( \pr{s_{n+1}} \colon (n+2)_n \to (n+2)_n \Big).
\end{equation}
Note that $\pr{s_{n+1}}$ is a homomorphism of $(A_{n+2},A_n)$-bimodules since $s_{n+1}$ commutes with the elements of $A_n$, viewed as a subalgebra of $A_{n+2}$.

\begin{lem} \label{lem:downcross-invariance}
  The images under $\bF$ of the diagrams
  \begin{equation} \label{eq:up-down-crossing-twist}
    \begin{tikzpicture}[>=stealth,baseline={([yshift=-.5ex]current bounding box.center)}]
      \draw[->] (-1.5,1.5) .. controls (-1.5,0.5) and (-1,-1) .. (0,0) .. controls (1,1) and (1.5,-0.5) .. (1.5,-1.5);
      \draw[->] (-2,1.5) .. controls (-2,-2) and (1.5,-1.5) .. (0,0) .. controls (-1.5,1.5) and (2,2) .. (2,-1.5);
      \draw (2.3,0.5) node {$n$};
    \end{tikzpicture}
    \qquad \text{and} \qquad
    \begin{tikzpicture}[>=stealth,baseline={([yshift=-.5ex]current bounding box.center)}]
      \draw[->] (1.5,1.5) .. controls (1.5,0.5) and (1,-1) .. (0,0) .. controls (-1,1) and (-1.5,-0.5) .. (-1.5,-1.5);
      \draw[->] (2,1.5) .. controls (2,-2) and (-1.5,-1.5) .. (0,0) .. controls (1.5,1.5) and (-2,2) .. (-2,-1.5);
      \draw (1.9,-0.6) node {$n$};
    \end{tikzpicture}
  \end{equation}
  are both $\pl{s_{n-1}}$.
\end{lem}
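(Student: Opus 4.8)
The plan is to expand each of the two diagrams in \eqref{eq:up-down-crossing-twist} as a vertical composite of horizontal juxtapositions of the elementary pieces on which $\bF$ has already been defined — identity strands, a single upward crossing, and cups and caps — then apply the rules \eqref{eq:up-crossing-map} and \eqref{eq:right-cap-cup-maps}--\eqref{eq:left-cap-cup-maps} and collapse the resulting composite of bimodule homomorphisms using the unit-counit relations \eqref{eq:right-unit-counit} and \eqref{eq:left-unit-counit} of Proposition~\ref{prop:adjunction-maps}. First I would dispose of the degenerate cases: if $n \le 1$ then $\bF(\sQ\sQ) = 0$ and both sides are the zero map, so assume $n \ge 2$; then $\bF(\sQ\sQ)$ is naturally identified with $A_n$ viewed as an $(A_{n-2},A_n)$-bimodule, and since $s_{n-1} = (n-1,n) \in S_n \subseteq A_n$ commutes with $A_{n-2}$, the operator $\pl{s_{n-1}}$ is a well-defined endomorphism of this bimodule — this is the target both computations must hit.

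Each diagram in \eqref{eq:up-down-crossing-twist} is a ``downward crossing'' $\sQ\sQ \to \sQ\sQ$ obtained by rotating the upward crossing by $180^\circ$; concretely it is a composite of cups, the upward crossing, and caps, and the two diagrams realize the two directions in which this rotation can be carried out. Under $\bF$, the upward crossing becomes $\pr{s_{n+1}}$ (on the appropriate bimodule $(n+2)_n$) by \eqref{eq:up-crossing-map}, and each cup and cap becomes one of $\eta_\rR, \eta_\rL, \varepsilon_\rR, \varepsilon_\rL$; so $\bF$ of each diagram is a composite of the schematic shape $(\text{cap maps}) \circ (\id \otimes \pr{s_{n+1}} \otimes \id) \circ (\text{cup maps})$. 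In both diagrams the cups and caps occur so that the grading shifts $\{-\delta,\sigma\}$ carried by the left (co)units $\eta_\rL$ and $\varepsilon_\rL$ cancel, leaving a map of degree $(0,0)$, matching the degree of $\pl{s_{n-1}}$.

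To evaluate the composite for the first diagram I would first use that $\varepsilon_\rR$ is multiplication and $\eta_\rR$ is the inclusion $A_n \into A_{n+1}$ (Proposition~\ref{prop:adjunction-maps}), together with the fact that $s_{n+1}$ is even and commutes with $A_n$, to slide $\pr{s_{n+1}}$ past the right (co)units; this reduces the composite to one built only from the left (co)units, the structure inclusions, and a single right-multiplication operator. One then straightens the remaining left zigzag with \eqref{eq:left-unit-counit}, and computes from the explicit formula for $\eta_\rL$ in Proposition~\ref{prop:adjunction-maps} — the sum $\sum_{b,i}(-1)^{\sigma \overline{b^\vee}} s_i \dotsm s_n(1_B^{\otimes n}\otimes b^\vee) \otimes (1_B^{\otimes n}\otimes b)s_n \dotsm s_i$ — that the net effect transports right-multiplication by $s_{n+1} = (n+1,n+2)$ to left-multiplication by $s_{n-1} = (n-1,n)$: the basis sum collapses because $\varepsilon_\rL$ is supported on the terms whose accumulated permutation lies in $S_n$, which selects exactly the summand in which the two ``folded-away'' strands cancel, and the remaining permutation is obtained from $s_{n+1}$ by shifting every index down by two. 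For the second diagram the computation is the mirror image, using the other half of \eqref{eq:left-unit-counit} (and the other order of the structure inclusions), and it again yields $\pl{s_{n-1}}$.

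The step I expect to be the real work is the sign bookkeeping. Isotoping the pictures in \eqref{eq:up-down-crossing-twist} into a standard composite of atoms moves odd cups and caps past one another, each such move contributing a factor $(-1)^\sigma$; the left zigzag relation in \eqref{eq:left-unit-counit} produces a further $(-1)^\sigma$ on one side but not the other; and $\eta_\rL$, $\varepsilon_\rL$ carry the sign factors $(-1)^{\sigma(\cdots)}$ and the trace $\tr_B$ (whose parity is constrained by \eqref{eq:trace-parity-condition}). One must check that, because $s_{n+1}$ is even of degree zero, all of these signs cancel in each of the two computations, so that the answer is exactly $\pl{s_{n-1}}$ with coefficient $1$ rather than merely up to sign — and, symmetrically, that the cancellations occur identically for both directions of rotation. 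Everything else is a routine application of the adjunction relations already recorded in Proposition~\ref{prop:adjunction-maps}.
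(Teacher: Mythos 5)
Your overall strategy — slice each diagram into cups, the upward crossing, and caps, apply $\bF$ to each atom, and compute the resulting composite of bimodule maps — is exactly the route the paper takes, and your opening paragraph (identifying $\pl{s_{n-1}} \colon {}_{n-2}(n) \to {}_{n-2}(n)$ as the common target, and disposing of small $n$) is sound.

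However, two structural claims in your plan are wrong and would derail the computation if you tried to carry it out. First, you describe the first (leftmost) diagram as ``reducing to one built only from the left (co)units'' and then as requiring the dual-basis expansion of $\eta_\rL$. That is not what happens. The first diagram's cups and caps are all right ones ($\eta_\rR$ and $\varepsilon_\rR$) — just the algebra inclusion and multiplication — and the image of $1_n$ tracks through the composite in four lines with no trace map, no dual basis, and no signs: $1_n \mapsto 1_{n-2}\otimes 1_n \mapsto 1_n\otimes 1_n \mapsto s_{n-1}\otimes 1_n \mapsto s_{n-1}$. The dual-basis expansion, the collapsing argument (selecting the summand with $s_i\dotsm s_{n-1}s_j\dotsm s_{n-1}\in S_{n-2}$, hence $i=j=n-1$), and the parity constraint \eqref{eq:trace-parity-condition} all belong exclusively to the second (rightmost) diagram, whose cups and caps are $\eta_\rL$ and $\varepsilon_\rL$. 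You have conflated the two, but the whole content of the lemma is that these two genuinely different decompositions agree.

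Second, the upward crossing sitting inside the rotated diagram lies in a region labeled $n-2$, not $n$: the outer region is $n$ and two downward strands are crossed going inward. So $\bF$ sends that crossing directly to $\pr{s_{n-1}}$ acting on $(n)_{n-2}$. Your $\pr{s_{n+1}}$ misreads \eqref{eq:up-crossing-map}, whose index refers to the label of the region to the crossing's right in the ambient diagram; there is no subsequent ``shift every index down by two'' step to perform. This is not just notation — it is exactly how $s_{n-1}$ appears in the first place.

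You correctly flag that, for the rightmost diagram, the remaining work is careful bookkeeping with \eqref{eq:dual-bases-decomp}, \eqref{eq:trace-parity-condition}, and the permutation condition; but since you do not carry it out, the proposal is a plan rather than a proof.
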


\begin{proof}
  First note that $\pl{s_{n-1}} \colon \prescript{}{n-2}(n) \to \prescript{}{n-2}(n)$ is a bimodule homomorphism.  The leftmost map of \eqref{eq:up-down-crossing-twist} is the composition
  \[
    \prescript{}{n-2}(n)
    \cong (n-2)_{n-2}(n)
    \xrightarrow{\eta_\rR^2 \otimes \id} \prescript{}{n-2}(n)_{n-2}(n)
    \xrightarrow{\pr{s_{n-1}} \otimes \id} \prescript{}{n-2}(n)_{n-2}(n)
    \xrightarrow{\varepsilon_\rR^2} \prescript{}{n-2}(n).
  \]
  This map is uniquely determined by the image of $1_n$ and we compute
  \[
    1_n
    \mapsto 1_{n-2} \otimes 1_n
    \mapsto 1_n \otimes 1_n
    \mapsto s_{n-1} \otimes 1_n
    \mapsto s_{n-1}
    = \pl{s_{n-1}}(1_n).
  \]
  On the other hand, the rightmost map of \eqref{eq:up-down-crossing-twist} is the composition
  \begin{multline*}
    \prescript{}{n-2}(n)
    \xrightarrow{\eta_\rL} \prescript{}{n-2}(n)_{n-1}(n)
    \cong \prescript{}{n-2}(n)_{n-1}(n-1)_{n-1}(n)
    \xrightarrow{\id \otimes \eta_\rL \otimes \id} \prescript{}{n-2}(n)_{n-1}(n-1)_{n-2}(n-1)_{n-1}(n) \\
    \cong \prescript{}{n-2}(n)_{n-2}(n)
    \xrightarrow{\pr{s_{n-1}} \otimes \id} \prescript{}{n-2}(n)_{n-2}(n)
    \xrightarrow{\varepsilon_\rL^2 \otimes \id} (n-2)_{n-2}(n)
    \cong \prescript{}{n-2}(n).
  \end{multline*}
  This map is also uniquely determined by the image of $1_n$ and we compute
  \begin{align*}
    1_n
    &\mapsto \sum_{\substack{b \in \cB \\ i \in \{1,\dotsc,n\}}} (-1)^{\sigma \bar b^\vee} s_i \dotsm s_{n-1} (1_B^{\otimes (n-1)} \otimes b^\vee) \otimes (1_B^{\otimes (n-1)} \otimes b) s_{n-1} \dotsm s_i \\
    &\mapsto \sum_{\substack{b,c \in \cB \\ i \in \{1,\dotsc,n\} \\ j \in \{1,\dotsc,n-1\}}} (-1)^{\sigma \bar c^\vee} s_i \dotsm s_{n-1} (1_B^{\otimes (n-1)} \otimes b^\vee) \otimes s_j \dotsm s_{n-2} (1_B^{\otimes (n-2)} \otimes c^\vee) \otimes \\
    & \pushright{\otimes (1_B^{\otimes (n-2)} \otimes c) s_{n-2} \dotsm s_j \otimes (1_B^{\otimes (n-1)} \otimes b) s_{n-1} \dotsm s_i} \\
    &\mapsto \sum_{\substack{b,c \in \cB \\ i \in \{1,\dotsc,n\} \\ j \in \{1,\dotsc,n-1\}}} (-1)^{\bar b \bar c^\vee} s_i \dotsm s_{n-1} s_j \dotsm s_{n-2} (1_B^{\otimes (n-2)} \otimes c^\vee \otimes b^\vee) \otimes (1_B^{\otimes (n-2)} \otimes c \otimes b) s_{n-2} \dotsm s_j s_{n-1} \dotsm s_i \\
    &\mapsto \sum_{\substack{b,c \in \cB \\ i \in \{1,\dotsc,n\} \\ j \in \{1,\dotsc,n-1\}}} (-1)^{\bar b \bar c^\vee} s_i \dotsm s_{n-1} s_j \dotsm s_{n-2} (1_B^{\otimes (n-2)} \otimes c^\vee \otimes b^\vee) s_{n-1} \otimes (1_B^{\otimes (n-2)} \otimes c \otimes b) s_{n-2} \dotsm s_j s_{n-1} \dotsm s_i \\
    &\mapsto \sum_{b,c \in \cB} \tr_B(c^\vee) \tr_B(b^\vee) (1_B^{\otimes (n-2)} \otimes c \otimes b) s_{n-1}
    \stackrel{\eqref{eq:dual-bases-decomp}}{=} s_{n-1} = \pl{s_{n-1}}(1_n),
  \end{align*}
  where we have used the fact that $s_i \dotsm s_{n-1} s_j \dotsm s_{n-1} \in S_{n-2}$ if and only if $i=j=n-1$ and that $\tr_B(c^\vee) \tr_B(b^\vee)=0$ unless $\bar b \bar c^\vee = 0$.
\end{proof}

In light of Lemma~\ref{lem:downcross-invariance}, we define $\bF$ on downwards crossings by the degree preserving map
\begin{equation}
  \begin{tikzpicture}[>=stealth,baseline={([yshift=-.5ex]current bounding box.center)}]
    \draw[<-] (0,0) -- (1,1);
    \draw[<-] (1,0) -- (0,1);
    \draw (1.5,0.5) node {$n$};
  \end{tikzpicture}
  \quad \mapsto \pl{s_{n-1}}.
\end{equation}

\begin{lem} \label{lem:rightcross-invariance}
  The images under $\bF$ of the diagrams
  \[
    \begin{tikzpicture}[>=stealth,baseline={([yshift=-.5ex]current bounding box.center)}]
    \draw[->] (1,0) .. controls (1,0.5) and (0,0.5) .. (0,1);
    \draw[->] (-0.5,1) .. controls (-0.5,0) and (0,0) .. (0.5,0.5) .. controls (1,1) and (1.5,1) .. (1.5,0);
    \draw (2,0.5) node {$n$};
    \end{tikzpicture}
    \quad \text{and} \quad
    \begin{tikzpicture}[>=stealth,baseline={([yshift=-.5ex]current bounding box.center)}]
      \draw[<-] (-1,0) .. controls (-1,0.5) and (0,0.5) .. (0,1);
      \draw[<-] (0.5,1) .. controls (0.5,0) and (0,0) .. (-0.5,0.5) .. controls (-1,1) and (-1.5,1) .. (-1.5,0);
      \draw (1,0.5) node {$n$};
    \end{tikzpicture}
  \]
  are both the bimodule homomorphism
  \[
    (n)_{n-1}(n) \to \prescript{}{n}(n+1)_n,\ a \otimes a' \mapsto as_na'.
  \]
\end{lem}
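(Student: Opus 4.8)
The plan is to compute the image under $\bF$ of each of the two composite diagrams by decomposing them into the atoms (cups, caps, crossings) already treated, and to check that both computations yield the map $a\otimes a'\mapsto as_na'$.

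\textbf{Setup.} First I would record that the claimed map is a well-defined $(A_n,A_n)$-bimodule homomorphism $(n)_{n-1}(n)\to \prescript{}{n}(n+1)_n$: it is balanced over $A_{n-1}$ because $s_n$ commutes with $A_{n-1}\subseteq A_{n+1}$, and it visibly intertwines the left and right $A_n$-actions since $s_n$ commutes with $A_n$ (the copy of $S_n$ fixing $n+1$) and the $B$-factors. As in Lemma~\ref{lem:downcross-invariance}, each composite map is uniquely determined by the image of the generator $1_{n}\otimes 1_n$ (or rather the appropriate unit, after using the identifications $(n)_{n-1}(n)$), so it suffices to trace that element through.

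\textbf{The left diagram.} Reading bottom to top, the left diagram in the statement is the composite
\[
  (n)_{n-1}(n)
  \xrightarrow{\id\otimes \eta_\rR} (n)_{n-1}(n)_{n-1}(n+1)
  \cong \text{(reassociate)}
  \xrightarrow{\pr{s_n}\otimes\id} \ \cdots\
  \xrightarrow{\id\otimes\varepsilon_\rR} \prescript{}{n}(n+1)_n,
\]
where one first uses the right unit $\eta_\rR$ (a right cup) to create an extra $\sP\sQ$ pair at the appropriate level, applies the upward crossing $\pr{s_n}$ from \eqref{eq:up-crossing-map} to pull the incoming up-strand across, and then cancels with the right cap $\varepsilon_\rR$ via \eqref{eq:right-unit-counit}. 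Concretely, $1_n\otimes 1_n \mapsto 1_n\otimes 1_{n+1}$, then $\mapsto s_n\otimes 1_{n+1}$ after the crossing (using $\pr{s_n}$), then $\mapsto s_n$ after multiplying through $\varepsilon_\rR$; and in general $a\otimes a'\mapsto as_na'$. Here one must be careful that $\pr{s_n}$ is the \emph{right} action of $s_n$, so signs are trivial since $s_n$ is even, and that the bimodule identifications $(n+1)_n(n+1)\cong(n+1)$ etc.\ are the canonical ones from Proposition~\ref{prop:adjunction-maps}.

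\textbf{The right diagram and the obstacle.} The right diagram is the analogous composite but built from \emph{down}-oriented strands, so it uses the left adjunction maps $\eta_\rL,\varepsilon_\rL$ from Proposition~\ref{prop:adjunction-maps} together with the downward crossing $\pl{s_{n-1}}$ (equivalently, by Lemma~\ref{lem:downcross-invariance}, $\pr{s_{n-1}}$ in the relevant strand picture). The expected main obstacle is exactly the bookkeeping in this second computation: $\eta_\rL$ introduces a sum $\sum_{b\in\cB,\,i} (-1)^{\sigma\bar b^\vee} s_i\dotsm s_n(1^{\otimes n}\otimes b^\vee)\otimes(1^{\otimes n}\otimes b)s_n\dotsm s_i$, and one must push the crossing through, apply $\varepsilon_\rL$ (which kills all terms whose permutation part leaves $S_n$ and applies $\tr_B$ to the last $B$-factor), and then collapse the remaining sum using \eqref{eq:dual-bases-decomp} and the parity constraint \eqref{eq:trace-parity-condition} (i.e.\ $\tr_B(b^\vee)\ne 0\implies \bar b=0$), just as in the second computation in the proof of Lemma~\ref{lem:downcross-invariance}. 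The key combinatorial facts needed are: $s_i\dotsm s_n\,s_j\dotsm s_n\in S_{n-1}$ (or the relevant parabolic) only when the indices force collapse, and that after summing over the dual basis one is left with a single $s_n$. Tracking the various factors $(-1)^{\sigma\bar b}$ and verifying they cancel against the sign in \eqref{eq:left-unit-counit} (where $(\varepsilon_\rL\otimes\id)\circ(\id\otimes\eta_\rL)=(-1)^\sigma\id$) is the delicate point; everything else is a routine diagram chase. Finally, since both composites send $1_n\otimes 1_n$ to $s_n$ and are bimodule maps, they agree with $a\otimes a'\mapsto as_na'$, completing the proof.
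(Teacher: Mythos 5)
Your proposal misreads both diagrams, which leads to a genuine gap in the argument. For the \emph{left-hand} diagram, the cup (the local minimum of the second strand) sits on the far \emph{left}, so the first step must be $\eta_\rR\otimes\id$, not $\id\otimes\eta_\rR$; the intermediate object is $\prescript{}{n}(n+1)_{n-1}(n)$, not the $(n)_{n-1}(n)_{n-1}(n+1)$ you write (which does not type-check as a tensor product of bimodules). The chase is $a\otimes a'\mapsto a\otimes a'\mapsto as_n\otimes a'\mapsto as_na'$, with $s_n$ landing in the $A_{n+1}$-factor; your ``$1_n\otimes 1_{n+1}\mapsto s_n\otimes 1_{n+1}$'' places $s_n$ in a factor $A_n$ where it does not live. (Also, $\eta_\rR$ produces a $\sQ\sP$ pair, not the ``extra $\sP\sQ$ pair'' you describe.)

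The more serious problem is your treatment of the \emph{right-hand} diagram. It does \emph{not} use the left adjunction maps $\eta_\rL,\varepsilon_\rL$: its local minimum is again a right cup $\eta_\rR$ ($\sQ$ on the left, $\sP$ on the right) and its local maximum is again a right cap $\varepsilon_\rR$ — the only changes from the left-hand picture are that the cup now sits on the right and the cap on the left, and the crossing is downward instead of upward. The correct decomposition is $\id\otimes\eta_\rR$, then $\id\otimes\pl{s_n}$, then $\varepsilon_\rR\otimes\id$, with chase $a\otimes a'\mapsto a\otimes a'\mapsto a\otimes s_na'\mapsto as_na'$. There is no dual-basis sum, no use of \eqref{eq:dual-bases-decomp} or \eqref{eq:trace-parity-condition}, and no $(-1)^\sigma$ from \eqref{eq:left-unit-counit} to cancel; the computation is exactly as short as for the left-hand diagram. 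The ``obstacle'' you flag does occur — but in Lemma~\ref{lem:downcross-invariance} (second diagram) and Lemma~\ref{lem:leftcross-invariance}, which genuinely involve $\eta_\rL,\varepsilon_\rL$ and dual bases. You have imported that machinery here where it plays no role, so the bulk of your argument for the second map addresses steps that never arise, even though the asserted formula $a\otimes a'\mapsto as_na'$ is correct.
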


\begin{proof}
  This follows from a straightforward computation, which will be omitted.
  \details{
    The image under $\bF$ of the left-hand diagram is the composition
    \[
      (n)_{n-1}(n) \xrightarrow{\eta_\rR \otimes \id} \prescript{}{n}(n+1)_{n-1}(n) \xrightarrow{\pr{s_n} \otimes \id} \prescript{}{n}(n+1)_{n-1}(n) \xrightarrow{\id \otimes \varepsilon_\rR} \prescript{}{n}(n+1)_n.
    \]
    For $a \in (n)_{n-1}$ and $a' \in \prescript{}{n-1}(n)$, we have
    \[
      a \otimes a' \mapsto a \otimes a' \mapsto as_n \otimes a' \mapsto a s_n a'.
    \]
    Similarly, the image under $\bF$ of the right-hand diagram is the composition
    \[
      (n)_{n-1}(n) \xrightarrow{\id \otimes \eta_\rR} (n)_{n-1}(n+1)_n \xrightarrow{\id \otimes \pl{s_n}} (n)_{n-1}(n+1)_n \xrightarrow{\varepsilon_\rR \otimes \id} \prescript{}{n}(n+1)_n.
    \]
    For $a \in (n)_{n-1}$ and $a' \in \prescript{}{n-1}(n)$, we have
    \[
      a \otimes a' \mapsto a \otimes a' \mapsto a \otimes s_n a' \mapsto a s_n a'. \qedhere
    \]
  }
\end{proof}

In light of Lemma~\ref{lem:rightcross-invariance}, we define $\bF$ on rightwards crossings by the degree preserving map
\begin{equation} \label{eq:right-crossing-map}
  \begin{tikzpicture}[>=stealth,baseline={([yshift=-.5ex]current bounding box.center)}]
    \draw[->] (0,0) -- (1,1);
    \draw[<-] (1,0) -- (0,1);
    \draw (1.5,0.5) node {$n$};
  \end{tikzpicture}
  \ \mapsto
  \Big( (n)_{n-1}(n) \to \prescript{}{n}(n+1)_n,\ a \otimes a' \mapsto a s_n a' \Big).
\end{equation}

\begin{lem} \label{lem:leftcross-invariance}
  The images under $\bF$ of the diagrams
  \[
    \begin{tikzpicture}[>=stealth,baseline={([yshift=-.5ex]current bounding box.center)}]
      \draw[<-] (1,0) .. controls (1,0.5) and (0,0.5) .. (0,1);
      \draw[<-] (-0.5,1) .. controls (-0.5,0) and (0,0) .. (0.5,0.5) .. controls (1,1) and (1.5,1) .. (1.5,0);
      \draw (2,0.5) node {$n$};
    \end{tikzpicture}
    \quad \text{and} \quad
    \begin{tikzpicture}[>=stealth,baseline={([yshift=-.5ex]current bounding box.center)}]
      \draw[->] (-1,0) .. controls (-1,0.5) and (0,0.5) .. (0,1);
      \draw[->] (0.5,1) .. controls (0.5,0) and (0,0) .. (-0.5,0.5) .. controls (-1,1) and (-1.5,1) .. (-1.5,0);
      \draw (1,0.5) node {$n$};
    \end{tikzpicture}
  \]
  are both the maps uniquely determined by
  \[
    \prescript{}{n}(n+1)_n \to (n)_{n-1}(n),\quad
    1_B^{\otimes n} \otimes b \mapsto 0,\
    (1_B^{\otimes n} \otimes b)s_n \mapsto (-1)^\sigma 1_B^{\otimes n} \otimes (1_B^{\otimes (n-1)} \otimes b),\quad b \in B.
  \]
\end{lem}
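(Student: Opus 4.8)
The plan is to follow the template of the proofs of Lemmas~\ref{lem:downcross-invariance} and~\ref{lem:rightcross-invariance}. First I would slice each of the two diagrams horizontally to express it as a vertical composite of atoms on which $\bF$ has already been defined. Reading from bottom to top, the first diagram is a left cup ($\eta_\rL$), then a downward crossing ($\pl{s_{n-1}}$ from Lemma~\ref{lem:downcross-invariance}), then a left cap ($\varepsilon_\rL$), all tensored on the appropriate side with identity strands; the second diagram is a left cup, then an \emph{upward} crossing ($\pr{s_{n+1}}$ from \eqref{eq:up-crossing-map}), then a left cap, with the cup and cap attached on the opposite side. The degree shifts $\{-\delta,\sigma\}$ carried by $\eta_\rL$ and $\varepsilon_\rL$ in Proposition~\ref{prop:adjunction-maps} contribute degrees $(\delta,\sigma)$ and $(-\delta,\sigma)$, which cancel (the crossings have degree $(0,0)$), so both composites are degree preserving, as asserted. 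Applying $\bF$ turns each diagram into an explicit composite of the bimodule homomorphisms $\eta_\rL$, $\varepsilon_\rL$ of Proposition~\ref{prop:adjunction-maps} with identities and with $\pl{s_{n-1}}$ or $\pr{s_{n+1}}$, and each such composite is visibly an $(A_n,A_n)$-bimodule homomorphism $\prescript{}{n}(n+1)_n\to(n)_{n-1}(n)$.

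Next I would evaluate both composites on the elements $1_B^{\otimes n}\otimes b$ and $(1_B^{\otimes n}\otimes b)s_n$, $b\in B$, which generate $\prescript{}{n}(n+1)_n$ as an $(A_n,A_n)$-bimodule: by Lemma~\ref{lem:decom} the $(1^{\otimes n}\otimes b)s_n\dotsm s_i$, $b\in\cB$, $i=1,\dotsc,n+1$, span $A_{n+1}$ as a left $A_n$-module, the $i=n+1$ term is $1^{\otimes n}\otimes b$, and for $i\le n$ one has $(1^{\otimes n}\otimes b)s_n\dotsm s_i=\big((1^{\otimes n}\otimes b)s_n\big)(s_{n-1}\dotsm s_i)$ with $s_{n-1}\dotsm s_i\in A_n$ acting on the right. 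The evaluation is mechanical once one unwinds the explicit formulas for $\eta_\rL$ (a sum over $\cB$ and its dual of terms $(-1)^{\sigma\overline{b^\vee}}s_i\dotsm s_n(1^{\otimes n}\otimes b^\vee)\otimes(1^{\otimes n}\otimes b)s_n\dotsm s_i$) and $\varepsilon_\rL$ (which kills every term whose permutation part is not in $S_n$ and otherwise applies $\tr_B$ to the last tensor factor), together with the dual-basis identity \eqref{eq:dual-bases-decomp} and the elementary fact that a product like $s_i\dotsm s_n s_j\dotsm s_n$ lies in $S_n$ only when the indices match --- exactly as in the displayed computations in the proofs of Lemmas~\ref{lem:downcross-invariance} and~\ref{lem:rightcross-invariance}. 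The point is that $1_B^{\otimes n}\otimes b$ is sent to $0$: the single crossing produces a permutation with no compensating $s_n$, so its image lies outside $S_n$ and $\varepsilon_\rL$ annihilates it; whereas for $(1_B^{\otimes n}\otimes b)s_n$ the crossing's transposition combines with $s_n$ to return to $S_n$, leaving one surviving summand that collapses, via \eqref{eq:dual-bases-decomp}, to $(-1)^\sigma\,1_B^{\otimes n}\otimes(1_B^{\otimes(n-1)}\otimes b)$. Finally I would check that the second composite yields the same two values.

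The step I expect to be the main obstacle is the sign bookkeeping needed to land on exactly $(-1)^\sigma$. Signs enter independently from the super conventions in $\pr{a}$ and $\pl{a}$ (their $(-1)^{\bar a\bar m}$ factors), from the parity shift $\Pi^\sigma$ and the explicit prefactors $(-1)^{\sigma\bar a}$, $(-1)^{\sigma\overline{b^\vee}}$ inside $\eta_\rL$ and $\varepsilon_\rL$, from the super-isotopy rule that odd cups, caps, and dots anticommute when passing each other, and from the trace-parity constraint \eqref{eq:trace-parity-condition}, which forces the surviving dual-basis terms to have definite parity and thereby kills many would-be sign prefactors. Organizing all of these so that everything cancels except one overall $(-1)^\sigma$, consistently for both diagrams, is the delicate part; the rest is routine manipulation of tensors over the $A_k$ and of reduced words in symmetric groups, of precisely the kind already carried out in the preceding two lemmas. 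One could instead argue abstractly that both diagrams are mates of a crossing under the left adjunction of Proposition~\ref{prop:adjunction-maps} and appeal to functoriality of mates, but the non-symmetric form of that adjunction --- the counit relation $(\varepsilon_\rL\otimes\id)\circ(\id\otimes\eta_\rL)=(-1)^\sigma\id$ of \eqref{eq:left-unit-counit} --- means one still has to track exactly this sign, so little would be gained over the direct computation.
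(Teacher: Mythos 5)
Your plan matches the paper's proof essentially exactly: decompose each diagram into a vertical composite of a left cup, a crossing, and a left cap (with the cup/cap attached on opposite sides for the two diagrams, and with a downward crossing in the first and an upward crossing in the second), evaluate the resulting composite of $\eta_\rL$, the crossing, and $\varepsilon_\rL$ on the two generators $1_B^{\otimes n}\otimes b$ and $(1_B^{\otimes n}\otimes b)s_n$, and invoke the dual-basis identity \eqref{eq:dual-bases-decomp} together with the parity constraint \eqref{eq:trace-parity-condition} to collapse the sum to a single term carrying the $(-1)^\sigma$. Your justification that these two elements generate $\prescript{}{n}(n+1)_n$ as a bimodule is also sound and a bit more explicit than the paper's.

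One small correction: the crossing maps you cite are the \emph{template} versions from Lemma~\ref{lem:downcross-invariance} and \eqref{eq:up-crossing-map}, which carry the region label shown in those displays, but in the present decomposition the regions adjacent to the crossings are shifted. In the first diagram the downward crossing has region $n+1$ to its right, so its image is $\pl{s_n}$ (not $\pl{s_{n-1}}$); in the second diagram the upward crossing has region $n-1$ to its right, so its image is $\pr{s_n}$ (not $\pr{s_{n+1}}$). Correspondingly, the instance of $\eta_\rL$ you'd plug in is the one for $(n)\to(n)_{n-1}(n)$, so its sum runs over $i\in\{1,\dotsc,n\}$ rather than $\{1,\dotsc,n+1\}$. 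These are exactly the bookkeeping adjustments that make the $s_i\dotsm s_n s_j\dotsm s_n\in S_n \Leftrightarrow i=j$ observation kick in and produce the single surviving term, so they are worth writing down carefully; otherwise your outline is correct.
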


\begin{proof}
  Since $\prescript{}{n}(n+1)_n$ is generated, as an $(A_n,A_n)$-bimodule, by $1_B^{\otimes n} \otimes b$ and $(1_B^{\otimes n} \otimes b)s_n$, $b \in B$, it suffices to compute the images of these elements.  The image under $\bF$ of the left-hand diagram is the composition
  \[
    \prescript{}{n}(n+1)_n \xrightarrow{\eta_\rL} (n)_{n-1}(n+1)_n \xrightarrow{\id \otimes \pl{s_n}} (n)_{n-1}(n+1)_n \xrightarrow{\id \otimes \varepsilon_\rL} (n)_{n-1}(n).
  \]
  We have
  \begin{multline*}
    1_B^{\otimes n} \otimes b
    \mapsto \sum_{\substack{c \in \cB \\ i \in \{1,\dotsc,n\}}} (-1)^{\sigma \bar c^\vee} s_i \dotsm s_{n-1} (1_B^{\otimes (n-1)} \otimes c^\vee) \otimes (1_B^{\otimes (n-1)} \otimes c \otimes 1_B) s_{n-1} \dotsm s_i (1_B^{\otimes n} \otimes b) \\
    \mapsto \sum_{\substack{c \in \cB \\ i \in \{1,\dotsc,n\}}} (-1)^{\sigma \bar c^\vee} s_i \dotsm s_{n-1} (1_B^{\otimes (n-1)} \otimes c^\vee) \otimes (1_B^{\otimes n} \otimes c) s_n \dotsm s_i (1_B^{\otimes n} \otimes b) \mapsto 0,
  \end{multline*}
  and
  \begin{multline*}
    (1_B^{\otimes n} \otimes b)s_n
    \mapsto \sum_{\substack{c \in \cB \\ i \in \{1,\dotsc,n\}}} (-1)^{\sigma \bar c^\vee} s_i \dotsm s_{n-1} (1_B^{\otimes (n-1)} \otimes c^\vee) \otimes (1_B^{\otimes (n-1)} \otimes c \otimes 1_B) s_{n-1} \dotsm s_i (1_B^{\otimes n} \otimes b)s_n \\
    \mapsto \sum_{\substack{c \in \cB \\ i \in \{1,\dotsc,n\}}} (-1)^{\sigma \bar c^\vee} s_i \dotsm s_{n-1} (1_B^{\otimes (n-1)} \otimes c^\vee) \otimes (1_B^{\otimes n} \otimes c) s_n \dotsm s_i (1_B^{\otimes n} \otimes b) s_n
    \mapsto (-1)^\sigma 1_B^{\otimes n} \otimes (1_B^{\otimes (n-1)} \otimes b),
  \end{multline*}
  where, in the last map, we used \eqref{eq:dual-bases-decomp} and the fact that $\tr(c^\vee)=0$ unless $\bar c^\vee = \sigma$.  The proof for the right-hand diagram is similar.

  \details{
    The image under $\bF$ of the right-hand diagram is the composition
    \[
      \prescript{}{n}(n+1)_n \xrightarrow{\eta_\rL} \prescript{}{n}(n+1)_{n-1}(n) \xrightarrow{\pr{s_n} \otimes \id} \prescript{}{n}(n+1)_{n-1}(n) \xrightarrow{\varepsilon_\rL \otimes \id} (n)_{n-1}(n),
    \]
    and we have
    \begin{multline*}
      1_B^{\otimes n} \otimes b
      \mapsto \sum_{\substack{c \in \cB \\ i \in \{1,\dotsc,n\}}} (-1)^{\sigma (\bar b + \bar c^\vee)} (1_B^{\otimes n} \otimes b) s_i \dotsm s_{n-1} (1_B^{\otimes (n-1)} \otimes c^\vee \otimes 1_B) \otimes (1_B^{\otimes (n-1)} \otimes c) s_{n-1} \dotsm s_i \\
      \mapsto \sum_{\substack{c \in \cB \\ i \in \{1,\dotsc,n\}}} (-1)^{\sigma (\bar b + \bar c^\vee)} (1_B^{\otimes n} \otimes b) s_i \dotsm s_n (1_B^{\otimes n} \otimes c^\vee) \otimes (1_B^{\otimes (n-1)} \otimes c) s_{n-1} \dotsm s_i \mapsto 0,
    \end{multline*}
    and
    \begin{multline*}
      (1_B^{\otimes n} \otimes b) s_n
      \mapsto \sum_{\substack{c \in \cB \\ i \in \{1,\dotsc,n\}}} (-1)^{\sigma (\bar b + \bar c^\vee)} (1_B^{\otimes n} \otimes b) s_n s_i \dotsm s_{n-1} (1_B^{\otimes (n-1)} \otimes c^\vee \otimes 1_B) \otimes (1_B^{\otimes (n-1)} \otimes c) s_{n-1} \dotsm s_i \\
      \mapsto \sum_{\substack{c \in \cB \\ i \in \{1,\dotsc,n\}}} (-1)^{\sigma (\bar b + \bar c^\vee)} (1_B^{\otimes n} \otimes b) s_n s_i \dotsm s_n (1_B^{\otimes n} \otimes c^\vee) \otimes (1_B^{\otimes (n-1)} \otimes c) s_{n-1} \dotsm s_i
      \mapsto (-1)^\sigma 1_B^{\otimes n} \otimes (1_B^{\otimes (n-1)} \otimes b),
    \end{multline*}
    where, in the last map, we used \eqref{eq:dual-bases-decomp} and the fact that $\tr(bc^\vee)=0$ unless $\bar b + \bar c^\vee = \sigma$.
  }
\end{proof}

In light of Lemma~\ref{lem:leftcross-invariance} and the sign of $(-1)^\sigma$ appearing in \eqref{rel:left-down-zigzag}, we define $\bF$ on leftwards crossings by the degree preserving map
\begin{equation} \label{eq:left-crossing-map}
  \begin{tikzpicture}[>=stealth,baseline={([yshift=-.5ex]current bounding box.center)}]
    \draw[<-] (0,0) -- (1,1);
    \draw[->] (1,0) -- (0,1);
    \draw (1.5,0.5) node {$n$};
  \end{tikzpicture}
  \ \mapsto \Big( \prescript{}{n}(n+1)_n \to (n)_{n-1}(n),\quad 1_B^{\otimes n} \otimes b \mapsto 0,\ (1_B^{\otimes n} \otimes b)s_n \mapsto 1_B^{\otimes n} \otimes (1_B^{\otimes (n-1)} \otimes b) \Big).
\end{equation}

%%%%%%%%%%%%%%%%%%%
\subsection{Action}
%%%%%%%%%%%%%%%%%%%

We now prove one of our main results: that the functors $\bF_n$ are well defined.  We assume some knowledge of cyclic biadjointness and its relation to planar diagrammatics for bimodules.  We refer the reader to \cite{Kho10} for an overview of this topic.

\begin{theo} \label{theo:action-functor}
  The above maps give a well-defined functor
  \[
    \bF_n \colon \cH_B \to \bigoplus_{m \in \Z} (A_m,A_n)\bimod
  \]
  for each $n \in \N$ and hence define an action of $\cH_B$ on $\bigoplus_{n \in \N} A_n\md$.
\end{theo}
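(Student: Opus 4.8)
The plan is to reduce the whole statement to a finite check on the defining data of $\cH_B'$. Since $\cH_B$ is the idempotent completion of $\cH_B'$, and the target $2$-category of bimodules over rings (hence also the functor category occurring in \eqref{eq:functor-A-def}) is idempotent complete, it suffices to produce well-defined functors $\bF_n \colon \cH_B' \to \bigoplus_{m\in\Z}(A_m,A_n)\bimod$; these then extend uniquely to $\cH_B$. On $\cH_B'$ the assignment has already been specified on objects and on the generating diagrams — dots, crossings in all four orientations, and the four cups/caps — and each generator has been checked to land in a bimodule homomorphism of the prescribed degree. Thus the entire content of the theorem is that these assignments are compatible with super isotopy invariance (including the Koszul signs incurred when odd cups, caps, or dots pass one another) and with the local relations \eqref{eq:rel0a}--\eqref{eq:rel3b}, \eqref{eq:collision-up}--\eqref{eq:collision-down}, \eqref{eq:dotsup-a}--\eqref{eq:dotsdown-b}, and \eqref{eq:supercomm}. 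I would organize the verification by relation type.

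First, adjunction and isotopy. The images of the cups and caps are exactly $\varepsilon_\rR,\eta_\rR,\varepsilon_\rL,\eta_\rL$ from Proposition~\ref{prop:adjunction-maps}, so the unit--counit identities \eqref{eq:right-unit-counit} and \eqref{eq:left-unit-counit} straighten all right zigzags and the left zigzags of \eqref{rel:left-down-zigzag}, the $(-1)^\sigma$ in the left zigzag matching the $(-1)^\sigma$ in the first identity of \eqref{eq:left-unit-counit}. Combined with cyclic biadjointness in the sense of \cite{Kho10}, this gives invariance under the isotopies of item (1) in the definition of super isotopy, with the sign from two odd cups/caps crossing vertically being precisely the Koszul sign already built into the bimodule structure and into the maps $\pr{a}$. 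Relations~\eqref{eq:rel3a} and \eqref{eq:rel3b} are short direct computations with the adjunction and dot maps: \eqref{eq:rel3a} is a composite of $\eta_\rR$, the dot map $\phi(b)$, and $\varepsilon_\rL$ evaluated at $1$, which is $\tr_B(b)$ by the defining formulas, while for \eqref{eq:rel3b} the relevant composite factors through the leftward-crossing map \eqref{eq:left-crossing-map}, which annihilates $1_B^{\otimes n}\otimes b$, and through the left cup \eqref{eq:left-cap-cup-maps}.

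Second, the dot and remaining crossing relations. The collisions \eqref{eq:collision-up}, \eqref{eq:collision-down} follow from $\phi_n$ being an algebra map $B^\op\to\END((n+1)_n)$ and $\phi_n'$ an algebra map $B\to\END(\prescript{}{n}(n+1))$, and \eqref{eq:supercomm} follows because $\pr{a}$- and $\pl{a}$-actions in distinct tensor slots supercommute. The Nakayama rules \eqref{eq:dotsup-a}--\eqref{eq:dotsdown-b} are checked directly from the formulas for the cup/cap maps using $\tr_B(ab)=(-1)^{\bar a\bar b}\tr_B(b\psiB(a))$ together with \eqref{eq:dual-bases-decomp} and \eqref{eq:double-dual-basis}. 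The dot-through-crossing relations \eqref{eq:rel0a}, \eqref{eq:rel0b} reduce to $s_{n+1}$ commuting with $1_B^{\otimes n}\otimes b$ in the appropriate tensor slot (and with its image under $s_{n+1}$), i.e.\ to identities in $A_{n+2}$; the purely crossing relations \eqref{eq:rel1a}, \eqref{eq:rel1b}, \eqref{eq:rel2b} reduce to $s_i^2=1$ and the braid relation $s_is_{i+1}s_i=s_{i+1}s_is_{i+1}$, transported through the adjunctions via Lemmas~\ref{lem:downcross-invariance}, \ref{lem:rightcross-invariance}, \ref{lem:leftcross-invariance} — which are exactly what make the downward, rightward, and leftward crossing images well defined (independent of how one slides a strand past a cup/cap).

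The one relation needing a genuine computation — and the main obstacle — is \eqref{eq:rel2a}. Using Lemma~\ref{lem:rightcross-invariance} and the cup/cap formulas, both sides of \eqref{eq:rel2a} are bimodule endomorphisms of $\prescript{}{n}(n+1)_n$, so it suffices to evaluate them on the generating elements $1_B^{\otimes(n+1)}$ and $(1_B^{\otimes n}\otimes b)s_{n+1}$ (Lemma~\ref{lem:decom}), and then to match terms using the explicit dual generators of the Frobenius extension from Proposition~\ref{prop:Frob-ext} and the completeness relation \eqref{eq:dual-bases-decomp}; basis-independence of the correction term $\sum_{b\in\cB}(\cdots)$ is Lemma~\ref{lem:casimir-properties}. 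The delicate part is the bookkeeping of Koszul signs in this computation (and, as a consistency check, seeing that the correction term specializes to Khovanov's when $B=\F$). Once all relations are verified, $\bF_n$ is a well-defined functor for each $n\in\N$; composing with $M\mapsto M\otimes_{A_n}-$ and summing as in \eqref{eq:functor-A-def} then produces the asserted action of $\cH_B$ on $\bigoplus_{n\in\N}A_n\md$, which is a formality.
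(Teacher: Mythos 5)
Your proof follows essentially the same route as the paper's: reduce to $\cH_B'$ by idempotent completeness of the bimodule category, use Proposition~\ref{prop:adjunction-maps} and Lemmas~\ref{lem:downcross-invariance}--\ref{lem:leftcross-invariance} for the snake/isotopy moves, handle \eqref{eq:collision-up}--\eqref{eq:supercomm} via the algebra-homomorphism property of $\phi_n,\phi_n'$, treat \eqref{eq:rel0a}--\eqref{eq:rel1b} by transporting symmetric-group relations through $\pr{\iota(\cdot)}$, and isolate \eqref{eq:rel2a} as the one genuine computation carried out on bimodule generators. One small slip: the generators of $\prescript{}{n}(n+1)_n$ as an $(A_n,A_n)$-bimodule are $1_B^{\otimes n}\otimes c$ and $(1_B^{\otimes n}\otimes c)s_n$ for $c\in B$ (with $s_n=(n,n+1)\in S_{n+1}$), not $1_B^{\otimes(n+1)}$ and $(1_B^{\otimes n}\otimes b)s_{n+1}$ — the $c$ must vary since the last tensor slot is not reached by the $(A_n,A_n)$-action, and $s_{n+1}\notin S_{n+1}$.
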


\begin{proof}
  Since $(A_m,A_n)\bimod$ is idempotent complete for all $m,n \in \N$, any functor from $\cH_B'$ to $\bigoplus_{m \in \Z} (A_m,A_n)\bimod$ naturally induces a functor $\cH_B \to \bigoplus_{m \in \Z} (A_m,A_n)\bimod$.  Thus, it suffices to consider the category $\cH_B'$.

  By Proposition~\ref{prop:adjunction-maps}, the images $(n+1)_n$ and $\prescript{}{n}(n+1)$ of the objects $\sP$ and $\sQ$ under $\bF$ are biadjoint up to shift.  In particular, the relations
  \begin{equation} \label{eq:relsnake}
    \begin{tikzpicture}[>=stealth,baseline={([yshift=-.5ex]current bounding box.center)}]
      \draw[->] (0,0) to (0,1) arc(180:0:.5) arc(180:360:.5) to (2,2);
    \end{tikzpicture}
    \ =\
    \begin{tikzpicture}[>=stealth,baseline={([yshift=-.5ex]current bounding box.center)}]
      \draw[->] (3,0) -- (3,2);
    \end{tikzpicture}
    \ =\
    \begin{tikzpicture}[>=stealth,baseline={([yshift=-.5ex]current bounding box.center)}]
      \draw[<-] (4,2) to (4,1) arc(180:360:.5) arc(180:0:.5) to (6,0);
    \end{tikzpicture}
    \qquad \text{and} \qquad (-1)^\sigma\
    \begin{tikzpicture}[>=stealth,baseline={([yshift=-.5ex]current bounding box.center)}]
      \draw[<-] (0,0) to (0,1) arc(180:0:.5) arc(180:360:.5) to (2,2);
    \end{tikzpicture}
    \ =\
    \begin{tikzpicture}[>=stealth,baseline={([yshift=-.5ex]current bounding box.center)}]
      \draw[<-] (3,0) -- (3,2);
    \end{tikzpicture}
    \ =\
    \begin{tikzpicture}[>=stealth,baseline={([yshift=-.5ex]current bounding box.center)}]
      \draw[->] (4,2) to (4,1) arc(180:360:.5) arc(180:0:.5) to (6,0);
    \end{tikzpicture}
  \end{equation}
  are preserved by $\bF$.  The fact that $\bF$ preserves invariance under local super isotopies not involving dots then follows from Lemmas~\ref{lem:downcross-invariance}, \ref{lem:rightcross-invariance}, and \ref{lem:leftcross-invariance}.  It remains to show that $\bF$ preserves the relations \eqref{eq:collision-up}--\eqref{eq:rel3b}.

  \medskip

  \paragraph{\emph{Relations \eqref{eq:collision-up}, \eqref{eq:collision-down}, \eqref{eq:supercomm}}.} Relations \eqref{eq:collision-up} and \eqref{eq:collision-down} follow immediately from the fact that the maps \eqref{eq:phi-def} and \eqref{eq:phi'-def} are algebra homomorphisms.  Relation \eqref{eq:supercomm} is satisfied since the tensor product of algebras satisfies the supercommutation relation.

  \medskip

  \paragraph{\emph{Relations \eqref{eq:dotsup-a}, \eqref{eq:dotsup-b}}.}  If we label the rightmost region of \eqref{eq:dotsup-a} by $n$, then the image under $\bF$ of the left-hand side corresponds to the composition
  \[
    (n) \xrightarrow{\eta_L} (n)_{n-1}(n) \xrightarrow{\phi(\psiB(b)) \otimes \id} (n)_{n-1}(n).
  \]
  Since the bimodule $(n)$ is generated by $1_n$, it suffices to consider the image of this element.  We have
  \begin{align*}
    1_n &\mapsto \sum_{\substack{c \in \cB \\ i \in \{1,\dotsc,n\}}} (-1)^{\sigma \bar c^\vee} s_i \dotsm s_{n-1} (1_B^{\otimes n} \otimes c^\vee) \otimes (1_B^{\otimes n} \otimes c) s_{n-1} \dotsm s_i \\
    &\mapsto \sum_{\substack{c \in \cB \\ i \in \{1,\dotsc,n\}}} (-1)^{\bar b^\vee \bar c^\vee} s_i \dotsm s_{n-1} (1_B^{\otimes n} \otimes c^\vee \psiB(b)) \otimes (1_B^{\otimes n} \otimes c) s_{n-1} \dotsm s_i \\
    &\!\! \stackrel{\eqref{eq:dual-bases-decomp}}{=} \sum_{\substack{c,d \in \cB \\ i \in \{1,\dotsc,n\}}} (-1)^{\bar b^\vee \bar c^\vee} s_i \dotsm s_{n-1} \left( 1_B^{\otimes n} \otimes \tr_B(d c^\vee \psiB(b)) d^\vee \right) \otimes (1_B^{\otimes n} \otimes c) s_{n-1} \dotsm s_i \\
    &= \sum_{\substack{c,d \in \cB \\ i \in \{1,\dotsc,n\}}} (-1)^{\bar b^\vee \bar c^\vee} s_i \dotsm s_{n-1} ( 1_B^{\otimes n} \otimes d^\vee ) \otimes \left( 1_B^{\otimes n} \otimes \tr_B(d c^\vee \psiB(b)) c \right) s_{n-1} \dotsm s_i \\
    &= \sum_{\substack{c,d \in \cB \\ i \in \{1,\dotsc,n\}}} (-1)^{\sigma \bar c^\vee + \bar b \bar d} s_i \dotsm s_{n-1} ( 1_B^{\otimes n} \otimes d^\vee ) \otimes \left( 1_B^{\otimes n} \otimes \tr_B(b d c^\vee) c \right) s_{n-1} \dotsm s_i \\
    &\!\! \stackrel{\eqref{eq:dual-bases-decomp}}{=} \sum_{\substack{d \in \cB \\ i \in \{1,\dotsc,n\}}} (-1)^{\bar b^\vee \bar d^\vee} s_i \dotsm s_{n-1} ( 1_B^{\otimes n} \otimes d^\vee ) \otimes (1_B^{\otimes n} \otimes b d) s_{n-1} \dotsm s_i \\
    &= \pl{\big( 1_{n+1} \otimes (1_B^{\otimes n} \otimes b) \big)} \sum_{\substack{d \in \cB \\ i \in \{1,\dotsc,n\}}} (-1)^{\sigma \bar d^\vee} s_i \dotsm s_{n-1} ( 1_B^{\otimes n} \otimes d^\vee ) \otimes (1_B^{\otimes n} \otimes d) s_{n-1} \dotsm s_i.
  \end{align*}
  The last expression above is the image of $1_n$ under the composition
  \[
    (n) \xrightarrow{\eta_L} (n)_{n-1}(n) \xrightarrow{\id \otimes \phi'(b)} (n)_{n-1}(n),
  \]
  which is the image under $\bF$ of the right-hand side of \eqref{eq:dotsup-a}.  The proof of \eqref{eq:dotsup-b} is straightforward and will be omitted.

  \details{
    If we label the rightmost region of \eqref{eq:dotsup-b} by $n$, then the left-hand side is mapped by $\bF$ to the composition
    \[
      (n)_{n-1}(n) \xrightarrow{\phi(b) \otimes \id} (n)_{n-1}(n) \xrightarrow{\varepsilon_\rR} (n).
    \]
    Since the bimodule $(n)_{n-1}(n)$ is generated by $1_n \otimes 1_n$, it suffices to consider the image of this element.  We have
    \[
      1_n \otimes 1_n \mapsto \left( 1_B^{\otimes (n-1)} \otimes b \right) \otimes 1_n \mapsto 1_B^{\otimes (n-1)} \otimes b.
    \]
    This is precisely the image of $1_n \otimes 1_n$ under the composition
    \[
      (n)_{n-1}(n) \xrightarrow{\id \otimes \phi'(b)} (n)_{n-1}(n) \xrightarrow{\varepsilon_\rR} (n).
    \]
    corresponding to the image under $\bF$ of the right-hand side of \eqref{eq:dotsup-b}.
  }

  \medskip

  \paragraph{\emph{Relations \eqref{eq:dotsdown-a}, \eqref{eq:dotsdown-b}}} The relation \eqref{eq:dotsdown-a} is a straightforward computation and will be omitted.
  \details{
    The left-hand side corresponds to the composition
    \[
      (n) \xrightarrow{\eta_\rR} \prescript{}{n}(n+1)_n \xrightarrow{\phi'(b) \otimes \id} \prescript{}{n}(n+1)_n,\quad 1_n \mapsto 1_{n+1} \mapsto 1_B^{\otimes n} \otimes b,
    \]
    and the right-hand side corresponds to the composition
    \[
      (n) \xrightarrow{\eta_\rR} \prescript{}{n}(n+1)_n \xrightarrow{\id \otimes \phi(b)} \prescript{}{n}(n+1)_n,\quad 1_n \mapsto 1_{n+1} \mapsto 1_B^{\otimes n} \otimes b.
    \]
  }
  If we label the rightmost region $n$, then the left-hand side of \eqref{eq:dotsdown-b} is mapped by $\bF$ to the composition
  \[
    \prescript{}{n}(n+1)_n \xrightarrow{\phi'(b)} \prescript{}{n}(n+1)_n \xrightarrow{\varepsilon_\rL} (n).
  \]
  Since the bimodule $\prescript{}{n}(n+1)_n$ is generated by the elements $1_B^{\otimes n} \otimes c$ and $(1_B^{\otimes n} \otimes c)s_n$, for $c \in B$, it suffices to consider the images of these elements.  We have
  \[
    1_B^{\otimes n} \otimes c \mapsto 1_B^{\otimes n} \otimes bc \mapsto \tr_B(bc) 1_B^{\otimes n},\quad (1_B^{\otimes n} \otimes c) s_n \mapsto (1_B^{\otimes n} \otimes bc)s_n \mapsto 0.
  \]
  On the other hand, the right-hand side of \eqref{eq:dotsdown-b} is mapped by $\bF$ to the composition
  \[
    \prescript{}{n}(n+1)_n \xrightarrow{\phi(\psiB(b))} \prescript{}{n}(n+1)_n \xrightarrow{\varepsilon_\rL} (n),
  \]
  and we have
  \begin{gather*}
    1_B^{\otimes n} \otimes c \mapsto (-1)^{\bar b \bar c} 1_B^{\otimes n} \otimes c\psiB(b) \mapsto (-1)^{\bar b \bar c} \tr_B(c\psiB(b)) 1_B^{\otimes n} = \tr_B(bc) 1_B^{\otimes n},\\
    (1_B^{\otimes n} \otimes c) s_n \mapsto (-1)^{\bar b \bar c} (1_B^{\otimes n} \otimes c) s_n (1_B^{\otimes n} \otimes \psiB(b)) \mapsto 0.
  \end{gather*}
  Thus the two maps are equal.

  \medskip

  \paragraph{\emph{Relations \eqref{eq:rel0a}--\eqref{eq:rel1b}}.}  For $k \in \N_+$, $\bF(\sP^{\otimes k}) = \prescript{}{n+k}(n+k)_n$.  We have a natural map
  \[
    A_k^\op \to \END \prescript{}{n+k}(n+k)_n,\quad a \mapsto \pr{\iota(a)},
  \]
  where $\iota \colon A_k \hookrightarrow A_n \otimes A_k \hookrightarrow A_{n+k}$ is the natural inclusion of $A_k$ into $A_{n+k}$ on the right.  Thus, the fact that $\bF$ preserves relations \eqref{eq:rel0a}--\eqref{eq:rel1b} follows from the corresponding relations in $A_2$ and $A_3$.

  \medskip

  \paragraph{\emph{Relations \eqref{eq:rel2a}, \eqref{eq:rel2b}}}  We prove \eqref{eq:rel2a}.  Relation \eqref{eq:rel2b} is similar but much easier.  If we label the rightmost region $n$, then both sides of \eqref{eq:rel2a} are mapped by $\bF$ to bimodule homomorphisms $\prescript{}{n}(n+1)_n \to \prescript{}{n}(n+1)_n$.  Since the bimodule $\prescript{}{n}(n+1)_n$ is generated by the elements $1_B^{\otimes n} \otimes c$ and $(1_B^{\otimes n} \otimes c)s_n$, for $c \in B$, it suffices to consider the images of these elements.  For the left-hand side, we compute
  \begin{gather*}
    1_B^{\otimes n} \otimes c \mapsto 0, \\
    (1_B^{\otimes n} \otimes c)s_n
    \mapsto 1_B^{\otimes n} \otimes (1_B^{\otimes (n-1)} \otimes c)
    \mapsto 1_B^{\otimes (n+1)} s_n(1_B^{\otimes (n-1)} \otimes c \otimes 1_B)
    = (1_B^{\otimes n} \otimes c)s_n.
  \end{gather*}
  We now consider the right-hand side.  The first term yields the identity map.  For the second term, the summand corresponding to a fixed $b \in \cB$ yields the compositions
  \begin{gather*}
    1_B^{\otimes n} \otimes c
    \mapsto 1_B^{\otimes n} \otimes b c
    \mapsto \tr_B(b c) 1_B^{\otimes n}
    \mapsto \tr_B(b c) 1_B^{\otimes (n+1)}
    \mapsto \tr_B(b c) 1_B^{\otimes n} \otimes b^\vee, \\
    (1_B^{\otimes n} \otimes c)s_n
    \mapsto (1_B^{\otimes n} \otimes b c) s_n
    \mapsto 0.
  \end{gather*}
  Thus we see that the right-hand side of \eqref{eq:rel2a} yields
  \begin{gather*}
    1_B^{\otimes n} \otimes c \mapsto 1_B^{\otimes n} \otimes c - \sum_{b \in \cB} \tr_B(b c) 1_B^{\otimes n} \otimes b^\vee \stackrel{\eqref{eq:dual-bases-decomp}}{=} 0, \\
    (1_B^{\otimes n} \otimes c)s_n \mapsto (1_B^{\otimes n} \otimes c)s_n.
  \end{gather*}

  \details{
    The left-hand side of \eqref{eq:rel2b} gives the composition
    \[
      (n)_{n-1}(n) \to \prescript{}{n}(n+1)_n \to (n)_{n-1}(n),\quad 1_n \otimes 1_n \mapsto s_n \mapsto 1_n \otimes 1_n,
    \]
    and so is equal to the identity map of the right-hand side.
  }

  \medskip

  \paragraph{\emph{Relations \eqref{eq:rel3a}, \eqref{eq:rel3b}}}  The proofs of \eqref{eq:rel3a} and \eqref{eq:rel3b} are straightforward computations and will be omitted.
  \details{
    The diagram
    \[
      \begin{tikzpicture}[>=stealth]
        \draw [->](0,0) arc (180:360:0.5cm);
        \draw (1,0) arc (0:180:0.5cm);
        \filldraw [blue](0,0) circle (2pt);
        \draw (0,0) node [anchor=east] {$b$};
        \draw (1.2,-0.3) node {$n$};
      \end{tikzpicture}
    \]
    is mapped by $\bF$ to the composition
   \begin{gather*}
      (n) \xrightarrow{\eta_\rR} \prescript{}{n}(n+1)_n \xrightarrow{\phi_n'(b)} \prescript{}{n}(n+1)_n \xrightarrow{\varepsilon_\rL} (n), \\
      a \mapsto a \mapsto (1_B^{\otimes n} \otimes b)a = (-1)^{\bar a \bar b} a \otimes b \mapsto \tr_B(b) a,\quad a \in (n),
    \end{gather*}
    where we have used the fact that $\tr_B(b)=0$ unless $\bar b = \sigma$.

    The diagram
    \[
      \begin{tikzpicture}[>=stealth]
        \draw (0,0) .. controls (0,.5) and (.7,.5) .. (.9,0);
        \draw (0,0) .. controls (0,-.5) and (.7,-.5) .. (.9,0);
        \draw (1,-1) .. controls (1,-.5) .. (.9,0);
        \draw[->] (.9,0) .. controls (1,.5) .. (1,1);
        \draw (1.3,-0.5) node {$n$};
      \end{tikzpicture}
    \]
    is mapped by $\bF$ to the composition
    \begin{gather*}
      (n+1)_n \xrightarrow{\eta_\rR} \prescript{}{n+1}(n+2)_n \xrightarrow{\pr{s_{n+1}}} \prescript{}{n+1}(n+2)_n \xrightarrow{\varepsilon_\rL} (n+1)_n, \\
      a \mapsto a \mapsto a s_{n+1} \mapsto 0,\quad a \in (n+1)_n.
    \end{gather*}
  }
\end{proof}

%%%%%%%%%%%%%%%%%%%%%%%%%
\section{Morphism spaces} \label{sec:morphism-spaces}
%%%%%%%%%%%%%%%%%%%%%%%%%

In this section, we examine the morphism spaces of the category $\cH'_B$.   We will freely use super isotopy invariance, including  relation \eqref{eq:supercomm}, without mention.

%%%%%%%%%%%%%%%%%%%%%%%%%%%%%
\subsection{Right curls}
%%%%%%%%%%%%%%%%%%%%%%%%%%%%%

While relation \eqref{eq:rel3b} asserts that a left curl is equal to zero, right curls are interesting morphisms in our category.  We will use an open dot to denote a right curl and an open dot labeled $d \in \N_+$ to denote $d$ right curls.
\[
  \begin{tikzpicture}[>=stealth,baseline={([yshift=-.5ex]current bounding box.center)}]
    \draw[->] (0,0) to (0,2);
    \redcircle{(0,1)};
  \end{tikzpicture}
  \ =\
  \begin{tikzpicture}[>=stealth,baseline={([yshift=-.5ex]current bounding box.center)}]
    \draw (2,1) .. controls (2,1.5) and (1.3,1.5) .. (1.1,1);
    \draw (2,1) .. controls (2,.5) and (1.3,.5) .. (1.1,1);
    \draw (1,0) .. controls (1,.5) .. (1.1,1);
    \draw[->] (1.1,1) .. controls (1,1.5) .. (1,2);
  \end{tikzpicture}
  \qquad \qquad
  \begin{tikzpicture}[>=stealth,baseline={([yshift=-.5ex]current bounding box.center)}]
    \draw[->] (0,0) to (0,2);
    \redcircle{(0,1)} node[anchor=west] {$d$};
  \end{tikzpicture}
  \ =\
  \begin{tikzpicture}[>=stealth,baseline={([yshift=-.5ex]current bounding box.center)}]
    \draw[->] (1.5,0) to (1.5,2);
    \redcircle{(1.5,.5)};
    \redcircle{(1.5,.8)};
    \redcircle{(1.5,1.1)};
    \redcircle{(1.5,1.4)};
    \draw (2.5,1) node {$\Bigg\}\ d$ dots};
  \end{tikzpicture}
\]
It follows from super isotopy invariance that open dots can be moved freely along strands, including through cups and caps.

\begin{lem}
  We have the following equalities of diagrams:
  \begin{equation} \label{eq:curl-Nakayama}
    \begin{tikzpicture}[>=stealth,baseline={([yshift=-.5ex]current bounding box.center)}]
      \draw[->] (0,0) to (0,2);
      \bluedot{(0,0.6)} node[color=black, anchor=east] {$b$};
      \redcircle{(0,1.4)};
    \end{tikzpicture}
    \ = (-1)^{\sigma \bar b} \ \
    \begin{tikzpicture}[>=stealth,baseline={([yshift=-.5ex]current bounding box.center)}]
      \draw[->] (0,0) to (0,2);
      \bluedot{(0,1.4)} node[color=black, anchor=west] {$\psiB(b)$};
      \redcircle{(0,0.6)};
    \end{tikzpicture}
    \qquad \qquad \qquad
    \begin{tikzpicture}[>=stealth,baseline={([yshift=-.5ex]current bounding box.center)}]
      \draw[<-] (0,0) to (0,2);
      \bluedot{(0,1.4)} node[color=black, anchor=east] {$b$};
      \redcircle{(0,0.6)};
    \end{tikzpicture}
    \ = (-1)^{\sigma \bar b} \ \
    \begin{tikzpicture}[>=stealth,baseline={([yshift=-.5ex]current bounding box.center)}]
      \draw[<-] (0,0) to (0,2);
      \bluedot{(0,0.6)} node[color=black, anchor=west] {$\psiB(b)$};
      \redcircle{(0,1.4)};
    \end{tikzpicture}
  \end{equation}
\end{lem}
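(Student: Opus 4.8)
The plan is to prove both identities by the same device: slide the blue dot along the strand and around the small loop that makes up the right curl, recording the Nakayama twist and the sign produced by the cup and cap the dot must traverse. First I would observe that the loop of a right curl contains exactly one cup and one cap, one of which is a left (equivalently odd, of degree $(\pm\delta,\sigma)$) cup/cap and the other a right (even, degree $(0,0)$) one; equivalently, up to super isotopy the right curl on an upward strand can be written as $(\varepsilon\otimes\id_\sP)\circ(s\otimes\id_\sP)\circ(\id_\sP\otimes\eta)$ for a crossing $s$ and a cup $\eta$, cap $\varepsilon$ of opposite parity. The blue dot below the curl sits on the strand entering the loop.

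Starting from the left-hand side of the first identity, I would slide the dot upward. It passes through the crossing with no change, by \eqref{eq:rel0a}--\eqref{eq:rel0b}; it passes the even (right) cup or cap with no change, by the relevant instance of \eqref{eq:dotsup-b} (or its cap/cup counterpart); and when it traverses the one odd (left) cup/cap, two things occur. By \eqref{eq:dotsup-a} (respectively \eqref{eq:dotsdown-b}), crossing that cup/cap in the direction of the strand's orientation replaces the label $b$ by $\psiB(b)$. And by the super isotopy convention --- a dot of odd parity picks up a factor $-1$ when it moves vertically past a cup or cap of odd parity --- we acquire the scalar $(-1)^{\sigma\bar b}$, since the odd cup/cap has parity $\sigma$, the dot has parity $\bar b$, and $\psiB$ is parity-preserving. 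When the dot emerges it lies above the curl with label $\psiB(b)$ and we have accumulated the scalar $(-1)^{\sigma\bar b}$; this is exactly the right-hand side. The second identity is handled identically: on a $\sQ$-strand, moving a dot from above to below the curl is again motion along the orientation, so the same three ingredients (free passage through the crossing, trivial passage of the even cup/cap, one use of \eqref{eq:dotsup-a}/\eqref{eq:dotsdown-b} together with one super isotopy sign) give the factor $(-1)^{\sigma\bar b}$ and the relabeling $b\mapsto\psiB(b)$. Alternatively it follows from the first identity by the symmetry of $\cH_B'$ interchanging $\sP$ and $\sQ$.

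The part I expect to require the most care is the orientation bookkeeping around the loop: one must check that as the dot circles the loop it meets exactly one odd cup/cap (so the net twist is $\psiB$ rather than $\id$ or $\psiB^2$), that it meets it in the orientation for which \eqref{eq:dotsup-a}/\eqref{eq:dotsdown-b} produces $\psiB$ and not $\psiB^{-1}$, and that the super isotopy sign is incurred exactly once. Once the precise shape of the right curl is matched against the orientation conventions in \eqref{eq:dotsup-a}--\eqref{eq:dotsdown-b}, the remaining verification is routine.
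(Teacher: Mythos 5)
Your proposal is correct and takes essentially the same route as the paper's (very terse) proof, which just cites \eqref{eq:dotsup-a}--\eqref{eq:dotsdown-b}, \eqref{eq:rel0a}, \eqref{eq:rel0b}, and notes that the sign comes from exchanging the heights of the dot with the left cup inside the curl. You have filled in the details correctly: the upward right curl decomposes as a crossing together with one left cup $\eta_L$ (the only odd piece, of degree $(\delta,\sigma)$, at the bottom of the loop) and one right cap $\varepsilon_R$ (even, at the top), the dot passes the crossing and the even cap with no change, passes the odd cup picking up $\psiB$ via the appropriate one of \eqref{eq:dotsup-a}/\eqref{eq:dotsdown-b}, and the sign $(-1)^{\sigma\bar b}$ arises from the single vertical exchange of the dot with that odd cup. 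Two small cosmetic remarks: your displayed decomposition $(\varepsilon\otimes\id_\sP)\circ(s\otimes\id_\sP)\circ(\id_\sP\otimes\eta)$ should read something like $(\id_\sP\otimes\varepsilon_R)\circ(s\otimes\id_\sQ)\circ(\id_\sP\otimes\eta_L)$ to have the tensor factors match; and ``left cup/cap, equivalently odd'' is only an equivalence when $\sigma=1$ (when $\sigma=0$ the left cup/cap is even and the sign $(-1)^{\sigma\bar b}$ is trivially $1$, so nothing is lost). The orientation check you flagged does go through: the dot traverses the loop clockwise, meeting the cup from right to left, which is precisely the direction in which \eqref{eq:dotsup-a} produces $\psiB$ rather than $\psiB^{-1}$.
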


\begin{proof}
  This follows immediately from \eqref{eq:dotsup-a}--\eqref{eq:dotsdown-b}, \eqref{eq:rel0a}, and \eqref{eq:rel0b}.  The sign comes from exchanging the heights of the dot labelled $b$ and the left cup involved in the right curl.
\end{proof}

\begin{lem}
  The relation
  \begin{equation} \label{eq:triple-point}
    \begin{tikzpicture}[>=stealth,baseline={([yshift=-.5ex]current bounding box.center)}]
      \draw (0,0) -- (2,2);
      \draw (2,0) -- (0,2);
      \draw (1,0) .. controls (0,1) .. (1,2);
    \end{tikzpicture}
    \ = \
    \begin{tikzpicture}[>=stealth,baseline={([yshift=-.5ex]current bounding box.center)}]
      \draw (3,0) -- (5,2);
      \draw (5,0) -- (3,2);
      \draw (4,0) .. controls (5,1) .. (4,2);
    \end{tikzpicture}
  \end{equation}
  holds for all possible orientations of the strands.
\end{lem}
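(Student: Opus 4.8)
The plan is to reduce every orientation to the all-upward case, which is exactly axiom \eqref{eq:rel1a}. The only genuinely new ingredient needed is a device for reversing the orientation of one of the three strands, and for this I would use biadjunction: by the zigzag relations \eqref{rel:left-down-zigzag} together with super isotopy invariance (equivalently, the snake relations \eqref{eq:relsnake}), any segment of a strand may be replaced by a cap followed by a cup (or a cup followed by a cap) pointing the opposite way. Thus, given a triple point with $k \geq 1$ downward strands, I would choose one such strand, insert a zigzag on it just below and/or above the crossing region, and then use the sideways-crossing relations \eqref{eq:rel2a} and \eqref{eq:rel2b} (together with \eqref{eq:rel1b} to annihilate adjacent oppositely-oriented up-crossings) to drag the crossings involving this strand past the newly created cup and cap. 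The effect is to rewrite the triple point as a sum of triple points with at most $k-1$ downward strands, plus correction diagrams coming from the subtracted term in \eqref{eq:rel2a}. An induction on $k$, with base case \eqref{eq:rel1a}, then finishes the argument, provided the correction diagrams on the left-bulge and right-bulge sides cancel.

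For the bookkeeping it is cleanest to keep everything local. Passing from the left-bulge to the right-bulge picture amounts to sliding the middle strand across the crossing of the two outer strands; along the way the middle strand crosses each outer strand once, in opposite orders on the two sides. If both of these crossings are governed by the clean relation \eqref{eq:rel2b} (or by \eqref{eq:rel1b}), the two pictures are isotopic and we are done. Otherwise one of them is an instance of \eqref{eq:rel2a}, and the subtracted term produces a diagram in which the middle strand has been cut into a cap--cup pair carrying a dot $b$ (resp.\ $b^\vee$), summed over the basis $\cB$. After an isotopy this cut diagram contains a curl on one of the remaining strands; if it is a left curl it vanishes by \eqref{eq:rel3b}, and if it is a right curl then, since right curls slide freely along strands and through crossings by super isotopy and \eqref{eq:rel0a}--\eqref{eq:rel0b}, the contribution is literally the same diagram on the left-bulge and right-bulge sides and hence cancels in the difference. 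The basis-independence of the term in \eqref{eq:rel2a}, established in Lemma~\ref{lem:casimir-properties}, is what legitimizes this term-by-term comparison. The left--right reflection of diagrams (which reverses the order of the three strands and swaps the two sides of \eqref{eq:triple-point}) can be used as a guide to halve the number of orientations treated explicitly, though one must remember that it interchanges left and right cups and caps and so is not literally a symmetry of $\cH_B'$ unless $B$ is symmetric; the mirror cases should therefore be verified directly.

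I expect the main obstacle to be precisely the sign and twist bookkeeping rather than any conceptual difficulty: every time two odd cups/caps or odd dots exchange heights one picks up a $-1$, there is the explicit $(-1)^\sigma$ in \eqref{rel:left-down-zigzag}, and dots acquire a Nakayama twist when dragged through cups and caps via \eqref{eq:dotsup-a}--\eqref{eq:dotsdown-b}. One has to check that, once the dust settles, these signs and twists attach identically to the correction diagrams on the two sides of \eqref{eq:triple-point}, so that the differences genuinely cancel rather than leaving a residual curl-type term. A convenient way to organize the verification is to treat first the orientations whose reductions only ever invoke \eqref{eq:rel1b} and \eqref{eq:rel2b} (so that no correction term arises), then those requiring a single application of \eqref{eq:rel2a}, and finally the remaining orientations by two successive reductions, invoking at each stage the cases already established.
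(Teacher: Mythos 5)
Your overall strategy matches the paper's: take the all-upward case \eqref{eq:rel1a} as the base, and reduce the other orientations to it using the Reidemeister~II moves \eqref{eq:rel1b}, \eqref{eq:rel2a}, \eqref{eq:rel2b}, using super isotopy to cut the number of cases in half and showing the \eqref{eq:rel2a} correction terms disappear. The paper works case by case rather than explicitly setting up an induction and inserting zigzags, but both amount to the same mechanism: the correction term from \eqref{eq:rel2a} cuts a strand into a cap--cup pair that, inside the triple-point picture, produces a \emph{left} curl, which is killed outright by \eqref{eq:rel3b}. No cancellation between the two sides of \eqref{eq:triple-point} is needed at all.

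The one genuine error in your write-up is the fallback clause for right curls. You assert that ``right curls slide freely along strands and through crossings by super isotopy and \eqref{eq:rel0a}--\eqref{eq:rel0b}.'' This is false: \eqref{eq:rel0a}--\eqref{eq:rel0b} govern dots labeled by elements of $B$, not right curls, and the whole point of Lemma~\ref{lem:dot-crossing-moves} (relations \eqref{eq:circle-leftup-slide} and \eqref{eq:circle-rightup-slide}) is that right curls do \emph{not} slide freely through crossings --- they pick up an extra $\sum_{b\in\cB}$ dot-pair correction term. So the claimed term-by-term cancellation of right-curl contributions does not hold as stated. Fortunately the clause is a red herring: once you trace through \eqref{eq:rel2a}, the cap--cup pair that cuts the strand sits on the side of the remaining crossing that closes up into a left curl, not a right one, and \eqref{eq:rel3b} finishes the job. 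You should verify this explicitly in at least one representative case per super-isotopy orbit (as the paper does for the configuration with middle strand down and outer strands up), and drop the incorrect right-curl cancellation argument rather than lean on it.
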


\begin{proof}
  The case where all strands are pointing up is \eqref{eq:rel1a}.  The case where all strands are pointing down follows by super isotopy invariance.  The case where the left and middle strands are pointing up and the right strand is pointing down follows easily from \eqref{eq:rel1a} and \eqref{eq:rel1b}.  (Here and in what follows, the ``left strand'' is the strand whose top endpoint is on the left, etc.)
  \details{
    \[
      \begin{tikzpicture}[>=stealth,baseline={([yshift=-.5ex]current bounding box.center)}]
        \draw[<-] (0,0) to (2,2);
        \draw[->] (2,0) to (0,2);
        \draw[<-] (1,2) .. controls (0,1) .. (1,0);
      \end{tikzpicture}
      \ \stackrel{\eqref{eq:rel1b}}{=}\
      \begin{tikzpicture}[>=stealth,baseline={([yshift=-.5ex]current bounding box.center)}]
        \draw[<-] (0,0) to (2,2);
        \draw[->] (2,0) to (0,2);
        \draw[<-] (1,2) .. controls (0,1) and (0.5,-.1) .. (1.3,.7) ..
        controls (2.1,1.5) and (2.3,.8) .. (1,0);
      \end{tikzpicture}
      \stackrel{\eqref{eq:rel1a}}{=}
      \begin{tikzpicture}[>=stealth,baseline={([yshift=-.5ex]current bounding box.center)}]
        \draw[<-] (0,0) to (2,2);
        \draw[->] (2,0) to (0,2);
        \draw[<-] (1,2) .. controls (0.2,1.5) and (-0.1,.5) .. (0.8,1.2) ..
        controls (1.7,2.1) and (2,1) .. (1,0);
      \end{tikzpicture}
      \stackrel{\eqref{eq:rel1b}}{=}
      \begin{tikzpicture}[>=stealth,baseline={([yshift=-.5ex]current bounding box.center)}]
        \draw[<-] (0,0) to (2,2);
        \draw[->] (2,0) to (0,2);
        \draw[<-] (1,2) .. controls (2,1) .. (1,0);
      \end{tikzpicture}
    \]
  }
  We then have
  \begin{multline*}
    \begin{tikzpicture}[>=stealth,baseline={([yshift=-.5ex]current bounding box.center)}]
      \draw[->] (0,0) to (2,2);
      \draw[->] (2,0) to (0,2);
      \draw[->] (1,2) .. controls (0,1) .. (1,0);
    \end{tikzpicture}
    \ \stackrel{\eqref{eq:rel2a}}{=}\
    \begin{tikzpicture}[>=stealth,baseline={([yshift=-.5ex]current bounding box.center)}]
      \draw[->] (3.5,0) to (5.5,2);
      \draw[->] (5.5,0) to (3.5,2);
      \draw[->] (4.5,2) .. controls (3.5,1) and (4,-.1) .. (4.8,.7) ..
      controls (5.6,1.5) and (5.8,.8) .. (4.5,0);
    \end{tikzpicture}
    + \sum_{b \in \cB}
    \begin{tikzpicture}[>=stealth,baseline={([yshift=-.5ex]current bounding box.center)}]
      \draw[->] (6.5,0) to (8.5,2);
      \draw[->] (8.5,0) .. controls (8,.5) .. (7.5,0);
      \draw[->] (7.5,2) .. controls (6.5,1) and (7.2,0) .. (7.7,.5) .. controls (8.2,1) and (7,1.5) .. (6.5,2);
      \bluedot{(7.8,0.7)} node[color=black, anchor=west] {$b^\vee$};
      \bluedot{(7.7,0.2)} node[color=black, anchor=north west] {$b$};
    \end{tikzpicture}
    \\
    \stackrel{\substack{\eqref{eq:rel0b},\eqref{eq:rel1b}}}{=}
    \begin{tikzpicture}[>=stealth,baseline={([yshift=-.5ex]current bounding box.center)}]
      \draw[->] (3.5,0) to (5.5,2);
      \draw[->] (5.5,0) to (3.5,2);
      \draw[->] (4.5,2) .. controls (3.7,1.5) and (3.4,.5) .. (4.3,1.2) ..
      controls (5.2,2.1) and (5.5,1) .. (4.5,0);
    \end{tikzpicture}
    \ + \sum_{b \in \cB}\
    \begin{tikzpicture}[>=stealth,baseline={([yshift=-.5ex]current bounding box.center)}]
      \draw[->] (7,0) to (9,2);
      \draw[->] (9,0) arc(0:180:.5);
      \draw[->] (8,2) .. controls (7,1.5) and (7.4,1.1) .. (7.7,1.1)
      .. controls (7.9,1.1) and (8,1.5) .. (7,2);
      \bluedot{(7.3,1.85)} node[color=black, anchor=south west] {$b^\vee$};
      \bluedot{(8.1,0.3)} node[color=black, anchor=east] {$b$};
    \end{tikzpicture}
    \stackrel{\eqref{eq:rel2b},\eqref{eq:rel3b}}{=}
    \begin{tikzpicture}[>=stealth,baseline={([yshift=-.5ex]current bounding box.center)}]
      \draw[->] (1,0) to (3,2);
      \draw[->] (3,0) to (1,2);
      \draw[->] (2,2) .. controls (3,1) .. (2,0);
    \end{tikzpicture}
  \end{multline*}
  where in the second equality we use the case where the left and middle strands are pointing up and the right strand is pointing down (and super isotopy invariance).  The case where the middle strand is pointing up and the outer left and right strands are pointing down follows by super isotopy invariance.  The remaining cases are proved similarly.
\end{proof}

\begin{lem} \label{lem:dot-crossing-moves}
  We have the following equalities of diagrams:
  \begin{equation} \label{eq:circle-leftup-slide}
    \begin{tikzpicture}[>=stealth,baseline={([yshift=-.5ex]current bounding box.center)}]
      \draw[->] (0,0) to (2,2);
      \draw[->] (2,0) to (0,2);
      \redcircle{(0.5,1.5)};
    \end{tikzpicture}
    \ =\
    \begin{tikzpicture}[>=stealth,baseline={([yshift=-.5ex]current bounding box.center)}]
      \draw[->] (3,0) to (5,2);
      \draw[->] (5,0) to (3,2);
      \redcircle{(4.5,.5)};
    \end{tikzpicture}
    +\ \sum_{b \in \cB}\
    \begin{tikzpicture}[>=stealth,baseline={([yshift=-.5ex]current bounding box.center)}]
      \draw[->] (8,0) to (8,2);
      \draw[->] (9,0) to (9,2);
      \bluedot{(8,1.2)} node[color=black,anchor=east] {$b^\vee$};
      \bluedot{(9,0.8)} node[color=black,anchor=west] {$b$};
    \end{tikzpicture}
  \end{equation}
  \begin{equation} \label{eq:circle-rightup-slide}
    \begin{tikzpicture}[>=stealth,baseline={([yshift=-.5ex]current bounding box.center)}]
      \draw[->] (0,0) to (2,2);
      \draw[->] (2,0) to (0,2);
      \redcircle{(0.5,0.5)};
    \end{tikzpicture}
    \ =\
    \begin{tikzpicture}[>=stealth,baseline={([yshift=-.5ex]current bounding box.center)}]
      \draw[->] (3,0) to (5,2);
      \draw[->] (5,0) to (3,2);
      \redcircle{(4.5,1.5)};
    \end{tikzpicture}
    +\ \sum_{b \in \cB}\
    \begin{tikzpicture}[>=stealth,baseline={([yshift=-.5ex]current bounding box.center)}]
      \draw[->] (7.5,0) to (7.5,2);
      \draw[->] (8.5,0) to (8.5,2);
      \bluedot{(7.5,0.8)} node[color=black,anchor=east] {$b$};
      \bluedot{(8.5,1.2)} node[color=black,anchor=west] {$b^\vee$};
    \end{tikzpicture}
  \end{equation}
\end{lem}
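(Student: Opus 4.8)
The plan is to treat both identities as ``Reidemeister~I through a crossing'' moves. The only step in which a correction term summed over $\cB$ can enter is the mixed crossing relation \eqref{eq:rel2a}; everything else is super isotopy together with the same-orientation relations \eqref{eq:rel1a}, \eqref{eq:rel1b}, the ``trivial'' mixed crossing relation \eqref{eq:rel2b}, the dot-slide relations \eqref{eq:rel0a}, \eqref{eq:rel0b}, the zigzag straightening \eqref{rel:left-down-zigzag}, and the curl/circle evaluations \eqref{eq:rel3a}, \eqref{eq:rel3b}. I would prove \eqref{eq:circle-leftup-slide} in full and then deduce \eqref{eq:circle-rightup-slide} from it. As a sanity check, both relations must be homogeneous of degree $(\delta,\sigma)$: a right curl on an upward strand is built from a cup, an upward--upward crossing, and a cap of total degree $(\delta,\sigma)$, which matches the degree of each side of the correction term $\sum_{b\in\cB}$, since the dot $b$ has degree $\deg b$ and the dot $b^\vee$ has degree $(\delta,\sigma)-\deg b$.

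\textbf{Proof of \eqref{eq:circle-leftup-slide}.} First I would write the right curl on the top-left strand of the crossing via its definition as a composite of a cup, an upward--upward crossing, and a cap, so that the left-hand side of \eqref{eq:circle-leftup-slide} becomes a diagram containing two adjacent upward--upward crossings together with one cup--cap pair. Using \eqref{eq:rel1a} (equivalently \eqref{eq:triple-point}) and super isotopy, I would slide the original crossing upward past the crossing coming from the curl; afterwards that crossing sits next to the cup of the curl and is traversed by the oppositely oriented strand of the curl, i.e.\ we are looking at a mixed crossing adjacent to a cup. Resolving it by \eqref{eq:rel2a} splits the diagram into two summands. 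For the first summand, \eqref{eq:rel2a} contributes its identity term, and after straightening the surviving cup--cap by super isotopy, \eqref{rel:left-down-zigzag}, and \eqref{eq:rel1b}, what remains is precisely an upward--upward crossing carrying a single right curl on the bottom-right strand, the first term on the right-hand side of \eqref{eq:circle-leftup-slide}. (One checks that this straightening leaves no closed loop; had a closed loop or a left curl appeared it would be evaluated by \eqref{eq:rel3a} or killed by \eqref{eq:rel3b}, but this does not occur here.) For the second summand, \eqref{eq:rel2a} contributes $-\sum_{b\in\cB}$ of a cup--cap pair carrying dots $b$ and $b^\vee$; the cup and cap of this summand fuse with the cup and cap of the curl into two parallel upward strands. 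Sliding the two dots through the single remaining crossing by \eqref{eq:rel0a} and \eqref{eq:rel0b}, then moving them along the strands by super isotopy, places $b^\vee$ on the left strand and $b$ on the right strand at the indicated heights; using basis independence (Lemma~\ref{lem:casimir-properties}) and tracking the super-isotopy signs — which, when $\sigma=1$, come from the odd cup/cap of the curl moving past the odd cup/cap produced by \eqref{eq:rel2a} — one finds the overall coefficient is $+1$, giving \eqref{eq:circle-leftup-slide}.

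\textbf{Deducing \eqref{eq:circle-rightup-slide} and the main obstacle.} To obtain \eqref{eq:circle-rightup-slide}, I would pre- and post-compose \eqref{eq:circle-leftup-slide} with an upward--upward crossing and use \eqref{eq:rel1b} to cancel the added crossings on the left-hand side, which leaves a right curl on the bottom-left strand (the left-hand side of \eqref{eq:circle-rightup-slide} after sliding the curl by super isotopy); on the right-hand side the first term becomes, after another application of \eqref{eq:rel1b}, an upward--upward crossing with a right curl on the top-right strand, while the second term is brought into the stated form using \eqref{eq:rel0a}, \eqref{eq:rel0b} and Lemma~\ref{lem:casimir-properties} to pass between $\sum_b b^\vee\otimes b$ and $\sum_b b\otimes b^\vee$ (with $b$ on the left strand and $b^\vee$ on the right). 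Alternatively, \eqref{eq:circle-rightup-slide} can be proved by the same resolution argument with the roles of cup and cap interchanged. The main obstacle throughout is the purely diagrammatic bookkeeping: one must keep track of orientations so as to apply \eqref{eq:rel2a} (which yields the correction) rather than \eqref{eq:rel2b} (which does not) at the appropriate step, ensure that the cup--cap surviving from \eqref{eq:rel2a} reassembles into two genuine parallel strands rather than collapsing into a left curl (which would vanish by \eqref{eq:rel3b} and destroy the term we want), and handle the super-isotopy signs carefully in the case $\sigma=1$.
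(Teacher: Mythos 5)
Your proposal is correct and follows essentially the same line as the paper: isolate a mixed crossing adjacent to the curl's cup, resolve it via \eqref{eq:rel2a}, straighten the first summand using the triple-point move and \eqref{eq:rel1b}, and massage the correction term into the stated form via \eqref{eq:rel0a}, \eqref{eq:rel0b}, and Lemma~\ref{lem:casimir-properties}. The paper proves \eqref{eq:circle-rightup-slide} by the ``analogous'' mirror argument, while you additionally offer conjugation of \eqref{eq:circle-leftup-slide} by an upward crossing (using $X\circ X=\id$ and dot-sliding), which is a perfectly valid alternative — and in fact, tracking heights shows the dots arrive already in the order $b$-left, $b^\vee$-right, so one does not even need \eqref{eq:b-bvee-swap} in that last step.
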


\begin{proof}
  We prove \eqref{eq:circle-leftup-slide}.  The proof of \eqref{eq:circle-rightup-slide} is analogous.
  \begin{multline*}
    \begin{tikzpicture}[>=stealth,baseline={([yshift=-.5ex]current bounding box.center)}]
      \draw[->] (3,0) to (5,2);
      \draw[->] (5,0) to (4,1) .. controls (3.5,1.5) and (3.7,1.8) ..
      (4,1.5) .. controls (4.3,1.2) and (3.7,1.2) .. (3,2);
    \end{tikzpicture}
    \ \stackrel{\eqref{eq:dotsdown-b},\eqref{eq:rel2a}}{=}
    \begin{tikzpicture}[>=stealth,baseline={([yshift=-.5ex]current bounding box.center)}]
      \draw[->] (9,0) .. controls (8,.5) and (7.2,1.5) .. (8.2,1.5) ..
      controls (9.2,1.5) and (8.5,-.25) .. (7,2);
      \draw[->] (7,0) .. controls (8,.5) and (9,.3) .. (8.2,1.3) ..
      controls (7.6,1.8) and (8.5,1.75) .. (9,2);
    \end{tikzpicture}
    +\ \sum_{b \in \cB}
    \begin{tikzpicture}[>=stealth,baseline={([yshift=-.5ex]current bounding box.center)}]
      \draw[->] (11.2,0) .. controls (12.2,1) .. (11.2,2);
      \draw[->] (12.2,0) .. controls (11.2,1) .. (12.2,2);
      \bluedot{(11.95,1.76)} node[color=black,anchor=south] {$b^\vee$};
      \bluedot{(11.95,1)} node[color=black,anchor=west] {$\psiB(b)$};
    \end{tikzpicture}
    \\
    \stackrel{\eqref{eq:rel0a},\eqref{eq:triple-point}}{=}\
    \begin{tikzpicture}[>=stealth,baseline={([yshift=-.5ex]current bounding box.center)}]
      \draw[->] (9,0) .. controls (8,.5) and (7.2,1.5) .. (8.2,1.5) ..
      controls (9.2,1.5) and (8.5,-.25) .. (7,2);
      \draw[->] (7,0) .. controls (7,1) and (7.5,1.6) .. (8.2,1.3) ..
      controls (8.9,1) and (9,1.5) .. (9,2);
    \end{tikzpicture}
    +\ \sum_{b \in \cB}
    \begin{tikzpicture}[>=stealth,baseline={([yshift=-.5ex]current bounding box.center)}]
      \draw[->] (11.2,0) .. controls (12.2,1) .. (11.2,2);
      \draw[->] (12.2,0) .. controls (11.2,1) .. (12.2,2);
      \bluedot{(11.95,1.76)} node[color=black,anchor=south] {$b^\vee$};
      \bluedot{(11.5,0.3)} node[color=black,anchor=south east] {$\psiB(b)$};
    \end{tikzpicture}
    \ \stackrel{\eqref{eq:rel1b}}{=}
    \begin{tikzpicture}[>=stealth,baseline={([yshift=-.5ex]current bounding box.center)}]
      \draw[->] (14,0) .. controls (13.5,.5) and (13.7,.8) ..
      (14,.5) .. controls (14.3,.2) and (13.7,.2) .. (13,1) to (12,2);
      \draw[->] (12,0) to (14,2);
    \end{tikzpicture}
    +\ \sum_{b \in \cB}
    \begin{tikzpicture}[>=stealth,baseline={([yshift=-.5ex]current bounding box.center)}]
      \draw[->] (8,0) to (8,2);
      \draw[->] (9,0) to (9,2);
      \bluedot{(8,0.8)} node[color=black,anchor=east] {$\psiB(b)$};
      \bluedot{(9,1.2)} node[color=black,anchor=west] {$b^\vee$};
    \end{tikzpicture}
  \end{multline*}
  Relation \eqref{eq:circle-leftup-slide} then follows from \eqref{eq:b-bvee-swap} and the fact that diagrams are linear in the dots.
\end{proof}

\begin{cor}
  We have the following equalities of diagrams:
  \begin{equation} \label{eq:circle-mult-leftup-slide}
    \begin{tikzpicture}[>=stealth,baseline={([yshift=-.5ex]current bounding box.center)}]
      \draw[->] (0,0) to (2,2);
      \draw[->] (2,0) to (0,2);
      \redcircle{(0.5,1.5)} node[color=black,anchor=north east] {$d$};
    \end{tikzpicture}
    \ =\
    \begin{tikzpicture}[>=stealth,baseline={([yshift=-.5ex]current bounding box.center)}]
      \draw[->] (3,0) to (5,2);
      \draw[->] (5,0) to (3,2);
      \redcircle{(4.5,.5)} node[color=black,anchor=north east] {$d$};
    \end{tikzpicture}
    \ +\ \sum_{k=0}^{d-1} \sum_{b \in \cB}\
    \begin{tikzpicture}[>=stealth,baseline={([yshift=-.5ex]current bounding box.center)}]
      \draw[->] (0,0) to (0,2);
      \draw[->] (1,0) to (1,2);
      \bluedot{(0,1.2)} node[color=black,anchor=east] {$b^\vee$};
      \bluedot{(1,0.8)} node[color=black,anchor=west] {$b$};
      \redcircle{(0,1.6)} node[color=black,anchor=east] {$k$};
      \redcircle{(1,0.4)} node[color=black,anchor=west] {$d-k-1$};
    \end{tikzpicture}
  \end{equation}
  \begin{equation} \label{eq:circle-mult-rightup-slide}
    \begin{tikzpicture}[>=stealth,baseline={([yshift=-.5ex]current bounding box.center)}]
      \draw[->] (0,0) to (2,2);
      \draw[->] (2,0) to (0,2);
      \redcircle{(0.5,0.5)} node[color=black,anchor=south east] {$d$};
    \end{tikzpicture}
    \ =\
    \begin{tikzpicture}[>=stealth,baseline={([yshift=-.5ex]current bounding box.center)}]
      \draw[->] (3,0) to (5,2);
      \draw[->] (5,0) to (3,2);
      \redcircle{(4.5,1.5)} node[color=black,anchor=south east] {$d$};
    \end{tikzpicture}
    \ +\ \sum_{k=0}^{d-1} \sum_{b \in \cB}\
    \begin{tikzpicture}[>=stealth,baseline={([yshift=-.5ex]current bounding box.center)}]
      \draw[->] (0,0) to (0,2);
      \draw[->] (1,0) to (1,2);
      \bluedot{(0,0.8)} node[color=black,anchor=east] {$b$};
      \bluedot{(1,1.2)} node[color=black,anchor=west] {$b^\vee$};
      \redcircle{(0,0.4)} node[color=black,anchor=east] {$k$};
      \redcircle{(1,1.6)} node[color=black,anchor=west] {$d-k-1$};
    \end{tikzpicture}
  \end{equation}
\end{cor}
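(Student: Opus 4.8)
The plan is to prove \eqref{eq:circle-mult-leftup-slide} and \eqref{eq:circle-mult-rightup-slide} by induction on $d$, the number of right curls. The base case $d = 1$ is precisely Lemma~\ref{lem:dot-crossing-moves}, i.e.\ relations \eqref{eq:circle-leftup-slide} and \eqref{eq:circle-rightup-slide}; there the sum over $k$ reduces to the single term $k = 0$. I would treat \eqref{eq:circle-mult-leftup-slide} in detail, since \eqref{eq:circle-mult-rightup-slide} is entirely analogous with \eqref{eq:circle-rightup-slide} playing the role of \eqref{eq:circle-leftup-slide}.

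For the inductive step, suppose $d \ge 2$ and that \eqref{eq:circle-mult-leftup-slide} holds for $d - 1$. Since open dots slide freely along strands, the left-hand side of \eqref{eq:circle-mult-leftup-slide} can be viewed as an upward crossing carrying a single right curl on the outgoing top-left strand, followed higher up (on that same strand) by $d - 1$ further right curls. Applying \eqref{eq:circle-leftup-slide} to the lower part — a legitimate local move, since the $d - 1$ spectator curls lie on a single strand above — replaces the crossing-with-one-curl by the crossing with one right curl on the incoming bottom-right strand, plus the correction $\sum_{b \in \cB}$ of two parallel vertical strands carrying $b^\vee$ on the left and $b$ on the right.

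Now carry the leftover $d - 1$ open dots through both resulting terms. In the first term one gets an upward crossing with $d - 1$ open dots on the outgoing top-left strand and one open dot low on the incoming bottom-right strand; sliding that bottom curl down out of the way and invoking the inductive hypothesis for the crossing with $d - 1$ top-left curls produces the crossing with $d - 1$ curls on the bottom-right strand — to which the curl set aside re-attaches, giving $d$ on the bottom-right — together with the correction sum $\sum_{k=0}^{d-2}\sum_{b\in\cB}$ from the $(d-1)$-case, in each term of which the re-attached bottom curl merges with the $d - k - 2$ open dots on the right strand to give $d - k - 1$, i.e.\ exactly the $k$-summand of \eqref{eq:circle-mult-leftup-slide}. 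In the second term, the $d - 1$ leftover curls sit above $b^\vee$ on the left strand with nothing on the right, which is exactly the missing $k = d - 1$ summand. Collecting everything yields the crossing with $d$ open dots on the bottom-right strand together with $\sum_{k=0}^{d-1}\sum_{b\in\cB}$ of the stated parallel-strand diagrams, which is \eqref{eq:circle-mult-leftup-slide}.

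This is really a bookkeeping argument once the base case is available, so there is no serious obstacle; the one point that needs care is tracking the summation index $k$ and the vertical ordering of the open dots relative to the $b$ and $b^\vee$ dots on the two strands. It is worth noting that at no stage does one need to move an open dot past a blue dot, so the sign and Nakayama twist of \eqref{eq:curl-Nakayama} never intervene: every displacement of open dots takes place along portions of strands free of blue dots and is justified purely by super isotopy invariance.
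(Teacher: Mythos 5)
Your proof is correct and is essentially a careful formalization of what the paper does: the paper's one-line "repeated application of \eqref{eq:circle-leftup-slide} and \eqref{eq:circle-rightup-slide}" is precisely your induction, with each application of the $d=1$ case producing one more summand of the final $\sum_{k=0}^{d-1}$. The bookkeeping you supply — tracking which strand each leftover curl lands on and noting that no open dot ever passes a blue dot or another open dot, so no Nakayama twists or $\sigma$-signs arise — is exactly the content the paper leaves implicit.
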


\begin{proof}
  This follows easily from repeated application of \eqref{eq:circle-leftup-slide} and \eqref{eq:circle-rightup-slide}.
\end{proof}

Under the functor $\bF$, the image of
\[
  \begin{tikzpicture}[>=stealth,baseline={([yshift=-.5ex]current bounding box.center)}]
    \draw[->] (0,0) to (0,1);
    \draw (0.5,0.2) node {$n$};
    \redcircle{(0,0.5)};
  \end{tikzpicture}
\]
is the zero map for $n \le 0$ and, for $n \in \N_+$, is the degree $(\delta,\sigma)$ bimodule homomorphism
\begin{gather}
  \pr{J_n} \colon (n+1)_n \to \{-\delta,\sigma\}(n+1)_n,\quad \text{where} \nonumber \\
  J_n := \sum_{\substack{b \in \cB \\ i \in \{1,\dotsc,n\}}} (-1)^{\sigma \bar b^\vee} \left( 1_B^{\otimes(i-1)} \otimes b^\vee \otimes 1_B^{\otimes(n-i)} \otimes b \right) (i,n+1),
\end{gather}
where $(i,n+1)$ denotes the transposition of $i$ and $n+1$.  By convention, we set $J_n = 0$ for $n \le 0$.
\details{
  The result for $n \le 0$ follows immediately from the fact that the diagram corresponding to the right curl has a region with a negative label in this case.  Now assume that $n \in \N_+$.  Then the right curl is the composition
  \begin{multline*}
    (n+1)_n \cong (n+1)_n(n)
    \xrightarrow{\id \otimes \eta_\rL} (n+1)_n(n)_{n-1}(n)
    \cong (n+1)_{n-1}(n)
    \xrightarrow{\pr{s_n} \otimes \id} (n+1)_{n-1}(n) \\
    \cong (n+1)_n(n)_{n-1}(n)
    \xrightarrow{\id \otimes \varepsilon_\rR} (n+1)_n(n)
    \cong (n+1)_n.
  \end{multline*}
  Under this composition, for $a \in (n+1)_n$, we have
  \begin{multline*}
    a \mapsto \sum_{\substack{b \in \cB \\ i \in \{1,\dotsc,n\}}} (-1)^{\sigma (\bar a + \bar b^\vee)} a s_i \dotsm s_{n-1} \left( 1_B^{\otimes (n-1)} \otimes b^\vee \otimes 1_B \right) \otimes \left( 1_B^{\otimes (n-1)} \otimes b \right) s_{n-1} \dotsm s_i \\
    \mapsto
    \sum_{\substack{b \in \cB \\ i \in \{1,\dotsc,n\}}} (-1)^{\sigma (\bar a + \bar b^\vee)} a s_i \dotsm s_{n-1} \left( 1_B^{\otimes (n-1)} \otimes b^\vee \otimes 1_B \right) s_n \otimes \left( 1_B^{\otimes (n-1)} \otimes b \right) s_{n-1} \dotsm s_i.
  \end{multline*}
  The result then follows from the fact that the degree of $J_n$ is $(\delta,\sigma)$ and that $s_i \dotsm s_n \dotsm s_i$ is the transposition $(i,n+1)$.
}
It follows that the image of
\[
  \begin{tikzpicture}[>=stealth,baseline={([yshift=-.5ex]current bounding box.center)}]
    \draw[<-] (0,0) to (0,1);
    \draw (-0.5,0.2) node {$n$};
    \redcircle{(0,0.5)};
  \end{tikzpicture}
\]
is the zero map for $n \le 0$ and, for $n \in \N+$, is the degree $(\delta,\sigma)$ bimodule homomorphism
\begin{gather}
  \pl{J_n} \colon \prescript{}{n}(n+1) \to \{-\delta,\sigma\} \prescript{}{n}(n+1).
\end{gather}

\begin{rem} \label{rem:Jucys-Murphy}
  Note that in the special case $B=\F$, we have that $J_n$ is the Jucys--Murphy element, which plays a crucial role in the representation theory of the symmetric group.  When $B = \Cl$, we obtain the analogous elements of the Sergeev algebra.  (See, for example, \cite[(13.22)]{Kle05}.)  One should think of $J_n$ as a generalization of the Jucys--Murphy element to more general wreath product algebras.  It would be interesting to investigate which properties of the usual Jucys--Murphy elements hold in the more general setting.
\end{rem}

%%%%%%%%%%%%%%%%%%%%
\subsection{Bubbles}
%%%%%%%%%%%%%%%%%%%%

For $b \in B$ and $d \in \N$, define the \emph{bubbles}
\begin{equation}
  \tilde c_{b,d} =
  \begin{tikzpicture}[>=stealth,baseline={([yshift=-.5ex]current bounding box.center)}]
    \draw[->] (0,0) arc (90:470:.5);
    \bluedot{(-0.45,-0.3)} node[color=black,anchor=east] {$b$};
    \redcircle{(0.45,-0.7)} node[color=black,anchor=west] {$d$};
  \end{tikzpicture}
  \qquad \text{and} \qquad
  c_{b,d} =
  \begin{tikzpicture}[>=stealth,baseline={([yshift=-.5ex]current bounding box.center)}]
    \draw[<-] (0,0) arc (90:470:.5);
    \bluedot{(-0.45,-0.3)} node[color=black,anchor=east] {$b$};
    \redcircle{(0.45,-0.7)} node[color=black,anchor=west] {$d$};
  \end{tikzpicture}
  \ .
\end{equation}

\begin{lem} \label{lem:circle-index-relations}
  We have
  \begin{equation} \label{eq:circle-Nakayama}
    \tilde{c}_{b,d} = \tilde{c}_{\psiB(b),d},\quad c_{b,d} = c_{\psiB(b),d} \quad \text{for all } b \in B,\ d \in \N.
  \end{equation}
  and, if $\sigma =1$, then
  \begin{equation} \label{eq:circle-d-deven}
    \tilde{c}_{b,d} = 0 = c_{b,d} \quad \text{for all } b \in B,\ d \in 2\N_+.
  \end{equation}
\end{lem}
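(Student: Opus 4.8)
The plan is to prove \eqref{eq:circle-Nakayama} first and then bootstrap to \eqref{eq:circle-d-deven}. For \eqref{eq:circle-Nakayama} I would slide the blue dot labelled $b$ once around the closed loop. Up to super isotopy the loop consists of a single cup and a single cap, which the dot passes exactly once each; by \eqref{eq:dotsup-a}--\eqref{eq:dotsdown-b} (together with \eqref{eq:rel0a} and \eqref{eq:rel0b}, exactly as in the proof of \eqref{eq:curl-Nakayama}), dragging the dot across one of them replaces its label by $\psiB$ of itself and across the other does nothing, so one full trip conjugates the label by a single power of $\psiB$. Since open dots slide freely along strands, including through cups and caps, the $d$ curls can be routed aside so that the blue dot never has to be dragged past one, and hence no super-isotopy sign is incurred. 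As $\psiB$ is an automorphism the resulting identity is equivalent to $\tilde c_{b,d}=\tilde c_{\psiB(b),d}$, and the statement for $c_{b,d}$ follows by reversing all orientations.

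For \eqref{eq:circle-d-deven} the new ingredient is the super-isotopy sign rule, which is only active when $\sigma=1$. The plan is to slide one of the $d$ curls once around the loop back to its starting point; on the way it passes the loop's odd cup or cap, the other $d-1$ curls (each an odd feature when $\sigma=1$), and the blue dot, so the diagram is returned to itself times an explicit sign $(-1)^\epsilon$. One then checks that, for $d$ even, either $\epsilon$ is odd, forcing the bubble to vanish, or, in the complementary parity of $b$, the bubble already vanishes for degree reasons: by \eqref{eq:rel3a} and \eqref{eq:trace-parity-condition}, $\tilde c_{b,0}=\tr_B(b)=0$ on even elements $b$ since $\tr_B$ has odd parity $\sigma=1$, and inserting an even number of curls keeps the bubble an even-parity endomorphism of the monoidal unit. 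Combining the two cases yields $\tilde c_{b,d}=0=c_{b,d}$ for $d\in2\N_+$.

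The first part is essentially bookkeeping once one accepts the free mobility of open dots through cups and caps. The hard part is the second: correctly computing the super-isotopy sign $\epsilon$ as the curl travels around the loop, and verifying that the sign argument together with the trace-parity observation really does dispose of every parity of $b$ and not merely the generic case. That sign and parity analysis, rather than any conceptual obstacle, is what I expect to be the crux.
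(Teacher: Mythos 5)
Your plan for \eqref{eq:circle-Nakayama} is essentially the paper's proof, and your plan for \eqref{eq:circle-d-deven} (slide one curl all the way around the bubble) is also the paper's chosen route. However, the sign bookkeeping you anticipate is off in two ways, and the fallback you hold in reserve for the ``bad'' parity of $b$ is both unnecessary and incorrect.

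First, the bubble's odd cup/cap contributes no sign. The curl slides through the cup/cap rather than past it vertically: the bubble's cup and cap sit at the extreme heights of the loop, and the curl's trajectory touches but never exceeds those extremes, so there is no height exchange. If there were an extra factor of $(-1)^\sigma$ from passing the odd cup, the total would be $(-1)^{d\sigma}$, which is $+1$ for $d$ even, and the lemma would fail. Second, the curl exchanges heights with the dot labelled $b$ \emph{twice} over the course of one loop — once on the far side of the circle (a pure height swap) and once on the same strand (which is exactly \eqref{eq:curl-Nakayama}, producing the $\psiB$ and a sign) — so the two $(-1)^{\sigma\bar b}$ factors cancel. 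The paper's displayed computation tracks precisely this: the curl exchanges heights with $b$ going from the first to the second diagram and again from the second to the third, leaving behind $\psiB(b)$ with no net $\bar b$-dependence. What remains is a single height exchange with the other $d-1$ curls, giving the clean result $c_{b,d} = (-1)^{(d-1)\sigma}\,c_{\psiB(b),d}$. After invoking \eqref{eq:circle-Nakayama} (which you omit) to replace $c_{\psiB(b),d}$ by $c_{b,d}$, this forces $c_{b,d}=0$ for $\sigma=1$ and $d$ even, for \emph{every} $b$ — no case split on $\bar b$ is needed.

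The fallback you propose does not work in any case. For $\sigma=1$ and $d\in 2\N_+$, the bubble $\tilde c_{b,d}$ has parity $(d+1)\sigma + \bar b = 1 + \bar b$, so for \emph{even} $b$ it is an \emph{odd}-parity endomorphism, not even as you assert; and regardless of parity, a morphism $\one\to\one$ of a given degree need not vanish — the endomorphism algebra of the monoidal unit is a large graded superalgebra (cf.\ Proposition~\ref{prop:identity-end-surjection}), not concentrated in a single degree. The observation $\tilde c_{b,0}=\tr_B(b)$ via \eqref{eq:rel3a} and \eqref{eq:trace-parity-condition} is correct for $d=0$ but has no bearing on $d\ge 2$. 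So the whole supplementary ``degree-reasons'' branch should be dropped; the sign computation, done carefully with the double crossing of the $b$ dot and no contribution from the loop's cup/cap, handles every parity uniformly.
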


\begin{proof}
  Equations~\eqref{eq:circle-Nakayama} follow immediately from \eqref{eq:dotsup-a}--\eqref{eq:dotsdown-b} and the fact that the open dots can be slid around the circles.  For $b \in B$ and $d \ge 2$, we have
  \begin{multline*}
    c_{b,d} =
    \begin{tikzpicture}[>=stealth,baseline={([yshift=-.5ex]current bounding box.center)}]
      \draw[<-] (0,0) arc (90:470:.5);
      \bluedot{(-0.45,-0.3)} node[color=black,anchor=east] {$b$};
      \redcircle{(0.45,-0.7)} node[color=black,anchor=west] {$d$};
    \end{tikzpicture}
    \ = (-1)^{\sigma b}
    \begin{tikzpicture}[>=stealth,baseline={([yshift=-.5ex]current bounding box.center)}]
      \draw[<-] (0,0) arc (90:470:.5);
      \bluedot{(-0.5,-0.5)} node[color=black,anchor=east] {$b$};
      \redcircle{(0.36,-0.85)} node[color=black,anchor=west] {$d-1$};
      \redcircle{(-0.36,-0.15)};
    \end{tikzpicture}
    \ =\
    \begin{tikzpicture}[>=stealth,baseline={([yshift=-.5ex]current bounding box.center)}]
      \draw[<-] (0,0) arc (90:470:.5);
      \bluedot{(-0.36,-0.15)} node[color=black,anchor=south east] {$\psiB(b)$};
      \redcircle{(0.36,-0.85)} node[color=black,anchor=west] {$d-1$};
      \redcircle{(-0.5,-0.5)};
    \end{tikzpicture}
    \ = (-1)^{(d-1)\sigma}\
    \begin{tikzpicture}[>=stealth,baseline={([yshift=-.5ex]current bounding box.center)}]
      \draw[<-] (0,0) arc (90:470:.5);
      \bluedot{(-0.36,-0.15)} node[color=black,anchor=south east] {$\psiB(b)$};
      \redcircle{(0.5,-0.5)} node[color=black,anchor=west] {$d-1$};
      \redcircle{(-0.36,-0.85)};
    \end{tikzpicture}
    \\
    = (-1)^{(d-1)\sigma}\
    \begin{tikzpicture}[>=stealth,baseline={([yshift=-.5ex]current bounding box.center)}]
      \draw[<-] (0,0) arc (90:470:.5);
      \bluedot{(-0.45,-0.3)} node[color=black,anchor=east] {$\psiB(b)$};
      \redcircle{(0.45,-0.7)} node[color=black,anchor=west] {$d$};
    \end{tikzpicture}
    \ = (-1)^{(d-1)\sigma} c_{\psiB(b),d} \stackrel{\eqref{eq:circle-Nakayama}}{=} (-1)^{(d-1) \sigma} c_{b,d},
  \end{multline*}
  from which it follows that $c_{b,d}=0$ when $\sigma=1$ and $d \in 2\N_+$.  The proof of the first equality in \eqref{eq:circle-d-deven} is analogous.
\end{proof}

\begin{prop} \label{prop:circle-conversion}
  For $b \in B$ and $d \in \N$, we have
  \begin{equation}
    \tilde{c}_{b,d+1} = \sum_{k=0}^{d-1} \sum_{f \in \cB} (-1)^{\bar f + \bar b \bar f + (k+1) \sigma \bar f + (d+kd+1)\sigma} \, \tilde{c}_{f \psiB (b),k} c_{f^\vee,d-k-1}.
  \end{equation}
  Note, in particular, that this implies $\tilde c_{b,1}=0$.
\end{prop}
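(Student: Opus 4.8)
The plan is to prove the identity by a direct diagrammatic computation in which one of the $d+1$ right curls of $\tilde c_{b,d+1}$ is isolated and expanded so as to expose a local configuration governed by relation \eqref{eq:rel2a}, whose ``cup--cap'' term is exactly what produces a \emph{product} of two bubbles. First I would use super isotopy invariance together with \eqref{eq:curl-Nakayama} and \eqref{eq:circle-Nakayama} to normalize the circle underlying $\tilde c_{b,d+1}$: put the dot labelled $b$ at a chosen point, collect all $d+1$ open dots along one arc, and pull the ``outermost'' open dot wide into an honest loop. Pulling a right curl wide exhibits a crossing in which one strand (part of the ambient circle) is oriented upward while the returning strand of the loop is oriented downward; after a further isotopy of the type used in the proof of \eqref{eq:triple-point} this is the left-hand side of \eqref{eq:rel2a}.

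The main steps would then be: (i) slide the remaining $d$ open dots across the exposed crossing via the corollary relations \eqref{eq:circle-mult-leftup-slide}--\eqref{eq:circle-mult-rightup-slide}; this replaces the picture by a ``leading'' term (all $d$ curls moved onto the other strand) plus a sum $\sum_{k=0}^{d-1}\sum_{f\in\cB}$ of diagrams consisting of two parallel strands carrying, respectively, $k$ curls with a dot $f^\vee$ and $d-k-1$ curls with a dot $f$. (ii) In the leading term, undo the crossing and the wide loop; after reclosing the circle this should produce a diagram containing a left curl, which vanishes by \eqref{eq:rel3b}, so the leading term drops out entirely (this is forced, since otherwise the reclosed leading term would be isotopic to $\tilde c_{b,d+1}$ itself). (iii) In each summand of the double sum, close the two parallel strands into two disjoint circles; dragging the $b$-dot past the cap of one of them turns it into $\psiB(b)$ by \eqref{eq:dotsup-a}--\eqref{eq:dotsdown-b}, so that one circle becomes $\tilde c_{f\psiB(b),k}$ and the other becomes $c_{f^\vee,d-k-1}$, as in the statement. (iv) Collect the signs coming from \eqref{eq:rel2a}, from exchanging heights of odd cups, caps and dots in the isotopies (governed by \eqref{eq:supercomm} and the $(-1)^\sigma$ rules in the definition of super isotopy), from \eqref{eq:curl-Nakayama}--\eqref{eq:circle-Nakayama}, and from moving a dot of parity $\bar b$ past $k+1$ cups/caps; then rewrite the basis sum via Lemma~\ref{lem:casimir-properties}, in particular \eqref{eq:b-bvee-swap}, to put the two dual basis vectors on the two circles in the claimed way. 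This should assemble the total sign into $(-1)^{\bar f + \bar b\bar f + (k+1)\sigma\bar f + (d+kd+1)\sigma}$. The case $d=0$ is degenerate: the right-hand side is an empty sum, so the identity reduces to $\tilde c_{b,1}=0$, which I would obtain either directly from the $d=0$ specialization of the argument above or, more cleanly, by sliding the single curl once all the way around the circle through a cup and a cap — the two Nakayama twists then cancel (and any residual $\sigma$-sign is absorbed using \eqref{eq:circle-Nakayama}) and the curl emerges as a left curl, hence $0$ by \eqref{eq:rel3b}.

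I expect the topological manipulation to be routine, and the genuine obstacle to be the sign bookkeeping. Every isotopy move can contribute a factor $-1$ depending on $\sigma$ and on the parities of $b$, of the basis vector $f$, and of the number of curls, cups, and caps passed; matching the exponent $\bar f + \bar b\bar f + (k+1)\sigma\bar f + (d+kd+1)\sigma$ exactly therefore requires careful tracking of the heights at which odd dots, left cups, and left caps occur. A secondary subtlety is ensuring the Nakayama automorphism is applied the correct number of times as the dot $b$ is transported around the circle, and that \eqref{eq:b-bvee-swap} is invoked with the correct duals, so that one lands on $\tilde c_{f\psiB(b),k}\,c_{f^\vee,d-k-1}$ rather than a variant with $f$ and $f^\vee$ interchanged or with $\psiB$ on the wrong side.
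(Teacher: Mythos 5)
Your plan mirrors the paper's proof in all essentials: open one right curl into a crossing, push the remaining $d$ curls across it via \eqref{eq:circle-mult-rightup-slide} to produce a leading term plus a $\sum_{k,f}$ of separated pieces, kill the leading term with \eqref{eq:rel3b} (it contains a left curl), close the extra terms into $\tilde c_{f\psiB(b),k}\,c_{f^\vee,d-k-1}$, and track the signs, with the $d=0$ case giving $\tilde c_{b,1}=0$. The only cosmetic difference is that the paper's figure-eight diagram for $\tilde c_{b,d+1}$ exposes an \emph{up-up} crossing and applies \eqref{eq:circle-mult-rightup-slide} directly, rather than routing explicitly through \eqref{eq:rel2a}; since \eqref{eq:circle-rightup-slide} is itself proved from \eqref{eq:rel2a}, this is the same argument one layer unrolled, and your correct identification of the sign bookkeeping as the real work matches where the paper's calculation spends its effort.
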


\begin{proof}
  \begin{multline*}
    \tilde c_{b,d+1}=
    \begin{tikzpicture}[>=stealth,baseline={([yshift=-.5ex]current bounding box.center)}]
      \draw[-<] (1,1) .. controls (1,2) and (1.5,2) .. (2,1) .. controls (2.5,0) and (3,0) ..
      (3,1);
      \draw (3,1) .. controls (3,2) and (2.5,2) .. (2,1) .. controls (1.5,0) and (1,0) .. (1,1);
      \redcircle{(1.02,0.7)} node[color=black,anchor=east] {$d$};
      \bluedot{(1.02,1.3)} node[color=black,anchor=east] {$b$};
    \end{tikzpicture}
    \ \stackrel{\eqref{eq:circle-mult-rightup-slide}}{=}
    \begin{tikzpicture}[>=stealth,baseline={([yshift=-.5ex]current bounding box.center)}]
      \draw[-<] (4,1) .. controls (4,2) and (4.5,2) .. (5,1) .. controls (5.5,0) and (6,0) ..
      (6,1);
      \draw (6,1) .. controls (6,2) and (5.5,2) .. (5,1) .. controls (4.5,0) and (4,0) .. (4,1);
      \redcircle{(5.18,1.3)} node[color=black, anchor=south] {$d$};
      \bluedot{(4.1,1.55)} node[color=black,anchor=east] {$b$};
    \end{tikzpicture}
    \ + \sum_{k=0}^{d-1} \sum_{f \in \cB}\
    \begin{tikzpicture}[>=stealth,baseline={([yshift=-.5ex]current bounding box.center)}]
      \draw[->] (9.5,1) ellipse (0.5 and 0.75);
      \draw[->] (9.5,0.25) to (9.51,0.25);
      \draw (12.5,1) ellipse (0.5 and 0.75);
      \draw[->] (12.49,1.75) to (12.5,1.75);
      \redcircle{(9.75,0.35)} node[color=black,anchor=north west] {$k$};
      \bluedot{(9.2,1.6)} node[color=black,anchor=east] {$b$};
      \bluedot{(9.92,.6)} node[color=black,anchor=west] {$f$};
      \redcircle{(12.05,1.3)} node[color=black,anchor=east] {$d-k-1$};
      \bluedot{(12.01,0.9)} node[color=black,anchor=east] {$f^\vee$};
    \end{tikzpicture}
    \\
    \stackrel{\substack{\eqref{eq:dotsdown-b} \\ \eqref{eq:rel0b}, \eqref{eq:collision-down}}}{=} (-1)^{d \sigma \bar b}\
    \begin{tikzpicture}[>=stealth,baseline={([yshift=-.5ex]current bounding box.center)}]
      \draw[-<] (4,1) .. controls (4,2) and (4.5,2) .. (5,1) .. controls (5.5,0) and (6,0) ..
      (6,1);
      \draw (6,1) .. controls (6,2) and (5.5,2) .. (5,1) .. controls (4.5,0) and (4,0) .. (4,1);
      \redcircle{(5.3,1.5)} node[color=black, anchor=south] {$d$};
      \bluedot{(5.3,0.5)} node[color=black,anchor=north east] {$\psi(b)$};
    \end{tikzpicture}
    \ + \sum_{k=0}^{d-1} \sum_{f \in \cB} (-1)^{\bar f + \bar b \bar f + \sigma \bar f + (d-k-1)\sigma}\
    \begin{tikzpicture}[>=stealth,baseline={([yshift=-.5ex]current bounding box.center)}]
      \draw[->] (9.5,1) ellipse (0.5 and 0.75);
      \draw[->] (9.5,0.25) to (9.51,0.25);
      \draw (11.5,1) ellipse (0.5 and 0.75);
      \draw[->] (11.49,1.75) to (11.5,1.75);
      \redcircle{(9.75,0.35)} node[color=black,anchor=north west] {$k$};
      \bluedot{(9.2,1.6)} node[color=black,anchor=east] {$\psiB^{-1}(f)b$};
      \redcircle{(11.99,0.9)} node[color=black,anchor=west] {$d-k-1$};
      \bluedot{(11.05,1.3)} node[color=black,anchor=east] {$f^\vee$};
    \end{tikzpicture}
    \\
    \stackrel{\eqref{eq:rel3b}}{=}
    \sum_{k=0}^{d-1} \sum_{f \in \cB} (-1)^{\bar f + \bar b \bar f + (k+1) \sigma \bar f + (d+kd+1)\sigma}\
    \begin{tikzpicture}[>=stealth,baseline={([yshift=-.5ex]current bounding box.center)}]
      \draw[->] (9.5,0.5) arc(-90:270:.5);
      \draw[-<] (11.5,0) arc(90:450:.5);
      \redcircle{(9.7,0.55)} node[color=black,anchor=north] {$k$};
      \bluedot{(9,1)} node[color=black,anchor=east] {$\psiB^{-1}(f) b$};
      \redcircle{(12,-0.5)} node[color=black,anchor=west] {$d-k-1$};
      \bluedot{(11.1,-0.2)} node[color=black,anchor=east] {$f^\vee$};
    \end{tikzpicture}
  \end{multline*}
  The result then follows from \eqref{eq:circle-Nakayama}.
\end{proof}

\begin{prop} \label{prop:bubble-slide}
  We have the following equality of diagrams:
  \begin{multline*}
    \begin{tikzpicture}[>=stealth,baseline={([yshift=-.5ex]current bounding box.center)}]
      \draw[<-] (0,0) arc (90:470:.5);
      \draw[->] (1,-1.5) to (1,0.5);
      \bluedot{(-0.45,-0.3)} node[color=black,anchor=east] {$b$};
      \redcircle{(0.45,-0.7)} node[color=black,anchor=west] {$d$};
    \end{tikzpicture}
    \ =\
    \begin{tikzpicture}[>=stealth,baseline={([yshift=-.5ex]current bounding box.center)}]
      \draw[<-] (0,0) arc (90:470:.5);
      \draw[->] (-1.1,-1.5) to (-1.1,0.5);
      \bluedot{(-0.45,-0.3)} node[color=black,anchor=east] {$b$};
      \redcircle{(0.45,-0.7)} node[color=black,anchor=west] {$d$};
    \end{tikzpicture}
    \ +\ \sum_{k=0}^{d-1} \sum_{f \in \cB} (-1)^{\bar f + \sigma(dk-k+1 + d \bar b + \bar f)}\
    \begin{tikzpicture}[>=stealth,baseline={([yshift=-.5ex]current bounding box.center)}]
      \draw[->] (0,0) to (0,2);
      \bluedot{(0,0.4)} node[color=black,anchor=west] {$f^\vee$};
      \bluedot{(0,1)} node[color=black,anchor=west] {$\psiB^{k-d}(b) \psiB^{-d}(f)$};
      \redcircle{(0,1.6)} node[color=black,anchor=west] {$d$};
    \end{tikzpicture}
    \ +\ \sum_{f \in \cB} (-1)^{\bar f + d \sigma \bar b + \sigma}\
    \begin{tikzpicture}[>=stealth,baseline={([yshift=-.5ex]current bounding box.center)}]
      \draw[->] (0,0) to (0,2);
      \bluedot{(0,1)} node[color=black,anchor=west] {$b \psiB^{-d}(f)$};
      \redcircle{(0,1.6)} node[color=black,anchor=west] {$d$};
      \bluedot{(0,0.4)} node[color=black,anchor=west] {$f^\vee$};
    \end{tikzpicture}
    \\
    \qquad \qquad \qquad
    \ -\ \sum_{k=0}^{d-1} \sum_{j = 0}^{d-k-2} \sum_{f,g \in \cB}  (-1)^{\bar f + \bar \sigma (dk-k+1 + (d+1)\bar f)}\
    \begin{tikzpicture}[>=stealth,baseline={([yshift=-.5ex]current bounding box.center)}]
      \draw[->] (0,0) to (0,3);
      \draw[<-] (1,1.5) arc (90:470:0.5);
      \bluedot{(0,2.7)} node[color=black,anchor=east] {$\psiB^k(b) f$};
      \redcircle{(0,2.2)} node[color=black,anchor=east] {$j+k$};
      \bluedot{(0,1.7)} node[color=black,anchor=east] {$g^\vee$};
      \bluedot{(0,0.2)} node[color=black,anchor=east] {$f^\vee$};
      \bluedot{(0.5,1)} node[color=black,anchor=west] {$g$};
      \redcircle{(1.2,0.55)};
      \draw (1.6,0.5) node[color=black,anchor=north] {$d-k-j-2$};
    \end{tikzpicture}
  \end{multline*}
\end{prop}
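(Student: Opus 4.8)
The statement is a bubble--slide relation: on the left the closed $\sQ$-loop $c_{b,d}$ sits to one side of a $\sP$-strand, and we wish to move it to the other side. Viewed inside the $2$-category $\mathscr H_B$ of Remark~\ref{rem:2-category-viewpoint}, the two sides are whiskerings of $c_{b,d}$ read in different region labels, so the move is genuinely nontrivial. The plan is to isotope the $\sQ$-loop of $c_{b,d}$ so that it crosses the vertical $\sP$-strand exactly twice (once near the top of the loop, once near the bottom), and then to resolve those two crossings using relations \eqref{eq:rel2a} and \eqref{eq:rel2b}. Relation \eqref{eq:rel2b} disposes of one of the two crossings with no cost; relation \eqref{eq:rel2a} replaces the other crossing by the identity \emph{minus} a cup--cap term summed over $\cB$ (which is basis--independent by Lemma~\ref{lem:casimir-properties}). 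The ``identity'' contribution, after undoing the auxiliary isotopy with \eqref{eq:rel1b}, is precisely the first term on the right--hand side (the bubble slid across). Every remaining contribution comes from the Casimir term $\sum_{f\in\cB} f\otimes f^\vee$, and producing those is the content of the proposition.

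\textbf{Moving the curls.} Before one can read off the correction terms, the $d$ right curls carried by $c_{b,d}$ must be transported through the crossings and cups/caps created above. For this I would use the curl--slide corollary, relations \eqref{eq:circle-mult-leftup-slide} and \eqref{eq:circle-mult-rightup-slide}: each time a packet of curls is dragged through a crossing it splits into a sum over $0\le k\le d-1$ together with a basis element $\sum_{b}b^\vee\otimes b$, which is exactly the source of the $\sum_{k=0}^{d-1}$ appearing in the statement, and iterating this inside the residual $c$--bubble of the last term produces the extra index $0\le j\le d-k-2$ with the nested bubble retaining $d-k-j-2$ curls. Whenever a curl slides onto a cup or cap it becomes a \emph{left} curl, which vanishes by \eqref{eq:rel3b}; bookkeeping which configurations survive is what separates the four summands on the right. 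Along the way, dots and curls passing along cups and caps pick up powers of the Nakayama automorphism via \eqref{eq:dotsup-a}--\eqref{eq:dotsdown-b} and \eqref{eq:curl-Nakayama}, \eqref{eq:circle-Nakayama}; this is where the twists $\psiB^{-d}$ and $\psiB^{k-d}$ enter. It may be cleanest to run an induction on $d$: the base case $d=0$ is the curl--free bubble slide (only the third term survives, and it is forced by \eqref{eq:dual-bases-decomp}), and the inductive step peels one curl off the bubble using \eqref{eq:circle-mult-rightup-slide} and Proposition~\ref{prop:circle-conversion} to convert any $\tilde c$--bubbles that appear back into $c$--bubbles.

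\textbf{Cleanup.} Finally, the various basis sums are simplified: \eqref{eq:dual-bases-decomp} collapses $\sum_f f^\vee\,(\,\cdot\,)\,f$ expressions onto single dots on the strand, \eqref{eq:b-bvee-swap} and \eqref{eq:double-dual-basis} are used to reindex the Casimir sums and to move $\psiB$ across, and \eqref{eq:circle-Nakayama} (together with \eqref{eq:circle-d-deven} when $\sigma=1$) absorbs Nakayama twists occurring \emph{inside} bubbles. Collecting everything gives the four terms as stated.

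\textbf{Main obstacle.} The structural moves above are all routine applications of relations already established; the real work is the sign and Nakayama--exponent accounting. Every odd left cup or cap that is slid past another, every pair of odd dots exchanged under super isotopy, and every use of \eqref{eq:double-dual-basis}/\eqref{eq:b-bvee-swap} contributes a sign depending on $\sigma$ and on the relevant parities, and the powers of $\psiB$ accumulate additively as curls and dots are dragged around the loop. Reproducing exactly the coefficient $(-1)^{\bar f+\sigma(dk-k+1+d\bar b+\bar f)}$ (and its analogues), the exponents $\psiB^{k-d}$, $\psiB^{-d}$, and the precise summation range $0\le j\le d-k-2$ with a bubble of $d-k-j-2$ curls, is the delicate part and must be checked step by step.
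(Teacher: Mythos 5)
Your overall strategy matches the paper's: super-isotope the $\sQ$-loop across the $\sP$-strand, apply the double-crossing relation to produce an identity term (the bubble on the far side) plus a Casimir correction, then use the curl-slide relations \eqref{eq:circle-mult-leftup-slide} and \eqref{eq:circle-mult-rightup-slide} together with the Nakayama bookkeeping \eqref{eq:dotsup-a}--\eqref{eq:dotsdown-b}, \eqref{eq:curl-Nakayama}, \eqref{eq:b-bvee-swap} to simplify. The paper's proof is exactly this kind of direct chain of equalities (citing \eqref{eq:rel2a}, then \eqref{eq:circle-mult-rightup-slide}, \eqref{eq:dotsdown-b}, \eqref{eq:rel0b}, then \eqref{eq:collision-up}, \eqref{eq:rel0a}, \eqref{eq:curl-Nakayama}, then \eqref{eq:rel1b}, \eqref{eq:circle-mult-leftup-slide}, and finally \eqref{eq:b-bvee-swap}), and not an induction on $d$.

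Two points to sharpen. First, your description of the crossing resolution is slightly off: you say \eqref{eq:rel2b} disposes of one crossing and \eqref{eq:rel2a} of the other, but both of those relations are statements about a \emph{double} crossing, not a single crossing. When the loop is slid across the strand, the two intersection points form exactly one such double-crossing configuration in the orientation governed by \eqref{eq:rel2a}; the paper applies \eqref{eq:rel2a} once, and \eqref{eq:rel2b} does not appear. This does not change your overall plan, but it matters if you want to track where each correction term comes from.

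Second, and more seriously for this particular proposition: the statement \emph{is} the signs and Nakayama exponents. Your proposal identifies every tool that gets used, but explicitly defers the entire coefficient calculation (``must be checked step by step''), including the exponent $(-1)^{\bar f+\sigma(dk-k+1+d\bar b+\bar f)}$, the twists $\psiB^{k-d}$, $\psiB^{-d}$, the index range $0\le j\le d-k-2$, and the residual bubble size $d-k-j-2$. A sketch at this level is not a proof of the proposition; it is a proof that the proposition has the right \emph{shape}. The paper's argument spends essentially all of its effort on that accounting (including the careful initial normalization by $(-1)^{d\sigma\bar b}$ before applying \eqref{eq:rel2a}, and using \eqref{eq:dual-bases-decomp} to collapse traces at the end), and your sketch would need to reproduce it before it can be accepted. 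Your proposed induction on $d$ would be a genuinely different organization, but you would still face the same sign accounting at the inductive step when converting $\tilde c$-bubbles back to $c$-bubbles via Proposition~\ref{prop:circle-conversion}, so it is unlikely to avoid the work.
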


\begin{proof}
  \begin{multline*}
     (-1)^{d \sigma \bar b}\
    \begin{tikzpicture}[>=stealth,baseline={([yshift=-.5ex]current bounding box.center)}]
      \draw[<-] (0,0) arc (90:470:.5);
      \draw[->] (1,-1.5) to (1,0.5);
      \bluedot{(-0.45,-0.3)} node[color=black,anchor=east] {$b$};
      \redcircle{(0.45,-0.7)} node[color=black,anchor=west] {$d$};
    \end{tikzpicture}
    \ \stackrel{\eqref{eq:rel2a}}{=}\
    \begin{tikzpicture}[>=stealth,baseline={([yshift=-.5ex]current bounding box.center)}]
      \draw[->] (0,0) .. controls (0,0.5) and (1.5,1) .. (2,1) .. controls (2.5,1) and (2.5,-1) .. (2,-1) .. controls (1.5,-1) and (0,-0.5) .. (0,0);
      \draw[->] (1,-1.2) .. controls (1.5,-0.5) and (1.5,0.5) .. (1,1.2);
      \redcircle{(0.6,0.6)} node[color=black,anchor=south] {$d$};
      \bluedot{(0.6,-0.6)} node[color=black,anchor=north] {$b$};
    \end{tikzpicture}
    \ + \sum_{f \in \cB}\
    \begin{tikzpicture}[>=stealth,baseline={([yshift=-.5ex]current bounding box.center)}]
      \draw[->] (1,-1.2) .. controls (1,.5) and (0.5,-1) .. (0.3,-1) .. controls (0,-1) and (0,1) .. (0.3,1) .. controls (0.5,1) and (1,-.5) .. (1,1.2);
      \redcircle{(0.15,0.8)} node[color=black,anchor=east] {$d$};
      \bluedot{(0.16,-0.8)} node[color=black,anchor=east] {$b$};
      \bluedot{(0.98,0.6)} node[color=black,anchor=west] {$f^\vee$};
      \bluedot{(0.66,-0.53)} node[color=black,anchor=north] {$f$};
    \end{tikzpicture}
    \\
    \stackrel{\substack{\eqref{eq:circle-mult-rightup-slide} \\ \eqref{eq:dotsdown-b}, \eqref{eq:rel0b}}}{=}\
    \begin{tikzpicture}[>=stealth,baseline={([yshift=-.5ex]current bounding box.center)}]
      \draw[->] (0,0) .. controls (0,0.5) and (1.5,1) .. (2,1) .. controls (2.5,1) and (2.5,-1) .. (2,-1) .. controls (1.5,-1) and (0,-0.5) .. (0,0);
      \draw[->] (1,-1.2) .. controls (1.5,-0.5) and (1.5,0.5) .. (1,1.2);
      \redcircle{(1.6,0.95)} node[color=black,anchor=south] {$d$};
      \bluedot{(1.5,-0.93)} node[color=black,anchor=north] {$b$};
    \end{tikzpicture}
    \ +\ \sum_{k=0}^{d-1} \sum_{f \in \cB}\
    \begin{tikzpicture}[>=stealth,baseline={([yshift=-.5ex]current bounding box.center)}]
      \draw[->] (0,0) .. controls (0.5,3) and (1,2) .. (1,1) .. controls (1,0) and (-0.5,1) .. (-0.5,2.5);
      \bluedot{(0.4,0.7)} node[color=black,anchor=north] {$b$};
      \redcircle{(-0.05,1.1)} node[color=black,anchor=north east] {$k$};
      \bluedot{(-0.24,1.4)} node[color=black,anchor=east] {$f$};
      \bluedot{(0.39,1.6)} node[anchor=south, color=black] {$f^\vee$};
      \redcircle{(0.9,1.8)} node[color=black,anchor=west] {$d-k-1$};
    \end{tikzpicture}
    \ +\ \sum_{f \in \cB} (-1)^{d \sigma (\bar f + \sigma) + \bar b(\bar f + \sigma) + \sigma \bar f}\
    \begin{tikzpicture}[>=stealth,baseline={([yshift=-.5ex]current bounding box.center)}]
      \draw[->] (0,0) to (0,2);
      \bluedot{(0,1.7)} node[color=black,anchor=west] {$f^\vee$};
      \redcircle{(0,1.2)} node[color=black,anchor=west] {$d$};
      \bluedot{(0,0.7)} node[color=black,anchor=west] {$b$};
      \bluedot{(0,0.2)} node[color=black,anchor=west] {$\psiB(f)$};
    \end{tikzpicture}
    \\
    \stackrel{\substack{\eqref{eq:collision-up}, \eqref{eq:rel0a} \\ \eqref{eq:rel0b},\eqref{eq:curl-Nakayama}}}{=}
    \begin{tikzpicture}[>=stealth,baseline={([yshift=-.5ex]current bounding box.center)}]
      \draw[->] (0,0) .. controls (0,0.5) and (1.5,1) .. (2,1) .. controls (2.5,1) and (2.5,-1) .. (2,-1) .. controls (1.5,-1) and (0,-0.5) .. (0,0);
      \draw[->] (1,-1.2) .. controls (1.5,-0.5) and (1.5,0.5) .. (1,1.2);
      \redcircle{(1.6,0.95)} node[color=black,anchor=south] {$d$};
      \bluedot{(1.5,-0.93)} node[color=black,anchor=north] {$b$};
    \end{tikzpicture}
    \ +\ \sum_{k=0}^{d-1} \sum_{f \in \cB} (-1)^{\bar f + \sigma (dk-k+1 + (d+1)\bar f + d \bar b)}\
    \begin{tikzpicture}[>=stealth,baseline={([yshift=-.5ex]current bounding box.center)}]
      \draw[->] (0,0) .. controls (0.2,3) and (1,1.5) .. (1,1) .. controls (1,0) and (-0.5,1) .. (-0.5,2.5);
      \bluedot{(0.03,0.3)} node[color=black,anchor=east] {$f^\vee$};
      \redcircle{(0.5,0.63)} node[color=black,anchor=north west] {$d-k-1$};
      \redcircle{(-0.2,1.3)} node[color=black,anchor=east] {$k$};
      \bluedot{(-0.4,1.8)} node[color=black,anchor=east] {$\psiB^k(b) f$};
    \end{tikzpicture}
    \ +\ \sum_{f \in \cB} (-1)^{\sigma \bar f}\
    \begin{tikzpicture}[>=stealth,baseline={([yshift=-.5ex]current bounding box.center)}]
      \draw[->] (0,0) to (0,2);
      \bluedot{(0,1)} node[color=black,anchor=west] {$b \psiB^{-d}(f^\vee)$};
      \redcircle{(0,1.6)} node[color=black,anchor=west] {$d$};
      \bluedot{(0,0.4)} node[color=black,anchor=west] {$\psiB(f)$};
    \end{tikzpicture}
    \\
    \stackrel{\eqref{eq:rel1b},\eqref{eq:circle-mult-leftup-slide}}{=}
    \begin{tikzpicture}[>=stealth,baseline={([yshift=-.5ex]current bounding box.center)}]
      \draw[<-] (0,0) arc (90:470:.5);
      \draw[->] (-1.1,-1.5) to (-1.1,0.5);
      \bluedot{(-0.45,-0.7)} node[color=black,anchor=east] {$b$};
      \redcircle{(0.45,-0.3)} node[color=black,anchor=west] {$d$};
    \end{tikzpicture}
    \ +\ \sum_{k=0}^{d-1} \sum_{f \in \cB} (-1)^{\bar f + \sigma (dk-k+1 + (d+1)\bar f + d \bar b)}\
    \left(
      \begin{tikzpicture}[>=stealth,baseline={([yshift=-.5ex]current bounding box.center)}]
        \draw[->] (0,0) .. controls (0,1.5) and (1,1.5) .. (1,1) .. controls (1,0.5) and (0,0) .. (0,2);
        \bluedot{(0.01,0.3)} node[color=black,anchor=east] {$f^\vee$};
        \bluedot{(0.02,1.7)} node[color=black,anchor=east] {$\psiB^k(b) f$};
        \redcircle{(0.05,1.2)} node[color=black,anchor=east] {$d-1$};
      \end{tikzpicture}
      \ -\ \sum_{j = 0}^{d-k-2} \sum_{g \in \cB}\
      \begin{tikzpicture}[>=stealth,baseline={([yshift=-.5ex]current bounding box.center)}]
        \draw[->] (0,0) to (0,3);
        \draw[<-] (1,1.5) arc (90:470:0.5);
        \bluedot{(0,2.7)} node[color=black,anchor=east] {$\psiB^k(b) f$};
        \redcircle{(0,2.2)} node[color=black,anchor=east] {$j+k$};
        \bluedot{(0,1.7)} node[color=black,anchor=east] {$g^\vee$};
        \bluedot{(0,0.2)} node[color=black,anchor=east] {$f^\vee$};
        \bluedot{(0.5,1)} node[color=black,anchor=west] {$g$};
        \redcircle{(1.2,0.55)};
        \draw (1.6,0.5) node[color=black,anchor=north] {$d-k-j-2$};
      \end{tikzpicture}
    \right)
    \\
    \qquad \qquad \qquad +\ \sum_{f \in \cB} (-1)^{\sigma \bar f}\
    \begin{tikzpicture}[>=stealth,baseline={([yshift=-.5ex]current bounding box.center)}]
      \draw[->] (0,0) to (0,2);
      \bluedot{(0,1)} node[color=black,anchor=west] {$b \psiB^{-d}(f^\vee)$};
      \redcircle{(0,1.6)} node[color=black,anchor=west] {$d$};
      \bluedot{(0,0.4)} node[color=black,anchor=west] {$\psiB(f)$};
    \end{tikzpicture}
    \\
    \ \stackrel{\eqref{eq:b-bvee-swap}}{=}\
    \begin{tikzpicture}[>=stealth,baseline={([yshift=-.5ex]current bounding box.center)}]
      \draw[<-] (0,0) arc (90:470:.5);
      \draw[->] (-1.1,-1.5) to (-1.1,0.5);
      \bluedot{(-0.45,-0.7)} node[color=black,anchor=east] {$b$};
      \redcircle{(0.45,-0.3)} node[color=black,anchor=west] {$d$};
    \end{tikzpicture}
    \ +\ \sum_{k=0}^{d-1} \sum_{f \in \cB} (-1)^{\bar f + \sigma (dk-k+1 + (d+1)\bar f + d \bar b)}\
    \left(
      \begin{tikzpicture}[>=stealth,baseline={([yshift=-.5ex]current bounding box.center)}]
        \draw[->] (0,0) to (0,2);
        \bluedot{(0,0.4)} node[color=black,anchor=east] {$f^\vee$};
        \bluedot{(0,1.6)} node[color=black,anchor=east] {$\psiB^k(b)f$};
        \redcircle{(0,1)} node[color=black,anchor=east] {$d$};
      \end{tikzpicture}
      \ -\ \sum_{j = 0}^{d-k-2} \sum_{g \in \cB}\
      \begin{tikzpicture}[>=stealth,baseline={([yshift=-.5ex]current bounding box.center)}]
        \draw[->] (0,0) to (0,3);
        \draw[<-] (1,1.5) arc (90:470:0.5);
        \bluedot{(0,2.7)} node[color=black,anchor=east] {$\psiB^k(b) f$};
        \redcircle{(0,2.2)} node[color=black,anchor=east] {$j+k$};
        \bluedot{(0,1.7)} node[color=black,anchor=east] {$g^\vee$};
        \bluedot{(0,0.2)} node[color=black,anchor=east] {$f^\vee$};
        \bluedot{(0.5,1)} node[color=black,anchor=west] {$g$};
        \redcircle{(1.2,0.55)};
        \draw (1.6,0.5) node[color=black,anchor=north] {$d-k-j-2$};
      \end{tikzpicture}
    \right)
    \\
    \qquad \qquad \qquad +\ \sum_{f \in \cB} (-1)^{\bar f + \sigma}\
    \begin{tikzpicture}[>=stealth,baseline={([yshift=-.5ex]current bounding box.center)}]
      \draw[->] (0,0) to (0,2);
      \bluedot{(0,1)} node[color=black,anchor=west] {$b \psiB^{-d}(f)$};
      \redcircle{(0,1.6)} node[color=black,anchor=west] {$d$};
      \bluedot{(0,0.4)} node[color=black,anchor=west] {$f^\vee$};
    \end{tikzpicture}
    \\
    \ = (-1)^{d \sigma \bar b}\
    \begin{tikzpicture}[>=stealth,baseline={([yshift=-.5ex]current bounding box.center)}]
      \draw[<-] (0,0) arc (90:470:.5);
      \draw[->] (-1.1,-1.5) to (-1.1,0.5);
      \bluedot{(-0.45,-0.3)} node[color=black,anchor=east] {$b$};
      \redcircle{(0.45,-0.7)} node[color=black,anchor=west] {$d$};
    \end{tikzpicture}
    \ +\ \sum_{k=0}^{d-1} \sum_{f \in \cB} (-1)^{\bar f + \sigma(dk-k+1 + \bar f)}\
    \begin{tikzpicture}[>=stealth,baseline={([yshift=-.5ex]current bounding box.center)}]
      \draw[->] (0,0) to (0,2);
      \bluedot{(0,0.4)} node[color=black,anchor=west] {$f^\vee$};
      \bluedot{(0,1)} node[color=black,anchor=west] {$\psiB^{k-d}(b) \psiB^{-d}(f)$};
      \redcircle{(0,1.6)} node[color=black,anchor=west] {$d$};
    \end{tikzpicture}
    \ +\ \sum_{f \in \cB} (-1)^{\bar f + \sigma}\
    \begin{tikzpicture}[>=stealth,baseline={([yshift=-.5ex]current bounding box.center)}]
      \draw[->] (0,0) to (0,2);
      \bluedot{(0,1)} node[color=black,anchor=west] {$b \psiB^{-d}(f)$};
      \redcircle{(0,1.6)} node[color=black,anchor=west] {$d$};
      \bluedot{(0,0.4)} node[color=black,anchor=west] {$f^\vee$};
    \end{tikzpicture}
    \\
    \qquad \qquad \qquad
    \ -\ \sum_{k=0}^{d-1} \sum_{j = 0}^{d-k-2} \sum_{f,g \in \cB}  (-1)^{\bar f + \sigma (dk-k+1 + (d+1)\bar f + d \bar b)}\
    \begin{tikzpicture}[>=stealth,baseline={([yshift=-.5ex]current bounding box.center)}]
      \draw[->] (0,0) to (0,3);
      \draw[<-] (1,1.5) arc (90:470:0.5);
      \bluedot{(0,2.7)} node[color=black,anchor=east] {$\psiB^k(b) f$};
      \redcircle{(0,2.2)} node[color=black,anchor=east] {$j+k$};
      \bluedot{(0,1.7)} node[color=black,anchor=east] {$g^\vee$};
      \bluedot{(0,0.2)} node[color=black,anchor=east] {$f^\vee$};
      \bluedot{(0.5,1)} node[color=black,anchor=west] {$g$};
      \redcircle{(1.2,0.55)};
      \draw (1.6,0.5) node[color=black,anchor=north] {$d-k-j-2$};
    \end{tikzpicture} \qedhere
  \end{multline*}
\end{proof}

Under the functor $\bF$, the image of
\[
  \begin{tikzpicture}[>=stealth,baseline={([yshift=-.5ex]current bounding box.center)}]
    \draw[<-] (0,0) arc (90:470:.5);
    \draw (1.1,-0.9) node {$n$};
    \bluedot{(-0.45,-0.3)} node[color=black,anchor=east] {$b$};
    \redcircle{(0.45,-0.7)} node[color=black,anchor=west] {$d$};
\end{tikzpicture}
\]
for $n \in \N$, is the degree $(|b|+(d+1)\delta,\bar b + (d+1)\sigma)$ bimodule homomorphism $(n) \to (n)$ given by
\begin{equation} \label{eq:bubble-endo}
  \pr{\left( \sum_{\substack{f \in \cB \\ i \in \{1,\dotsc,n\}}} (-1)^{(d+1) \sigma \bar f^\vee + \bar b \bar f^\vee} s_i \dotsm s_{n-1} \left( 1_B^{\otimes (n-1)} \otimes f^\vee b \right) J_{n-1}^d \left( 1_B^{\otimes (n-1)} \otimes f \right) s_{n-1} \dotsm s_i \right)}.
\end{equation}
\details{
  The image of the given bubble under $\bF$ is the composition
  \[
    (n) \xrightarrow{\eta_\rL} (n)_{n-1}(n) \xrightarrow{\id \otimes \pl{J_{n-1}^d}} (n)_{n-1}(n) \xrightarrow{\phi_{n-1}(b) \otimes \id} (n)_{n-1}(n) \xrightarrow{\varepsilon_\rR} (n).
  \]
  Under this composition, we have
  \begin{multline*}
     1_n \mapsto \sum_{\substack{f \in \cB \\ i \in \{1,\dotsc,n\}}} (-1)^{\sigma \bar f^\vee} s_i \dotsm s_{n-1} \left( 1_B^{\otimes (n-1)} \otimes f^\vee \right) \otimes \left( 1_B^{\otimes (n-1)} \otimes f \right) s_{n-1} \dotsm s_i \\
    \mapsto \sum_{\substack{f \in \cB \\ i \in \{1,\dotsc,n\}}} (-1)^{(d+1) \sigma \bar f^\vee} s_i \dotsm s_{n-1} \left( 1_B^{\otimes (n-1)} \otimes f^\vee \right) \otimes J_{n-1}^d \left( 1_B^{\otimes (n-1)} \otimes f \right) s_{n-1} \dotsm s_i \\
    \mapsto \sum_{\substack{f \in \cB \\ i \in \{1,\dotsc,n\}}} (-1)^{(d+1) \sigma \bar f^\vee + \bar b \bar f^\vee} s_i \dotsm s_{n-1} \left( 1_B^{\otimes (n-1)} \otimes f^\vee b \right) \otimes J_{n-1}^d \left( 1_B^{\otimes (n-1)} \otimes f \right) s_{n-1} \dotsm s_i.
  \end{multline*}
}

%%%%%%%%%%%%%%%%%%%%%%%%%%%%%%%%%%%%%%%%%%
\subsection{Endomorphisms of the identity}
%%%%%%%%%%%%%%%%%%%%%%%%%%%%%%%%%%%%%%%%%%

We now turn our attention to endomorphisms of the identity.  Define
\begin{equation}
  B_\psi = B/\langle b - \psiB(b) \mid b \in B\rangle.
\end{equation}
For $\beta \in B_\psi$ and $d \in \N$, we define $c_{\beta,d} = c_{b,d}$ where $b$ is any representative of $\beta$.  This is well defined by \eqref{eq:circle-Nakayama}.

Let
\[
  \widetilde \N :=
  \begin{cases}
    \N & \text{if } \sigma=0, \\
    2\N + 1 & \text{if } \sigma=1,
  \end{cases}\
\]
and let $C$ be the $\F$-vector space with basis $v_d$, $d \in \widetilde \N$.  We consider this as a $\N \times \Z_2$-graded vector space by declaring $\deg v_d = (d+1)(\delta,\sigma)$.  So
\begin{equation}
  C = \bigoplus_{d \in \widetilde \N} \F v_d,\quad \deg v_d = (d+1)(\delta,\sigma).
\end{equation}
Note that, for any $\Z \times \Z_2$-graded vector space $V$,
\[
  S(V) := \bigoplus_{k=0}^\infty S^k(V)
\]
is a $\Z \times \Z_2$-graded algebra, with product induced from the product in the tensor algebra.

\begin{prop} \label{prop:identity-end-surjection}
  We have a surjective algebra homomorphism
  \begin{equation} \label{eq:identity-end-surjection}
    S(B_\psi \otimes_\F C) \twoheadrightarrow \END_{\cH'_B} \one,\quad \beta \otimes v_d \mapsto c_{\beta,d},\quad \beta \in B_\psi,\ d \in \widetilde \N.
  \end{equation}
\end{prop}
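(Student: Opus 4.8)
The plan is to prove the statement in two parts: first that the assignment $\beta \otimes v_d \mapsto c_{\beta,d}$ extends to a well-defined algebra homomorphism $S(B_\psi \otimes_\F C) \to \END_{\cH'_B} \one$, and then that this homomorphism is surjective.

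\textbf{Well-definedness and algebra structure.} First I would check that the $c_{\beta,d}$ are genuinely well-defined elements of $\END_{\cH'_B} \one$: by \eqref{eq:circle-Nakayama} the bubble $c_{b,d}$ depends only on the class $\beta$ of $b$ in $B_\psi$, and by \eqref{eq:circle-d-deven} when $\sigma = 1$ the bubbles $c_{b,d}$ vanish for even $d$, so restricting to $d \in \widetilde\N$ loses nothing. Since $\beta \mapsto c_{\beta,d}$ is $\F$-linear in $\beta$ (diagrams are linear in dots) and $C$ has basis $\{v_d : d \in \widetilde\N\}$, the assignment $\beta \otimes v_d \mapsto c_{\beta,d}$ extends uniquely to an $\F$-linear map $B_\psi \otimes_\F C \to \END_{\cH'_B} \one$. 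To promote this to an algebra homomorphism out of the free symmetric algebra $S(B_\psi \otimes_\F C)$ I only need the images of the generators to pairwise commute in $\END_{\cH'_B} \one$; by the universal property of $S(-)$ the map then factors through. So the key algebraic point reduces to: \emph{any two bubbles $c_{\beta,d}$ and $c_{\beta',d'}$ commute}. This should follow from super isotopy invariance — two disjoint downward-oriented circles, each carrying a dot and some open dots, can be slid past one another in the plane; since all these bubbles have even $\Z_2$-degree (the parity of $c_{b,d}$ is $\bar b + (d+1)\sigma$, and when $\sigma=1$ we restrict to odd $d$ so this is $\bar b$, but actually we must be slightly careful — in fact a cleaner route is to observe that two bubbles occupy disjoint regions of the strip and horizontal juxtaposition/vertical sliding of closed diagrams is unobstructed up to the sign rule, and any sign incurred by moving one bubble past the other is incurred again moving back, giving commutativity). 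I would make this precise by noting that $c_{\beta,d} \cdot c_{\beta',d'}$ and $c_{\beta',d'} \cdot c_{\beta,d}$ are isotopic diagrams with the same sign.

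\textbf{Surjectivity.} The morphism space $\END_{\cH'_B} \one = \Hom_{\cH'_B}(\one,\one)$ is spanned by diagrams in the strip $\R \times [0,1]$ with no endpoints on the boundary, i.e. closed diagrams: disjoint unions of immersed circles carrying dots, crossings, and curls. I would argue by induction on the number of crossings, and then on complexity, that every such diagram lies in the image. A diagram with no crossings is a disjoint union of nested/side-by-side simple circles each decorated by dots; using \eqref{eq:collision-up}/\eqref{eq:collision-down} to collide dots, \eqref{eq:dotsup-a}--\eqref{eq:dotsdown-b} to move dots through the caps, and the definition of the right curl as a composite involving \eqref{eq:rel3b}, one sees a simple circle with dots but no curl is just $\tr_B$ of the product of the dot labels (a scalar, hence in the image as $c_{1,0}$ times that scalar if $\sigma = 0$; one must check the $\sigma=1$ bookkeeping). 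More generally a circle carrying dots and $d$ open dots (right curls) is exactly $c_{\beta,d}$ for the appropriate $\beta$. For diagrams with crossings, I would use \eqref{eq:rel1b}, \eqref{eq:rel2a}, \eqref{eq:rel2b} and the triple-point relation \eqref{eq:triple-point} to reduce the number of crossings: \eqref{eq:rel1b} removes an up-up bigon, \eqref{eq:rel2b} removes an up-down bigon, and \eqref{eq:rel2a} trades an up-down bigon for the identity minus a sum of diagrams with a cap-cup pair (hence fewer crossings after resolving), at the cost of introducing bubbles — but bubbles are exactly what we want. Using Proposition~\ref{prop:circle-conversion} to reduce $\tilde c$-type bubbles to products of $c$-type bubbles, and Proposition~\ref{prop:bubble-slide} to slide bubbles off to the side past any remaining strands, every term is eventually expressed as a product of $c_{\beta,d}$'s times a scalar, i.e. lies in the image.

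\textbf{Main obstacle.} I expect the real work to be the surjectivity induction — specifically, making the reduction-of-crossings argument airtight, since resolving \eqref{eq:rel2a} produces diagrams that are not obviously "simpler" in a naive crossing count until one also accounts for the cap-cup pair it creates; one needs a well-chosen complexity measure (e.g. lexicographic on (number of crossings, number of strands)) on which all of \eqref{eq:rel1b}, \eqref{eq:rel2a}, \eqref{eq:rel2b}, \eqref{eq:triple-point} are non-increasing, with \eqref{eq:rel2a}'s correction term strictly decreasing. The sign and Nakayama bookkeeping (tracking $\sigma$, the parity restrictions from \eqref{eq:circle-d-deven}, and the $\psiB$'s picked up from \eqref{eq:dotsup-a}--\eqref{eq:dotsdown-b}) is routine but fiddly; since the proposition only claims a surjection (not an isomorphism), I can afford to be generous and need not track these signs with full precision — it suffices that each closed diagram is \emph{some} $\F$-linear combination of products of $c_{\beta,d}$'s.
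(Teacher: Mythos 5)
Your surjectivity argument follows the paper's proof: reduce closed diagrams to nested circles with dots and right curls via the local relations, then invoke Proposition~\ref{prop:bubble-slide} to separate nested circles and Proposition~\ref{prop:circle-conversion} to convert counterclockwise circles to products of clockwise ones, with Lemma~\ref{lem:circle-index-relations} giving the $B_\psi$-dependence and the restriction $d \in \widetilde\N$. You flesh out the reduction-of-crossings step more carefully than the paper (which simply asserts it as a consequence of the local relations), and your worry about finding a monotone complexity measure is a legitimate detail the paper glosses over, though not a gap in your reasoning.

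There is, however, a genuine gap in your well-definedness paragraph. First, your parenthetical claim that ``all these bubbles have even $\Z_2$-degree'' is false: the parity of $c_{b,d}$ is $\bar b + (d+1)\sigma$, which reduces to $\bar b$ on $\widetilde\N$ and can certainly be odd. Second, and more importantly, the ``cleaner route'' you substitute is circular: observing that moving one bubble past another and back returns the original only shows the sign is $\pm 1$, which is trivially true and does not establish commutativity. What you actually need is \emph{super}-commutativity: $c_{\beta,d}\,c_{\beta',d'} = (-1)^{\bar\beta\bar{\beta'}}\,c_{\beta',d'}\,c_{\beta,d}$. Note that by \eqref{eq:SkV-def} together with the superpermutation action \eqref{eq:superpermutation}, the algebra $S(B_\psi\otimes_\F C)$ is the free \emph{super}-commutative algebra, so this is precisely the relation you must verify, not ordinary commutativity. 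The correct argument is that the super isotopy sign rules directly give the Koszul sign $(-1)^{\bar\beta\bar{\beta'}}$ when exchanging the heights of two disjoint closed subdiagrams of parities $\bar\beta$ and $\bar{\beta'}$; no shortcut via ``moving back'' is available. Relatedly, your parenthetical claim that a simple circle carrying dots but no curl evaluates to a scalar by $\tr_B$ applies only to counterclockwise circles (via \eqref{eq:rel3a}); the clockwise circle $c_{b,0}$ has degree $(|b|+\delta,\bar b + \sigma)$ and is a genuine non-scalar generator of the target algebra, not ``$c_{1,0}$ times a scalar.'' These are correctable slips, but as written the well-definedness step does not go through.
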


\begin{proof}
  Endomorphisms of the object $\one$ of $\cH'_B$ correspond to linear combinations of closed diagrams.  Using the local relations, closed diagrams can be converted to linear combinations of diagrams consisting of nested circles decorated with elements of $B$ and open dots (right curls).  Proposition~\ref{prop:bubble-slide} can be used to split apart nested circles.  Then Proposition~\ref{prop:circle-conversion} can be used to convert counterclockwise circles into linear combinations of products of clockwise circles.  Then the result follows from Lemma~\ref{lem:circle-index-relations}.
\end{proof}

\begin{conj} \label{conj:id-end}
  The map \eqref{eq:identity-end-surjection} is an isomorphism.
\end{conj}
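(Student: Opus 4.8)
The surjectivity half of Conjecture~\ref{conj:id-end} is Proposition~\ref{prop:identity-end-surjection}, so the content is injectivity: we must show the relations among the $c_{\beta,d}$ coming from the graphical calculus are \emph{only} the commutativity relations in the symmetric algebra $S(B_\psi \otimes_\F C)$. The natural strategy is to produce a left inverse (or at least a separating family of linear functionals) to \eqref{eq:identity-end-surjection} by evaluating endomorphisms of $\one$ against the categorical action $\bF$ of Theorem~\ref{theo:action-functor}. Concretely, for each $n \in \N$ we have ring homomorphisms $\bF_n(\one) \colon \END_{\cH_B'}\one \to \END_{(A_n,A_n)}(n) = Z(A_n)$ (the graphical endomorphism of $\one$ goes to an $A_n$-bimodule endomorphism of $A_n$, i.e. to multiplication by a central element), and formula \eqref{eq:bubble-endo} tells us exactly where each $c_{\beta,d}$ goes. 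The first step is to assemble these into a single homomorphism $\END_{\cH_B'}\one \to \prod_{n} Z(A_n)$ and to compute the images of the monomials in the $c_{\beta,d}$ explicitly: a product $c_{\beta_1,d_1}\cdots c_{\beta_k,d_k}$ maps to a sum over tuples of distinct ``active strands'' of Jucys--Murphy-type elements $J^{d_j}$ decorated by $\beta_j$, symmetrized over $S_n$, which is visibly a deformation of the classical power-sum/Jucys--Murphy picture for the symmetric group.

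**Key steps, in order.** (i) Show the composite $S(B_\psi \otimes_\F C) \twoheadrightarrow \END_{\cH_B'}\one \to \prod_n Z(A_n)$ is injective; since the target factors through the graded pieces and $\F$ has characteristic zero, it suffices to show the images of a monomial basis of $S(B_\psi\otimes_\F C)$ are linearly independent in $\prod_n Z(A_n)$. (ii) For this, fix a degree and look at large $n$: the image of a degree-$k$ monomial in the $c_{\beta,d}$ is a symmetric function of $n$ ``variables'' (the content/Jucys--Murphy data of the $n$ strands), decorated by elements of $B_\psi$, and these are linearly independent for $n \gg 0$ by the classical fact that power sums in infinitely many variables are algebraically independent (tensored with a basis of $B_\psi$, which requires only that $B_\psi \ne 0$, guaranteed since $1_B \notin \langle b - \psi_B(b)\rangle$). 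The technical heart is identifying $\bF_n(\one)(c_{\beta,d})$ with the central element $\sum_i (\text{stuff}) J_i^d (\text{stuff})$ obtained by summing the "bubble at strand $i$" over $i = 1,\dots,n$, and checking that distinct monomials give genuinely distinct symmetric functions — this uses that $J_1, \dots, J_n$ behave, in the semisimplification / on the level of their action on simple modules, like independent commuting variables indexed by a combinatorial content statistic (as in the classical Okounkov--Vershik picture, generalized here to wreath products; cf.\ Remark~\ref{rem:Jucys-Murphy}). (iii) Handle the parity-$\sigma=1$ subtlety: by Lemma~\ref{lem:circle-index-relations} only odd $d$ survive, which is exactly why $C$ is indexed by $\widetilde\N$, and the power sums with odd index remain algebraically independent, so nothing is lost. (iv) Conclude: the composite is injective, hence \eqref{eq:identity-end-surjection} is injective, hence an isomorphism.

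**The main obstacle.** The hard part is step~(ii): proving that $\bigoplus_n \bF_n$ is \emph{faithful on $\END_{\cH_B'}\one$}, equivalently that the ``bubble values'' $c_{\beta,d}$ separate points via their action on wreath product modules. Faithfulness of the whole categorical action $\bF$ is \emph{not} claimed in the paper (indeed the analogous statement for Khovanov's category $\cH$ is the famous open problem), so one cannot simply invoke it; one needs the weaker statement restricted to the commutative subalgebra $\END\one$, where the structure of central elements of $A_n$ (described via Propositions~\ref{prop:wreath-simples} and~\ref{prop:induced-projectives}: $Z(A_n)$ has a basis indexed by $\fP(n)$) makes things tractable. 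Making the combinatorial independence argument rigorous — tracking how a decorated power sum $\sum_i \beta\, J_i^d$ evaluates on the simple module $\Ind(V_\blambda \otimes L_\blambda)$ in terms of the contents of the boxes of $\blambda$, and showing that varying $(\beta,d)$ and multiplying gives linearly independent class functions as $n\to\infty$ — is where the real work lies, and is presumably why the authors left this as a conjecture rather than a theorem.
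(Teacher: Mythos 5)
The statement you are proving is an open \emph{conjecture} in the paper, and the paper itself already explains precisely why the approach you propose cannot work. In the remark immediately following Conjecture~\ref{conj:id-end}, the authors observe that the composition
\[
  S(B_\psi \otimes_\F C) \twoheadrightarrow \END_{\cH'_B}(\one) \xrightarrow{\bigoplus \bF_n} \bigoplus_{n=0}^\infty Z(A_n)
\]
is \emph{not} injective for general $B$. Concretely, by \eqref{eq:bubble-endo} the image of the bubble $c_{b,0}$ under $\bF_n$ is left multiplication by a central element built from $\sum_{f \in \cB} f^\vee b f$, which lies in $\Z$-degree $|b| + \delta$. Since the top $\Z$-degree of $B$ is $\delta$ (forced by nondegeneracy of the degree $-\delta$ trace pairing), this element vanishes whenever $|b| > 0$. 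Yet $c_{b,0} = \beta \otimes v_0$ is a nonzero generator of $S(B_\psi \otimes_\F C)$ for any $\beta \ne 0$ in $B_\psi$ of positive degree. So your step~(i) already fails: there is no separating family of functionals coming from the action on $\bigoplus_n Z(A_n)$.

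This is exactly the point of the remark: the Khovanov case $B = \F$ is special because there $B_\psi \cong \F$ is concentrated in degree zero and the composition \emph{is} injective, which is what \cite[Prop.~3]{Kho14} exploits. The moment $B$ has positive-degree elements (which it must if $\delta > 0$, the regime of the main theorem), the kernel of $\bigoplus_n \bF_n$ restricted to $\END_{\cH'_B}\one$ contains a large part of $S(B_\psi \otimes_\F C)$, and so no amount of clever symmetric-function / Jucys--Murphy combinatorics in the target can recover injectivity. Your step~(iii) observation about odd $d$ is fine, and the reduction of the problem to injectivity is correct, but steps~(i) and~(ii) — the entire mechanism you propose for proving injectivity — is ruled out by the authors' own observation. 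Proving Conjecture~\ref{conj:id-end} would require a genuinely different idea: a direct diagrammatic argument that no nontrivial linear relation among products of clockwise bubbles follows from the local relations, not a faithfulness argument via $\bF$.
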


\begin{rem}
  If $B = \F$, Conjecture~\ref{conj:id-end} is proved in \cite[Prop.~3]{Kho14}.  The proof there relies on the fact that, when $B = \F$, the composition
  \begin{equation} \label{eq:circle-action}
    S(B_\psi \otimes_\F C) \twoheadrightarrow \END_{\cH'_B}(\one) \xrightarrow{\bigoplus \bF_n} \bigoplus_{n=0}^\infty Z(A_n)
  \end{equation}
  is injective.  However, this composition is \emph{not} injective for general $B$. For example, the composition \eqref{eq:circle-action} maps $c_{b,0}$ to a bimodule endomorphism involving $\sum_{f \in \cB} f^\vee b f$, which has $\Z$-degree $|b| + \delta$.  This is zero if $|b| > 0$, since the maximal degree of elements of $B$ is $\delta$.  For the particular choice of $B$ made in \cite{CL12}, Conjecture~\ref{conj:id-end} reduces to \cite[Conj.~3]{CL12}.
\end{rem}

%%%%%%%%%%%%%%%%%%%%%%%%%%%%%%%%%%%%%%%%%%
\subsection{Endomorphisms of $\sP^n$}
%%%%%%%%%%%%%%%%%%%%%%%%%%%%%%%%%%%%%%%%%%

For $m \in \N_+$, let $\F[x_1,\dotsc,x_m]$ denote the associative $\F$-algebra generated by $x_1,\dotsc,x_m$ with relations $x_i x_j = (-1)^\sigma x_j x_i$ for $i \ne j$, where we declare each $x_i$, $1 \le i \le m$, to be of degree $(\delta,\sigma)$.

\begin{defin}[The algebra $D_m$]
  For $m \ge 1$, define the $\N$-graded super vector space
  \[
    D_m := \F[x_1,\dotsc,x_m] \otimes_\F A_m^\op.
  \]
  Then we give $D_m$ the structure of an associative $\F$-algebra by declaring $\F[x_1,\dotsc,x_m]$ and $A_m$ to be subalgebras and
  \begin{gather}
    x_i (b_1 \otimes \dotsb \otimes b_m) = (-1)^{\sigma (\bar b_1 + \dotsb + \bar b_m)} (b_1 \otimes \dotsb \otimes b_{i-1} \otimes \psiB(b_i) \otimes b_{i+1} \otimes \dotsb \otimes b_m) x_i, \\
    x_i s_i = s_i x_{i+1} + \sum_{b \in \cB} 1_B^{\otimes(i-1)} \otimes b^\vee \otimes b \otimes 1_B^{\otimes (m-i-1)}, \\
    s_i x_i = x_{i+1} s_i + \sum_{b \in \cB} (-1)^{\bar b (\bar b + \sigma)} 1_B^{\otimes (i-1)} \otimes b \otimes b^\vee \otimes 1_B^{\otimes (m-i-1)},
  \end{gather}
  for all $b_1,\dotsc,b_m \in B$ and $1 \le i \le m-1$.  It follows that $D_m$ is a graded superalgebra.
\end{defin}

Note that, if $B = \F$, then $D_m$ is the degenerate affine Hecke algebra of rank $m$.  By \eqref{eq:rel0a}--\eqref{eq:rel1b}, \eqref{eq:curl-Nakayama}, \eqref{eq:circle-leftup-slide}, and \eqref{eq:circle-rightup-slide}, we have a homomorphism of algebras
\begin{equation} \label{eq:chi-m}
  \chi_m \colon D_m \otimes \END_{\cH'_B} \one \to \END_{\cH'_B} \sP^m,
\end{equation}
sending $s_i$, $1 \le i \le n-1$, to a crossing of strands $i$ and $i+1$, sending $x_i$, $1 \le i \le m$, to a right curl on the $i$-th strand, and sending $1_B^{\otimes (i-1)} \otimes b \otimes 1_B^{\otimes (m-i)}$, $b \in B$, $1 \le i \le m$, to a dot labelled $b$ on the $i$-th strand.  Here we label the strands $1,2,\dotsc,n$ from the left and read planar diagrams from bottom to top (i.e.\ an element $z_1 z_2$, with $z_1,z_2 \in D_m \otimes \END_{\cH'_B} \one$ is mapped to the diagram corresponding to $z_1$ placed on top of the diagram corresponding to $z_2$).  Elements of $\END_{\cH'_B} \one$, which consist of closed diagrams, are placed to the right of the diagrams corresponding to elements of $D_m$.

Define the \emph{$m$-disturbance} of a permutation $\tau \in S_{n+m}$ to be the number of integers between 1 and $n$ that are not fixed by $\tau$.  Precisely, the $m$-disturbance of $\tau$ is
\[
  \dist_m \tau = | \{i \mid 1 \le i \le n,\ \tau(i) \ne i \} |.
\]
We define the $m$-disturbance of elements of $B^{\otimes (n+m)}$ to be zero.  This yields a filtration on $A_{n+m}^\op$ by taking the $k$-th step in the increasing filtration to be the subspace spanned by elements of total $m$-disturbance at most $k$.  Analogously, we define a filtration on $D_m$ by setting the $m$-disturbance of elements of $A_m^\op$ to be zero, and the $m$-disturbance of $x_i$ to be 1 for $1 \le i \le m$.

For $i,j \in \{1,\dotsc,n\}$ with $i < j$, define the degree $(\delta,\sigma)$ element of $A_n$
\begin{equation}
  t_{i,j} =
  \sum_{b \in \cB} (-1)^{\sigma \bar b + \sigma} \left( 1_B^{\otimes (i-1)} \otimes b^\vee \otimes 1_B^{\otimes (j-i-1)} \otimes b \otimes 1_B^{\otimes (n-j)} \right) (i,j).
\end{equation}
Here, and in what follows, we use cycle notation for permutations.  Then, for $n \ge 2$, we have
\begin{equation}
  J_{n-1} = \sum_{i=1}^{n-1} t_{i,n}.
\end{equation}

\begin{lem} \label{lem:tij-products}
  Suppose $i_1,\dotsc,i_\ell \in \{1,\dotsc,n\}$ and $n < j \le n + m$.  Then, in the associated graded algebra corresponding to the above filtration on $D_m$, we have $t_{i_\ell,j} \dotsm t_{i_2,j} t_{i_1,j}=0$ if the $i_1,\dotsc,i_\ell$ are not all distinct and, if $i_1 < i_2 < \dotsb < i_\ell$, then
  \[
    t_{i_\ell,j} \dotsm t_{i_2,j} t_{i_1,j} = (-1)^{\sigma \ell (\ell+1)/2} \sum_{b_1,\dotsc,b_\ell \in \cB} (-1)^{\sigma \bar b_\ell + \sum_{s=1}^{\ell-1} \bar b_s \bar b_{s+1}} \beta_{b_1,\dotsc,b_\ell} (i_1,i_2,\dotsc,i_\ell,j),
  \]
  where $\beta_{b_1,\dotsc,b_\ell} \in B^{\otimes n}$ is a simple tensor with $k$-th component equal to
  \begin{itemize}
    \item $b_1^\vee$ if $k=i_1$,
    \item $b_p^\vee b_{p-1}$ if $k = i_p$ for some $2 \le p \le \ell$,
    \item $b_\ell$ if $k=j$,
    \item $1_B$ if $k \not \in \{i_1,\dotsc,i_\ell,j\}$.
  \end{itemize}
\end{lem}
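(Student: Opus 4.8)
The statement is a computation in the associated graded algebra $\gr D_m$, so the plan is to work directly with the elements $t_{i_s,j}$ and track which terms survive modulo lower $m$-disturbance. Recall that each $t_{i,j}$ has $m$-disturbance $1$ (it contains the transposition $(i,j)$, and $i\le n<j$), so a product $t_{i_\ell,j}\dotsm t_{i_1,j}$ has $m$-disturbance at most $\ell$. The key observation is that the only way to keep the disturbance equal to $\ell$ after multiplying out is for the underlying permutations to compose to a single $(\ell+1)$-cycle on $\{i_1,\dotsc,i_\ell,j\}$ without any cancellation between the transpositions; any repeated index, or any pair of transpositions that shares an endpoint in a way that merges orbits, drops the disturbance. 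So first I would compute the product $(i_\ell,j)\dotsm(i_2,j)(i_1,j)$ of the bare transpositions in $S_{n+m}$: this is exactly the cycle $(i_1,i_2,\dotsc,i_\ell,j)$ when the $i_s$ are distinct (a standard fact about products of transpositions all sharing the point $j$), and it has disturbance $\ell$ on $\{1,\dots,n\}$; if two of the $i_s$ coincide, consecutive or not, one checks that the resulting permutation moves fewer than $\ell$ of the points $1,\dots,n$, hence that monomial dies in $\gr D_m$. (One must be slightly careful: even with distinct $i_s$, a monomial of lower disturbance could in principle be hidden inside the $B^{\otimes(n+m)}$-coefficients times a shorter permutation; but those coefficients lie in $B^{\otimes(n+m)}$, which has disturbance zero, so they do not affect the permutation part, and the disturbance of each monomial is determined purely by its permutation.)

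Second, assuming $i_1<i_2<\dots<i_\ell$, I would expand the product of the full $t_{i_s,j}$, keeping only the top-disturbance term, i.e.\ the term where one reorders all transpositions past all the $B$-coefficients to collect the permutation $(i_1,\dots,i_\ell,j)$ on the right. Multiplying $t_{i_1,j}$ first and then applying $t_{i_2,j}$, etc., each $(i_{s},j)$ must act on the accumulated $B$-decoration by the superpermutation rule \eqref{eq:superpermutation}, and each $x$-free dot $b_s^\vee\otimes\cdots\otimes b_s$ gets deposited. Moving the transposition $(i_s,j)$ to the right past the factors of $B$ already placed on the strands $i_1,\dots,i_{s-1}$ and $j$ produces the Koszul signs; moving $b_s$ from slot $j$ (where it starts in $t_{i_s,j}$) through the earlier decorations and colliding the dots using \eqref{eq:collision-up}/\eqref{eq:collision-down} (equivalently, using that the $B$-component on slot $j$ accumulates as $b_\ell\cdots$ while slot $i_p$ ends up carrying $b_p^\vee b_{p-1}$) gives the claimed simple tensor $\beta_{b_1,\dots,b_\ell}$. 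The sign is then a bookkeeping exercise: one gets a factor $(-1)^{\sigma\bar b_s+\sigma}$ from each $t_{i_s,j}$, a product of factors $(-1)^{\bar b_s\bar b_{s+1}}$ from colliding the dual-basis dots in order, and a $(-1)^{\sigma\ell(\ell+1)/2}$ from commuting the $\ell$ odd transpositions past one another and past the $\ell$ odd cup/cap-type coefficients — the $\binom{\ell+1}{2}$-type count. The parity-$\sigma$ dots of $t_{i,j}$ are exactly the $f,f^\vee$ pairs, each of parity $\sigma$ up to the $\bar b$ shift recorded in \eqref{eq:trace-parity-condition}, which is why only the terms with $\bar b_s+\sigma$ or $\bar b_\ell$ appearing survive sign-wise, matching $(-1)^{\sigma\bar b_\ell+\sum\bar b_s\bar b_{s+1}}$.

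The main obstacle I expect is purely the sign bookkeeping: getting the exponent $\sigma\ell(\ell+1)/2 + \sigma\bar b_\ell + \sum_{s=1}^{\ell-1}\bar b_s\bar b_{s+1}$ exactly right requires carefully fixing an order in which the transpositions are pushed to the right and the dots are collided, and consistently applying \eqref{eq:superpermutation}, \eqref{eq:collision-up}, \eqref{eq:collision-down}, and the definition of $t_{i,j}$ with its built-in $(-1)^{\sigma\bar b+\sigma}$. The cleanest route is probably induction on $\ell$: the case $\ell=1$ is just the definition of $t_{i_1,j}$; for the inductive step, multiply the known expression for $t_{i_{\ell-1},j}\dotsm t_{i_1,j}$ on the left by $t_{i_\ell,j}$, use \eqref{eq:superpermutation} to move the new transposition $(i_\ell,j)$ through the decorations on slots $i_1,\dots,i_{\ell-1},j$ (this is where the $(-1)^{\sigma\cdot(\text{number of odd decorations passed})}$ accumulates into the $\ell(\ell+1)/2$ pattern), collide the new dot $b_\ell$ on slot $j$ with the old decoration $b_{\ell-1}$ there using \eqref{eq:collision-down} to get $b_{\ell-1}$ moved onto slot $i_{\ell-1}$ as $b_\ell^\vee b_{\ell-1}$ — wait, more precisely the new slot-$j$ content becomes $b_\ell$ and the slot-$i_{\ell-1}$ content picks up the factor making it $b_{\ell-1}^\vee b_{\ell-2}\to$ no change, rather it is slot $i_\ell$ that becomes $b_\ell^\vee b_{\ell-1}$ — and check that the permutation $(i_1,\dots,i_{\ell-1},j)(i_\ell,j)=(i_1,\dots,i_\ell,j)$ with the correct sign. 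Once the inductive bookkeeping is set up, the vanishing statement for non-distinct indices follows by the same disturbance-counting remark, since the composed permutation then fixes at least one of $i_1,\dots,i_\ell$ or merges two of them into a cycle not through $j$, either way lowering the disturbance below $\ell$.
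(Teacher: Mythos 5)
Your proposal takes essentially the same route as the paper's own proof: the paper notes that each $t_{i_k,j}$ has $m$-disturbance one, that a product with repeated indices drops below disturbance $\ell$ (since the product of the bare transpositions is then supported on a set of size $<\ell$ inside $\{1,\dots,n\}$), and that the formula in the distinct-index case ``follows by direct computation.'' Your disturbance-counting argument for vanishing is exactly the paper's argument, and your proposed induction on $\ell$ is a reasonable way of organizing the ``direct computation'' that the paper leaves to the reader.

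Two small cautions on the write-up, neither of which is a gap in the approach. First, your ``the only way to keep disturbance equal to $\ell$ is to avoid cancellation of transpositions'' phrasing is doing more work than needed: the cleaner statement, which you also give, is that the product of transpositions is supported on $\{i_1,\dots,i_\ell,j\}$, so its disturbance is at most $|\{i_1,\dots,i_\ell\}|$, which is $<\ell$ when the indices repeat. Second, your sign accounting in the inductive step is not yet reduced to a clean final form: you correctly flag that a factor $(-1)^{\sigma\bar b_s+\sigma}$ comes from the definition of each $t_{i_s,j}$, that Koszul signs from the superpermutation and from dot collisions contribute the $(-1)^{\bar b_s\bar b_{s+1}}$ and $(-1)^{\sigma\ell(\ell+1)/2}$ pieces, but note that $\prod_s(-1)^{\sigma\bar b_s}$ does not visibly equal $(-1)^{\sigma\bar b_\ell}$. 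The resolution is that the superpermutation signs from sliding $(i_\ell,j)$ past the accumulated decorations (whose parities are $\bar b_s^\vee=\bar b_s+\sigma$ on each occupied strand) cancel the extra $(-1)^{\sigma\sum_{s<\ell}\bar b_s}$; you should make that cancellation explicit in the induction. Once that is pinned down, your inductive step, which moves $(i_\ell,j)$ past the accumulated tensor via \eqref{eq:superpermutation}, deposits $b_\ell^\vee$ on strand $i_\ell$, and collides it with the transported $b_{\ell-1}$ to produce $b_\ell^\vee b_{\ell-1}$ while $(i_\ell,j)(i_1,\dots,i_{\ell-1},j)=(i_1,\dots,i_\ell,j)$, is exactly the intended computation.
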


\begin{proof}
  The transposition $(i_k,j)$, $1 \le k \le \ell$, has $m$-disturbance one.  If the $i_k$ are not all distinct, then the product $(i_\ell,j) \dotsm (i_2,j) (i_1,j)$ has disturbance less than $\ell$ and so is zero in the associated graded algebra.  The case where $i_1 < i_2 < \dotsb < i_\ell$ follows by direct computation.
\end{proof}

\begin{prop} \label{prop:chim-properties}
  The map $\chi_m$ of \eqref{eq:chi-m} is a surjective algebra homomorphism.  Its restriction to $D_m$ is injective.
\end{prop}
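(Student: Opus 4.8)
The statement has two parts: surjectivity of $\chi_m$, and injectivity of its restriction to $D_m$. For surjectivity, the plan is to argue that every planar diagram representing an endomorphism of $\sP^m$ can be reduced, using the local relations and super isotopy invariance, to a linear combination of ``straight-line'' diagrams in the image of $\chi_m$, namely diagrams consisting of crossings of the $m$ upward strands, dots on those strands, right curls on those strands, and closed diagrams (bubbles) placed to the side. Concretely, given any diagram in $\End_{\cH_B'} \sP^m$, first pull all cups and caps apart from the through-strands: any time a cup/cap appears, it either bounds a region that can be collapsed (so a left curl appears, which is zero by \eqref{eq:rel3b}, or a bubble appears, which is an element of $\End_{\cH_B'}\one$) or it can be isotoped using \eqref{eq:relsnake}, \eqref{eq:rel1b}, \eqref{eq:rel2b}, and the triple-point relation \eqref{eq:triple-point} to remove it. Inductively on the number of local minima/maxima, one reduces to diagrams with no cups or caps at all except inside closed components, which are exactly the diagrams of the form $\chi_m(z)$ for $z \in D_m \otimes \End_{\cH_B'}\one$: the braid-like crossing part gives $A_m^\op$, the right curls give the $x_i$'s, the dots give $B^{\otimes m}$, and the closed components give $\End_{\cH_B'}\one$. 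I would invoke Proposition~\ref{prop:identity-end-surjection} implicitly here, since $\End_{\cH_B'}\one$ is the target of the second tensor factor.

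\textbf{Injectivity on $D_m$.} For the second assertion, the plan is to use the action functor $\bF = \bigoplus_n \bF_n$ of Theorem~\ref{theo:action-functor} together with the filtration by $m$-disturbance. Composing $\chi_m|_{D_m}$ with $\bF_n$ gives an algebra homomorphism $D_m \to \End\big(\prescript{}{n+m}(n+m)_n\big)$; since the bimodule $\prescript{}{n+m}(n+m)_n$ contains (a copy of) $A_{n+m}^{\op}$ as a summand with the right action identified appropriately, and since $x_i$ maps to $\pr{J_{n+i-1}}$ acting on the $(n+i)$-th strand region — i.e.\ to a Jucys--Murphy-type element $\sum_{j<n+i} t_{j,n+i}$ in $A_{n+m}^{\op}$ — it suffices to show that the images of a PBW-type basis of $D_m$ are linearly independent in $A_{n+m}^{\op}$ for $n$ large. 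A PBW basis of $D_m$ is given by monomials $x_1^{d_1} \dotsm x_m^{d_m} \otimes (\text{basis element of } A_m^{\op})$, where the $x$-part is well-ordered because of the commutation relations (using that $\F[x_1,\dots,x_m]$ is a polynomial algebra up to signs). Passing to the associated graded algebra for the $m$-disturbance filtration, the image of $x_{i_1}^{d_1}\dotsm$ becomes, by Lemma~\ref{lem:tij-products}, a sum over tuples $b_\bullet \in \cB$ of terms $\beta_{b_\bullet} \cdot (\text{cycle})$, and the key point is that distinct monomials produce terms supported on disjoint sets of permutations (the cycles appearing have different supports / lengths among the $\{n+1,\dots,n+m\}$ indices), so no cancellation between distinct basis monomials is possible. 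Choosing $n \geq \sum d_i$ ensures there are enough ``fresh'' indices $i_1 < \dots < i_\ell \leq n$ for the transpositions to be genuinely distinct, so the leading terms are linearly independent in $\gr A_{n+m}^{\op}$, hence the monomials themselves are independent in $D_m$.

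\textbf{Expected main obstacle.} The surjectivity argument is essentially bookkeeping with the local relations, and should present no conceptual difficulty beyond carefully handling the reduction of nested cups/caps; the triple-point relation \eqref{eq:triple-point} (proved for all orientations) is the workhorse there. The genuinely delicate step is the injectivity argument: one must verify that, after passing to the associated graded for the $m$-disturbance filtration, the leading terms of distinct PBW monomials of $D_m$ remain $\F$-linearly independent in $\gr\big(\bF_n(D_m)\big) \subseteq \gr A_{n+m}^{\op}$. This requires (i) identifying $\bF_n(x_i)$ precisely as a sum of the $t_{j,n+i}$'s, which follows from the computation of $\pr{J_n}$ recorded after Remark~\ref{rem:Jucys-Murphy}, and unwinding the identification of $\prescript{}{n+m}(n+m)_n$ with (a twist of) $A_{n+m}$; (ii) using Lemma~\ref{lem:tij-products} to see that the ``top $m$-disturbance'' part of $\bF_n\chi_m(x_1^{d_1}\dotsm x_m^{d_m})$ is a nonzero element of $\gr A_{n+m}^{\op}$ whose permutation support determines the exponent sequence $(d_1,\dots,d_m)$; and (iii) concluding that the whole collection of images of basis monomials of $D_m$ is linearly independent for $n$ sufficiently large (independence for \emph{some} $n$ suffices, since $\chi_m|_{D_m}$ is defined independently of $n$). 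The subtlety in (ii)--(iii) is making sure the combinatorial data (which $b$-labels appear in $\beta_{b_\bullet}$, and in which tensor slots) does indeed reconstruct the monomial unambiguously, which is exactly what Lemma~\ref{lem:tij-products} was set up to provide.
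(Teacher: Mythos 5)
Your surjectivity argument is essentially the same as the paper's (whose version is even more terse, a single sentence plus a picture), and it is correct. One small point: you do not actually need Proposition~\ref{prop:identity-end-surjection} here; it suffices to observe that closed components of a diagram literally are elements of $\END_{\cH'_B}\one$, which is the second tensor factor of the domain of $\chi_m$.

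The injectivity argument has the right scaffold --- compose with $\bF_n$, use the $m$-disturbance filtration, invoke Lemma~\ref{lem:tij-products}, take $n$ large --- but the central claim that ``distinct monomials produce terms supported on disjoint sets of permutations'' is false as stated, and the gap it hides is precisely where the real work lies. Two basis monomials $x(\ba,\beta,\tau)$ and $x(\ba,\beta',\tau)$ differing only in the $B^{\otimes m}$-factor $\beta$ land on exactly the same permutations of $\{1,\dotsc,n+m\}$ under $\chi'_{m,n}$; what differs is the $B^{\otimes(n+m)}$-coefficient. So disjointness of permutation support cannot do the job alone. Moreover, even for a single monomial $x(\ba,\beta,1)$, the $B^{\otimes(n+m)}$-coefficient of the fixed permutation $(i_{1,m},\dotsc,i_{a_m,m},n+1)\dotsm(i_{1,1},\dotsc,i_{a_1,1},n+m)$ (with all $i_{s,r}<n$ distinct) is itself a signed sum over all tuples of basis elements $(b_{s,r})$ coming from the expansion of $J_{n}^{a_m}\dotsm J_{n+m-1}^{a_1}$ in the associated graded algebra; you must still show this sum is nonzero, and Lemma~\ref{lem:tij-products} by itself only writes down the formula for the sum, it does not show it is nonzero. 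The paper's resolution is an iterated application of $\tr_B$ to the tensor slots indexed by the $i_{s,r}$, using the identity $\sum_{c\in\cB}\tr_B(c^\vee)\,bc = b$ (which follows from \eqref{eq:dual-bases-decomp} together with the parity constraint \eqref{eq:trace-parity-condition}) to contract the sum down to $\pm(1_B^{\otimes n}\otimes\beta) \ne 0$. Your items (ii)--(iii) correctly flag this as the delicate point, but the trace-contraction computation is a separate essential step that your proposal does not supply. A minor indexing slip: $\chi_{m,n}(x_i) = \pr{J_{n+m-i}}$, not $\pr{J_{n+i-1}}$; the region to the right of the $i$-th upward strand from the left is labelled $n+m-i$, so your item (i) has the index reversed.
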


\begin{proof}
  Any diagram representing an element of $\END_{\cH'_B} P^m$ can be inductively simplified to a linear combination of standard diagrams consisting of (the image under $\chi_m$) of an element of $D_m$ and a closed diagram to the right:
  \[
    \begin{tikzpicture}[>=stealth,baseline={([yshift=-.5ex]current bounding box.center)}]
      \draw[->] (0,0) .. controls (0,1.5) .. (1,2) .. controls (2,2.5) .. (2,4);
      \draw[->] (2,0) .. controls (2,1.5) .. (1,2) .. controls (0,2.5) .. (0,4);
      \draw[->] (1,0) .. controls (1,1) and (2,1) .. (2,2) .. controls (2,3) and (1,3) .. (1,4);
      \bluedot{(0.03,3)};
      \redcircle{(0,3.5)};
      \bluedot{(2,3.25)};
      \draw[-<] (4,2.7) arc(90:450:.5);
      \bluedot{(3.55,2.4)};
      \draw[-<] (3,1.5) arc(90:450:.5);
      \bluedot{(2.55,1.2)};
      \redcircle{(3.45,0.8)};
    \end{tikzpicture}
  \]
  Surjectivity of $\chi_m$ follows.

  It remains to show that the restriction of $\chi_m$ to $D_m$ is injective.  For $n \in \N$, we have the composition
  \begin{gather*}
    \chi_{m,n} \colon D_m \xrightarrow{\chi_m} \END_{\cH'_B} (\sP^m) \xrightarrow{\bF_n} \END((n+m)_n), \\
    \chi_{m,n}(x_i) = \pr{J_{n+m-i}},\quad \chi_{m,n}(a) = \pr{\shift_n(a)},\quad a \in A_m,\ i \in \{1,\dotsc,m\},
  \end{gather*}
  \details{
    Note that the subscript of $n+m-i$ in the equality $\chi_{m,n}(x_i) = \pr{J_{n+m-i}}$ comes from the fact that the region to the right of the $i$-th upward pointing strand from the left is $n+m-i$.
  }
  where $\shift_n{a}$ denotes the shift of an element $a \in A_m$ given by
  \begin{gather*}
    \shift_n(\beta) = 1_B^{\otimes n} \otimes \beta,\quad \beta \in B^{\otimes n}, \\
    \shift_n(\tau)(i+n) = \tau(i) + n,\quad 1 \le i \le m,\quad \shift_n(\tau)(i) = i,\quad 1 \le i \le n.
  \end{gather*}
  Now, $\END ((n+m)_n) \subseteq A_{n+m}^\op$, since the endomorphism algebra of an algebra $R$ viewed as a left $R$-module is naturally isomorphic to $R^\op$, i.e., $\END_R (\prescript{}{R}R, \prescript{}{R}R) \cong R^\op$.  Thus, $\chi_{m,n}$ corresponds to a map
  \[
    \chi_{m,n}' \colon D_m \to A_{n+m}^\op.
  \]

  The map $\chi_{m,n}'$ is a homomorphism of filtered algebras.  To show that the restriction of $\chi_m$ to $D_m$ is injective, it suffices to show that $\chi_{m,n}'$ is injective for some $n$. We will take $n$ large compared to $m$.  Note that $D_m$ has a basis given by the elements
  \[
    x(\ba,\beta,\tau) := x_1^{a_1} \dotsm x_m^{a_m} \beta \tau,\quad \ba =(a_1,\dotsc,a_m) \in \N^m,\ \beta \in \cB^{\otimes m},\ \tau \in S_m.
  \]
  Consider a nonzero element
  \begin{equation} \label{eq:chim-kernel-element}
    y = \sum_{\ba,\beta,\tau} d_{\ba,\beta,\tau}\, x(\ba,\beta,\tau) \in D_m,
  \end{equation}
  where $d_{\ba,\beta,\tau} \in \F \setminus \{0\}$ and the sum is over finitely many triples $(\ba,\beta,\tau)$.  The element $\chi_{m,n}'(x(\ba,\beta,\tau))$ belongs to the $k$-th step of the filtration of $A_{m+n}$, where
  \[
    k = a_1 + \dotsb + a_m,
  \]
  but not to the $(k-1)$-st step.  Let $k_0$ denote the largest value of $k$ such that $\chi_{m,n}'(y)$ does not lie in the $(k-1)$-st step in the filtration of $A_{m+n}^\op$.  It suffices to show that, as we sum over only the terms in \eqref{eq:chim-kernel-element} lying in the $k_0$-th step of the filtration, the image of $\chi_{m,n}'( \sum d_{\ba,\beta,\tau} x(\ba,\beta,\tau))$ in the associated graded ring of $A_{m+n}^\op$ relative to the $m$-disturbance filtration is nonzero.  In other words, it is enough to show that, for some sufficiently large $n$, the $B^{\otimes n}$-coefficients of permutations of disturbance $k_0$ in
  \begin{equation} \label{eq:chi-image}
    \chi_{m,n}' \left( \sum_{\ba,\beta,\tau} d_{\ba,\beta,\tau}\, x(\ba,\beta,\tau) \right) \in A_{m+n}
  \end{equation}
  are not all zero.

  Choose $\ba,\beta = \beta_1 \otimes \dotsb \otimes \beta_m,\tau$ such that $a_1 + \dotsb + a_m = k_0$ and $d_{\ba, \beta, \tau} \ne 0$.  Replacing $y$ by $y \tau^{-1}$, we may assume that $\tau = 1$.  Then we have
  \begin{equation} \label{eq:Dm-injective-leading-term}
    \chi_{m,n}'(x(\ba,\beta,1)) = \pm \shift_n(\beta) J_{n}^{a_m} J_{n+1}^{a_{m-1}} \dotsb J_{n+m-1}^{a_1}.
  \end{equation}
  Choose natural numbers
  \[
    i_{1,m} < i_{2,m} < \dotsb i_{a_m,m} < i_{1,m-1} < \dotsb < i_{a_{m-1},m-1} < \dotsb < i_{1,1} < \dotsb < i_{a_1,1} < n.
  \]
  (Recall that we take $n$ to be large.)  It follows from Lemma~\ref{lem:tij-products} that the $B^{\otimes m}$-coefficient of
  \begin{equation} \label{eq:highest-disturbance-term}
    (i_{1,m},i_{2,m},\dotsc,i_{a_m,m},n+1) \dotsm (i_{1,2}, i_{2,2}, \dotsc, i_{a_2,2}, n+m-1) (i_{1,1}, i_{2,1}, \dotsc, i_{a_1,1},n+m)
  \end{equation}
  in \eqref{eq:chi-image} is equal to the coefficent of \eqref{eq:highest-disturbance-term} in \eqref{eq:Dm-injective-leading-term} and is equal to
  \begin{equation} \label{eq:leading-term-coef}
    \pm \sum_{\substack{b_{s,r} \in \cB \\ 1 \le r \le m,\ 1 \le s \le a_r}} (-1)^{\sum_{t=1}^m \left( \sigma \bar b_{a_t,t} + \sum_{\ell=1}^{a_t-1} \bar b_{\ell,t} \bar b_{\ell+1,t} \right)} \shift_n(\beta) \beta',
  \end{equation}
  where $\beta' \in B^{\otimes (m+n)}$ is the simple tensor with $k$-th component equal to
  \begin{itemize}
    \item $b_{1,r}^\vee$ if $k=i_{1,r}$ for some $r = 1,\dotsc,m$,
    \item $b_{s,r}^\vee b_{s-1,r}$ if $k=i_{s,r}$ for some $s = 2, \dotsc, a_r$, $r = 1,\dotsc,m$,
    \item $b_{a_r,r}$ if $k=n+m+1-r$ for some $r=1,\dotsc,m$ (where $b_{a_r,r}=1_B$ if $a_r=0$),
    \item $1_B$ otherwise.
  \end{itemize}
  Note that, in \eqref{eq:leading-term-coef} and in what follows, we have absorbed all signs not depending on the $b_{s,r}$ into the leading $\pm$.  It suffices to prove that \eqref{eq:leading-term-coef} is nonzero.  Note that, for $b \in B$, we have
  \begin{equation} \label{eq:contraction}
    (\tr_B \otimes \id) \left ( \sum_{c \in \cB} (-1)^{\bar b \bar c} c^\vee \otimes bc \right)
    = \sum_{c \in \cB} \tr_B(c^\vee) \otimes b c \\
    = 1_\F \otimes b \left( \sum_{c \in \cB} \tr_B(c^\vee) c \right)
    \stackrel{\eqref{eq:dual-bases-decomp}}{=} 1_\F \otimes b,
  \end{equation}
  where, in the first equality, we used the fact that $\tr_B(c^\vee)=0$ unless $\bar c = 0$ by \eqref{eq:trace-parity-condition}.  Thus, applying the map $\tr_B$ to the $i_{s,r}$-th components of \eqref{eq:leading-term-coef} for $s=1,\dotsc,a_r-1$ and $r=1,\dotsc,m$, we obtain
  \begin{equation} \label{eq:leading-term-coef2}
    \pm \sum_{b_{a_1,1},\dotsc,b_{a_m,m} \in \cB} (-1)^{\sigma \sum_{r=1}^m \bar b_r} \shift_n(\beta) \beta'',
  \end{equation}
  where $\beta'' \in B^{\otimes (m+n)}$ is the simple tensor with $k$-th component equal to
  \begin{itemize}
    \item $b_{a_r,r}^\vee$ if $k=i_{a_r,r}$ for some $r=1,\dotsc,m$,
    \item $b_{a_r,r}$ if $k=n+m+1-r$ for some $r=1,\dotsc,m$ (where $b_{a_r,r}=1_B$ if $a_r=0$),
    \item $1_B$ or $1_\F$ otherwise.
  \end{itemize}
  Therefore, \eqref{eq:leading-term-coef2} is equal to
  \begin{equation} \label{eq:leading-term-coef3}
    \pm \sum_{b_1,\dotsc,b_m \in \cB} (-1)^{\sum_{r=1}^m \bar b_r (\sigma + \bar \beta_1 + \dotsb + \bar \beta_{m+1-r})} \beta''',
  \end{equation}
  where $\beta''' \in B^{\otimes (m+n)}$ is the simple tensor with $k$-th component equal to
  \begin{itemize}
    \item $b_r^\vee$ if $k=i_{a_r,r}$ for some $r=1,\dotsc,m$,
    \item $\beta_{m+1-r} b_r$ if $k=n+m+1-r$ for some $r=1,\dotsc,m$ (where $b_r=1_B$ if $a_r=0$),
    \item $1_B$ or $1_\F$ otherwise.
  \end{itemize}
  Finally, applying the map $\tr_B$ to the $i_{a_r,r}$-the components of \eqref{eq:leading-term-coef3} for $r=1,\dotsc,m$ gives
  \[
    \pm \left( 1^{\otimes n} \otimes \beta \right),
  \]
  which is nonzero.  (As in \eqref{eq:contraction}, we use the fact that $\tr_B(b_r^\vee)=0$ unless $\bar b_r = 0$.)  It follows that \eqref{eq:leading-term-coef} is nonzero, as desired.
\end{proof}

\begin{cor} \label{cor:CL-conjecture}
  Conjecture~1 of \cite{CL12} holds.
\end{cor}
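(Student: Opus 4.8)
The plan is to obtain Corollary~\ref{cor:CL-conjecture} as a direct specialization of Proposition~\ref{prop:chim-properties}. By Remark~\ref{rem:special-cases}, taking $B = \Lambda^*(V) \rtimes \Gamma$, where $\Gamma$ is a finite subgroup of $\mathrm{SL}_2(\F)$ and $V$ is its natural two-dimensional representation, identifies the 2-category $\mathscr{H}_\Gamma'$ of \cite{CL12} with $\mathscr{H}_B'$, under a dictionary in which the object $P$ of \cite{CL12} corresponds to $\{-1\}\sP$. For this choice of $B$ one has $\sigma = 0$, $\delta = 2$, and $\psiB = \id_B$. Since grading shifts leave morphism spaces unchanged, $\END_{\cH'_B}(\sP^m)$ and $\END_{\mathscr{H}_\Gamma'}(P^{\otimes m})$ agree as algebras (up to an overall internal grading shift), so it suffices to argue with $\cH'_B$ and $\sP^m$.

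First I would specialize the algebra $D_m$ to this $B$. With $\sigma = 0$ and $\psiB = \id$, the defining relations collapse to: the $x_i$ pairwise commute; each $x_i$ commutes with $B^{\otimes m} \subseteq A_m^\op$; and $x_i s_i = s_i x_{i+1} + c_i$, $s_i x_i = x_{i+1} s_i + c_i$, where $c_i = \sum_{b \in \cB} 1_B^{\otimes(i-1)} \otimes b^\vee \otimes b \otimes 1_B^{\otimes(m-i-1)}$ (the two correction terms coincide here by Lemma~\ref{lem:casimir-properties}). By that same lemma, $\sum_{b \in \cB} b^\vee \otimes b$ is the basis-independent Casimir element of $B$, so $c_i$ is precisely the correction term appearing in the wreath-product deformation of the degenerate affine Hecke algebra introduced in \cite{CL12}. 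Matching the two presentations is then a routine comparison of generators and relations.

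Next, under the homomorphism $\chi_m$ of \eqref{eq:chi-m}, the element $s_i$ maps to a crossing of adjacent strands, $x_i$ maps to a right curl, and $1_B^{\otimes(i-1)} \otimes b \otimes 1_B^{\otimes(m-i)}$ maps to a dot labelled $b$; after the translation of the previous paragraph this is exactly the map that \cite{CL12} conjectures to be injective. The second assertion of Proposition~\ref{prop:chim-properties} states precisely that the restriction of $\chi_m$ to $D_m$ is injective. Combining these observations proves Conjecture~1 of \cite{CL12}.

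The only real content of the argument is the translation step: identifying the presentation of $D_m$, specialized to $B = \Lambda^*(V) \rtimes \Gamma$, with the Cautis--Licata algebra, and matching the diagrammatic generators under the dictionary of Remark~\ref{rem:special-cases}, while keeping track of the fact that the cups and caps in \cite{CL12} carry shifted degrees relative to ours, so that the isomorphism of algebras holds after rescaling the internal grading. Once Proposition~\ref{prop:chim-properties} is available this is purely bookkeeping; in particular the case $\Gamma = \Z_2$, which required separate treatment in \cite{CL12}, needs nothing special here.
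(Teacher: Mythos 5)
Your proof has a genuine gap: you specialized to the wrong Frobenius algebra. You take $B = \Lambda^*(V) \rtimes \Gamma$, which by Remark~\ref{rem:special-cases} recovers the main 2-category $\mathscr{H}_\Gamma'$ of \cite{CL12}. But Conjecture~1 of \cite{CL12} concerns the algebra $H_i^n$ of \cite[\S10.3]{CL12}, which lives in the \emph{alternate} 2-category $\mathscr{H}^\Gamma$ of \cite[\S6.1]{CL12}; that category is recovered by taking $B$ to be the skew-zigzag algebra of \cite[\S2.3]{CLS14}, i.e.\ Remark~\ref{rem:special-cases}\eqref{rem-item:special-cases-CLaux}. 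Your comparison of $D_m$ with a ``wreath-product deformation of the degenerate affine Hecke algebra'' in $\Lambda^*(V) \rtimes \Gamma$ is therefore matching against the wrong presentation: the generators $y_k$, $z_k$ of $H_i^n$ are, respectively, the curls $x_k$ and the length-two-path elements $1_B^{\otimes(k-1)}\otimes p_i\otimes 1_B^{\otimes(n-k)}$ in the skew-zigzag setting, not the Casimir correction you describe.

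A second, related omission: even with the correct $B$, the algebra $H_i^n$ of \cite{CL12} is not $D_n$ itself but the idempotent truncation $(1\otimes e_i^{\otimes n})D_n(1\otimes e_i^{\otimes n})$, and the relevant diagrammatic object is $(\sP^n, e_i^{\otimes n})$ in the Karoubi envelope $\cH_B$, not $\sP^n$ in $\cH_B'$ (this is the content of Remark~\ref{rem:special-cases}\eqref{rem-item:special-cases-CLaux}, which you cite for the wrong item). Injectivity on all of $D_n$ from Proposition~\ref{prop:chim-properties} does restrict to injectivity on the truncation, so this is a small repair, but your writeup works entirely in $\cH_B'$ with $\sP^m$ and never invokes the idempotent, so as written the identification with $H_i^n$ does not go through. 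Your overall strategy — reduce Conjecture~1 of \cite{CL12} to the injectivity assertion in Proposition~\ref{prop:chim-properties} — is exactly right; it is the translation step you flagged as ``routine bookkeeping'' that goes wrong.
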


\begin{proof}
  Recall that the category $\mathscr{H}^\Gamma$ of \cite{CL12} is a special case of the 2-category version $\mathscr{H}_B$ of the category $\cH_B$ defined in the current paper, where $B$ is skew-zigzag algebra defined in \cite[\S2.3]{CLS14} (see Remark~\ref{rem:special-cases}\eqref{rem-item:special-cases-CLaux}).  The subscript $i$ of the algebra $H_i^n$ of \cite[\S10.3]{CL12} is a node of a Dynkin diagram of affine ADE type corresponding to $\Gamma$.  If $e_i$ is the corresponding idempotent of $B$, then $e_iBe_i$ is two-dimensional, spanned by $e_i$ and $p_i$, where $p_i$ is the class of a path of length two (starting and ending at the vertex $i$) in the double quiver associated to $\Gamma$.

  The algebra $H_i^n$ of \cite{CL12} is precisely $(1 \otimes e_i^{\otimes n}) D_n (1 \otimes e_i^{\otimes n})$.   For $k \in \{1,\dotsc,n\}$, the $y_k$ of \cite{CL12} are the $x_k$ of the current paper, and the $z_k$ of \cite{CL12} are the elements $1_B^{\otimes (k-1)} \otimes p_i \otimes 1_B^{\otimes (n-k)} \in A_n^\op \subseteq D_n$.  Finally, as explained in Remark~\ref{rem:special-cases}\eqref{rem-item:special-cases-CLaux}, the 1-morphism $P_i^n$ of $\mathscr{H}^\Gamma$, which is $n$ upward pointing strands labeled $i$, corresponds to the object $(\sP^n,e_i^{\otimes n})$ of $\cH_B$.  Thus, Conjecture~1 of \cite{CL12} follows immediately from Proposition~\ref{prop:chim-properties}.
\end{proof}

\begin{conj} \label{conj:chim-isom}
  The map $\chi_m$ of \eqref{eq:chi-m} is an isomorphism of algebras for all $m \in \N$.
\end{conj}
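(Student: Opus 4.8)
The plan is to derive Conjecture~\ref{conj:chim-isom} from a basis theorem for the morphism spaces of $\cH'_B$ that simultaneously yields Conjecture~\ref{conj:id-end} as the case $m=0$ (with $D_0=\F$). For words $X,Y$ in $\sP$ and $\sQ$, one wants to show that $\Hom_{\cH'_B}(X,Y)$ has $\F$-basis the \emph{normal-form} diagrams obtained by (i) using \eqref{eq:rel1a}--\eqref{eq:rel3b} and \eqref{eq:triple-point} so that no two strands cross more than once; (ii) sliding all right curls to the tops of the $\sP$-strands and all dots beneath them, with the dots labelled by a fixed ordered $\F$-basis of $B$; and (iii) using Propositions~\ref{prop:bubble-slide} and~\ref{prop:circle-conversion} and Lemma~\ref{lem:circle-index-relations} to collect the closed components into a single reduced nested family of clockwise bubbles $c_{\beta,d}$ ($\beta$ in a basis of $B_\psi$, $d\in\widetilde\N$) placed to the far right. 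The \emph{spanning} half of this is a routine, uniform extension of the rewriting already performed in the proofs of Propositions~\ref{prop:chim-properties} and~\ref{prop:identity-end-surjection}; applied to $X=Y=\sP^m$ it says exactly that $\chi_m$ carries the basis $\{x(\ba,\beta,\tau)\otimes s\}$ of $D_m\otimes S(B_\psi\otimes_\F C)$ onto a spanning set of $\END_{\cH'_B}\sP^m$.

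The substance is \emph{linear independence}, for which the action on $\bigoplus_n A_n\md$ of Theorem~\ref{theo:action-functor} does not suffice: as noted after Conjecture~\ref{conj:id-end}, a bubble with a positive-degree label acts by zero on every $A_n$, so no representation built from the undeformed wreath products can be faithful. The proposal is to act $\cH'_B$ instead on the module categories of the deformations $D_n$ (the affine wreath product algebras, which contain $A_n$): send $\sP$ and $\sQ$ to induction and restriction along the natural chain $D_0\hookrightarrow D_1\hookrightarrow D_2\hookrightarrow\cdots$, send dots and crossings to the evident generators $1_B^{\otimes\bullet}\otimes b\otimes 1_B^{\otimes\bullet}$ and $s_i$, send right curls to the polynomial generators $x_i$, and send cups and caps to a $D_n$-linearized version of the maps of Proposition~\ref{prop:adjunction-maps}, twisted by the $x$-variables so that the curl composite returns $x_n$ in place of $J_n$. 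One must verify that this descends through the local relations \eqref{eq:collision-up}--\eqref{eq:rel3b}, the super isotopy moves, and the adjunctions \eqref{eq:right-unit-counit}--\eqref{eq:left-unit-counit}; the delicate points are \eqref{eq:rel2a}, \eqref{eq:rel3b}, and the left adjunction \eqref{eq:left-unit-counit}, whose structure maps involve the dual basis of $B$ and must now be twisted by the $x_i$. Under such an action the bubbles act by central elements of $D_n$, and for $n$ large these (for the finitely many $(\beta,d)$ occurring in a given morphism) behave as algebraically independent parameters in the relevant range of degrees.

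Granting the deformed action, linear independence of the normal forms follows from a leading-term argument parallel to the injectivity half of Proposition~\ref{prop:chim-properties}: for $N\gg0$ a nonzero normal-form combination $y\in\Hom_{\cH'_B}(X,Y)$ maps to a $D_N$-module homomorphism with a nonzero coefficient on a permutation of maximal $m$-disturbance, which one isolates by iterating the Frobenius-form contraction \eqref{eq:contraction}; the accompanying monomials in the $x$-variables and in the central (bubble) elements then separate the $D_m$-part of $y$ from its bubble part, precisely because by construction these are free. Taking $X=Y=\sP^m$ and feeding the result into Proposition~\ref{prop:identity-end-surjection} shows that $\chi_m$ sends the PBW-type basis of $D_m\otimes S(B_\psi\otimes_\F C)$ to a basis of $\END_{\cH'_B}\sP^m$, so $\chi_m$ is injective; being surjective by Proposition~\ref{prop:chim-properties}, it is an isomorphism, and the case $m=0$ is Conjecture~\ref{conj:id-end}.

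The main obstacle is the construction of the deformed representation. Bubbles with positive-degree labels are genuinely invisible to the wreath product algebras, so faithfulness forces one to work with the larger algebras $D_n$, whose module categories do not carry a $\Z$-grading compatible with that of $\cH'_B$ — only a filtration by $m$-disturbance-type data — so the grading-driven rigidity used in Sections~\ref{sec:key-isoms} and~\ref{sec:main-result} must be replaced by filtered arguments throughout. Checking that the $x$-twisted cup/cap maps really do yield a monoidal functor out of $\cH'_B$ (in particular that they satisfy the left adjunction \eqref{eq:left-unit-counit} and are compatible with \eqref{eq:rel2a} and \eqref{eq:rel3b}), together with verifying the claimed algebraic independence of the bubble elements inside $\bigoplus_n Z(D_n)$, is where essentially all of the work resides; once these are in place, the asymptotic faithfulness, and hence the basis theorem and both conjectures, follow as sketched.
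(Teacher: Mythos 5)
This statement is \emph{not} proved in the paper: it is stated as Conjecture~\ref{conj:chim-isom} and left open. The paper only proves (Proposition~\ref{prop:chim-properties}) that $\chi_m$ is surjective and that its restriction to $D_m$ is injective, and the subsequent remark notes the conjecture is known when $B=\F$ by \cite[Prop.~4]{Kho14} and is conjectured in \cite[Conj.~2]{CL12} in their setting. So there is no paper proof to compare against, and your submission cannot be a verified proof of a correct statement that the paper also verifies.

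Evaluating your sketch on its own terms: you correctly identify the central obstruction — the $A_n$-module categories cannot separate bubbles carrying positive-degree labels, since those bubbles act by degree reasons as zero, so the functor $\bF$ of Theorem~\ref{theo:action-functor} cannot detect all of $\END_{\cH'_B}\one$. Your proposed remedy is to replace the $A_n$ by the deformations $D_n$ and to act by induction/restriction along $D_n\hookrightarrow D_{n+1}$ with ``$x$-twisted'' cup and cap maps. But this is a plan, not a proof, and the plan leaves the two hardest points unaddressed. First, the adjunction data in the paper rests entirely on $A_{n+1}$ being a graded Frobenius extension of $A_n$ (Proposition~\ref{prop:Frob-ext}), with explicit dual bases furnishing $\varepsilon_\rL$ and $\eta_\rL$. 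The $D_n$ are not finite over $D_{n-1}$; $D_{n}$ is a free $D_{n-1}$-module of infinite rank, and there is no trace map of the type used in \cite[Prop.~4.9]{PS15} handed to you for free. You do not show that $D_{n+1}$ is a Frobenius extension of $D_n$ in any sense, nor construct the adjunction maps, nor verify compatibility with \eqref{eq:rel2a}, \eqref{eq:rel3b}, or the sign-sensitive super-isotopy rules; you flag these as ``where essentially all of the work resides,'' which is exactly to say the proof is missing at the decisive point. Second, even granting such an action, the assertion that the bubble images become algebraically independent in $\bigoplus_n Z(D_n)$ is stated without argument. The closed diagrams still involve contractions over dual bases of $B$ (as in \eqref{eq:bubble-endo}), and nothing you write rules out relations among their images beyond those in $S(B_\psi\otimes_\F C)$; this is the same faithfulness question you set out to resolve, merely relocated. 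Until the deformed functor is actually constructed and its injectivity on the relevant graded pieces established, the leading-term/contraction argument from Proposition~\ref{prop:chim-properties} has nothing to act on, and neither Conjecture~\ref{conj:chim-isom} nor Conjecture~\ref{conj:id-end} follows.
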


\begin{rem}
  In the case $B=\F$, Conjecture~\ref{conj:chim-isom} is proven in \cite[Prop.~4]{Kho14}.   The map $\chi_m$ is also conjectured to be an isomorphism in \cite[Conj.~2]{CL12} for the special case of $B$ considered there.  If Conjectures~\ref{conj:id-end} and~\ref{conj:chim-isom} both hold, then $\END_{\cH'_B} \sP^m \cong S(B_\psi \otimes C) \otimes D_m$.
\end{rem}

%%%%%%%%%%%%%%%%%%%%%%%%%%%%%%%%%%%%%
\section{Key isomorphisms in $\cH_B$} \label{sec:key-isoms}
%%%%%%%%%%%%%%%%%%%%%%%%%%%%%%%%%%%%%

It follows from relations \eqref{eq:collision-up}, \eqref{eq:rel0a}--\eqref{eq:rel1b} that we have an algebra homomorphism
\begin{equation} \label{eq:map-Bn-Pn}
  A_n^{\op}\to \END_{\cH'_B}(\sP^n).
\end{equation}
Using the adjunctions we then also have an analogous map
\begin{equation} \label{eq:map-Bn-Qn}
  A_n \to \END_{\cH'_B}(\sQ^n).
\end{equation}
For $n \in \N_+$ and an idempotent $f \in B$, let
\begin{equation}
  e_{f,(n)}:=(f\otimes f\otimes \dotsb\otimes f)\frac{1}{n!}\sum_{w\in S_n}w,\quad
  e_{f,(1^n)}:=(f\otimes f\otimes \dotsb\otimes f)\frac{1}{n!}\sum_{w\in S_n} (-1)^{\ell(w)} w.
\end{equation}
By abuse of notation, we also use $e_{f,(n)}$ and $e_{f,(1^n)}$ to denote the images of these elements under the maps \eqref{eq:map-Bn-Pn} and \eqref{eq:map-Bn-Qn}.  Then, for an idempotent $f \in B$ and $\lambda = (n)$ or $\lambda = (1^n)$, $n \in \N_+$, we define the following objects in the category $\cH_B$:
\[
  \sP^\lambda_f:= \left( \sP^{|\lambda|},e_{f,\lambda} \right),\quad \sQ^\lambda_f:= \left( \sQ^{|\lambda|},e_{f,\lambda} \right).
\]
The inclusion morphisms $\sP_f^\lambda \hookrightarrow \sP^{|\lambda|}$ and $\sQ_f^\lambda \hookrightarrow \sQ_f^{|\lambda|}$ will be denoted, respectively, by
\begin{equation}
  \begin{tikzpicture}[>=stealth,baseline={([yshift=-.5ex]current bounding box.center)}]
    \draw(-1,-0.3) rectangle (0,0.2) node[midway]{$f,\lambda$};
    \draw[->] (-0.5,0.2) to (-0.5,0.8);
  \end{tikzpicture}
  \qquad \text{and} \qquad
  \begin{tikzpicture}[>=stealth,baseline={([yshift=-.5ex]current bounding box.center)}]
    \draw(-1,-0.3) rectangle (0,0.2) node[midway]{$f,\lambda$};
    \draw[<-] (-0.5,0.2) to (-0.5,0.8);
  \end{tikzpicture}
  \ ,
\end{equation}
while the projections $\sP^{|\lambda|} \twoheadrightarrow \sP_f^\lambda$ and $\sQ_f^{|\lambda|} \twoheadrightarrow \sQ_f^\lambda$ will be denoted, respectively, by
\begin{equation}
  \begin{tikzpicture}[>=stealth,baseline={([yshift=-.5ex]current bounding box.center)}]
    \draw(-1,-0.3) rectangle (0,0.2) node[midway]{$f,\lambda$};
    \draw[->] (-0.5,-0.9) to (-0.5,-0.3);
  \end{tikzpicture}
  \qquad \text{and} \qquad
  \begin{tikzpicture}[>=stealth,baseline={([yshift=-.5ex]current bounding box.center)}]
    \draw(-1,-0.3) rectangle (0,0.2) node[midway]{$f,\lambda$};
    \draw[<-] (-0.5,-0.9) to (-0.5,-0.3);
  \end{tikzpicture}
  \ .
\end{equation}
Finally, the compositions $\sP_f^\lambda \hookrightarrow \sP^{|\lambda|} \twoheadrightarrow \sP_f^\lambda$ and $\sQ_f^\lambda \hookrightarrow \sQ_f^{|\lambda|} \twoheadrightarrow \sQ_f^\lambda$ will be denoted, respectively, by
\begin{equation}
  \begin{tikzpicture}[>=stealth,baseline={([yshift=-.5ex]current bounding box.center)}]
    \draw(-1,-0.3) rectangle (0,0.2) node[midway]{$f,\lambda$};
    \draw[->] (-0.5,0.2) to (-0.5,0.8);
    \draw[->] (-0.5,-0.9) to (-0.5,-0.3);
  \end{tikzpicture}
  \qquad \text{and} \qquad
  \begin{tikzpicture}[>=stealth,baseline={([yshift=-.5ex]current bounding box.center)}]
    \draw(-1,-0.3) rectangle (0,0.2) node[midway]{$f,\lambda$};
    \draw[<-] (-0.5,0.2) to (-0.5,0.8);
    \draw[<-] (-0.5,-0.9) to (-0.5,-0.3);
  \end{tikzpicture}
  \ .
\end{equation}
As above, for simplicity, when drawing diagrams involving such boxes, we will sometimes draw one curve to represent multiple strands.  When there is potential for confusion, we indicate the number of strands beside the curve.

Note that, for all idempotents $f\in B$ and $n \geq k$, the fact that $\tau \sum_{w \in S_n} w = \left( \sum_{w \in S_n} w \right) \tau$ for all $\tau \in S_n$ implies that
\begin{equation} \label{eq:absorb-crossing}
  \begin{tikzpicture}[>=stealth,baseline={([yshift=-.5ex]current bounding box.center)}]
    \draw (-0.75,0) rectangle (0.75,0.5) node[midway] {$f,(n)$};
    \draw (-0.45,0.5) -- (-0.45,1.5);
    \draw (0.45,0.5) -- (0.45,1.5);
    \draw (-0.15,0.5) -- (0.15,1.5);
    \draw (0.15,0.5) -- (-0.15,1.5);
  \end{tikzpicture}
  \ =\
  \begin{tikzpicture}[>=stealth,baseline={([yshift=-.5ex]current bounding box.center)}]
    \draw (-0.75,0) rectangle (0.75,0.5) node[midway] {$f,(n)$};
    \draw (-0.45,0.5) -- (-0.45,1.5);
    \draw (0.45,0.5) -- (0.45,1.5);
    \draw (0.15,0.5) -- (0.15,1.5);
    \draw (-0.15,0.5) -- (-0.15,1.5);
  \end{tikzpicture}
  \ ,\qquad
    \begin{tikzpicture}[>=stealth,baseline={([yshift=-.5ex]current bounding box.center)}]
    \draw (-0.75,0) rectangle (0.75,-0.5) node[midway] {$f,(n)$};
    \draw (-0.45,-0.5) -- (-0.45,-1.5);
    \draw (0.45,-0.5) -- (0.45,-1.5);
    \draw (-0.15,-0.5) -- (-0.15,-1.5);
    \draw (0.15,-0.5) -- (0.15,-1.5);
  \end{tikzpicture}
  \ =\
  \begin{tikzpicture}[>=stealth,baseline={([yshift=-.5ex]current bounding box.center)}]
    \draw (-0.75,0) rectangle (0.75,-0.5) node[midway] {$f,(n)$};
    \draw (-0.45,-0.5) -- (-0.45,-1.5);
    \draw (0.45,-0.5) -- (0.45,-1.5);
    \draw (-0.15,-0.5) -- (0.15,-1.5);
    \draw (0.15,-0.5) -- (-0.15,-1.5);
  \end{tikzpicture}\ ,
\end{equation}
\begin{equation} \label{eq:idem-diag}
  \begin{tikzpicture}[>=stealth,baseline={([yshift=-.5ex]current bounding box.center)}]
    \draw (0,0) -- (0,2);
    \draw (0.6,0) -- (0.6,2);
    \filldraw[fill=white] (-0.4,1.2) rectangle (0.9,1.7) node[midway] {$f,(n)$};
    \filldraw[fill=white] (-0.5,0.3) rectangle (0.5,0.8) node[midway] {$f,(k)$};
  \end{tikzpicture}
  \ =\
  \begin{tikzpicture}[>=stealth,baseline={([yshift=-.5ex]current bounding box.center)}]
    \draw (0.25,0) -- (0.25,2);
    \filldraw[fill=white] (-0.4,0.75) rectangle (0.9,1.25) node[midway] {$f,(n)$};
  \end{tikzpicture}
  \ =\
  \begin{tikzpicture}[>=stealth,baseline={([yshift=-.5ex]current bounding box.center)}]
    \draw (0,0) -- (0,2);
    \draw (0.6,0) -- (0.6,2);
    \filldraw[fill=white] (-0.4,0.3) rectangle (0.9,0.8) node[midway] {$f,(n)$};
    \filldraw[fill=white] (-0.5,1.2) rectangle (0.5,1.7) node[midway] {$f,(k)$};
  \end{tikzpicture}\ ,
\end{equation}
where the strands are either all oriented up or all oriented down.  Note also that, in light of \eqref{eq:rel0a} and \eqref{eq:rel0b}, crossings of strands carrying dots may also be absorbed into boxes, leaving the dots on uncrossed strands.  For example, we have
\[
  \begin{tikzpicture}[>=stealth,baseline={([yshift=-.5ex]current bounding box.center)}]
    \draw (-0.75,0) rectangle (0.75,0.5) node[midway] {$f,(n)$};
    \draw (-0.45,0.5) -- (-0.45,1.5);
    \draw (0.45,0.5) -- (0.45,1.5);
    \draw (-0.15,0.5) -- (0.15,1.5);
    \draw (0.15,0.5) -- (-0.15,1.5);
    \bluedot{(0.075,0.75)};
  \end{tikzpicture}
  \ =\
  \begin{tikzpicture}[>=stealth,baseline={([yshift=-.5ex]current bounding box.center)}]
    \draw (-0.75,0) rectangle (0.75,0.5) node[midway] {$f,(n)$};
    \draw (-0.45,0.5) -- (-0.45,1.5);
    \draw (0.45,0.5) -- (0.45,1.5);
    \draw (-0.15,0.5) -- (-0.15,1.5);
    \draw (0.15,0.5) -- (0.15,1.5);
    \bluedot{(-0.15,1)};
  \end{tikzpicture}\ .
\]
This will be used repeatedly in the calculations to follow.

For $b_1,\dotsc,b_k \in B$ and $\bb = b_1 \otimes \dotsb \otimes b_k$, we define
\begin{equation}
  \bb^\vee = b_1^\vee \otimes \dotsb \otimes b_k^\vee.
\end{equation}
Note that $\bb^\vee$ is not necessarily right dual to $\bb$ under the trace $\tr_B^{\otimes n}$; it may differ from the right dual by a sign.  We also define
\begin{equation} \label{eq:multi-dots}
  \begin{tikzpicture}[>=stealth,baseline={([yshift=-.5ex]current bounding box.center)}]
    \draw[->] (0,2) -- (0,0);
    \bluedot{(0,1)} node[anchor=east, color=black] {$\bb$};
  \end{tikzpicture}
  \ = \
  \begin{tikzpicture}[>=stealth,baseline={([yshift=-.5ex]current bounding box.center)}]
    \draw[->] (0,2) -- (0,0);
    \bluedot{(0,1.7)} node[anchor=east, color=black] {$b_1$};
    \draw[->] (1,2) -- (1,0);
    \bluedot{(1,1.4)} node[anchor=east, color=black] {$b_2$};
    \draw (1.7,1) node {$\cdots$};
    \draw[->] (2.4,2) -- (2.4,0);
    \bluedot{(2.4,0.3)} node[anchor=west, color=black] {$b_k$};
  \end{tikzpicture}
  \quad \text{and} \quad
  \begin{tikzpicture}[>=stealth,baseline={([yshift=-.5ex]current bounding box.center)}]
    \draw[<-] (0,2) -- (0,0);
    \bluedot{(0,1)} node[anchor=east, color=black] {$\bb$};
  \end{tikzpicture}
  \ = \
  \begin{tikzpicture}[>=stealth,baseline={([yshift=-.5ex]current bounding box.center)}]
    \draw[<-] (0,2) -- (0,0);
    \bluedot{(0,1.7)} node[anchor=east, color=black] {$b_k$};
    \draw (0.7,1) node {$\cdots$};
    \draw[<-] (1.4,2) -- (1.4,0);
    \bluedot{(1.4,0.6)} node[anchor=west, color=black] {$b_2$};
    \draw[<-] (2.4,2) -- (2.4,0);
    \bluedot{(2.4,0.3)} node[anchor=west, color=black] {$b_1$};
  \end{tikzpicture}.
\end{equation}
We then define strands carrying dots labeled by arbitrary elements of $B^{\otimes k}$, $k \in \N_+$, by linearity.

\begin{lem} \label{lem:multi-strand-up-down-rel}
  For all idempotents $f,g \in B$ and $m, n \in \N_+$, we have
  \begin{equation} \label{eq:multi-strand-up-down-rel}
    \begin{tikzpicture}[>=stealth,baseline={([yshift=-.5ex]current bounding box.center)}]
      \draw (-0.4,1) rectangle (-1.6,1.5) node[midway] {$f,(n)$};
      \draw (0.4,1) rectangle (1.6,1.5) node[midway] {$g,(m)$};
      \draw (-0.4,-1) rectangle (-1.6,-1.5) node[midway] {$f,(n)$};
      \draw (0.4,-1) rectangle (1.6,-1.5) node[midway] {$g,(m)$};
      \draw[->] (-1,1) .. controls (1,0) .. (-1,-1);
      \draw[<-] (1,1) .. controls (-1,0) .. (1,-1);
    \end{tikzpicture}
    \ =\
    \sum_{\ell=0}^{\min \{m,n\}} \sum_{\bb \in \cB^{\otimes \ell}} (-1)^\ell \frac{m! n!}{\ell! (m-\ell)! (n-\ell)!}\
    \begin{tikzpicture}[>=stealth,baseline={([yshift=-.5ex]current bounding box.center)}]
      \draw (-0.4,1) rectangle (-1.6,1.5) node[midway] {$f,(n)$};
      \draw (0.4,1) rectangle (1.6,1.5) node[midway] {$g,(m)$};
      \draw (-0.4,-1) rectangle (-1.6,-1.5) node[midway] {$f,(n)$};
      \draw (0.4,-1) rectangle (1.6,-1.5) node[midway] {$g,(m)$};
      \draw[->] (-1.2,1) -- (-1.2,-1);
      \draw[<-] (1.2,1) -- (1.2,-1);
      \draw[->] (-0.8,1) arc (180:360:0.8);
      \bluedot{(0.72,0.6)} node[color=black,anchor=north] {$\bb^\vee$};
      \draw[<-] (-0.8,-1) arc (180:0:0.8);
      \bluedot{(-0.72,-0.6)} node[color=black,anchor=south] {$\bb$};
    \end{tikzpicture}
  \end{equation}
\end{lem}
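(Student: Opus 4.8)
The strategy is to reduce the identity \eqref{eq:multi-strand-up-down-rel} to the single-strand exchange relation \eqref{eq:rel2a} by repeatedly pulling one strand of the $m$-bundle past the $n$-bundle. First I would resolve the box $e_{g,(m)}$ into an ordinary symmetrizer-decorated diagram using its definition, do the same for $e_{f,(n)}$, and write the left-hand side as a sum over $\F[S_n]\times\F[S_m]$ of honest crossing diagrams. Because of \eqref{eq:absorb-crossing} and \eqref{eq:idem-diag}, any crossing landing on a box can be swallowed by that box, so the combinatorial bookkeeping only involves how many of the $m$ downward strands actually pass through the $n$ upward strands and turn back around (creating cups and caps). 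This is exactly the ``number of strands that bounce'' parameter $\ell$ that appears on the right-hand side.

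The core computation is an induction on $\min\{m,n\}$. Moving a single upward strand of the left bundle across a single downward strand of the right bundle and using \eqref{eq:rel2a} produces two terms: the ``straightened'' term (both strands continue through) and the ``bubble-like'' correction term in which the strands are joined by a cup and a cap carrying the sum $\sum_{b\in\cB} b\otimes b^\vee$. Iterating this across all $mn$ crossing pairs, and using \eqref{eq:rel2b} to straighten the purely ``crossed-through'' configurations, organizes the whole sum by the number $\ell$ of independent cup/cap pairs produced. For each fixed $\ell$, the multiplicity is the number of ways of choosing which $\ell$ of the $n$ left strands and which $\ell$ of the $m$ right strands get connected, together with the $\ell!$ ways of matching them up; absorbing the crossings into the boxes via \eqref{eq:absorb-crossing} shows that all such choices give the \emph{same} diagram (the box projectors are invariant under permuting their strands). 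This accounts for the coefficient $\binom{n}{\ell}\binom{m}{\ell}\ell! = \frac{m!\,n!}{\ell!\,(m-\ell)!\,(n-\ell)!}$. The sign $(-1)^\ell$ is one per application of the correction term in \eqref{eq:rel2a}, and the dual-basis decoration on the cups/caps collects into the multi-dot $\bb\mapsto\bb^\vee$ notation of \eqref{eq:multi-dots}; here one must be careful, using \eqref{eq:b-bvee-swap} and \eqref{eq:dotsup-a}--\eqref{eq:dotsdown-b}, that the Nakayama twists and supersigns incurred when the dots slide along the cups and caps and past each other are exactly absorbed by the convention $\bb^\vee = b_1^\vee\otimes\dotsb\otimes b_k^\vee$, which (as remarked after its definition) already differs from the honest dual by a sign.

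Rather than doing the $mn$-fold induction directly on the diagram, it is cleaner to first establish the base/building-block case $m=n=1$, which is immediate from \eqref{eq:rel2a} and the definitions, and then the cases $(m,1)$ and $(1,n)$, where the symmetrizer boxes force all the ``partially bounced'' configurations with the same $\ell$ to coincide; a generating-function or direct counting argument then upgrades $(m,1)$ and $(1,n)$ to general $(m,n)$ by slicing the $m$-box into $m$ strands one at a time (using \eqref{eq:idem-diag} to re-insert the box) and tracking how the correction terms compose. Alternatively, one can verify the identity after applying the faithful functor $\bF_n$ for $n$ large (Theorem~\ref{theo:action-functor}): both sides become explicit bimodule homomorphisms built from the $x_{b,i}$, $y_{b,i}$ of Proposition~\ref{prop:Frob-ext}, and the identity becomes the standard combinatorial fact about how $\Res$ and $\Ind$ of $(Be_f)^{\otimes n}\boxtimes(Be_g)^{\otimes m}$ interact via the dual-basis Casimir; since $\cH_B'$ need not act faithfully this only \emph{checks} the relation, so I would use it as a sanity guide but give the diagrammatic proof above as the actual argument.

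\textbf{Main obstacle.} The hard part is the sign and Nakayama-twist bookkeeping: each of the $\ell$ bounced strands contributes a cup and a cap, the dot-data $\sum b\otimes b^\vee$ must be slid into position through other cups, caps, and dots (picking up factors of $(-1)^{\sigma\bar b}$, $\psiB$, and supercommutation signs from \eqref{eq:dotsup-a}--\eqref{eq:dotsdown-b}, \eqref{eq:supercomm}, and \eqref{eq:b-bvee-swap}), and one must check that the net effect is precisely the clean coefficient $(-1)^\ell \frac{m!\,n!}{\ell!\,(m-\ell)!\,(n-\ell)!}$ with the decoration repackaged as $\bb^\vee$ on one cap and $\bb$ on one cup. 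Isolating a single ``elementary move'' lemma — moving one strand past one strand and past one existing cup/cap, with its exact sign — and then invoking it mechanically is the way to keep this under control.
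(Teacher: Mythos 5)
Your proposal is correct and matches the paper's argument: the paper does exactly what you sketch, proving the $n=1$ case by induction on $m$ (base case $m=1$ being \eqref{eq:rel2a}) and then the general case by peeling one strand off the $n$-box and invoking the $n=1$ case together with the inductive hypothesis, finishing with a short binomial coefficient identity. One small note: the sign/Nakayama bookkeeping you flag as the main obstacle is in fact a non-issue in the paper's route, because every bounce comes from a single clean application of \eqref{eq:rel2a} (which already has the $-\sum_{b}$ built in with the right labels), and the coefficient $\frac{m!n!}{\ell!(m-\ell)!(n-\ell)!}$ then falls out of the recursion $\frac{m!(n-1)!}{(\ell-1)!(m-\ell)!(n-1-\ell)!}\bigl(\frac1\ell+\frac1{n-\ell}\bigr)=\frac{m!n!}{\ell!(m-\ell)!(n-\ell)!}$ rather than a direct count of strand-matchings.
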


\begin{proof}
  We prove the result by induction on $n$.  Consider first the case $n=1$.  We prove this case by induction on $m$.  The case $m=1$ is simply \eqref{eq:rel2a}.  Then, for $m > 1$, we have
  \begin{multline*}
    \begin{tikzpicture}[>=stealth,baseline={([yshift=-.5ex]current bounding box.center)}]
      \draw (-0.4,1) rectangle (-1.6,1.5) node[midway] {$f,(1)$};
      \draw (0.4,1) rectangle (1.6,1.5) node[midway] {$g,(m)$};
      \draw (-0.4,-1) rectangle (-1.6,-1.5) node[midway] {$f,(1)$};
      \draw (0.4,-1) rectangle (1.6,-1.5) node[midway] {$g,(m)$};
      \draw[->] (-1,1) .. controls (1,0) .. (-1,-1);
      \draw[<-] (1,1) .. controls (-1,0) .. (1,-1);
    \end{tikzpicture}
    \ \stackrel{\eqref{eq:rel2a}}{=}\
    \begin{tikzpicture}[>=stealth,baseline={([yshift=-.5ex]current bounding box.center)}]
      \draw (-0.4,1) rectangle (-1.6,1.5) node[midway] {$f,(1)$};
      \draw (0.4,1) rectangle (1.6,1.5) node[midway] {$g,(m)$};
      \draw (-0.4,-1) rectangle (-1.6,-1.5) node[midway] {$f,(1)$};
      \draw (0.4,-1) rectangle (1.6,-1.5) node[midway] {$g,(m)$};
      \draw[->] (-1.2,1) .. controls (1,0) .. (-1.2,-1);
      \draw[<-] (0.8,1) .. controls (-1,0) .. (0.8,-1);
      \draw[<-] (1.2,1) -- (1.2,-1) node[pos=0.4,anchor=west] {$1$};
    \end{tikzpicture}
    \ - \sum_{c \in \cB}
    \begin{tikzpicture}[>=stealth,baseline={([yshift=-.5ex]current bounding box.center)}]
      \draw (-0.4,1) rectangle (-1.6,1.5) node[midway] {$f,(1)$};
      \draw (0.4,1) rectangle (1.6,1.5) node[midway] {$g,(m)$};
      \draw (-0.4,-1) rectangle (-1.6,-1.5) node[midway] {$f,(1)$};
      \draw (0.4,-1) rectangle (1.6,-1.5) node[midway] {$g,(m)$};
      \draw[<-] (1.2,1) -- (1.2,-1);
      \draw[->] (-0.8,1) arc (180:360:0.8);
      \bluedot{(0.72,0.6)} node[color=black,anchor=north] {$c^\vee$};
      \draw[<-] (-0.8,-1) arc (180:0:0.8);
      \bluedot{(-0.72,-0.6)} node[color=black,anchor=south] {$c$};
    \end{tikzpicture} \\
    =\
    \begin{tikzpicture}[>=stealth,baseline={([yshift=-.5ex]current bounding box.center)}]
      \draw (-0.4,1) rectangle (-1.6,1.5) node[midway] {$f,(1)$};
      \draw (0.4,1) rectangle (1.6,1.5) node[midway] {$g,(m)$};
      \draw (-0.4,-1) rectangle (-1.6,-1.5) node[midway] {$f,(1)$};
      \draw (0.4,-1) rectangle (1.6,-1.5) node[midway] {$g,(m)$};
      \draw[->] (-1,1) -- (-1,-1);
      \draw[<-] (1,1) -- (1,-1);
    \end{tikzpicture}
    \ - m \sum_{c \in \cB}\
    \begin{tikzpicture}[>=stealth,baseline={([yshift=-.5ex]current bounding box.center)}]
      \draw (-0.4,1) rectangle (-1.6,1.5) node[midway] {$f,(1)$};
      \draw (0.4,1) rectangle (1.6,1.5) node[midway] {$g,(m)$};
      \draw (-0.4,-1) rectangle (-1.6,-1.5) node[midway] {$f,(1)$};
      \draw (0.4,-1) rectangle (1.6,-1.5) node[midway] {$g,(m)$};
      \draw[->] (-1.2,1) -- (-1.2,-1);
      \draw[<-] (1.2,1) -- (1.2,-1);
      \draw[->] (-0.8,1) arc (180:360:0.8);
      \bluedot{(0.72,0.6)} node[color=black,anchor=north] {$c^\vee$};
      \draw[<-] (-0.8,-1) arc (180:0:0.8);
      \bluedot{(-0.72,-0.6)} node[color=black,anchor=south] {$c$};
    \end{tikzpicture}\ ,
  \end{multline*}
  where the second equality follows from the inductive hypothesis.  Therefore, \eqref{eq:multi-strand-up-down-rel} holds when $n=1$.

  Now suppose $n>1$.  Then
  \begin{align*}
    \begin{tikzpicture}[>=stealth,baseline={([yshift=-.5ex]current bounding box.center)}]
      \draw (-0.4,1) rectangle (-1.6,1.5) node[midway] {$f,(n)$};
      \draw (0.4,1) rectangle (1.6,1.5) node[midway] {$g,(m)$};
      \draw (-0.4,-1) rectangle (-1.6,-1.5) node[midway] {$f,(n)$};
      \draw (0.4,-1) rectangle (1.6,-1.5) node[midway] {$g,(m)$};
      \draw[->] (-1,1) .. controls (1,0) .. (-1,-1);
      \draw[<-] (1,1) .. controls (-1,0) .. (1,-1);
    \end{tikzpicture}
    \ &=\
    \begin{tikzpicture}[>=stealth,baseline={([yshift=-.5ex]current bounding box.center)}]
      \draw (-0.4,1) rectangle (-1.6,1.5) node[midway] {$f,(n)$};
      \draw (0.4,1) rectangle (1.6,1.5) node[midway] {$g,(m)$};
      \draw (-0.4,-1) rectangle (-1.6,-1.5) node[midway] {$f,(n)$};
      \draw (0.4,-1) rectangle (1.6,-1.5) node[midway] {$g,(m)$};
      \draw[->] (-0.8,1) .. controls (1,0) .. (-0.8,-1);
      \draw[<-] (1,1) .. controls (-1,0) .. (1,-1);
      \draw[->] (-1.2,1) -- (-1.2,-1) node[pos=0.4, anchor=east] {$1$};
    \end{tikzpicture}
    \ - m \sum_{c \in \cB}\
    \begin{tikzpicture}[>=stealth,baseline={([yshift=-.5ex]current bounding box.center)}]
      \draw (-0.4,1) rectangle (-1.6,1.5) node[midway] {$f,(n)$};
      \draw (0.4,1) rectangle (1.6,1.5) node[midway] {$g,(m)$};
      \draw (-0.4,-1) rectangle (-1.6,-1.5) node[midway] {$f,(n)$};
      \draw (0.4,-1) rectangle (1.6,-1.5) node[midway] {$g,(m)$};
      \draw[->] (-1.2,1) .. controls (-1.2,0) and (0.5,0.5) .. (0.5,0) .. controls (0.5,-0.5) and (-1.2,0) .. (-1.2,-1);
      \draw[<-] (1.2,1) .. controls (1.2,0) and (-0.5,0.5) .. (-0.5,0) .. controls (-0.5,-0.5) and (1.2,0) .. (1.2,-1);
      \draw[->] (-0.8,1) .. controls (-0.8,0.2) and (0.8,0.2) .. (0.8,1);
      \bluedot{(0.76,0.75)} node[color=black, anchor=east] {$c^\vee$};
      \draw[<-] (-0.8,-1) .. controls (-0.8,-0.2) and (0.8,-0.2) .. (0.8,-1);
      \bluedot{(-0.76,-0.75)} node[color=black, anchor=west] {$c$};
    \end{tikzpicture} \\
    &= \sum_{\ell=0}^{\min \{m,n-1\}} \sum_{\bb \in \cB^{\otimes \ell}} (-1)^\ell \frac{m! (n-1)!}{\ell! (m-\ell)! (n-1-\ell)!}\
    \begin{tikzpicture}[>=stealth,baseline={([yshift=-.5ex]current bounding box.center)}]
      \draw (-0.4,1) rectangle (-1.6,1.5) node[midway] {$f,(n)$};
      \draw (0.4,1) rectangle (1.6,1.5) node[midway] {$g,(m)$};
      \draw (-0.4,-1) rectangle (-1.6,-1.5) node[midway] {$f,(n)$};
      \draw (0.4,-1) rectangle (1.6,-1.5) node[midway] {$g,(m)$};
      \draw[->] (-1.2,1) -- (-1.2,-1);
      \draw[<-] (1.2,1) -- (1.2,-1);
      \draw[->] (-0.8,1) arc (180:360:0.8);
      \bluedot{(0.72,0.6)} node[color=black,anchor=north] {$\bb^\vee$};
      \draw[<-] (-0.8,-1) arc (180:0:0.8);
      \bluedot{(-0.72,-0.6)} node[color=black,anchor=south] {$\bb$};
    \end{tikzpicture} \\
    &\qquad - m \sum_{\ell=0}^{\min \{m-1,n-1\}} \sum_{\substack{c \in \cB \\ \bb \in \cB^{\otimes \ell}}} (-1)^\ell\frac{(m-1)! (n-1)!}{\ell! (m-1-\ell)! (n-1-\ell)!}\
    \begin{tikzpicture}[>=stealth,baseline={([yshift=-.5ex]current bounding box.center)}]
      \draw (-0.4,1) rectangle (-1.6,1.5) node[midway] {$f,(n)$};
      \draw (0.4,1) rectangle (1.6,1.5) node[midway] {$g,(m)$};
      \draw (-0.4,-1) rectangle (-1.6,-1.5) node[midway] {$f,(n)$};
      \draw (0.4,-1) rectangle (1.6,-1.5) node[midway] {$g,(m)$};
      \draw[->] (-1.2,1) -- (-1.2,-1);
      \draw[<-] (1.2,1) -- (1.2,-1);
      \draw[->] (-0.8,1) arc (180:360:0.8);
      \bluedot{(0.72,0.6)} node[color=black,anchor=east] {$\bb^\vee\!  \otimes c^\vee \!$};
      \draw[<-] (-0.8,-1) arc (180:0:0.8);
      \bluedot{(-0.72,-0.6)} node[color=black,anchor=west] {$\bb \otimes c$};
    \end{tikzpicture} \\
    &= \sum_{\ell=0}^{\min \{m,n-1\}} \sum_{\bb \in \cB^{\otimes \ell}} (-1)^\ell \frac{m! (n-1)!}{\ell! (m-\ell)! (n-1-\ell)!}\
    \begin{tikzpicture}[>=stealth,baseline={([yshift=-.5ex]current bounding box.center)}]
      \draw (-0.4,1) rectangle (-1.6,1.5) node[midway] {$f,(n)$};
      \draw (0.4,1) rectangle (1.6,1.5) node[midway] {$g,(m)$};
      \draw (-0.4,-1) rectangle (-1.6,-1.5) node[midway] {$f,(n)$};
      \draw (0.4,-1) rectangle (1.6,-1.5) node[midway] {$g,(m)$};
      \draw[->] (-1.2,1) -- (-1.2,-1);
      \draw[<-] (1.2,1) -- (1.2,-1);
      \draw[->] (-0.8,1) arc (180:360:0.8);
      \bluedot{(0.72,0.6)} node[color=black,anchor=north] {$\bb^\vee$};
      \draw[<-] (-0.8,-1) arc (180:0:0.8);
      \bluedot{(-0.72,-0.6)} node[color=black,anchor=south] {$\bb$};
    \end{tikzpicture} \\
    &\qquad + \sum_{\ell=1}^{\min \{m,n\}} \sum_{\bb \in \cB^{\otimes \ell}} (-1)^\ell\frac{m! (n-1)!}{(\ell-1)! (m-\ell)! (n-\ell)!}\
    \begin{tikzpicture}[>=stealth,baseline={([yshift=-.5ex]current bounding box.center)}]
      \draw (-0.4,1) rectangle (-1.6,1.5) node[midway] {$f,(n)$};
      \draw (0.4,1) rectangle (1.6,1.5) node[midway] {$g,(m)$};
      \draw (-0.4,-1) rectangle (-1.6,-1.5) node[midway] {$f,(n)$};
      \draw (0.4,-1) rectangle (1.6,-1.5) node[midway] {$g,(m)$};
      \draw[->] (-1.2,1) -- (-1.2,-1);
      \draw[<-] (1.2,1) -- (1.2,-1);
      \draw[->] (-0.8,1) arc (180:360:0.8);
      \bluedot{(0.72,0.6)} node[color=black,anchor=east] {$\bb^\vee$};
      \draw[<-] (-0.8,-1) arc (180:0:0.8);
      \bluedot{(-0.72,-0.6)} node[color=black,anchor=west] {$\bb$};
    \end{tikzpicture}\ ,
  \end{align*}
  where we used the $n=1$ case in the first equality and the inductive hypothesis in the second equality.  Now, for $1 \le \ell \le \min\{m,n-1\}$, the sum of the coefficients in front of the last two diagrams above is equal to
  \begin{equation} \label{eq:coefficient-sum}
    (-1)^\ell \frac{m!(n-1)!}{(\ell-1)!(m-\ell)!(n-1-\ell)!} \left( \frac{1}{\ell} + \frac{1}{n-\ell} \right) = (-1)^\ell \frac{m!n!}{\ell!(m-\ell)!(n-\ell)!}.
  \end{equation}
  In addition, when $\ell=0$, the coefficient of the second-to-last diagram above also agrees with the right-hand side of  \eqref{eq:coefficient-sum}.  Finally, when $\ell=n$, the coefficient of the last diagram above also agrees with the right-hand side of \eqref{eq:coefficient-sum}.  This completes the proof of the inductive step.
\end{proof}

There exists a covariant autoequivalence $\Omega' \colon \cH_B' \to \cH_B'$ which is the identity on objects and, on morphisms, maps a diagram $D$ to $(-1)^{w(D)}$, where $w(D)$ is the number of crossings in $D$.  The functor $\Omega'$ induces a covariant autoequivalence $\Omega$ of $\cH_B$ squaring to the identity.  It is clear that for any $n \in \N_+$ and idempotent $f \in B$, we have
\[
  \Omega' \left( e_{f,(n)} \right) = e_{f,(1^n)},\quad \Omega' \left( e_{f,(1^n)} \right) = e_{f,(n)}.
\]
Thus,
\begin{equation} \label{eq:Psi-reversals}
  \Omega \left( \sP_f^{(n)} \right) = \sP_f^{(1^n)},\quad
  \Omega \left( \sP_f^{(1^n)} \right) = \sP_f^{(n)},\quad
  \Omega \left( \sQ_f^{(n)} \right) = \sQ_f^{(1^n)},\quad
  \Omega \left( \sQ_f^{(1^n)} \right) = \sQ_f^{(n)}.
\end{equation}

We will often need to consider the sum of several copies of an object of $\cH_B$ with different shifts. To simplify notation, if $V$ is a $\Z\times \Z_2$-graded vector space over $\F$ with a homogeneous basis $\{v\}_{v\in \mathfrak{b}}$ and $(M,h)$ is an object of $\cH_B$, then we define the notation
\begin{equation} \label{eq:mult-space}
  V\otimes (M,h):= \bigoplus_{v \in \mathfrak{b} }\{|v|,\bar{v}\}(M,h).
\end{equation}
It is clear that the above is independent of the chosen basis of $V$.

The following theorem is one of the key indegredients in the proof of our main theorem (Theorem~\ref{theo:main-iso}).  It is a categorical analogue of the relations of Proposition~\ref{prop:hB-presentation} and Remark~\ref{rem:even-type-Q}.

\begin{theo} \label{theo:functor-isos}
  Suppose $f,g \in B$ are idempotents, $n,m \in \N_+$, and $i \in \{r+1,\dotsc,\ell\}$.  Then, in the category $\cH_B$, we have the following isomorphisms:
  \begin{gather}
    \sQ^{(n)}_f \otimes \sQ^{(m)}_g \cong \sQ^{(m)}_g \otimes \sQ^{(n)}_f, \quad
    \sQ^{(1^n)}_f \otimes \sQ^{(1^m)}_g \cong \sQ^{(1^m)}_g \otimes \sQ^{(1^n)}_f, \label{eq:iso-Sn} \\
    \sP^{(n)}_f \otimes \sP^{(m)}_g \cong \sP^{(m)}_g \otimes \sP^{(n)}_f, \quad
    \sP^{(1^n)}_f \otimes \sP^{(1^m)}_g \cong \sP^{(1^m)}_g \otimes \sP^{(1^n)}_f,\label{eq:iso-Ln} \\
    \bigoplus_{k=0}^n \sQ_{e_i}^{(2k)} \otimes \sQ_{e_i}^{(2n-2k)} \cong \bigoplus_{k=0}^{n-1} \sQ_{e_i}^{(2k+1)} \otimes \sQ_{e_i}^{(2n-2k-1)}, \label{eq:Q-even-isom} \\
    \bigoplus_{k=0}^n \sP_{e_i}^{(2k)} \otimes \sP_{e_i}^{(2n-2k)} \cong \bigoplus_{k=0}^{n-1} \sP_{e_i}^{(2k+1)} \otimes \sP_{e_i}^{(2n-2k-1)}, \label{eq:P-even-isom} \\
    \sQ^{(n)}_f \otimes \sP^{(m)}_g \cong \bigoplus_{k\geq 0} S^k(fBg) \otimes \left( \sP^{(m-k)}_g\otimes \sQ^{(n-k)}_f \right), \label{eq:iso-LSn} \\
    \sQ^{(1^n)}_f \otimes \sP^{(1^m)}_g \cong \bigoplus_{k\geq 0} S^k(fBg) \otimes \left( \sP^{(1^{m-k})}_g\otimes \sQ^{(1^{n-k})}_f \right), \label{eq:iso-1n1m} \\
    \sQ^{(n)}_f \otimes \sP^{(1^m)}_g \cong \bigoplus_{k\geq 0} \Lambda^k(fBg) \otimes \left( \sP^{(1^{m-k})}_g\otimes \sQ^{(n-k)}_f \right), \label{eq:iso-n1m} \\
    \sQ^{(1^n)}_f \otimes \sP^{(m)}_g \cong \bigoplus_{k\geq 0} \Lambda^k(fBg) \otimes \left( \sP^{(m-k)}_g\otimes \sQ^{(1^{n-k})}_f \right), \label{eq:iso-1nm}
  \end{gather}
  where we have used the notation from \eqref{eq:mult-space}.  Note that the space $fBg$ appearing above is naturally isomorphic (as a vector space) to $\HOM_B(Bf,Bg)$.
\end{theo}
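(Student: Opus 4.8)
The plan is to prove the eight families of isomorphisms by constructing explicit mutually inverse morphisms in $\cH_B$ built out of cups, caps, crossings, dots, and the idempotent boxes $e_{f,\lambda}$, and then verifying the defining relations for inverses using the local relations and Lemma~\ref{lem:multi-strand-up-down-rel}. I would organize the proof into three groups, exactly as the statement naturally splits: the ``same-type'' commutativity isomorphisms \eqref{eq:iso-Sn}--\eqref{eq:iso-Ln}, the ``even reduction'' isomorphisms \eqref{eq:Q-even-isom}--\eqref{eq:P-even-isom}, and the ``mixed'' commutation isomorphisms \eqref{eq:iso-LSn}--\eqref{eq:iso-1nm}. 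Throughout, I would exploit the autoequivalence $\Omega$ of \eqref{eq:Psi-reversals} to cut the work roughly in half: once \eqref{eq:iso-LSn} is established, applying $\Omega$ on the $\sP$-side and on the $\sQ$-side converts it into \eqref{eq:iso-1n1m}, \eqref{eq:iso-n1m}, and \eqref{eq:iso-1nm} (with $S^k$ replaced by $\Lambda^k = S^k \circ \Pi$, consistent with \eqref{eq:symmetric-exterior-relation} and \eqref{eq:LambdakV-def}), and similarly for the commutativity and even-reduction statements. So the real content is \eqref{eq:iso-Sn} (first one), \eqref{eq:iso-LSn}, and \eqref{eq:Q-even-isom}.

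For the commutativity isomorphism $\sQ_f^{(n)} \otimes \sQ_g^{(m)} \cong \sQ_g^{(m)} \otimes \sQ_f^{(n)}$, the morphism is simply the permutation diagram taking the $n$-strand bundle past the $m$-strand bundle, pre- and post-composed with the idempotent boxes; by \eqref{eq:absorb-crossing} the box absorbs the internal crossings, and the relation \eqref{eq:rel1b} (Reidemeister II for like-oriented strands) shows that performing the braiding and then unbraiding is the identity. This is the easiest case. For \eqref{eq:iso-LSn}, the idea is standard for this kind of Heisenberg categorification: one uses the ``explosion'' of the box $e_{g,(m)}$ into individual strands, slides the bundle of $n$ downward strands (carrying $e_{f,(n)}$) past the $m$ upward strands using \eqref{eq:rel2a} repeatedly, and collects the ``contraction terms'' that appear; Lemma~\ref{lem:multi-strand-up-down-rel} is precisely the combinatorial engine that packages this, and the multinomial coefficients $\tfrac{m!\,n!}{\ell!(m-\ell)!(n-\ell)!}$ together with the symmetrizers conspire to produce the factor $\grdim S^k(fBg)$ (via the identification, as in \eqref{eq:hom-symmetrizer}, of $S^k$-invariants of $(fBg)^{\otimes k}$ with $S^k(fBg)$, using that $\F[S_k]$ is semisimple). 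The inverse morphism goes in the opposite direction, and the composites in both orders are checked to be identities by a careful but mechanical manipulation of diagrams using \eqref{eq:rel1b}, \eqref{eq:rel2a}, \eqref{eq:rel3a}, and \eqref{eq:rel3b}. The even-reduction isomorphisms \eqref{eq:Q-even-isom}, \eqref{eq:P-even-isom} are categorical shadows of \eqref{eq:Q-even-reduction}, \eqref{eq:P-even-reduction}: here I would build the isomorphism from the explicit combination of symmetrizers/antisymmetrizers realizing the Schur $Q$-function identity $\sum (-1)^{k} q_k q_{2n-k} = 0$ at the level of $\bS_{2n}$-modules (see \cite[III,(8.2')]{Mac95}), transported through the maps \eqref{eq:map-Bn-Qn}; the point is that the relation among the idempotents $e_{e_i,(2k)}, e_{e_i,(2k+1)}$ in $\bS_{2n}$ is already an isomorphism of projective modules, and \eqref{eq:1n-type-Q-def} together with $\Omega$ handles the bookkeeping.

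The main obstacle, I expect, is the sign and shift bookkeeping in \eqref{eq:iso-LSn} and, more delicately, in its $\Omega$-twisted cousins \eqref{eq:iso-n1m} and \eqref{eq:iso-1nm}. The super-isotopy sign rules (the $(-1)$ for odd cups/caps passing each other, the Nakayama twists \eqref{eq:dotsup-a}--\eqref{eq:dotsdown-b}, the parity shift $\Pi$ hidden in $\Lambda^k = S^k(\Pi(-))$), the ``$q$ versus $q^{-1}$'' asymmetry between $H^+$ and $H^-$ in the definition of $\fh_B$, and the degree shifts $\{-\delta,\sigma\}$ on the left cups/caps all have to line up so that the graded multiplicity space on the right-hand side is exactly $S^k(fBg)$ (respectively $\Lambda^k(fBg)$) with its correct $\Z \times \Z_2$-grading — not merely up to an overall shift. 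Concretely, the coefficient-matching computation in Lemma~\ref{lem:multi-strand-up-down-rel} must be re-examined in the presence of the dots-on-cups Nakayama automorphisms and the parities of basis elements $b \in \cB$, and one must check that the $(-1)^\ell$ in \eqref{eq:multi-strand-up-down-rel} combines with the parities of $\bb \in \cB^{\otimes\ell}$ to give exactly the symmetrization (not antisymmetrization) defining $S^k$. Verifying that both composites are the relevant identities — rather than the identity times a unit of $\kk$ — is where I would spend the most care; everything else is a routine, if lengthy, diagram chase.
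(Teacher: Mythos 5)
Your overall strategy matches the paper's: explicit cap/cup/crossing morphisms pre- and post-composed with the idempotent boxes $e_{f,\lambda}$, with Lemma~\ref{lem:multi-strand-up-down-rel} as the combinatorial engine for \eqref{eq:iso-LSn}, the Schur $Q$-function identity \cite[III,(8.2')]{Mac95} transported through \eqref{eq:map-Bn-Pn}--\eqref{eq:map-Bn-Qn} for the even reductions, and a crossing absorbed by \eqref{eq:absorb-crossing} plus \eqref{eq:rel1b} for the commutativity statements. This is essentially the proof in the paper.

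However, there is a genuine gap in your reduction of the four mixed isomorphisms to \eqref{eq:iso-LSn} via $\Omega$. You claim that applying $\Omega$ ``on the $\sP$-side and on the $\sQ$-side'' converts \eqref{eq:iso-LSn} into all of \eqref{eq:iso-1n1m}, \eqref{eq:iso-n1m}, and \eqref{eq:iso-1nm}. But $\Omega$ is a single covariant monoidal autoequivalence of $\cH_B$; it flips $(n) \leftrightarrow (1^n)$ \emph{simultaneously} on every box in a diagram, and there is no sided version of it. Applying $\Omega$ to \eqref{eq:iso-LSn} therefore yields exactly \eqref{eq:iso-1n1m}, and nothing more. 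In particular, the multiplicity space remains $S^k(fBg)$ --- $\Omega$ acts by an even sign and does not introduce a parity shift $\Pi$ --- so your proposed identification $\Lambda^k = S^k \circ \Pi$ is not what happens under $\Omega$, and this is consistent with \eqref{eq:iso-1n1m} still displaying $S^k$, not $\Lambda^k$. To obtain \eqref{eq:iso-n1m} you must redo the explicit construction from the proof of \eqref{eq:iso-LSn} with the idempotent $e_{g,(1^k)}$ in place of $e_{g,(k)}$ (and accordingly redefine the stabilizer $S_{\bb^\vee}$ with the sign $(-1)^{\ell(w)}$), which is what produces the $\Lambda^k$ in the multiplicity space; only then does a second application of $\Omega$ give \eqref{eq:iso-1nm}. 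So your ``real content'' list is missing \eqref{eq:iso-n1m}: there are two, not one, irreducible constructions among the four mixed isomorphisms.
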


\begin{proof}
  To show \eqref{eq:iso-Ln}, let $\lambda=(n)$, $\mu=(m)$ or $\lambda=(1^n)$, $\mu=(1^m)$, and consider the morphisms
  \begin{equation} \label{eq:PQ-commute-crossing}
    \begin{tikzpicture}[>=stealth,baseline={([yshift=-.5ex]current bounding box.center)}]
      \draw (0,0) rectangle (1,0.5) node[midway] {$f,\lambda$};
      \draw (2,0) rectangle (3,0.5) node[midway] {$g,\mu$};
      \draw (0,1.5) rectangle (1,2) node[midway] {$g,\mu$};
      \draw (2,1.5) rectangle (3,2) node[midway] {$f,\lambda$};
      \draw[<-] (0.5,1.5) -- (2.5,0.5);
      \draw[<-] (2.5,1.5) -- (0.5,0.5);
    \end{tikzpicture}
    \qquad \text{and} \qquad
    \begin{tikzpicture}[>=stealth,baseline={([yshift=-.5ex]current bounding box.center)}]
      \draw (0,0) rectangle (1,0.5) node[midway] {$g,\mu$};
      \draw (2,0) rectangle (3,0.5) node[midway] {$f,\lambda$};
      \draw (0,1.5) rectangle (1,2) node[midway] {$f,\lambda$};
      \draw (2,1.5) rectangle (3,2) node[midway] {$g,\mu$};
      \draw[<-] (0.5,1.5) -- (2.5,0.5);
      \draw[<-] (2.5,1.5) -- (0.5,0.5);
    \end{tikzpicture}\ .
  \end{equation}
  The first morphism of \eqref{eq:PQ-commute-crossing}, followed by the second, is equal to
  \[
    \begin{tikzpicture}[>=stealth,baseline={([yshift=-.5ex]current bounding box.center)}]
      \draw (0,0) rectangle (1,0.5) node[midway] {$f,\lambda$};
      \draw (2,0) rectangle (3,0.5) node[midway] {$g,\mu$};
      \draw (0,1.5) rectangle (1,2) node[midway] {$g,\mu$};
      \draw (2,1.5) rectangle (3,2) node[midway] {$f,\lambda$};
      \draw (0,3) rectangle (1,3.5) node[midway] {$f,\lambda$};
      \draw (2,3) rectangle (3,3.5) node[midway] {$g,\mu$};
      \draw[<-] (0.5,1.5) -- (2.5,0.5);
      \draw[<-] (2.5,1.5) -- (0.5,0.5);
      \draw[<-] (0.5,3) -- (2.5,2);
      \draw[<-] (2.5,3) -- (0.5,2);
    \end{tikzpicture}
    \ \stackrel{\substack{\eqref{eq:rel0a}, \eqref{eq:rel0b} \\ \eqref{eq:triple-point}, \eqref{eq:idem-diag}}}{=}\
    \begin{tikzpicture}[>=stealth,baseline={([yshift=-.5ex]current bounding box.center)}]
      \draw (0,0) rectangle (1,0.5) node[midway] {$f,\lambda$};
      \draw (2,0) rectangle (3,0.5) node[midway] {$g,\mu$};
      \draw (0,2) rectangle (1,2.5) node[midway] {$f,\lambda$};
      \draw (2,2) rectangle (3,2.5) node[midway] {$g,\mu$};
      \draw[<-] (0.5,2) .. controls (3,1.25) .. (0.5,0.5);
      \draw[<-] (2.5,2) .. controls (0,1.25) .. (2.5,0.5);
    \end{tikzpicture}
    \ \stackrel{\eqref{eq:rel1b}}{=}\
    \begin{tikzpicture}[>=stealth,baseline={([yshift=-.5ex]current bounding box.center)}]
      \draw (0,0) rectangle (1,0.5) node[midway] {$f,\lambda$};
      \draw (2,0) rectangle (3,0.5) node[midway] {$g,\mu$};
      \draw (0,2) rectangle (1,2.5) node[midway] {$f,\lambda$};
      \draw (2,2) rectangle (3,2.5) node[midway] {$g,\mu$};
      \draw[<-] (0.5,2) -- (0.5,0.5);
      \draw[<-] (2.5,2) -- (2.5,0.5);
    \end{tikzpicture}\ .
  \]
  Similarly, the second morphism of \eqref{eq:PQ-commute-crossing}, followed by the first, is also equal to the identity.  Thus, the morphisms of \eqref{eq:PQ-commute-crossing} are inverses of each other.  This gives the isomorphism \eqref{eq:iso-Ln}.  The proof of \eqref{eq:iso-Sn} is analogous.

  It follows from the fact that the representation theory of $\bS_n$ is governed by the algebra of Schur $Q$-functions (see \cite[\S 4C]{Joz89} or the more recent \cite[Th.~3.5]{WW12}) and the identity \cite[III, (8.2)]{Mac95} that we have an isomorphism of $\bS_{2n}$-modules
  \[
    \bigoplus_{k=0}^n \bS_{2n} \left( e_{(2k)} \otimes e_{(2n-2k)} \right) \cong \bigoplus_{k=0}^{n-1} \bS_{2n} \left( e_{(2k+1)} \otimes e_{(2n-2k-1)} \right).
  \]
  Note that no parity shifts are needed in the above isomorphism because $\bS_{m} e_{(m)}$ is a type $Q$ module for all $m \in \N_+$, and hence all summands above are invariant under parity shift.
  \details{
    The map $\pr{\left( \sum_{k=1}^m 1^{\otimes (k-1)} \otimes c \otimes 1^{\otimes (m-k)} \right)}$ yields an isomorphism $\bS_{m}e_{(m) } \cong \Pi \bS_{m} e_{(m)}$.
  }
  Tensoring on left with $P_i^{\otimes 2n} \cong B^{\otimes 2n} e_i^{\otimes 2n}$ over $\bS_{2n}$ yields an isomorphism of $B^{\otimes 2n} \rtimes S_{2n}$-modules
  \[
    \bigoplus_{k=0}^n B^{\otimes 2n} \left( e_{(2k) \langle i \rangle} \otimes e_{(2n-2k) \langle i \rangle} \right) \cong \bigoplus_{k=0}^{n-1} B^{\otimes 2n} \left( e_{(2k+1) \langle i \rangle} \otimes e_{(2n-2k-1) \langle i \rangle} \right).
  \]
  Thus, there exists an invertible element in
  \begin{multline*}
    \Hom_{B^{\otimes 2n} \rtimes S_{2n}} \left( \bigoplus_{k=0}^n B^{\otimes 2n} \left( e_{(2k) \langle i \rangle} \otimes e_{(2n-2k) \langle i \rangle} \right), \bigoplus_{k=0}^{n-1} B^{\otimes 2n} \left( e_{(2k+1) \langle i \rangle} \otimes e_{(2n-2k-1) \langle i \rangle} \right) \right) \\
    \cong \bigoplus_{k=0}^n \bigoplus_{k'=0}^{m-1} \left( e_{(2k) \langle i \rangle} \otimes e_{(2n-2k) \langle i \rangle} \right) B^{\otimes 2n} \left( e_{(2k'+1) \langle i \rangle} \otimes e_{(2n-2k'-1) \langle i \rangle} \right).
  \end{multline*}
  Under the map \eqref{eq:map-Bn-Qn}, this yields the isomorphism \eqref{eq:Q-even-isom}.  The proof of \eqref{eq:P-even-isom} is analogous.

  The isomorphism \eqref{eq:iso-LSn} requires a little more work.  Fix a homogeneous basis $\cB$ of $B$ with right dual basis $\cB^\vee$.  For $0 \leq k\leq \min\{m,n\}$ and $\bb = b_1 \otimes \dotsb \otimes b_k \in B^{\otimes k}$, we define
  \[
    \alpha_\bb =\
    \begin{tikzpicture}[>=stealth,baseline={([yshift=-.5ex]current bounding box.center)}]
      \draw (-0.3,1) rectangle (-2.2,1.5) node[midway] {$g,(m-k)$};
      \draw (0.3,1) rectangle (2.2,1.5) node[midway] {$f,(n-k)$};
      \draw (-0.5,-1) rectangle (-2,-1.5) node[midway] {$f,(n)$};
      \draw (0.5,-1) rectangle (2,-1.5) node[midway] {$g,(m)$};
      \draw[<-] (-0.8,-1) arc (180:0:0.8);
      \bluedot{(-0.72,-0.6)} node[color=black,anchor=west] {$\bb$};
      \draw[<-] (-1.25,1) .. controls (-1.25,0.3) and (1.7,-0.3) .. (1.7,-1);
      \draw[->] (1.25,1) .. controls (1.25,0.3) and (-1.7,-0.3) .. (-1.7,-1);
    \end{tikzpicture}
    \ \colon  \sQ^{(n)}_f \otimes \sP^{(m)}_g \to \{|\bb^\vee|,\bar{\bb}^\vee\} (\sP^{(m-k)}_g \otimes \sQ^{(n-k)}_f).
  \]
  The degree of this diagram is the sum of the degrees of the caps and the dots, and so is equal to $k(-\delta,\sigma) + (|\bb|,\bar \bb) = (-|\bb^\vee|,\bar \bb^\vee)$.  Note also that $\alpha_\bb=0$ if $\psi^{-1}(g)b_\ell f=0$ for some $\ell \in \{1,\dotsc,k\}$.

  If we let $\cB_{\psi^{-1}(g),f}$ be a homogeneous basis of $\psi^{-1}(g)B f\subseteq B$ and analogously for other idempotents, we can, without loss of generality, assume that
  \[
    \cB = \cB_{\psi^{-1}(g),f} \sqcup \cB_{(1-\psi^{-1}(g)),f} \sqcup \cB_{\psi^{-1}(g),(1-f)} \sqcup \cB_{(1-\psi^{-1}(g)),(1-f)}.
  \]
  Then, for all $b \in \cB$, we have
  \begin{equation}
    \psi^{-1}(g) b f =
    \begin{cases}
      b & \text{if } b \in \cB_{\psi^{-1}(g),f}, \\
      0 & \text{otherwise}.
    \end{cases}
  \end{equation}
  Notice that, if $b \in \cB_{\psi^{-1}(g),f}$ and $c \in \cB$, then
  \[
    \delta_{b,c} = \tr_B(b c^\vee) = \tr_B(\psi^{-1}(g) b f c^\vee) = \tr_B(b f c^\vee g),
  \]
  and so
  \[
    \cB^\vee = \cB^\vee_{f,g} \sqcup \cB^\vee_{f,(1-g)} \sqcup \cB^\vee_{(1-f),g} \sqcup \cB^\vee_{(1-f),(1-g)}.
  \]
  where $\cB^\vee_{f,g} \subseteq \cB^\vee$ is the right dual basis to $\cB_{\psi^{-1}(g),f}$ and similarly for the other idempotents.  In particular,
  \[
    \psi^{-1}(g) b f \ne 0 \iff f b^\vee g \ne 0 \iff b^\vee \in \cB^\vee_{f,g}.
  \]
  Hence $\alpha_\bb = 0$ unless $b_\ell^\vee \in \cB^\vee_{f,g}$ for all $\ell = 1,\dotsc,k$.

  For a given $\bb^\vee= b_1^\vee \otimes \dotsb \otimes b_k^\vee \in (\cB^\vee_{f,g})^{\otimes k}$, notice that there are two possibilities for $e_{f,(k)}{\bb}^\vee e_{g,(k)}$.  The first possibility is that there exists  $w \in S_k$ such that $w \cdot \bb^\vee = -\bb^\vee$, which happens exactly if there exist $1\leq r,s\leq k$, $r \ne s$, such that $b_r^\vee=b_s^\vee$ and $\bar{b}_r^\vee=1$. If this is the case, then $e_{f,(k)}{\bb}^\vee e_{g,(k)}=0$. The other possibility is that $e_{f,(k)}{\bb}^\vee e_{g,(k)}\neq 0$, in which case we let
  \begin{equation} \label{eq:Sbbvee-def}
    S_{\bb^\vee}=\{w\in S_k \mid w \cdot \bb^\vee = \bb^\vee\}.
  \end{equation}

  Now, let $\mathcal{D}^k_{f,g} \subseteq (\cB_{\psi^{-1}(g),f})^{\otimes k}$ be such that $\{e_{f,(k)}{\bb}^\vee e_{g,(k)} \mid \bb \in \mathcal{D}^k_{f,g}\}$ is a basis of $e_{f,(k)}A_ke_{g,(k)}$.  Then, taking the sum over all $\bb \in \mathcal{D}^k_{f,g}$, we get a 2-morphism
  \begin{equation} \label{eq:alpha-b-map}
    \alpha_k = \bigoplus_{{\bb} \in \mathcal{D}^k_{f,g}} \alpha_\bb \colon \sQ^{(n)}_f \otimes \sP^{(m)}_g  \to \bigoplus\limits_{\bb\in\mathcal{D}^k_{f,g}}\{|\bb^\vee|,\bar{\bb}^\vee\} (\sP^{(m-k)}_g \otimes \sQ^{(n-k)}_f) = (e_{f,(k)}A_ke_{g,(k)})\otimes (\sP^{(m-k)}_g \otimes \sQ^{(n-k)}_f)
  \end{equation}
  where we have used the notation of \eqref{eq:mult-space}.  Since, as in \eqref{eq:hom-symmetrizer}, there is an isomorphism of $\Z\times\Z_2$-graded vector spaces
  \[
    e_{f,(k)} A_k e_{g,(k)} \cong \HOM_{A_k}(A_ke_{f,(k)},A_ke_{g,(k)}) \cong S^k(\HOM_B(Bf,Bg)),
  \]
  we can also rewrite \eqref{eq:alpha-b-map} as
  \begin{equation} \label{eq:alpha-b-map2}
    \alpha_k \colon  \sQ^{(n)}_f \otimes \sP^{(m)}_g \to S^k(\HOM_B(Bf,Bg)) \otimes (\sP^{(m-k)}_g \otimes \sQ^{(n-k)}_f).
  \end{equation}

  For $0 \leq k \leq \min\{m,n\}$ and $\bb \in B^{\otimes k}$, we define
  \begin{equation}
    \beta_\bb =\
    \begin{tikzpicture}[>=stealth,baseline={([yshift=-.5ex]current bounding box.center)}]
      \draw (-0.3,-1) rectangle (-2.2,-1.5) node[midway] {$g,(m-k)$};
      \draw (0.3,-1) rectangle (2.2,-1.5) node[midway] {$f,(n-k)$};
      \draw (-0.5,1) rectangle (-2,1.5) node[midway] {$f,(n)$};
      \draw (0.5,1) rectangle (2,1.5) node[midway] {$g,(m)$};
      \draw[->] (-0.8,1) arc (180:360:0.8);
      \bluedot{(+0.72,0.6)} node[color=black,anchor=east] {$\bb$};
      \draw[->] (-1.25,-1) .. controls (-1.25,-0.3) and (1.7,0.3) .. (1.7,1);
      \draw[<-] (1.25,-1) .. controls (1.25,-0.3) and (-1.7,0.3) .. (-1.7,1);
    \end{tikzpicture}
    \ \colon \{|\bb|,\bar{\bb}\} \left( \sP^{(m-k)}_g \otimes \sQ^{(n-k)}_f \right) \to  \sQ^{(n)}_f \otimes \sP^{(m)}_g.
  \end{equation}
  We then define
  \begin{equation}
    \beta_k = \sum_{\bb \in \mathcal{D}_{f,g}^k} \frac{m!n!}{|S_{{\bb}^\vee}|(m-k)!(n-k)!} \beta_{\bb^\vee} \colon \HOM_{A_k}(A_ke_{f,(k)},A_ke_{g,(k)})\otimes \left( \sP^{(m-k)}_g \otimes \sQ^{(n-k)}_f \right) \to \sQ^{(n)}_f \otimes \sP^{(m)}_g.
  \end{equation}
  We will prove that
  \begin{equation}
    \alpha := \bigoplus_{k=0}^{\min \{m,n\}} \alpha_k \quad \text{and} \quad
    \beta := \sum_{k=0}^{\min \{m,n\}} \beta_k
  \end{equation}
  are mutually inverse morphisms.  Note that we use the notation $\bigoplus$ in the definition of $\alpha$ to denote a map into a direct sum with components $\alpha_k$ (and similarly in \eqref{eq:alpha-b-map}) and the notation $\sum$ in the definition of $\beta$ to denote a map from a direct sum.  In matrix notation, $\alpha$ would be a column matrix with entries $\alpha_k$ and $\beta$ would be a row matrix with entries $\beta_k$.

  We first prove that
  \begin{equation} \label{eq:simplify-identity}
    \beta \circ \alpha = \sum_{k=0}^{\min\{n,m\}} \sum_{\bb \in \mathcal{D}_{f,g}^k} \beta_{\bb^\vee} \circ \alpha_\bb = \id_{\sQ^{(n)}_f \otimes \sP^{(m)}_g}.
  \end{equation}
  Now, for $\bb \in B^{\otimes k}$, we have
  \begin{multline} \label{eq:beta-alpha-simplify}
    \beta_{{\bb}^\vee}\circ \alpha_\bb
    \ =\
    \begin{tikzpicture}[>=stealth,baseline={([yshift=-.5ex]current bounding box.center)}]
      \draw (-0.3,1) rectangle (-2.2,1.5) node[midway] {$g,(m-k)$};
      \draw (0.3,1) rectangle (2.2,1.5) node[midway] {$f,(n-k)$};
      \draw (-0.5,-1) rectangle (-2,-1.5) node[midway] {$f,(n)$};
      \draw (0.5,-1) rectangle (2,-1.5) node[midway] {$g,(m)$};
      \draw[<-] (-0.8,-1) arc (180:0:0.8);
      \bluedot{(-0.72,-0.6)} node[color=black,anchor=west] {$\bb$};
      \draw[<-] (-1.25,1) .. controls (-1.25,0.3) and (1.7,-0.3) .. (1.7,-1);
      \draw[->] (1.25,1) .. controls (1.25,0.3) and (-1.7,-0.3) .. (-1.7,-1);
      \draw (-0.5,3.5) rectangle (-2,4) node[midway] {$f,(n)$};
      \draw (0.5,3.5) rectangle (2,4) node[midway] {$g,(m)$};
      \draw[->] (-0.8,3.5) arc (180:360:0.8);
      \bluedot{(+0.72,3.1)} node[color=black,anchor=east] {$\bb^\vee$};
      \draw[->] (-1.25,1.5) .. controls (-1.25,2.2) and (1.7,2.8) .. (1.7,3.5);
      \draw[<-] (1.25,1.5) .. controls (1.25,2.2) and (-1.7,2.8) .. (-1.7,3.5);
    \end{tikzpicture}
    \stackrel{\substack{\eqref{eq:rel0a}, \eqref{eq:rel0b} \\ \eqref{eq:triple-point}, \eqref{eq:idem-diag}}}{=}
    \begin{tikzpicture}[>=stealth,baseline={([yshift=-.5ex]current bounding box.center)}]
      \draw (-0.5,-1) rectangle (-2,-1.5) node[midway] {$f,(n)$};
      \draw (0.5,-1) rectangle (2,-1.5) node[midway] {$g,(m)$};
      \draw[<-] (-0.8,-1) arc (180:0:0.8);
      \bluedot{(-0.72,-0.6)} node[color=black,anchor=west] {$\bb$};
      \draw[<-] (1.7,3).. controls (1.7,2.3) and (-1.25,1.7).. (-1.25,1) .. controls (-1.25,0.3) and (1.7,-0.3) .. (1.7,-1);
      \draw[->] (-1.7,3).. controls (-1.7,2.3) and (1.25,1.7).. (1.25,1) .. controls (1.25,0.3) and (-1.7,-0.3) .. (-1.7,-1);
      \draw (-0.5,3) rectangle (-2,3.5) node[midway] {$f,(n)$};
      \draw (0.5,3) rectangle (2,3.5) node[midway] {$g,(m)$};
      \draw[->] (-0.8,3) arc (180:360:0.8);
      \bluedot{(+0.72,2.6)} node[color=black,anchor=east] {$\bb^\vee$};
    \end{tikzpicture} \\
    \stackrel{\eqref{eq:multi-strand-up-down-rel}}{=} \sum_{\substack{k\leq \ell\leq\min\{m,n\} \\ {\bc}\in(\cB_{\psi^{-1}(g),f})^{\otimes (\ell-k)}}} (-1)^{\ell-k}(\ell-k)!{{m-k} \choose {\ell-k}} {{n-k} \choose {\ell-k}}\
    \begin{tikzpicture}[>=stealth,baseline={([yshift=-.5ex]current bounding box.center)}]
      \draw (-0.4,1) rectangle (-1.6,1.5) node[midway] {$f,(n)$};
      \draw (0.4,1) rectangle (1.6,1.5) node[midway] {$g,(m)$};
      \draw (-0.4,-1) rectangle (-1.6,-1.5) node[midway] {$f,(n)$};
      \draw (0.4,-1) rectangle (1.6,-1.5) node[midway] {$g,(m)$};
      \draw[->] (-1.2,1) -- (-1.2,-1);
      \draw[<-] (1.2,1) -- (1.2,-1);
      \draw[->] (-0.9,1) arc (180:360:0.85);
      \bluedot{(0.72,0.6)} node[color=black,anchor=east] {$\bc^\vee\! \otimes\! \bb^\vee$};
      \draw[<-] (-0.8,-1) arc (180:0:0.8);
      \bluedot{(-0.72,-0.6)} node[color=black,anchor=west] {$\bc \otimes \bb$};
    \end{tikzpicture}\ .
  \end{multline}
  For $1 \le k \le \ell \le \min \{m,n\}$, and $\bb \in B^{\otimes k}$, define
  \[
    \gamma^{f,n,g,m}_{\bb,\ell} = \sum_{{\bc}\in(\cB_{\psi^{-1}(g),f})^{\otimes {(\ell-k)}}}\
    \begin{tikzpicture}[>=stealth,baseline={([yshift=-.5ex]current bounding box.center)}]
      \draw (-0.4,1) rectangle (-1.6,1.5) node[midway] {$f,(n)$};
      \draw (0.4,1) rectangle (1.6,1.5) node[midway] {$g,(m)$};
      \draw (-0.4,-1) rectangle (-1.6,-1.5) node[midway] {$f,(n)$};
      \draw (0.4,-1) rectangle (1.6,-1.5) node[midway] {$g,(m)$};
      \draw[->] (-1.2,1) -- (-1.2,-1);
      \draw[<-] (1.2,1) -- (1.2,-1);
      \draw[->] (-0.9,1) arc (180:360:0.85);
      \bluedot{(0.72,0.6)} node[color=black,anchor=east] {$\bc^\vee\! \otimes\! \bb^\vee$};
      \draw[<-] (-0.8,-1) arc (180:0:0.8);
      \bluedot{(-0.72,-0.6)} node[color=black,anchor=west] {$\bc \otimes \bb$};
    \end{tikzpicture}
    \quad \text{and} \quad
    \gamma^{f,n,g,m}_\ell = \sum_{\bc \in (\cB_{\psi^{-1}(g),f})^{\otimes {\ell}}}\
    \begin{tikzpicture}[>=stealth,baseline={([yshift=-.5ex]current bounding box.center)}]
      \draw (-0.4,1) rectangle (-1.6,1.5) node[midway] {$f,(n)$};
      \draw (0.4,1) rectangle (1.6,1.5) node[midway] {$g,(m)$};
      \draw (-0.4,-1) rectangle (-1.6,-1.5) node[midway] {$f,(n)$};
      \draw (0.4,-1) rectangle (1.6,-1.5) node[midway] {$g,(m)$};
      \draw[->] (-1.2,1) -- (-1.2,-1);
      \draw[<-] (1.2,1) -- (1.2,-1);
      \draw[->] (-0.8,1) arc (180:360:0.8);
      \bluedot{(0.72,0.6)} node[color=black,anchor=north] {$\bc^\vee$};
      \draw[<-] (-0.8,-1) arc (180:0:0.8);
      \bluedot{(-0.72,-0.6)} node[color=black,anchor=south] {$\bc$};
    \end{tikzpicture}\ .
  \]
  Note that
  \begin{equation}\label{eq:rec-rec}
    \gamma^{f,n,g,m}_\ell
    = \sum_{\bb \in (\cB_{\psi^{-1}(g),f})^{\otimes k}} \gamma^{f,n,g,m}_{\bb,\ell}
    = \sum_{\bb \in \mathcal{D}_{f,g}^k}\frac{k!}{|S_{{\bb}^\vee}|} \gamma^{f,n,g,m}_{\bb,\ell}.
  \end{equation}
  Thus,
  \begin{align*}
    \sum_{k=0}^{\min\{n,m\}} \sum_{{\bb}^\vee\in\mathcal{D}_{f,g}^{ k}} &\frac{m!n!}{|S_{{\bb}^\vee}|(m-k)!(n-k)!} \beta_{{\bb}^\vee}\circ  \alpha_\bb \\
    &\!\!\! \stackrel{\eqref{eq:beta-alpha-simplify}}{=} \sum_{k=0}^{\min\{n,m\}} \sum_{\ell=k}^{\min\{n,m\}}\sum_{{\bb}^\vee\in\mathcal{D}_{f,g}^{ k}} (-1)^{\ell-k} \frac{m!n!(\ell-k)!}{|S_{{\bb}^\vee}|(m-k)!(n-k)!}{{m-k} \choose {\ell-k}} {{n-k} \choose {\ell-k}} \gamma^{f,n,g,m}_{\bb,\ell} \\
    &= \sum_{k=0}^{\min\{n,m\}} \sum_{\ell=k}^{\min\{n,m\}} (-1)^{\ell-k} \frac{m!n!}{k!(\ell-k)!(m-\ell)!(n-\ell)!}
    \left( \sum_{{\bb}^\vee\in\mathcal{D}_{f,g}^{ k}}\frac{k!}{|S_{{\bb}^\vee}|} \gamma^{f,n,g,m}_{\bb,\ell} \right)\\
    &\!\!\! \stackrel{\eqref{eq:rec-rec}}{=} \sum_{\ell=0}^{\min\{n,m\}} \sum_{k=0}^{\ell} (-1)^{\ell-k} \frac{m!n!}{k!(\ell-k)!(m-\ell)!(n-\ell)!} \gamma^{f,n,g,m}_\ell \\
    &= \sum_{\ell=0}^{\min\{n,m\}}\frac{m!n!}{\ell!(m-\ell)!(n-\ell)!}\left( \sum_{k=0}^{\ell} (-1)^{\ell-k}\frac{\ell!}{k!(\ell-k)!}\right) \gamma^{f,n,g,m}_\ell \\
    &= \gamma^{f,n,g,m}_0 = \id_{\sQ^{(n)}_f \otimes \sP^{(m)}_g},
  \end{align*}
  where, in second-to-last equality, we have used the fact that the sum in parentheses in the penultimate line above is the binomial expansion for $(1-1)^\ell$ when $\ell \ge 1$.  Thus, \eqref{eq:simplify-identity} is satisfied.

  Now, for $\bb \in \cB^{\otimes k'}$ and $\bc \in \cB^{\otimes k}$, consider the composition
  \[
    \alpha_\bc \circ \beta_{\bb^\vee} =\
    \begin{tikzpicture}[>=stealth,baseline={([yshift=-.5ex]current bounding box.center)}]
      \draw (-0.2,-1) rectangle (-2.3,-1.5) node[midway] {$g,(m-k')$};
      \draw (0.2,-1) rectangle (2.3,-1.5) node[midway] {$f,(n-k')$};
      \draw (-0.5,1) rectangle (-2,1.5) node[midway] {$f,(n)$};
      \draw (0.5,1) rectangle (2,1.5) node[midway] {$g,(m)$};
      \draw[->] (-0.8,1) arc (180:360:0.8);
      \bluedot{(+0.72,0.6)} node[color=black,anchor=east] {$\bb^\vee$};
      \draw[->] (-1.25,-1) .. controls (-1.25,-0.3) and (1.7,0.3) .. (1.7,1);
      \draw[<-] (1.25,-1) .. controls (1.25,-0.3) and (-1.7,0.3) .. (-1.7,1);
      \draw (-0.3,3.5) rectangle (-2.2,4) node[midway] {$g,(m-k)$};
      \draw (0.3,3.5) rectangle (2.2,4) node[midway] {$f,(n-k)$};
      \draw[<-] (-0.8,1.5) arc (180:0:0.8);
      \bluedot{(-0.72,1.9)} node[color=black,anchor=west] {$\bc$};
      \draw[<-] (-1.25,3.5) .. controls (-1.25,2.8) and (1.7,2.2) .. (1.7,1.5);
      \draw[->] (1.25,3.5) .. controls (1.25,2.8) and (-1.7,2.2) .. (-1.7,1.5);
    \end{tikzpicture}\ .
  \]
  Notice that this composition is zero if $k \neq k'$ because, in that case, when we expand the idempotents of the symmetric group as permutations, it will be impossible to match all caps and cups to get circles and all diagrams will contain a left curl, which is zero by \eqref{eq:rel3b}.  When $k=k'$, again we get left curls for any permutation in the box
  \begin{tikzpicture}[>=stealth,baseline={([yshift=-.5ex]current bounding box.center)}]
    \draw (0,0) rectangle (1.1,0.5) node[midway] {$f,(n)$};
  \end{tikzpicture}
  that is not in $S_{n-k}\times S_k$ and any permutation in the box \begin{tikzpicture}[>=stealth,baseline={([yshift=-.5ex]current bounding box.center)}]
    \draw (0,0) rectangle (1.2,0.5) node[midway] {$g,(m)$};
  \end{tikzpicture}
  that is not in $S_k\times S_{m-k}$. Also, for any permutation in $S_{n-k}\times S_k$ from \begin{tikzpicture}[>=stealth,baseline={([yshift=-.5ex]current bounding box.center)}]
    \draw (0,0) rectangle (1.1,0.5) node[midway] {$f,(n)$};
  \end{tikzpicture}
  we have exactly $(m-k)!$ permutations in
  \begin{tikzpicture}[>=stealth,baseline={([yshift=-.5ex]current bounding box.center)}]
    \draw (0,0) rectangle (1.2,0.5) node[midway] {$g,(m)$};
  \end{tikzpicture}
  that make it so there are no left curls.  When those conditions are satisfied, the circles in the middle will give coefficients of $\tr_B(c f b^\vee g)$ which equals $\delta_{b,c}$ when $b^\vee\in \cB^\vee_{f,g}$ and equals zero otherwise.  Recall that $S_{{\bb}^\vee}=\{w\in S_k \mid w{\bb}^\vee w^{-1}={\bb}^\vee\}$.  Then, among the $k!$ elements of $S_k$, only $|S_{{\bb}^\vee}|$ of them will give that all the circles are nonzero.  Therefore, after resolving the loops and circles and using \eqref{eq:rel2b}, we have
  \[
    \alpha_\bc \circ \beta_{\bb^\vee} =
    \begin{cases}
      \frac{ |S_{{\bb}^\vee}|(n-k)!(m-k)!}{n!m!} \id_{\sP^{(m-k)}_g \sQ^{(n-k)}_f} & \text{if } \bb = \bc \text{ (up to permutation)},\ b_p^\vee \in \cB_{f,g}^\vee \text{ for all } 0 \le p \le k,\\
      0 & \text{otherwise}.
    \end{cases}
  \]
  Now, if $\bc, \bb \in \mathcal{D}_{f,g}^k$, then the condition $b_p^\vee \in \cB_{f,g}^\vee$ is automatically satisfied for all $0 \le p \le k$ and $\bb$ is a permutation of $\bc$ if and only if $\bb = \bc$ (otherwise the elements of $\mathcal{D}_{f,g}^k$ would not be linearly independent).  Thus $\alpha \circ \beta$ is also the identity morphism, completing the proof of \eqref{eq:iso-LSn}.

  Considering the autoequivalence $\Omega$, \eqref{eq:iso-1n1m} follows from \eqref{eq:iso-LSn} by \eqref{eq:Psi-reversals}.  The proof of the isomorphism \eqref{eq:iso-n1m} is analogous to the proof of \eqref{eq:iso-LSn} except that we replace everywhere idempotents $e_{g,(k)}$, $k \in \N_+$, by $e_{g,(1^k)}$, and change the definition of $S_{\bb^\vee}$ in \eqref{eq:Sbbvee-def} to be $S_{\bb^\vee} = \{w \in S_k \mid w \cdot \bb^\vee = (-1)^{\ell(w)} \bb^\vee\}$.  Finally, \eqref{eq:iso-1nm} follows from \eqref{eq:iso-n1m} by another application of the functor $\Omega$.
\end{proof}

\begin{rem} \label{rem:missing-isoms}
  Note that the isomorphisms \eqref{eq:Q-even-isom} and \eqref{eq:P-even-isom} are categorical analogues of the relations  \eqref{eq:Q-even-reduction} and \eqref{eq:P-even-reduction}.  Since those relations are missing in \cite[Prop.~5.1]{CS14} and \cite[Prop.~1]{HS15} (see Remark~\ref{rem:missing-relations}), the authors of those papers do not prove the corresponding isomorphisms in the categories they consider.\footnote{As noted in the footnote to Remark~\ref{rem:missing-relations}, the authors of \cite{HS15} fixed this issue in the published version of their paper.}  Fortunately, these isomorphisms do in fact hold, as can be seen by specializing \eqref{eq:Q-even-isom} and \eqref{eq:P-even-isom} for the appropriate choices of $B$ (see Remark~\ref{rem:special-cases}).  Thus, the assertions in \cite{CS14,HS15} that there exists a map from the Heisenberg algebra to the Grothendieck group of the categories defined in those papers remain true.
\end{rem}

%%%%%%%%%%%%%%%%%%%%%
\section{Main result} \label{sec:main-result}
%%%%%%%%%%%%%%%%%%%%%

We now prove our main theorem, which identifies the Grothendieck group of the category $\cH_B$ with the Heisenberg algebra $\fh_B$.

By Proposition~\ref{prop:hB-presentation} and Theorems~\ref{theo:action-functor} and~\ref{theo:functor-isos}, we have maps
\begin{equation} \label{eq:main-iso}
  \fh_B \xrightarrow{p} K_0(\cH_B) \xrightarrow{K_0(\bF)} \End\left(\bigoplus_{n\in\N}K_0(A_n\pmd)\right).
\end{equation}
It follows from the definitions of Section \ref{sec:actions} and \cite[Th.~5.7]{RS15a} that the composition $K_0(\bF)\circ p$ is the Fock space representation of $\fh_B$ (the representation induced from the trivial representation of $H^-$).  Viewing $\Q$ as a $\kk$-module via $q \mapsto 1$, $\pi \mapsto 1$, we can extend scalars from $\kk$ to $\Q$.  Recall that $\fh_B \otimes_\kk \Q$ is the Heisenberg double of $H^+ \otimes_\kk \Q$ (see Remark~\ref{rem:Heis-double}). Since the Fock space representation of a Heisenberg double is faithful by \cite[Th.~2.11]{SY15}, we have that $p$ is injective.

\begin{lem}
  Every object of $\cH_B'$ is isomorphic to a direct sum of shifts of objects of the form $\sP^n \sQ^m$.
\end{lem}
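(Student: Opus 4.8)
The plan is to reduce an arbitrary object, which by definition is a finite direct sum of shifted words in $\sP$ and $\sQ$, to the desired normal form by repeatedly pushing $\sP$'s to the left past $\sQ$'s. Since $\cH_B'$ is additive and the claimed statement is closed under direct sums and shifts, it suffices to treat a single word $w$ in the letters $\sP$ and $\sQ$. I will argue by induction on some complexity measure of $w$; the natural choice is the number of ``inversions'' of $w$, meaning the number of pairs consisting of a $\sP$ occurring to the right of a $\sQ$. If this number is zero, then $w$ is already of the form $\sP^n \sQ^m$ and there is nothing to prove.

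For the inductive step, suppose $w$ has at least one such inversion; then $w$ contains a subword $\sQ\sP$, say $w = u\, \sQ\sP\, v$ for words $u,v$. The key input is relation \eqref{eq:rel2a}, which gives an isomorphism in $\cH_B$ (not merely in $\cH_B'$, but the point is that \eqref{eq:rel2a} is an identity of morphisms in $\cH_B'$ expressing $\id_{\sQ\sP}$ in terms of $\id_{\sP\sQ}$ plus a ``cup-cap'' correction term). More precisely, from \eqref{eq:rel2a} together with \eqref{eq:relsnake}/\eqref{eq:rel1b} one extracts an isomorphism $\sQ\sP \cong \sP\sQ \oplus \one^{\oplus \dim B}$ in $\cH_B$ — this is the categorified analogue of the Heisenberg relation, and indeed it is exactly the $n=m=1$, $f=g=1$ case of \eqref{eq:iso-LSn}, which has already been proved in Theorem~\ref{theo:functor-isos} (taking $f = g = 1_B$ so that $\HOM_B(B,B) = B$, and noting $S^k(B)$ contributes only in degrees $k=0,1$ in the relevant sense — actually one should be slightly careful: $S^k(B)$ is generally nonzero for all $k$, so \eqref{eq:iso-LSn} gives $\sQ\sP \cong \bigoplus_{k\ge 0} S^k(B)\otimes(\sP^{1-k}\sQ^{1-k})$, where the terms with $k\ge 2$ vanish since $\sP^{1-k}$ is the zero object, leaving $\sQ\sP \cong \sP\sQ \oplus (B \otimes \one)$). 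Applying this isomorphism inside $w$ — that is, tensoring on the left by $u$ and on the right by $v$, using that $\otimes$ is a bifunctor — yields
\[
  u\,\sQ\sP\,v \;\cong\; u\,\sP\sQ\,v \;\oplus\; \big(B \otimes (u v)\big),
\]
where $B \otimes (uv)$ denotes, as in \eqref{eq:mult-space}, a finite direct sum of shifts of the word $uv$.

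Now $u\,\sP\sQ\,v$ has one fewer inversion than $w$ (the $\sP$ and $\sQ$ that were swapped no longer form an inversion, and no new inversions are created since the relative order of all other letters is unchanged), and $uv$ has strictly fewer letters than $w$, hence also strictly fewer inversions. By the inductive hypothesis both $u\,\sP\sQ\,v$ and each shift of $uv$ are isomorphic to direct sums of shifts of objects of the form $\sP^n\sQ^m$, and therefore so is $w$. This completes the induction. I do not expect a genuine obstacle here: the entire argument is a bookkeeping induction powered by the single isomorphism \eqref{eq:iso-LSn} (equivalently relation \eqref{eq:rel2a}), and the only point requiring a moment's care is the observation that the correction term $B \otimes (uv)$ involves a \emph{shorter} word, so that the induction is genuinely on a well-founded quantity such as the lexicographically ordered pair (number of letters, number of inversions), or simply on the number of inversions after first inducting on length.
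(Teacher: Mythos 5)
Your proof is correct and follows essentially the same route as the paper: both rely on the $f=g=1$, $n=m=1$ specialization of \eqref{eq:iso-LSn} (equivalently, the isomorphism $\sQ\sP \cong \sP\sQ \oplus (B \otimes \one)$ extracted from \eqref{eq:rel2a}) to push $\sQ$'s past $\sP$'s. The only difference is that you make the termination of the procedure explicit via an induction on (length, inversions), whereas the paper leaves this bookkeeping implicit; both correctly note that the specialization involves only trivial idempotents and so the isomorphism already lives in $\cH_B'$, not just its idempotent completion.
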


\begin{proof}
  By definition, objects of $\cH_B'$ are direct sums of shifts of words in $\sP$ and $\sQ$.  Taking $f=g=1\in B$ and $m=n=1$ in \eqref{eq:iso-LSn}, we have an isomorphism in $\cH_B$ where all the idempotents involved are $1$.  Therefore, it is actually an isomorphism in $\cH'_B$. We can then use that isomorphism to move any $\sQ$'s past the $\sP$'s all the way to the right, and the result follows.
\end{proof}

\begin{lem} \label{lem:end-spaces}
  If $\delta\neq 0$, (where $(-\delta,\sigma)$ is the degree of the trace map $\tr_B$ of $B$), then $\END_{\cH_B'}(\sP^n\sQ^m)_{i,\epsilon}=0$ when $i<0$ and $\END_{\cH_B'}(\sP^n\sQ^m)_{0,0} \cong (A_n^{\op})_{0,0}\otimes_\F (A_m)_{0,0}$.
\end{lem}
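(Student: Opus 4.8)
The plan is to show that every diagram representing an endomorphism of $\sP^n\sQ^m$ can, after simplification, be written as a linear combination of diagrams in a ``standard form'' whose nonnegative-degree part is controlled by $A_n^\op\otimes A_m$. First I would use the biadjunction relations \eqref{eq:right-unit-counit}--\eqref{eq:left-unit-counit} (equivalently the snake relations \eqref{eq:relsnake}) together with relations \eqref{eq:rel0a}--\eqref{eq:rel3b} to reduce an arbitrary diagram in $\END_{\cH_B'}(\sP^n\sQ^m)$ to a sum of diagrams consisting of: (i) a permutation-and-dot diagram on the $n$ upward strands (an element of the image of $A_n^\op$ under \eqref{eq:map-Bn-Pn}); (ii) a permutation-and-dot diagram on the $m$ downward strands (an element of the image of $A_m$ under \eqref{eq:map-Bn-Qn}); (iii) right curls on the through-strands; and (iv) a closed diagram (an element of $\END_{\cH_B'}\one$) placed to the right. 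The key point is that any strand that goes from the top of $\sP^n\sQ^m$ to $\sQ^m$ at the top, or any cap/cup connecting an up-strand to a down-strand, can be removed: using \eqref{eq:rel2a}, \eqref{eq:rel2b}, and isotopy, a $\sP$--$\sQ$ crossing either resolves into parallel strands or produces a cup-cap pair with a summed dot, and such cup-cap pairs, once isolated, either close up into bubbles (contributing to $\END\one$) or create a left curl and vanish by \eqref{eq:rel3b}. This is essentially the normal-form argument already invoked in the proofs of Proposition~\ref{prop:identity-end-surjection} and Proposition~\ref{prop:chim-properties}.

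Next I would track degrees. Under the degree assignment, crossings have degree $(0,0)$, a dot labeled $b$ has degree $\deg b = (|b|,\bar b)$ with $|b|\ge 0$, a right curl $J$ has degree $(\delta,\sigma)$, and a closed clockwise bubble $c_{b,d}$ has degree $(|b|+(d+1)\delta,\,\bar b+(d+1)\sigma)$. Since $\delta\neq 0$ and $|b|\ge 0$ for all homogeneous $b\in B$, every right curl and every bubble has \emph{strictly positive} $\Z$-degree; and every dot has nonnegative $\Z$-degree. Hence in the standard form above, a diagram of $\Z$-degree $i$ must have $i\ge 0$ (giving the first assertion $\END_{\cH_B'}(\sP^n\sQ^m)_{i,\epsilon}=0$ for $i<0$), and a diagram of degree exactly $(0,0)$ can contain no right curls, no bubbles, and only degree-zero dots. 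What remains is precisely a permutation-and-(degree-zero-)dot diagram on the $n$ up-strands times one on the $m$ down-strands, i.e.\ an element of $(A_n^\op)_{0,0}\otimes_\F (A_m)_{0,0}$ — here I use that $(A_n)_{0,0} = (B_{0,0})^{\otimes n}\rtimes \F S_n$ since $S_n$ sits in degree zero. This shows the maps \eqref{eq:map-Bn-Pn} and \eqref{eq:map-Bn-Qn} induce a \emph{surjection} $(A_n^\op)_{0,0}\otimes_\F (A_m)_{0,0}\twoheadrightarrow \END_{\cH_B'}(\sP^n\sQ^m)_{0,0}$.

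For the reverse inequality (injectivity of this map on the degree-$(0,0)$ part), I would apply the action functor $\bF_0\colon \cH_B\to \bigoplus_{p\in\Z}(A_p,A_0)\bimod$ from Theorem~\ref{theo:action-functor}, with $n=0$. Here $A_0=\F$, so $\bF_0$ sends $\sP^n\sQ^m$ to a bimodule which vanishes unless $m\le \ldots$; more usefully, take $\bF_N$ with $N$ large. The composite $A_n^\op\otimes A_m\to\END_{\cH_B'}(\sP^n\sQ^m)\xrightarrow{\bF_N}\END_{A_{N+n-m},A_N}(\text{some bimodule})$ should be shown injective in degree $(0,0)$ by the same explicit computation used in the proof of Proposition~\ref{prop:chim-properties}: the image of $A_n^\op$ acts by $\pr{\shift_N(\cdot)}$ and the image of $A_m$ acts by $\pl{(\cdot)}$ on the concrete wreath-product bimodule, and these left and right actions of the degree-zero parts are faithful because, after choosing a large enough $N$, one can detect any nonzero element of $(A_n^\op)_{0,0}\otimes(A_m)_{0,0}$ by evaluating on a suitable element of the bimodule. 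Combining surjectivity with this injectivity gives the claimed isomorphism.

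\textbf{Main obstacle.} The routine-but-delicate part is step one: carefully justifying that \emph{every} diagram reduces to the stated standard form, i.e.\ that no through-strand is forced to backtrack between $\sP^n$ at the bottom and $\sQ^m$ at the top in a way that cannot be straightened, and that all the ``extra'' topology genuinely collects into $\END_{\cH_B'}\one$ (bubbles) plus right curls on the $n+m$ through-strands. This requires a small induction on the number of crossings/cups/caps, invoking \eqref{eq:rel2a}, \eqref{eq:rel2b}, \eqref{eq:triple-point}, \eqref{eq:rel3b}, and Lemmas~\ref{lem:rightcross-invariance} and~\ref{lem:leftcross-invariance} repeatedly — conceptually identical to the reductions already performed in Sections~\ref{sec:morphism-spaces}, so I would present it briefly and lean on those precedents rather than redo every local move.
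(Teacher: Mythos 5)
Your proposal is correct and takes essentially the same approach as the paper: eliminate caps and cups up to left curls (which vanish by \eqref{eq:rel3b}), right curls, and closed diagrams (bubbles), then argue from degree considerations that nothing of negative degree survives and that degree $(0,0)$ is exhausted by braid-like diagrams with degree-zero dots. The paper is slightly more economical, arguing degree-first (only left caps have negative degree, and these can occur only inside left curls or closed diagrams) rather than first establishing a full normal form, but the content is the same; your explicit verification of injectivity of $(A_n^\op)_{0,0}\otimes_\F(A_m)_{0,0}\to\END_{\cH_B'}(\sP^n\sQ^m)_{0,0}$ via the functors $\bF_N$ makes explicit a point the paper leaves implicit (it simply asserts the identification, tacitly relying on the faithfulness computation already carried out in the proof of Proposition~\ref{prop:chim-properties}).
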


\begin{proof}
  Any negative degree endomorphism must contain a left cap, since that is the only generating morphism of negative degree. Since all the $\sP$'s are to the left of all the $\sQ$'s, after using the local relations we have that all left caps can be straightened out unless they are part either of a left curl or of a closed diagram. By \eqref{eq:rel3b}, in the case of a left curl the whole diagram is zero. In the second case, by Proposition \ref{prop:identity-end-surjection}, all closed diagrams can be expressed as combinations of the bubbles $c_{b,d}$ which have positive degrees. It follows that $\END_{\cH_B'}(\sP^n\sQ^m)_{i,\epsilon}=0$ when $i<0$.

  Now consider degree zero morphisms. Any instance of a right curl or of a clockwise circle, since they have strictly positive degrees, has to be balanced by at least one instance of a left cap. In the previous discussion for negative degree morphisms, we have already established that such a left cap has to appear in a closed diagram, which consists of a combination of bubbles $c_{b,d}$ and is therefore of strictly positive degree. It follows that no right curls nor clockwise circles can appear. Notice also that, again because of the order in which the $\sP$'s and $\sQ$'s appear, any right cup has to be part of a right curl or clockwise circle, and hence cannot occur. It then follows that any morphism in $\END_{\cH_B'}(\sP^n\sQ^m)_{0,0}$ can be written as a braid-like diagram (containing neither caps nor cups) possibly carrying some dots. Under the maps \eqref{eq:map-Bn-Pn} and \eqref{eq:map-Bn-Qn}, we can then identify those as elements of $(A_n^{\op})_{0,0}\otimes_\F (A_m)_{0,0}$.
\end{proof}

From Lemma~\ref{lem:end-spaces} and the definition of the category $\cH_B$ as the idempotent completion of $\cH_B'$ we obtain immediately the following.

\begin{cor} \label{cor:1morph}
  If $\delta \ne 0$, then every object of $\cH_B$ is a direct sum of shifts of objects of the form $(\sP^n,e)\otimes (\sQ^m,e')$ for some idempotents $e\in A_n$, $e'\in A_m$.
\end{cor}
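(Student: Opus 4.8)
The plan is to deduce this corollary directly from the preceding lemma together with the structure of the idempotent completion. First I would recall that, by the lemma just before the corollary, every object of $\cH_B'$ is isomorphic to a direct sum of shifts of objects of the form $\sP^n \sQ^m$. Since $\cH_B$ is by definition the Karoubi envelope of $\cH_B'$, a general object of $\cH_B$ is a pair $(R, e)$ with $R$ an object of $\cH_B'$ and $e \colon R \to R$ an idempotent. Using the isomorphism $R \cong \bigoplus_j \{s_j, \epsilon_j\} \sP^{n_j} \sQ^{m_j}$ in $\cH_B'$, we may transport $e$ to an idempotent endomorphism $\tilde e$ of $\bigoplus_j \{s_j, \epsilon_j\} \sP^{n_j} \sQ^{m_j}$, and $(R,e) \cong (\bigoplus_j \{s_j,\epsilon_j\} \sP^{n_j}\sQ^{m_j}, \tilde e)$ in $\cH_B$.

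The key remaining point is to understand idempotents on such a direct sum. Here is where Lemma~\ref{lem:end-spaces} enters: it tells us that, when $\delta \ne 0$, $\END_{\cH_B'}(\sP^n\sQ^m)_{i,\epsilon} = 0$ for $i < 0$, and hence (since morphisms in $\cH_B$ respecting the grading are degree $(0,0)$) the degree-zero part of the endomorphism ring of $\bigoplus_j \{s_j,\epsilon_j\}\sP^{n_j}\sQ^{m_j}$ decomposes as a sum over pairs $(j,j')$ with $s_j = s_{j'}$ and $\epsilon_j = \epsilon_{j'}$ of the spaces $\END_{\cH_B'}(\sP^{n_j}\sQ^{m_j}, \sP^{n_{j'}}\sQ^{m_{j'}})_{0,0}$. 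Grouping the summands of $R$ by their shift $(s_j,\epsilon_j)$, the idempotent $\tilde e$ is block-diagonal with respect to this grouping, so it suffices to treat a single shift, and by shift-invariance we may assume the shift is trivial. Thus we are reduced to analyzing idempotents in $\End_{\cH_B}\!\big(\bigoplus_j \sP^{n_j}\sQ^{m_j}\big)$.

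Next I would decompose further by the pair $(n,m)$: again by Lemma~\ref{lem:end-spaces} (applied to mixed Hom-spaces, which by the same argument vanish in negative degree, and whose degree-zero part is identified with $(A_n^\op)_{0,0} \otimes_\F (A_m)_{0,0}$-type spaces only when $(n_j,m_j) = (n_{j'},m_{j'})$ — any morphism $\sP^{n}\sQ^{m} \to \sP^{n'}\sQ^{m'}$ of degree $0$ with $(n,m)\ne(n',m')$ would force a cap or cup that cannot be straightened and hence lies in a closed diagram of strictly positive degree, so it vanishes), the ambient endomorphism ring is a direct product over $(n,m)$ of matrix algebras over $(A_n^\op)_{0,0} \otimes_\F (A_m)_{0,0}$. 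Hence $\tilde e$ splits as a direct sum of idempotents, one for each $(n,m)$, each living in a matrix algebra over $(A_n^\op)_{0,0}\otimes_\F(A_m)_{0,0}$. A standard argument — an idempotent in $M_k(S)$ for a ring $S$ gives a direct summand of $S^k$, and one then splits it into indecomposable pieces — shows $(R,e)$ is a direct sum of objects of the form $(\sP^n, e_1) \otimes (\sQ^m, e_2)$ for idempotents $e_1 \in A_n^\op$, $e_2 \in A_m$, noting that an idempotent in $(A_n^\op)_{0,0}\otimes_\F (A_m)_{0,0} \cong \End_{\cH_B}(\sP^n\sQ^m)$ that is a tensor of idempotents $e_1\otimes e_2$ gives precisely $(\sP^n,e_1)\otimes(\sQ^m,e_2)$, and any idempotent can be conjugated/decomposed into such (after allowing direct sums, since $A_n \otimes A_m$-idempotents need not themselves be simple tensors, but $(\sP^n\sQ^m, e) \cong (\sP^n, \pi_1(e))\otimes(\sQ^m,\pi_2(e))$ can fail — so more carefully, one uses that $\cH_B$ is additive and idempotent complete, so it suffices to observe $(\sP^n\sQ^m, e_1 \otimes e_2)=(\sP^n,e_1)\otimes(\sQ^m,e_2)$ and that the monoidal product of the Karoubi completions, restricted to summands of $\sP^n$ and $\sQ^m$, realizes all such objects).

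The main obstacle is precisely the last step: one must be careful that $\End_{\cH_B}(\sP^n\sQ^m)_{0,0} \cong (A_n^\op)_{0,0} \otimes_\F (A_m)_{0,0}$ does \emph{not} immediately give that every idempotent is a simple tensor $e_1\otimes e_2$; rather, one needs that the category is idempotent complete and that the functor $(\sP^n,-)\otimes(\sQ^m,-)$ from (idempotents in $A_n^\op$) $\times$ (idempotents in $A_m$) hits, up to direct sum and isomorphism, all summands of $\sP^n\sQ^m$. This follows because $\sP^n\sQ^m = \sP^n \otimes \sQ^m$ and the idempotent completion of a tensor product of additive categories, when the Hom-space factors as a tensor product, has its indecomposable objects given by tensor products of indecomposables — which here amounts to the observation that a primitive idempotent of $A_n^\op \otimes_\F A_m$ over an algebraically closed field is, up to conjugacy, of the form $e_1 \otimes e_2$ with $e_i$ primitive, together with semisimplicity considerations on the degree-zero parts. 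Everything else in the argument is bookkeeping with gradings and shifts, which the vanishing statement of Lemma~\ref{lem:end-spaces} makes routine.
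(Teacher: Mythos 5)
There is a genuine gap: the two ``block-diagonal'' claims you make for $\End_{\cH_B}\bigl(\bigoplus_j \{s_j,\epsilon_j\}\sP^{n_j}\sQ^{m_j}\bigr)$ --- first along the shifts $(s_j,\epsilon_j)$, then along the pairs $(n_j,m_j)$ --- are both false, so the ``direct product of matrix algebras'' description on which the rest of your argument relies does not hold. Lemma~\ref{lem:end-spaces} kills only strictly negative $\Z$-degree; if $s_j > s_{j'}$ the relevant Hom space sits in positive degree and can be nonzero (for instance, a right curl is a nonzero element of $\END_{\cH_B'}(\sP\sQ)_{\delta,\sigma}$). More seriously, your parenthetical assertion that a degree-$(0,0)$ morphism $\sP^n\sQ^m\to\sP^{n'}\sQ^{m'}$ with $(n,m)\ne(n',m')$ ``forces a cap or cup that cannot be straightened and hence lies in a closed diagram'' is incorrect: the straightening argument in the proof of Lemma~\ref{lem:end-spaces} applies only to \emph{left} caps, the unique generator of strictly negative degree. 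A \emph{right} cap has degree $(0,0)$ and need not sit inside a closed diagram. Concretely, the diagram consisting of $n$ parallel upward strands, a right cap joining the $(n+1)$-st upward strand to the first downward strand, and $m$ parallel downward strands is a nonzero morphism $\sP^{n+1}\sQ^{m+1}\to\sP^n\sQ^m$ of degree $(0,0)$ (under $\bF$ it is, up to tensoring with identities, the surjective bimodule map $\varepsilon_\rR$). After a suitable ordering of the summands the endomorphism ring is block \emph{upper}-triangular with genuinely nonzero strictly-upper blocks, not block-diagonal.

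The corollary is nonetheless a consequence of Lemma~\ref{lem:end-spaces} together with the preceding lemma, but you must replace the block-diagonality argument with the Krull--Schmidt theorem. Since $\End_{\cH_B}(\sP^n\sQ^m)\cong(A_n^\op)_{0,0}\otimes_\F(A_m)_{0,0}$ is a finite-dimensional $\F$-algebra, each $\{s_j,\epsilon_j\}\sP^{n_j}\sQ^{m_j}$ decomposes in the idempotent-complete category $\cH_B$ into indecomposable summands $\{s_j,\epsilon_j\}(\sP^{n_j}\sQ^{m_j},e)$, for $e$ a primitive idempotent of $(A_{n_j}^\op)_{0,0}\otimes_\F(A_{m_j})_{0,0}$, and these have local endomorphism rings. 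By the Krull--Schmidt property, the direct summand $(R,\tilde e)$ of the finite sum $\bigoplus_j\{s_j,\epsilon_j\}\sP^{n_j}\sQ^{m_j}$ is then isomorphic to a direct sum of some of these indecomposables, without any need for $\tilde e$ to be block-diagonal. Finally, the point you correctly flag at the end is exactly the remaining ingredient: a primitive idempotent of $S\otimes_\F T$, for finite-dimensional algebras $S,T$ over the algebraically closed field $\F$, is conjugate to $e_1\otimes e_2$ with $e_1,e_2$ primitive, because the projective cover of an outer tensor product of simples is the outer tensor product of their projective covers (this is the first observation in the proof of Proposition~\ref{prop:induced-projectives}). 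Hence each indecomposable summand is a shift of $(\sP^{n_j},e_1)\otimes(\sQ^{m_j},e_2)$. So the tensor-idempotent step in your last paragraph is essential and correct; it is the direct-product decomposition of the endomorphism ring in the middle that must be replaced by Krull--Schmidt.
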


\begin{lem} \label{lem:surj}
  If $\delta \ne 0$, then the map $p \colon \fh_B\to K_0(\cH_B)$ of \eqref{eq:main-iso} is surjective.
\end{lem}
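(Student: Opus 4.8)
The plan is to show that the image of $p$ contains a set of generators of $K_0(\cH_B)$. By Corollary~\ref{cor:1morph}, every object of $\cH_B$ is a direct sum of shifts of objects of the form $(\sP^n, e) \otimes (\sQ^m, e')$ for idempotents $e \in A_n$, $e' \in A_m$ (here we use $\delta \ne 0$). Since $K_0(\cH_B)$ is generated as a $\kk$-module by the classes of indecomposable objects, and every such object is a summand of some $(\sP^n,e)\otimes(\sQ^m,e')$, it suffices to show that $\left[ (\sP^n,e) \otimes (\sQ^m,e') \right]$ lies in the image of $p$ for all such $e,e'$. By the multiplicativity of $K_0$ under the monoidal structure, $\left[ (\sP^n,e) \otimes (\sQ^m,e') \right] = \left[ (\sP^n,e) \right] \cdot \left[ (\sQ^m,e') \right]$, so it is enough to treat the two factors separately; that is, to show $\left[ (\sP^n,e) \right]$ and $\left[ (\sQ^m,e') \right]$ lie in the image of $p$ for every idempotent $e \in A_n$.

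First I would reduce to the case of \emph{primitive} idempotents. Any idempotent $e \in A_n$ decomposes (after conjugation, which does not change the isomorphism class of $(\sP^n,e)$, hence not the class in $K_0$) as a sum of orthogonal primitive idempotents, and correspondingly $(\sP^n,e)$ is a direct sum of the $(\sP^n,e_j)$; so it suffices to handle primitive $e$. By Proposition~\ref{prop:induced-projectives} (and \eqref{eq:wreath-module-idemponent}), the indecomposable projective $A_n$-modules are, up to shift, exactly the $A_n e_\blambda$ for $\blambda \in \fP(n)$, where $e_\blambda$ is the idempotent of \eqref{eq:e-blambda-def}. Thus every primitive idempotent of $A_n$ is conjugate (up to shift) to some $e_\blambda$, and it remains to show $\left[ (\sP^n, e_\blambda) \right] \in \operatorname{im} p$ for all $\blambda$. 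Recall $e_\blambda = \left( e_1^{\otimes|\lambda^1|} \otimes \dotsb \otimes e_\ell^{\otimes|\lambda^\ell|} \right) e_\blambda'$ where $e_\blambda' \in \fS_\blambda$ is a primitive idempotent realizing $L_\blambda$.

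Now I would invoke the categorical version of the standard fact that Schur functions (and Schur $Q$-functions) are polynomials in the complete homogeneous symmetric functions (resp.\ the odd Schur $Q$-functions). Concretely: the object $(\sP^n, e_\blambda)$ is, up to shifts and inside the Grothendieck group, an integer linear combination of products $\bigotimes_i \bigotimes_j \sP_{e_i}^{(n_{ij})}$ — precisely because the idempotent $e_\blambda'$ expressing $L_\blambda$ can be obtained, via the representation theory of $S_m$ (type $M$ part) and the Sergeev algebra $\bS_m$ (type $Q$ part), from the symmetrizer idempotents $e_{(k)}$ by the same Littlewood--Richardson / $Q$-function identities used in the proof of Lemma~\ref{lem:Heis-double-generators}. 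Combined with the even-reduction isomorphisms \eqref{eq:Q-even-isom}--\eqref{eq:P-even-isom} of Theorem~\ref{theo:functor-isos} (categorifying \eqref{eq:Q-even-reduction}--\eqref{eq:P-even-reduction}), this shows $\left[ (\sP^n, e_\blambda) \right]$ lies in the $\kk$-subalgebra of $K_0(\cH_B)$ generated by the classes $\left[ \sP_{e_i}^{(k)} \right]$, $i \in \{1,\dotsc,\ell\}$, $k \in N_i$. But $p\left( P_i^{(k)} \right) = \left[ \sP_{e_i}^{(k)} \right]$ by construction of $p$ (it is the map sending the generators $P_i^{(k)}$ of $\fh_B$ from Lemma~\ref{lem:Heis-double-generators} to the corresponding objects, using $P_i \cong B e_i$ and \eqref{eq:wreath-module-idemponent}), so these classes are in $\operatorname{im} p$. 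Since $p$ is an algebra homomorphism, so is everything generated by them, and likewise for the $\sQ$ side with $p\left( Q_i^{(k)} \right) = \left[ \sQ_{e_i}^{(k)} \right]$. Hence $\left[ (\sP^n,e) \right], \left[ (\sQ^m,e') \right] \in \operatorname{im} p$, so $p$ is surjective.

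The main obstacle I anticipate is the bookkeeping in the third step: carefully justifying that $(\sP^n, e_\blambda)$ decomposes in $K_0(\cH_B)$ into products of the $\sP_{e_i}^{(k)}$'s, keeping track of grading shifts and the factors of $2$ that appear for type-$Q$ blocks (exactly the point where $\kk$ has $\tfrac12$ inverted, as in Lemma~\ref{lem:Heis-double-generators}). One must be careful that the relevant idempotent manipulations, which live in $A_n$ (or $\fS_\blambda$), transport correctly through the functor realizing $(\sP^n,-)$ and descend to identities in $K_0$; the isomorphisms \eqref{eq:iso-Ln}, \eqref{eq:iso-Sn} (commutativity of the $\sP$'s among themselves) and \eqref{eq:Q-even-isom}, \eqref{eq:P-even-isom} are exactly what make this work, so the argument is really just assembling these pieces, but the indexing is delicate.
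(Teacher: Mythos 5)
Your proof reaches the right conclusion but takes a considerably more laborious route than the paper, and the extra work stems from not taking full advantage of how the map $p$ is set up. You begin exactly as the paper does, invoking Corollary~\ref{cor:1morph} to reduce to objects of the form $(\sP^n,e)\otimes(\sQ^m,e')$. But then, instead of observing that $[A_n e]\#[A_m e']$ is already an element of $\fh_B$ (since $H^\pm$ contains the class of \emph{every} finitely generated projective $A_n$-module, not only the distinguished generators $P_i^{(n)}$, $Q_i^{(n)}$), and that $p$ sends it directly to $[(\sP^n,e)\otimes(\sQ^m,e')]$, you reduce to primitive idempotents, then to the $e_\blambda$, and then invoke a categorical Jacobi--Trudi / Littlewood--Richardson argument to re-express $(\sP^n,e_\blambda)$ as a $\kk$-combination of products of the $\sP_{e_i}^{(k)}$. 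That last step, which you yourself flag as delicate, is unnecessary: the decomposition of $(\sP^n,e)$ into indecomposables, and its compatibility with Grothendieck-group relations, is already guaranteed by the algebra map $A_n^{\op}\to\END_{\cH_B'}(\sP^n)$ from \eqref{eq:map-Bn-Pn}, which is precisely the device that makes $p|_{H^+}\colon [A_n e]\mapsto[(\sP^n,e)]$ a well-defined map of Grothendieck groups in the first place. Once you trust that $p$ restricted to $H^\pm$ is this map on arbitrary projective classes, the lemma is immediate, as the paper's one-line argument shows. So your argument is not wrong, but the ``main obstacle'' you anticipate is entirely self-inflicted; the isomorphisms \eqref{eq:Q-even-isom}--\eqref{eq:P-even-isom} are not needed for this lemma at all (they are needed to show that $p$ is well-defined, which happens before Lemma~\ref{lem:surj}).
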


\begin{proof}
  By Corollary \ref{cor:1morph}, it is enough to show that, for all $n,m \in \N$, and for all idempotents $e\in A_n$, $e'\in A_m$, we have that the class $[(\sP^n,e)\otimes (\sQ^m,e')]\in K_0(\cH_B)$ is in the image of $p$. But if we take $R=A_ne\in A_n\pmd$ and $S=A_me'\in A_m\pmd$, we have $[R]\#[S]\in\fh_B$ and
  \[
    p([R]\#[S])=[(\sP^n,e)\otimes (\sQ^m,e')]. \qedhere
  \]
\end{proof}

We can now state our main theorem, which follows from the results of this section.

\begin{theo} \label{theo:main-iso}
  Suppose $B$ is an $\N$-graded Frobenius superalgebra not concentrated in $\N$-degree zero, and let $\fh_B$ be the Heisenberg algebra associated to the tower of graded superalgebras $A=\bigoplus_{n\in\N}(B^{\otimes n}\rtimes S_n)$ in Definition~\ref{def:hB} (with presentations as in Proposition~\ref{prop:hB-presentation}).  Then there is an isomorphism of algebras
  \[
    \fh_B \cong K_0(\cH_B).
  \]
\end{theo}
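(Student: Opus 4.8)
The plan is to assemble the proof from the ingredients already developed in the paper, using the sandwich of maps in \eqref{eq:main-iso}. First I would recall that $p \colon \fh_B \to K_0(\cH_B)$ is well defined: the relations of Proposition~\ref{prop:hB-presentation} (together with the even-to-odd reductions of Remark~\ref{rem:even-type-Q}) lift to the isomorphisms in $\cH_B$ established in Theorem~\ref{theo:functor-isos}, and $K_0(\bF) \circ p$ is the Fock space representation by \cite[Th.~5.7]{RS15a}. Extending scalars to $\Q$ via $q \mapsto 1$, $\pi \mapsto 1$, the algebra $\fh_B \otimes_\kk \Q$ is the Heisenberg double of $H^+ \otimes_\kk \Q$ (Remark~\ref{rem:Heis-double}), and the Fock space representation of a Heisenberg double is faithful by \cite[Th.~2.11]{SY15}. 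Hence $K_0(\bF) \circ p$ is injective after base change, so $p$ itself is injective. Since $B$ is not concentrated in degree zero, $\delta \neq 0$, so Lemma~\ref{lem:surj} applies and $p$ is surjective. Therefore $p$ is an isomorphism of $\kk$-algebras, which is exactly the statement $\fh_B \cong K_0(\cH_B)$.

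The one point that needs a little care, and which I expect to be the main obstacle, is the injectivity argument: one must be sure that extending scalars from $\kk$ to $\Q$ does not lose information about $p$, i.e.\ that $p$ being injective after tensoring with $\Q$ implies $p$ is injective. This is where the hypothesis $\delta \neq 0$ does real work through Lemma~\ref{lem:end-spaces} and Corollary~\ref{cor:1morph}: these show that $K_0(\cH_B)$ is, as a $\kk$-module, spanned by classes $[(\sP^n,e) \otimes (\sQ^m,e')]$, and hence is a free $\kk$-module of the ``expected'' rank, with no $\kk$-torsion. Concretely, $\fh_B$ is a free $\kk$-module by Proposition~\ref{prop:hB-presentation}, $p$ is surjective by Lemma~\ref{lem:surj}, and $K_0(\cH_B)$ has no torsion, so a $\kk$-module surjection $p$ between free $\kk$-modules that becomes injective after $\otimes_\kk \Q$ must already be injective (if $p(x) = 0$ with $x \neq 0$, pick $x$ primitive; then $x \otimes 1 \neq 0$ in $\fh_B \otimes_\kk \Q$ maps to zero, a contradiction). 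So the logical skeleton is: (i) $p$ well defined; (ii) $K_0(\bF)\circ p$ = faithful Fock representation $\Rightarrow$ $p$ injective after base change $\Rightarrow$ $p$ injective; (iii) $\delta \neq 0$ $\Rightarrow$ $p$ surjective; (iv) conclude.

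I would write this up as follows. \emph{Proof of Theorem~\ref{theo:main-iso}.} Since $B$ is not concentrated in $\N$-degree zero and the trace map $\tr_B$ has degree $(-\delta,\sigma)$ with $\delta \in \N$, we must have $\delta \neq 0$ (a trace map of degree $0$ on an algebra concentrated in degree $0$ is the only possibility excluded). The map $p$ of \eqref{eq:main-iso} is well defined by Proposition~\ref{prop:hB-presentation}, Remark~\ref{rem:even-type-Q}, and Theorem~\ref{theo:functor-isos}: every defining relation of $\fh_B$ in one of the presentations of Proposition~\ref{prop:hB-presentation} is sent by $p$ to a genuine isomorphism in $\cH_B$, namely one of \eqref{eq:iso-Sn}--\eqref{eq:iso-1nm}, with the even-index generators handled via \eqref{eq:Q-even-isom} and \eqref{eq:P-even-isom}. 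By the construction of $\bF$ in Section~\ref{sec:actions} and \cite[Th.~5.7]{RS15a}, the composite $K_0(\bF) \circ p$ is the Fock space representation of $\fh_B$, i.e.\ the representation induced from the trivial representation of $H^-$. Extend scalars from $\kk$ to $\Q$ by letting $\Q$ be a $\kk$-module via $q \mapsto 1$, $\pi \mapsto 1$; then $\fh_B \otimes_\kk \Q$ is the Heisenberg double of the Hopf algebra $H^+ \otimes_\kk \Q$ by Remark~\ref{rem:Heis-double}, and its Fock space representation is faithful by \cite[Th.~2.11]{SY15}. Hence $K_0(\bF) \circ p$ is injective after applying $- \otimes_\kk \Q$, and therefore so is $p \otimes_\kk \id_\Q$. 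Now $\fh_B$ is a free $\kk$-module (Proposition~\ref{prop:hB-presentation}) and, since $\delta \neq 0$, Corollary~\ref{cor:1morph} shows that $K_0(\cH_B)$ is spanned over $\kk$ by classes of the form $[(\sP^n,e) \otimes (\sQ^m,e')]$; in particular $K_0(\cH_B)$ is $\kk$-torsion-free, so the natural map $K_0(\cH_B) \to K_0(\cH_B) \otimes_\kk \Q$ is injective. If $x \in \ker p$ were nonzero, then its image in $\fh_B \otimes_\kk \Q$ would be nonzero (as $\fh_B$ is free, hence torsion-free) and would lie in $\ker(p \otimes_\kk \id_\Q)$, contradicting injectivity of $p \otimes_\kk \id_\Q$. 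Thus $p$ is injective. Finally, $p$ is surjective by Lemma~\ref{lem:surj} (which also uses $\delta \neq 0$, via Corollary~\ref{cor:1morph}). Hence $p \colon \fh_B \to K_0(\cH_B)$ is an isomorphism of $\kk$-algebras. \qed
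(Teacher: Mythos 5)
Your proof is correct and follows essentially the same route as the paper's: well-definedness of $p$ via Theorem~\ref{theo:functor-isos}, injectivity by extending scalars to $\Q$ and invoking faithfulness of the Fock space representation of the Heisenberg double (\cite[Th.~2.11]{SY15}), and surjectivity via Lemma~\ref{lem:surj}. You merely spell out more explicitly the torsion-freeness argument that the paper leaves implicit when passing from injectivity of $p \otimes_\kk \Q$ back to injectivity of $p$ itself.
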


\begin{proof}
  If $B$ is not concentrated in $\N$-degree zero, then its trace map must have strictly negative degree.  That is, $\tr_B$ has degree $(-\delta,\sigma)$ for $\delta > 0$.  The map $p$ of \eqref{eq:main-iso} was observed to be an injective algebra map and by Lemma~\ref{lem:surj} it is surjective, hence it is an isomorphism.
\end{proof}

It follows immediately from \eqref{eq:Psi-reversals} that the functor $\Omega$ induces the automorphism $\omega$ of Corollary~\ref{cor:heis-alg-involution} on the Grothendieck group $K_0(\cH_B) \cong \fh_B$.

%%%%%%%%%%%%%%%%%%%%%%%%%%%%%%%%%%%%%%%%%%%%%%%%%%%%%%%%%%%%%%%%%%%
% References
%%%%%%%%%%%%%%%%%%%%%%%%%%%%%%%%%%%%%%%%%%%%%%%%%%%%%%%%%%%%%%%%%%%

\bibliographystyle{alpha}
\bibliography{RossoSavage-biblist}

\end{document}